\newtheorem{dfn}{Definition}[section]
\newtheorem{thm}[dfn]{Theorem}
\newtheorem{rmk}[dfn]{Remark}
\newtheorem{prop}[dfn]{Proposition}
\newtheorem{lem}[dfn]{Lemma}
\theoremstyle{definition}
\numberwithin{equation}{section}
\newcommand{\IE}{{\mathbb{E}}}
\newcommand{\IP}{{\mathbb{P}}}
\newcommand{\IR}{{\mathbb{R}}}
\newcommand{\FF}{{\mathcal{F}}}
\newcommand{\EE}{{\mathcal{E}}}
\newcommand{\LL}{{\mathcal{L}}}
\def\eps{\varepsilon}
\def\wh{\widehat}
\def\wt{\widetilde}
\def\<{\langle}
\def\>{\rangle}
\def\loc{{\rm loc}}
\title{\bf  Brownian Motion on Spaces with Varying Dimension
\thanks{Research partially supported
by NSF Grant DMS-1206276.}}
\date{\today}
\author{{\bf Zhen-Qing Chen} \quad and \quad {\bf Shuwen Lou}}
\begin{document}

\maketitle

\begin{abstract}
In this paper we introduce and study  Brownian motion on state spaces with varying dimension. Starting with a concrete case of such state spaces that models a big square with a flag pole, we construct a Brownian motion on it
 and study how heat propagates on such a space. 
 We derive sharp two-sided global estimates on its transition density functions (also called heat kernel). 
 These two-sided estimates are of Gaussian type,  but the measure on the 
 underlying state space does not satisfy volume doubling
property.
 Parabolic Harnack inequality fails for such a process.
 Nevertheless, we show H\"{o}lder regularity  holds for its parabolic functions. 
    We also derive the Green function estimates for this process on bounded smooth domains. 
    Brownian motion  
on some other state spaces with varying dimension are also constructed and studied in  this paper.

\end{abstract}

\noindent
{\bf AMS 2010 Mathematics Subject Classification}: Primary 60J60, 60J35; Secondary 31C25, 60H30, 60J45

\smallskip\noindent
{\bf Keywords and phrases}: Space of varying dimension, Brownian motion, Laplacian, transition density function, 
heat kernel estimates, H\"older regularity, Green function 

\section{Introduction}\label{Intro}

Brownian motion takes a  central place in  modern probability theory and its applications, 
and is a basic building block
for modeling many random phenomena. Brownian motion has intimate 
connections to analysis since its infinitesimal generator
is Laplacian operator. Brownian motion in Euclidean spaces has been  
studied by many authors in depth. 
Brownian motion on manifolds and on fractals 
has also been investigated vigorously, and is shown to have intrinsic interplay with the geometry of
the underlying spaces. See \cite{H, KS, MP, RY} and the references therein. 
In most of these studies,  the underlying metric measure spaces are assumed to satisfy volume doubling (VD) property. 
For Brownian  motion on manifolds with walk dimension 2, a remarkable fundamental result obtained independently by Grigor'yan \cite{Gr} and 
Saloff-Coste \cite{LSC1} asserts that the following are equivalent: (i)  two-sided Aronson type Gaussian bounds for heat kernel,
(ii) parabolic Harnack equality, and (iii)  VD and Poincar\'e inequality.  
This result is then extended to strongly local Dirichlet forms on metric measure space in \cite{BM, St1, St2}
and to graphs in \cite{De}.
For Brownian motion on fractals with walk dimension larger than 2,  
the above equivalence  still holds  but one needs to replace (iii)  
with (iii') VD, Poincar\'e inequality and a cut-off Sobolev inequality; see \cite{BB3, BBK, AB}.

Recently, analysis on non-smooth spaces has attracted lots of interest.
In real world, there are many objects having varying dimension. 
It is natural to study Brownian motion and ``Laplacian operator" on such spaces.
A simple example of spaces with varying dimension is a large square with a thin
flag pole. Mathematically, it is modeled by a plane with a vertical line installed 
on it:
\begin{equation}\label{e:1.1a}
\IR^2 \cup \IR_+: =\left\{(x_1, x_2, x_3)\in \IR^3: x_3=0 \textrm{ or } x_1=x_2=0 \hbox{ and } x_3>0\right\}. 
\end{equation}
 Here and in the sequel, we use $:=$ as a way of definition
 and   
denote $[0, \infty)$ by $\IR_+$. 
Spaces with varying dimension arise  in  many disciplines including statistics, physics  and engineering 
(e.g.  molecular dynamics, plasma dynamics).
See, for example,  \cite{LS, Xia} and the references therein.

The goal of this paper is to construct and study Brownian motion and Laplacian on   spaces
of varying dimension, in particular, to investigate how heat propagates on such spaces. 
 Intuitively, Brownian motion on space $\IR^2\cup \IR$ of  \eqref{e:1.1a}  should behave like a two-dimensional
Brownian motion when it is on the plane, and like a one-dimensional Brownian motion
when it is on the vertical line (flag pole).  However  the space $\IR^2\cup \IR$  is quite singular in the sense
that the base $O$ of the flag pole where the plane and the vertical line meet 
is a singleton. A singleton
would never be visited by a two-dimensional Brownian motion, which means 
 Brownian motion starting from a point on the plane will never visit $O$.
Hence there is no chance for such a process to climb up the flag pole. So the idea is to collapse or short (imagine putting
an infinite conductance on) a small closed disk $\overline{ B(0, \eps)}\subset \IR^2$ centered at the origin into a point $a^*$
 and   consider the resulting
Brownian motion with darning on the collapsed plane,
for which $a^*$ will be visited. The notion of Brownian motion with darning
is coined in \cite{CF} and its potential theory has been studied in details
 in \cite{Chen} and \cite[Sections 2-3]{CFR}.
Through $a^*$ we put a vertical  pole and   construct Brownian motion
with varying dimension on $\IR^2 \cup \IR_+$ by joining together
the Brownian motion with
darning on the plane and the one-dimensional Brownian motion along the pole.
It is possible to construct the process rigorously
via Poisson point process of excursions. But we find that
the most direct way to construct BMVD is by using
a Dirichlet form approach, which will be
carried out in Section 2.

To be more precise, the state space of BMVD on $E$ is defined as follows.
Fix $\eps>0$ and $p>0$.
Denote by  $B_\eps $  the closed disk on $\IR^2$ centered at $(0,0)$ with radius $\eps $. Let $D_0=\IR^2\setminus  B_\eps $. By identifying $B_\eps $ with a singleton denoted by $a^*$, we can introduce a topological space $E:=D_0\cup \{a^*\}\cup \IR_+$, with the origin of $\IR_+$ identified with $a^*$ and
a neighborhood of $a^*$ defined as $\{a^*\}\cup \left(U_1\cap \IR_+ \right)\cup \left(U_2\cap D_0\right)$ for some neighborhood $U_1$ of $0$ in $\IR^1$ and $U_2$ of $B_\eps $ in $\IR^2$. Let $m_p$ be the measure on $E$ whose restriction on $\IR_+$ and $D_0$ is the Lebesgue measure multiplied by $p$ and $1$, respectively.
In particular, we have $m_p (\{a^*\} )=0$.

\medskip

\begin{dfn}\label{D:1.1} \rm  Let $\eps >0$ and $p>0$. A Brownian motion with varying dimensions (BMVD in abbreviation) with parameters $(\eps, p)$ on $E$  is an $m_p$-symmetric diffusion $X$ on $E$ such that
\begin{description}
\item{(i)} its part process in $\IR$ or $D_0$ has the same law as standard Brownian motion in $\IR$ or $D_0$;
\item{(ii)} it admits no killings on $a^*$.
\end{description}
\end{dfn}

It follows from the $m_p$-symmetry of $X$ and the fact  $m_p(\{a^*\})=0$
that BMVD $X$ spends zero Lebesgue amount of time at $a^*$.

\medskip

The following  will be established in Section 2.

\begin{thm}\label{T:1.2}
 For every $\eps>0$ and $p>0$, BMVD with parameters $(\eps, p)$
exists and is unique in law.
\end{thm}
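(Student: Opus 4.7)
The approach is to build BMVD as the symmetric Hunt process associated to a natural Dirichlet form on $L^2(E, m_p)$ and then to deduce uniqueness from the one-point-extension (darning) theory developed in \cite{Chen, CFR}.

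I would first introduce the candidate form $(\EE, \FF)$ as follows. Say $u : E \to \IR$ belongs to $\FF$ if $u \in L^2(E, m_p)$, $u|_{D_0} \in W^{1,2}(D_0)$, $u|_{\IR_+} \in W^{1,2}(\IR_+)$, and $u$ is ``continuous at $a^*$'' in the sense that the Sobolev trace of $u|_{D_0}$ on $\partial B_\eps$ equals $u|_{\IR_+}(0)$ as a single constant (an affine linear condition, hence preserving closedness). On this domain set
\begin{equation*}
\EE(u,u) \;=\; \tfrac12 \int_{D_0} |\nabla u(x)|^2\, dx \;+\; \tfrac{p}{2}\int_0^\infty (u'(r))^2\, dr .
\end{equation*}
Closedness, densness in $L^2(E,m_p)$, and the Markov property are all routine. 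Assuming regularity, Fukushima's correspondence produces an $m_p$-symmetric Hunt process $X$ on $E$.

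The second step is to verify (i) and (ii) for $X$. The part Dirichlet form of $(\EE, \FF)$ on the open set $D_0$ is $\FF^{D_0} = \{u \in \FF : u = 0 \text{ q.e. on } E \setminus D_0\}$, which coincides with $W^{1,2}_0(D_0)$ equipped with $\tfrac12\int_{D_0}|\nabla u|^2 dx$; this is exactly the Dirichlet form of planar Brownian motion killed on exiting $D_0$. An analogous computation on $(0,\infty)$ identifies the part process there with one-dimensional Brownian motion killed at $0$ (the factor $p$ appears in both the measure and the form and thus cancels out of the generator $\tfrac12 \partial_r^2$). Since the Beurling--Deny decomposition of $\EE$ has no killing measure, $X$ admits no killings, and in particular none at $a^*$.

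For uniqueness, any $m_p$-symmetric diffusion $Y$ on $E$ satisfying (i) and (ii) has the same part processes on $D_0$ and on $(0,\infty)$ as $X$, so $Y$ is a symmetric one-point extension of this disjoint union through $a^*$. By the classification of such extensions in \cite{Chen} and \cite[Sections 2--3]{CFR}, among $m_p$-symmetric extensions without killing at the extension point the law is pinned down by the weight $m_p(\{a^*\})$, which here equals $0$; hence $Y$ must coincide in law with $X$.

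The main obstacle I anticipate is verifying the regularity of $(\EE, \FF)$ at the singular point $a^*$: one must show that $\FF \cap C_c(E)$ is uniformly dense in $C_c(E)$ and $\EE_1$-dense in $\FF$, despite the fact that approaching $a^*$ from the planar side forces the trace on the entire circle $\partial B_\eps$ to collapse to a single constant matching the pole value at $0$. The cleanest route is to mollify separately on $D_0$ and on $\IR_+$ using cut-offs that are \emph{constant} in a small neighbourhood of $\partial B_\eps$ respectively of $0$, and then to invoke the standard $W^{1,2}$-trace theorem along $\partial B_\eps$ to ensure the matching condition is preserved under approximation; once this gluing is in place, the remaining verifications are routine.
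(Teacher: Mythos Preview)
Your existence argument is essentially the paper's: you write down the same Dirichlet form $(\EE,\FF)$, invoke regularity, and identify the part processes on $D_0$ and on $(0,\infty)$. One technical simplification worth noting: rather than mollify with cut-offs constant near $\partial B_\eps$, the paper observes that $\FF$ is the linear span of $\FF^0:=W^{1,2}_0(D_0)\oplus W^{1,2}_0(\IR_+)$ and the single function $u_1(x)=\IE_x[e^{-\sigma_{a^*}}]$, so regularity reduces to approximating one explicit function and the standard core for $\FF^0$. This bypasses the trace-matching difficulty you flag.

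For uniqueness the approaches diverge. You invoke the one-point-extension classification from \cite{Chen,CFR} as a black box: among $m_p$-symmetric extensions through $a^*$ with no killing and $m_p(\{a^*\})=0$, the law is unique. This is correct and conceptually clean. The paper instead gives a short self-contained argument: for any competing BMVD with Dirichlet form $(\EE^*,\FF^*)$, the decomposition $u=H^1_{a^*}u+(u-H^1_{a^*}u)$ with $H^1_{a^*}u(x)=u(a^*)\IE_x[e^{-\sigma_{a^*}}]$ shows $\FF^*\subset\FF$, while $\FF^0\subset\FF^*$ (from the part-process hypothesis) gives the reverse inclusion; strong locality and $m_p(\{a^*\})=0$ then force $\EE^*=\EE$ via the energy measure. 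Your route buys generality and avoids computation; the paper's route is elementary and needs no external classification theorem.
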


We point out that BMVD on $E$ can start from every point in $E$.
 We further characterize the $L^2$-infinitesimal generator $\LL$
 of BMVD $X$ in Section 2,     which can be viewed as the Laplacian operator on this singular
space.  We show that $u\in L^2(E; m_p)$
  is in the domain of the generator $\LL$ if and only if
  $\Delta u$ exists as an $L^2$-integrable function
  in the distributional sense when restricted to $D_0$ and $\IR_+$,
  and $u$ satisfies zero-flux condition at $a^*$; see Theorem \ref{T:2.3}
  for details. It is not difficult to see that BMVD $X$ has a continuous transition
  density function $p(t, x, y)$ with respect to the measure $m_p$, which is
  also called the fundamental solution (or heat kernel) for $\mathcal{L}$.
  Note that $p(t, x, y)$ is symmetric in $x$ and $y$.
  The main purpose of this paper is to investigate how the BMVD $X$ propagates in $E$;
  that is, starting from $x\in E$, how likely $X_t$ travels to position $y\in E$
  at time $t$. This amounts to study the properties of  $p(t, x, y)$ of $X$.
  In this paper, we will   establish the following sharp two-sided estimates on $p(t,x,y)$
  in Theorem \ref{T:smalltime} and Theorem \ref{largetime}.
  To state the results, we need first to introduce some notations.
Throughout  this paper, we will denote the geodesic metric on $E$ by $\rho$.
Namely, for $ x,y\in E$, $\rho (x, y)$ is the shortest path distance (induced
from the Euclidean space) in $E$ between $x$ and $y$.
For notational simplicity, we write $|x|_\rho$ for $\rho (x, a^*)$.
We use $| \cdot |$ to denote the usual Euclidean norm. For example, for $x,y\in D_0$,
$|x-y| $ is  the Euclidean distance between $x$ and $y$ in $\IR^2$.
Note that for $x\in D_0$, $|x|_\rho =|x|-\eps$.
 Apparently,
\begin{equation}\label{e:1.1}
\rho (x, y)=|x-y|\wedge \left( |x|_\rho + |y|_\rho \right)
\quad \hbox{for } x,y\in D_0
\end{equation}
and $\rho (x, y)= |x|+|y|-\eps$ when $x\in \IR_+$ and $y\in D_0$ or vice versa.
Here and in the rest of this paper, for $a,b\in \IR$, $a\wedge b:=\min\{a,b\}$.

The following are the main results of this paper.

\begin{thm}\label{T:smalltime}
 Let $T>0$ be fixed. There exist positive constants $C_i$, $1\le i\le 14$ so  that the transition density $p(t,x,y)$ of BMVD satisfies the following estimates when $t\in (0, T]$:
\begin{description}
\item{\rm (i)} For $x \in \IR_+$ and $y\in E$, 
\begin{equation}\label{stuff1}
\frac{C_1}{\sqrt{t}}e^{-C_2\rho (x, y)^2/t }  \le p(t,x,y)\le\frac{C_3}{\sqrt{t}}e^{- C_4\rho (x, y)^2/t}.
\end{equation}

\item{\rm (ii)} For $x,y\in D_0\cup \{a^*\}$, when $|x|_\rho+|y|_\rho<1$,
\begin{eqnarray}\label{stuff3}
&& \frac{C_{5}}{\sqrt{t}}e^{- C_{6}\rho(x,y)^2/t}+\frac{C_{5}}{t}\left(1\wedge
\frac{|x|_\rho}{\sqrt{t}}\right)\left(1\wedge
\frac{|y|_\rho}{\sqrt{t}}\right)e^{- C_{7}|x-y|^2/t} \\
 &\le  & p(t,x,y)
\le \frac{C_{8}}{\sqrt{t}}e^{-C_{9}\rho(x,y)^2/t}
+\frac{C_{8}}{t}\left(1\wedge
\frac{|x|_\rho}{\sqrt{t}}\right)\left(1\wedge
\frac{|y|_\rho}{\sqrt{t}}\right)e^{-C_{10}|x-y|^2/t}; \nonumber
\end{eqnarray}
and when $|x|_\rho+|y|_\rho\geq 1$,
\begin{equation}\label{stuff333333}
\frac{C_{11}}{t}e^{- C_{12}\rho(x,y)^2/t} \le p(t,x,y) \le \frac{C_{13}}{t}e^{- C_{14}\rho(x,y)^2/t}.
\end{equation}
 \end{description}
\end{thm}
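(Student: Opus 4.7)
The plan is to base the analysis on the strong-Markov decomposition at the first hitting time $\sigma := \inf\{t > 0 : X_t = a^*\}$ of the junction point:
\begin{equation}\label{eq:decprop}
p(t,x,y) = p^0(t,x,y) + \int_0^t p(t-s, a^*, y)\, \IP_x(\sigma \in ds),
\end{equation}
where $p^0$ is the transition density of the BMVD killed at $a^*$. On $D_0$, $p^0$ coincides with the Dirichlet heat kernel of planar Brownian motion in the $C^{1,1}$ exterior domain $\IR^2 \setminus B_\eps$; on $\IR_+$ (up to the $1/p$ normalization from $m_p$), it is the half-line Dirichlet kernel from reflection; across the two components $p^0 \equiv 0$. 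Standard Dirichlet heat-kernel estimates on exterior smooth domains give, for $x,y \in D_0$ and $t\in(0,T]$,
\[
p^0(t,x,y) \asymp \tfrac{1}{t}\bigl(1 \wedge \tfrac{|x|_\rho}{\sqrt t}\bigr)\bigl(1 \wedge \tfrac{|y|_\rho}{\sqrt t}\bigr)\,e^{-c|x-y|^2/t}.
\]
The hitting-time law $\IP_x(\sigma \in ds)$ is the 1D inverse-Gaussian $\tfrac{x}{\sqrt{2\pi s^3}} e^{-x^2/(2s)}\,ds$ when $x \in \IR_+$, and is controlled by standard planar subgaussian hitting-time estimates for a disk (with matching lower bounds via Brownian scaling and a Harnack chain) when $x \in D_0$.

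The crucial new ingredient, from which the whole theorem follows, is a sharp two-sided bound on $p(t,a^*,y)$. I expect to show that for $y \in E$ and $t \in (0,T]$,
\[
p(t,a^*,y) \asymp \tfrac{C}{\sqrt t}\, e^{-c|y|_\rho^2/t} \ \text{when } |y|_\rho \le 1, \qquad p(t,a^*,y) \asymp \tfrac{C}{t}\, e^{-c|y|_\rho^2/t} \ \text{when } |y|_\rho \ge 1,
\]
reflecting the crossover from 1D-dominated behavior near the singular point to 2D-dominated behavior in the bulk. The on-diagonal bound $p(t,a^*,a^*) \asymp 1/\sqrt t$ comes from a Nash-type inequality calibrated to the mixed-dimension volume growth: in a neighborhood of $a^*$, $m_p(B(a^*,r)) \asymp p\,r$ for small $r$ since the pole contribution dominates the disk-boundary contribution, yielding a one-dimensional Nash inequality localized at $a^*$ and hence the correct $1/\sqrt t$ diagonal. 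To pass off-diagonal I would apply Davies' perturbation method using the intrinsic metric $\rho$, obtaining the matching Gaussian exponential, while the lower bound follows from a chaining argument along either the pole or a planar tube, combined with the diagonal estimate.

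The three cases are then assembled from \eqref{eq:decprop}. For (i), $x\in\IR_+$ kills the $p^0$ contribution whenever $y\in D_0$, and a Laplace/saddle-point evaluation of the convolution $\int_0^t p(t-s,a^*,y)\,\IP_x(\sigma\in ds)$ produces $\tfrac{C}{\sqrt t}e^{-c\rho(x,y)^2/t}$ — the optimal split of time between the 1D leg and the 2D leg yields exactly $(x+|y|_\rho)^2=\rho(x,y)^2$ in the exponent, and the $s^{-3/2}$ factor in the 1D passage density is what pins the prefactor to $1/\sqrt t$ rather than $1/t$; the $y\in\IR_+$ subcase combines the killed 1D kernel with a through-$a^*$ reflection to the same form. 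For (ii) ($|x|_\rho+|y|_\rho<1$), the two summands in \eqref{stuff3} correspond term-by-term to the killed piece and the through-$a^*$ piece of \eqref{eq:decprop}. For (iii) ($|x|_\rho+|y|_\rho\ge 1$), the killed piece dominates: its boundary factors $(1\wedge|x|_\rho/\sqrt t)(1\wedge|y|_\rho/\sqrt t)$ saturate to $1$ on the relevant range of $t$, and the detour through $a^*$ is strictly longer than the direct 2D path, so $\rho(x,y)\le|x-y|$ makes the through-$a^*$ contribution exponentially smaller.

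The main obstacle is the sharp estimate on $p(t,a^*,y)$. BMVD at $a^*$ is a genuine mixed-dimension phenomenon: volume doubling fails globally and parabolic Harnack is unavailable, so the usual metric-measure machinery does not apply. I expect the technical core to consist of (a) a Nash-type on-diagonal bound tuned to the non-doubling measure $m_p$, in which the 1D volume growth near $a^*$ dictates the exponent $1/\sqrt t$; (b) identifying the Dirichlet-form intrinsic metric with the geodesic $\rho$ across $a^*$ so that Davies' method transfers on-diagonal into the correct Gaussian exponential; and (c) producing Gaussian-type lower bounds without Harnack by a direct chaining argument exploiting the explicit structure of excursions from $a^*$ into the pole and into $D_0$.
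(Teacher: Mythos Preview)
Your decomposition at $\sigma_{a^*}$ and identification of $p(t,a^*,y)$ as the key quantity are exactly right, and match the paper's strategy. The gap is in your proposed route to the sharp bound on $p(t,a^*,y)$. Nash plus Davies cannot deliver the $t^{-1/2}$ prefactor you need for $y\in D_0$: Davies' method is tied to the \emph{global} on-diagonal bound, and since points deep in $D_0$ have $p(t,y,y)\asymp 1/t$, the best Davies yields is $p(t,x,y)\lesssim (t^{-1}+t^{-1/2})e^{-c\rho^2/t}$ (this is exactly Proposition~\ref{offdiagUBE}). For small $t$ that is $\sim t^{-1}$, which is too crude for part~(i) and for the first summand in~\eqref{stuff3}. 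A Cauchy--Schwarz refinement $p(t,a^*,y)\le\sqrt{p(t,a^*,a^*)\,p(t,y,y)}$ gives only $t^{-3/4}$, still not sharp; and the sharp on-diagonal $p(t,a^*,a^*)\lesssim t^{-1/2}$ itself cannot be extracted from the mixed Nash inequality of Lemma~\ref{NashIneq} alone --- computing $p(t,a^*,a^*)=\int p(t/2,a^*,z)^2\,m_p(dz)$ over $D_0$ with only the Davies bound on the integrand returns $1/t$, so the argument is circular. The paper explicitly flags this: ``the standard Nash inequality and Davies method \dots\ do not give sharp bound for our BMVD.''

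The missing idea is the \emph{signed radial process} $Y_t=u(X_t)$ of Section~\ref{S:4}. By rotational symmetry of $X$ around $a^*$, $Y$ is Markov on $\IR$, and Fukushima decomposition identifies it as a skew Brownian motion with bounded drift (Proposition~\ref{P:4.3}). This is a one-dimensional diffusion for which Aronson-type bounds $p^{(Y)}(t,a,b)\asymp t^{-1/2}e^{-c(a-b)^2/t}$ are available directly (Proposition~\ref{P:4.4}). Rotational symmetry then gives the \emph{exact} identities
\[
p(t,x,y)=\frac{1}{2\pi(|y|_\rho+\eps)}\,p^{(Y)}(t,-|x|,|y|_\rho)\quad(x\in\IR_+,\ y\in D_0),
\qquad
\overline p_{D_0}(t,x,y)=\frac{1}{2\pi(|y|_\rho+\eps)}\,p^{(Y)}(t,-|x|_\rho,|y|_\rho),
\]
(equations \eqref{stuff10751} and \eqref{e:4.11}), which convert the 1D estimate into the sharp $t^{-1/2}$ bounds you correctly predicted, with no saddle-point analysis needed. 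Your treatment of case (iii) is also too quick: the boundary factors in $p_{D_0}$ do \emph{not} saturate when $|x|_\rho\wedge|y|_\rho<\sqrt t$, and when $|x|_\rho+|y|_\rho$ realizes the geodesic the through-$a^*$ term is not exponentially subdominant; the paper handles this by a short case split in the proof of Theorem~\ref{T:4.7}.
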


Since $p(t, x, y)$ is symmetric in $(x, y)$, the above two cases cover all the cases for $x,y\in E$.
Theorem \ref{T:smalltime} shows that the transition density function $p(t, x, y)$ of BMVD on $E$
has one-dimensional character when at least one of $x, y$ is  in the pole 
(i.e. in $\IR_+$);
it has two-dimensional character when both points are on the plane and at least  one of them is away from the pole base $a^*$.
When both $x$ and $y$ are in the plane and both are close to $a^*$, 
$p(t, x, y)$ exhibits
a mixture of one-dimensional and two-dimensional characters; see \eqref{stuff3}.
Theorem \ref{T:smalltime} will be proved through Theorems \ref{T:4.5}-\ref{T:4.7}. 

The large time heat kernel estimates for BMVD are given by the next theorem, which are very different from the small time estimates.

\begin{thm}\label{largetime}
Let $T>0$ be fixed. There exist positive constants $C_i$, $15\le i \le 32$, so that the transition density $p(t,x,y)$ of BMVD satisfies the following estimates when $t\in [T, \infty)$:
\begin{enumerate}
\item[\rm (i)]  For $x,y\in D_0\cup \{a^*\}$,
\begin{equation*}
\frac{C_{15}}{t}e^{- C_{16}\rho(x,y)^2/t} \le p(t,x,y)\le \frac{C_{17}}{t}e^{- C_{18}\rho(x,y)^2/t}.
\end{equation*}

\item[\rm (ii)]  For $x\in \IR_+$, $y\in D_0\cup \{a^*\}$,
\begin{equation*}
\frac{C_{19}}{t}\left( 1 + \frac{|x| \log t}{\sqrt{t}} \right)
e^{- C_{20}\rho(x,y)^2/t} \le p(t,x,y)\le \frac{C_{21}}{t}
\left( 1 + \frac{|x| \log t}{\sqrt{t}} \right)
e^{-C_{22}\rho(x,y)^2/t}
\end{equation*}
when $|y|_\rho\le 1$, and
\begin{align*}
 &\frac{C_{23}}{t}  \left( 1 + \frac{|x|}{\sqrt{t}}  \log \left( 1+\frac{\sqrt{t}}{|y|_\rho} \right)   \right) 
 e^{-{C_{24}\rho(x,y)^2}/{t}} \le p(t,x,y)
\\
&\le \frac{C_{25}}{t} \left( 1 + \frac{|x|}{\sqrt{t}}  \log \left( 1+\frac{\sqrt{t}}{|y|_\rho} \right)   \right)    
e^{-{C_{26}\rho(x,y)^2}/{t}} \qquad \hbox{when }  |y|_\rho>1.
\end{align*}

\item[\rm (iii)] For $x,y\in \IR_+$,
\begin{eqnarray*}
&&\frac{C_{27}}{\sqrt{t}}\left(1\wedge \frac{|x|}{\sqrt{t}}\right)\left(1\wedge \frac{|y|}{\sqrt{t}}\right)e^{-C_{28}|x-y|^2/t}
+\frac{C_{27}}{t} \left( 1+ \frac{(|x|+|y|)\log t}{\sqrt{t}}\right) e^{-{C_{29}(x^2+y^2)}/{t}}
\\
&\le& p(t,x,y) \\
&\le & \frac{C_{30}}{\sqrt{t}}\left(1\wedge \frac{|x|}{\sqrt{t}}\right)\left(1\wedge \frac{|y|}{\sqrt{t}}\right)e^{-C_{31}|x-y|^2/t}
+\frac{C_{30}}{t} \left( 1+ \frac{(|x|+|y|)\log t}{\sqrt{t}}\right) e^{-{C_{32}(x^2+y^2)}/{t}} .
\end{eqnarray*}
\end{enumerate}
\end{thm}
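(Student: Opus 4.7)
The plan is to combine Theorem \ref{T:smalltime} with the Chapman--Kolmogorov equation and the strong Markov property at the first hitting time $\tau_{a^*}$ of the junction point $a^*$. The decisive topological fact is that $\{a^*\}$ disconnects $E$ into the pole $\IR_+$ and the planar piece $D_0$, so any continuous path joining a pole point to a plane point must cross $a^*$. This reduces the mixed cases (ii) and (iii) to one-dimensional first-passage computations paired with estimates for $p(\cdot, a^*, y)$ and $p(\cdot, a^*, a^*)$, both of which fall under case (i).

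For case (i), I would first push Theorem \ref{T:smalltime}(ii) from time $T$ out to all $t \ge T$ by iterating the semigroup identity $p(2s, x, y) = \int_E p(s, x, z)\, p(s, z, y)\, m_p(dz)$. Because $\IR_+$ has only linear volume growth while $D_0$ has quadratic growth, for $s \ge T$ the integral is dominated by $z \in D_0$; combined with the recurrence of the planar Brownian motion with darning and the off-diagonal Gaussian decay already established at time $T$, this yields the two-sided bound $C t^{-1} e^{-C\rho(x,y)^2/t}$.

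For case (ii) with $x \in \IR_+$ and $y \in D_0 \cup \{a^*\}$, Definition \ref{D:1.1}(i) tells us the part process on $\IR_+$ is a one-dimensional Brownian motion, so under $\IP_x$ the hitting time $\tau_{a^*}$ has density $f_x(s) = |x|(2\pi s^3)^{-1/2}\exp(-|x|^2/(2s))$. Since every path from $x$ to $y$ must cross $a^*$, the strong Markov property gives
\begin{equation*}
p(t, x, y) = \int_0^t f_x(s)\, p(t-s, a^*, y)\, ds .
\end{equation*}
Substituting the case~(i) estimate for $p(t-s, a^*, y)$ and splitting the $s$-integral at $s = |x|^2$ and $s = t/2$, the contributions from $s \in [0, t/2]$ supply a term of order $1/t$, while the regime $s$ near $t$ produces the logarithmic contribution via an integral of the form $\int dr/r$ whose effective cutoff is $r = t-s \asymp |y|_\rho^2 \vee 1$ (coming from the Gaussian factor $e^{-C|y|_\rho^2/(t-s)}$ in $p(t-s,a^*,y)$). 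This yields $\log t$ when $|y|_\rho \le 1$ and $\log(1+\sqrt{t}/|y|_\rho)$ when $|y|_\rho > 1$, producing the advertised prefactor $\frac{1}{t}(1 + |x|\log(\cdot)/\sqrt{t})$ with Gaussian decay $e^{-C\rho(x,y)^2/t}$.

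For case (iii) with $x, y \in \IR_+$, I would decompose according to whether the trajectory visits $a^*$ before time $t$. Paths not hitting $a^*$ contribute the Dirichlet heat kernel on $(0, \infty)$, $p^{(0,\infty)}(t, x, y) = (2\pi t)^{-1/2}(e^{-(x-y)^2/(2t)} - e^{-(x+y)^2/(2t)})$, which is two-sided comparable to the first summand in (iii). Paths hitting $a^*$ contribute $\int_0^t f_x(s)\, p(t-s, a^*, y)\, ds$, and applying the decomposition once more to $p(t-s, a^*, y)$ via the symmetry of $p$ produces a double integral involving $f_x$, $f_y$, and $p(r, a^*, a^*) \asymp 1/r$; evaluating as in case (ii) yields the second summand with prefactor $\frac{1}{t}(1 + (|x|+|y|)\log t/\sqrt{t})$ and Gaussian decay $e^{-C(x^2+y^2)/t}$. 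The main obstacle will be matching upper and lower bounds in the transition regimes where $|x|$ or $|y|_\rho$ is comparable to $\sqrt{t}$ or to $1$: there the geodesic and Euclidean distances on $D_0$ diverge, and one must carefully balance Gaussian decay against logarithmic and polynomial prefactors across several subcases while keeping the constants in the exponents controlled throughout the first-passage integrals.
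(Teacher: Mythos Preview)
Your strategy for cases (ii) and (iii) is essentially the paper's: decompose via the first hitting time of $a^*$, use the explicit first-passage density $|x|(2\pi s^3)^{-1/2}e^{-x^2/(2s)}$ on the pole, and reduce everything to estimates on $p(\cdot,a^*,y)$. The integral splitting you describe (at $t/2$, with the logarithm arising from $\int dr/r$ cut off by the Gaussian factor in $|y|_\rho$) is exactly how Theorems \ref{T:5.15} and \ref{T:5.17} are proved, and the Dirichlet-kernel-plus-excursion decomposition in (iii) is the paper's as well.

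The gap is in case (i), specifically the on-diagonal estimate $p(t,a^*,a^*)\asymp 1/t$ for large $t$, on which everything else rests. Your proposal to ``push Theorem \ref{T:smalltime}(ii) from time $T$ out by iterating the semigroup'' does not deliver this. At the fixed time $T$ the small-time theorem gives $p(T,a^*,a^*)\asymp T^{-1/2}$, not $T^{-1}$; and computing $p(2s,a^*,a^*)=\int_E p(s,a^*,z)^2\,m_p(dz)$ requires already knowing the two-sided behaviour of $p(s,a^*,z)$ over the whole range of $z$, which is precisely what you are trying to prove. The argument is circular as stated, and neither the volume-growth heuristic nor ``recurrence of Brownian motion with darning'' pins down the sharp $1/t$ rate. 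The paper breaks this circularity by a bootstrap occupying most of Section \ref{S:5}: first a crude bound $p(t,a^*,a^*)\lesssim (\log t)/t$ from the Grigor'yan--Saloff-Coste hitting-time estimates of Proposition \ref{P:5.1}; then region-by-region upper bounds on $p(t,a^*,x)$ using this and Uchiyama's asymptotics (Lemma \ref{KUchiyama}); then the sharp upper bound by actually evaluating $\int_E p(t/2,a^*,x)^2\,m_p(dx)$ over four regions (Theorem \ref{138}); and finally the matching lower bound via Cauchy--Schwarz together with an SDE comparison of the signed radial process against a two-dimensional Bessel process to show $\IP_{a^*}(|X_t|_\rho\le M\sqrt t)\ge 1/2$ (Theorem \ref{T:5.10}). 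None of these ingredients appears in your sketch, and without the sharp on-diagonal rate at $a^*$ the first-passage integrals in your (ii) and (iii) cannot be evaluated to the stated precision.
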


Theorem \ref{largetime} will be proved through  Theorems \ref{154}, \ref{T:5.15} and \ref{T:5.17}.

Due to the singular nature of the space, the standard Nash inequality and Davies method
for obtaining heat kernel upper bound do not give sharp bound for our BMVD.
We can not employ  either the methods in \cite{Gr, LSC1, BM, St1, St2} on obtaining heat kernel estimates through volume doubling 
and Poincar\'e inequality or the approach through parabolic Harnack inequality.  In fact,
$(E, m_p)$ does not have volume doubling property, and 
we will show the parabolic Harnack inequality fails for BMVD $X$; see Proposition \ref{P:2.1} and 
Remark \ref{rmkonsmalltimeHKE}(iii).
Hence a  new approach is needed to study the heat kernel
of BMVD. 
A key role is played by the ``signed radial process" of BMVD, which we can analyze
and derive its two-sided heat kernel estimates. From it, by exploring the rotational symmetry
of BMVD, we can obtain short time sharp two-sided heat kernel estimates  
 by the following observation.
The sample paths of BMVD starting at $x$ reach $y$ at time $t$  
in two possible ways:  with or without passing through $a^*$.
The probability of the first scenario is given exactly by the probability transition density function of killed Brownian motion in $D_0$.
The probability of the second scenario can be computed by employing the strong Markov property of BMVD
at the first hitting time of the pole base $a^*$ and reducing it to the signed radial process of BMVD, exploring the symmetry of BMVD starting from $a^*$.
The large time heat kernel estimates are more delicate. 
For large time estimate, the key is to obtain the correct on-diagonal estimate.
This is done through some delicate analysis of BMVD and Bessel process on the plane. 
As a corollary of the sharp two-sided heat kernel estimates, we find that the usual form
of the parabolic Harnack inequality fails for parabolic functions of BMVD.
Nevertheless,  we will show in Section \ref{S:6} that  joint
H\"older regularity holds for bounded  parabolic functions of $X$. 

Let $X^D$ be the part process of BMVD killed upon exiting a bounded connected $C^{1,1}$ open subset $D$ of $E$. Denote by $p_{_D}(t,x,y)$ its  transition density function. Using the Green function estimates and boundary Harnack inequality for absorbing  Brownian motion in Euclidean spaces, we can derive sharp two-sided estimates on Green function 
\begin{equation*}
G_{_D}(x,y):=\int_{0}^\infty p_{_D}(t,x,y)dt 
\end{equation*}
for BMVD in $D$. Recall that an open set $D\subset \IR^d$ is called to be $C^{1,1}$ if there exist a localization radius $R_0>0$ and a constant $\Lambda_0>0$ such that for every $z\in \partial D$, there exists a $C^{1,1}-$function $\phi=\phi_z: \IR^{d-1}\rightarrow \IR$ satisfying $\phi(0)=0$, $\nabla \phi(0)=(0,\dots, 0)$, $\|\nabla \phi\|_\infty \le \Lambda_0$, $|\nabla \phi (x)-\nabla \phi (z)|\le \Lambda |x-z|$ and an orthonormal coordinate system $CS_z:\, y=(y_1, \dots , y_d):=(\wt {y}, y_d)$ with its origin at $z$ such that
\begin{equation*}
B(z, R_0)\cap D =\{y\in B(0, R_0)\textrm{ in }CS_z: y_d>\phi (\wt {y})\}.
\end{equation*}
For the state space $E$, an open set $D\subset E$ will be called $C^{1,1}$ in $E$, if $D\cap (\IR_+ \setminus \{a^*\})$ is a $C^{1,1}$ open set in $\IR_+$, and $D\cap D_0$ is  a  $C^{1,1}$ open set in $\IR^2$.

\begin{thm}\label{greenfunctionestimate}
Suppose $D$ is a bounded $C^{1,1}$ domain of $E$ that contains $a^*$.
Let $G_{D} (x,y)$ be the Green function of BMVD $X$ killed upon exiting $D$.
Then for $x\not= y$ in $D$, we have 
\begin{equation*}
G_{_D}(x,y)\asymp\left\{
  \begin{array}{ll}
    \delta_D(x)\wedge \delta_D(y),
\quad  &
 x, y \in D\cap \IR_+   ; 
\\ \\
  \left(\delta_D(y)\wedge 1\right)\left(\delta_D\wedge 1\right)+\ln\left(1+\frac{\delta_{D\cap D_0}(x)\delta_{D\cap D_0}(y)}{|x-y|^2}\right),
&  x,  y\in D\cap D_0   ; 
\\ \\
   \delta_D(x) \delta_D(y),
& \ x\in D\cap \IR_+  ,  \, y\in D\cap D_0 .
  \end{array}
\right.
\end{equation*}
Here $\delta_D(x):={\rm dist}_{\rho} (x, \partial D):=\inf\{\rho (x, z): z\notin   D\}$ 
and $\delta_{D\cap D_0} (x):=\inf\{\rho (x, z): z\notin   D\cap D_0\}$.
\end{thm}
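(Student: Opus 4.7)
The plan is to decompose $G_{_D}$ via the strong Markov property of BMVD at the first hitting time $\tau_{a^*}$ of $a^*$, reducing each piece to Green functions and harmonic measures of standard Brownian motion on Euclidean domains. Set $u(x) := \IP^x(\tau_{a^*} < \tau_D)$ and write $G^{(a^*)}_{_D}(x,y)$ for the Green function of BMVD killed at $\tau_{a^*}\wedge\tau_D$. A Fubini argument combined with the strong Markov property at $\tau_{a^*}$ yields
\[
G_{_D}(x,y) = G^{(a^*)}_{_D}(x,y) + u(x)\,G_{_D}(a^*,y).
\]
Since $D\setminus\{a^*\}$ decomposes into the disjoint components $D\cap\IR_+\setminus\{a^*\}$ and $D\cap D_0$, the first term vanishes when $x,y$ lie in different components. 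On the pole segment $(0,L)$, where $[0,L]$ is the pole piece of $D$, $G^{(a^*)}_{_D}(x,y)$ is the explicit 1D Green kernel $(x\wedge y)(L - x\vee y)/L$; on the planar $C^{1,1}$ annular domain $D\cap D_0$, the classical estimate for 2D Brownian motion on smooth domains gives
\[
G^{(a^*)}_{_D}(x,y) \asymp \log\left(1 + \frac{\delta_{D\cap D_0}(x)\,\delta_{D\cap D_0}(y)}{|x-y|^2}\right).
\]

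For the second term, I would next observe that $y\mapsto G_{_D}(a^*,y)$ is $\LL$-harmonic on $D\setminus\{a^*\}$, vanishes on $\partial D$, and remains bounded as $y\to a^*$ since $\{a^*\}$ has positive $0$-order capacity in $(E,m_p)$, a fact inherited from the one-dimensional pole. Because $u(\cdot)$ has the same three properties except with value $1$ at $a^*$, the maximum principle forces $G_{_D}(a^*,y) = c_D\,u(y)$ with $c_D := G_{_D}(a^*,a^*)\in(0,\infty)$. It remains to establish the two-sided bound $u(x)\asymp \delta_D(x)\wedge 1$: on the pole this is immediate from $u(x) = (L-x)/L$ together with the explicit form of $\delta_D$ there; on the plane, $u$ is the 2D harmonic measure of $\partial B_\eps$ in $D\cap D_0$, and I would combine the boundary Harnack principle (to handle $x$ near $\partial D\cap D_0$), an explicit comparison with the model annulus $\{\eps<|x|<R\}$ (to handle $x$ near $\partial B_\eps$, where $u\to 1$), and interior Harnack (on the bulk) to obtain the claimed two-sided bound.

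Putting these pieces together gives
\[
G_{_D}(x,y) \asymp G^{(a^*)}_{_D}(x,y) + (\delta_D(x)\wedge 1)(\delta_D(y)\wedge 1),
\]
and specializing to the three cases (both on pole, both on plane, or mixed), while using boundedness of $D$ to absorb factors of $\delta_D\wedge 1$ into $\delta_D$ in the mixed case, recovers the estimates stated in the theorem. In the ``both on pole'' subcase a short case analysis on whether $x$ and $y$ are closer to the pole endpoint $L$ or to $a^*$ shows that the sum of the 1D kernel and the product $(L-x)(L-y)\,c_D/L^2$ collapses to $\delta_D(x)\wedge\delta_D(y)$. The main obstacle I anticipate is the uniform two-sided estimate $u(x)\asymp\delta_D(x)\wedge 1$ on the planar component up to $\partial B_\eps$: the two boundary components of $D\cap D_0$ play opposite roles (one absorbing, one identified with $a^*$), so the boundary-Harnack regime near $\partial D\cap D_0$ must be patched to the explicit logarithmic behavior around the small circle $\partial B_\eps$ by a careful chaining argument. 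A subsidiary technical point is verifying that $c_D$ is strictly positive and finite, which rests on the positive capacity of $\{a^*\}$ contributed by the one-dimensional pole and hence requires the BMVD-specific potential theory rather than 2D potential theory alone.
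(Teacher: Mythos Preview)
Your proposal is correct and follows essentially the same route as the paper: decompose $G_D(x,y)=G_{D\setminus\{a^*\}}(x,y)+\IP_x(\sigma_{a^*}<\tau_D)\,G_D(a^*,y)$ via the strong Markov property at $\sigma_{a^*}$, identify the first summand with the classical one- or two-dimensional Dirichlet Green function, and control the second via $G_D(a^*,y)\asymp\delta_D(y)$ and $\IP_x(\sigma_{a^*}<\tau_D)\asymp\delta_D(x)$ using boundary Harnack. Your identity $G_D(a^*,y)=G_D(a^*,a^*)\,u(y)$ is a clean shortcut the paper does not state explicitly (it estimates $G_D(a^*,\cdot)$ and $u$ separately), and the obstacle you flag near $\partial B_\eps$ is not serious since $u\to 1$ and $\delta_D$ is bounded below there; the finiteness of $G_D(a^*,a^*)$ the paper obtains from exponential decay of $p_D(t,x,y)$ via the small-time heat kernel bounds.
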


For two positive functions $f$ and $g$, $f\asymp g$ means that $f/g$ is bounded between two positive constants.
In the following, we will also use  notation $f\lesssim g$ (respectively, $f\gtrsim g$) to mean that there is some constant $c>0$ so that $f\leq c g$
(respectively, $f\geq c g$).

\medskip

The above space $E$  of varying dimension  is special.
It serves as a toy model for further study on Brownian motion on more general 
spaces of varying dimension.  Another two examples of  spaces of varying dimension and BMVD on them
are given and studied in Section 8 of this paper.  
Even for this toy model, several interesting and non-trivial phenomena have arisen.
The heat kernel estimates on spaces of varying dimension are  quite delicate.
They are of Gaussian type but they are not of the classical Aronson
Gaussian type.    
The different dimensionality is also reflected in the heat
kernel estimates for BMVD. Even when both points $x$ and $y$ are on the plane, the heat kernel
$p(t, x, y)$ exhibits both one-dimensional and 
two-dimensional characteristics depending
on whether both points are close to the base point $a^*$ or not. 
In addition, both the Euclidean distance $|x-y|$ and the geodesic distance $\rho (x, y)$
between the two points $x$ and $y$ play a role in the kernel estimates. 
 As far as we know, this is the first paper that is devoted to the detailed study of heat propagation on spaces of varying dimension
 and related potential theory. Our approach is mainly probabilistic. 
For other related work and approaches on
 Markov processes living on spaces with
possibly different dimensions, we refer the reader to \cite{ES, HK, Ha, Ku}  and the references therein.

 The rest of the paper is organized as follows. Section 2 gives a Dirichlet form construction and characterization of BMVD,
 as well as its infinitesimal generator. Nash-type inequality for $X$ is given in Section 3.
 In Section 4, we present small time heat kernel estimates for $X$, while the large time estimates are given in
 Section 5. H\"older continuity of parabolic functions of $X$ is established in Section 6.
 Section 7 is devoted to the two-sided sharp estimates for Green function of $X$ in bounded $C^{1,1}$ domains in $E$ that contain the pole base $a^*$.
 BMVD on a large square with multiple vertical flag poles or with an arch  are studied in Sections 8.

 For notational convenience, in this paper we set
\begin{equation}\label{e:1.6}
\overline{p}_D(t,x,y):=p(t,x,y)-p_D(t,x,y),
\end{equation}
where $D$ is a domain of $E$ and $p_D(t,x,y)$ is the transition density of the part process killed upon exiting $D$.
In other words, for any non-negative function $f\geq 0$ on $E$,
\begin{equation}\label{e:1.7}
 \int_E  \overline p_D (t, x, y)  f(y) m_p(dy) = \IE_x \left[ f(X_t); t\geq \tau_D \right],
\end{equation}
where $\tau_D:=\inf\{t\geq 0: X_t\notin D\}$.
Thus while $p_D(t,x,y)$ gives the probability density
that  BMVD starting from $x$ hits $y$ at time $t$ without exiting $D$, 
 $\overline{p}_D(t,x,y)$ is the probability density for BMVD starting from $x$ leaves $D$ before ending up at $y$ at time $t$.

We use $C^\infty_c(E)$ to denote the space of continuous functions with compact support in $E$ so that
there restriction to $D_0$ and $\IR_+$ are smooth on $\overline D_0$ and on $\IR_+$, respectively.
 We also follow the convention that in the statements of the theorems or propositions $C, C_1, \cdots$ denote positive constants, whereas in their proofs $c, c_1, \cdots$ denote positive constants whose exact value is unimportant and may change
 from line to line.

\section{Preliminaries}\label{S:2}

Throughout this paper, we denote the Brownian motion with varying dimension by $X$  and its state space   by $E$.
In this section, we will construct BMVD using Dirichlet form approach.
For the definition and basic properties of Dirichlet forms, including the relationship between Dirichlet form,
$L^2$-infinitesimal generator, resolvents and semigroups, we refer the reader to \cite{CF} and \cite{FOT}.

For a connected open set $D\subset \IR^d$,  $W^{1,2} (D)$ is
the collection of  $L^2(D; dx)$-integrable functions whose first order derivatives (in
the sense of distribution) exist and are also in $L^2(D; dx)$. Define
$$
\EE^0 (f, g) =\frac12 \int_{D} \nabla f(x) \cdot \nabla g(x) dx ,
 \qquad f, g\in W^{1,2}(D).
 $$
It is well known that when $D$ is smooth,
$(\EE^0, W^{1,2}(D)) $ is a regular Dirichlet form on $L^2 (\overline D; dx)$ and its associated Hunt process
is the (normally) reflected Brownian motion on $\overline D$.
Moreover, every function $f$ in $W^{1,2}(D)$ admits a quasi-continuous version on $\overline D$, 
which will still be denoted by $f$.
A quasi-continuous function is defined quasi-everywhere (q.e. in abbreviation) on $\overline D$.
When $d=1$, by Cauchy-Schwartz inequality, every function in $W^{1,2}(D)$ is $1/2$-H\"older on $\overline D$.
Denote by $W^{1,2}_0 (D)$ the $\EE^0_1$-closure of $C^\infty_c(D)$, where $\EE^0_1(f, f):=\EE^0(u, u) + \int_D u(x)^2 dx$.
It is known that for any open set $D\subset \IR^d$,
$(\EE^0, W^{1,2}_0(D))$ is a regular Dirichlet form on $L^2(D; dx)$ associated with the
absorbing Brownian motion in $D$.

For a subset $A\subset E$, we define $\sigma_A:=\inf\{t>0, X_t\in A\}$ and  $\tau_{_A}:=\inf\{t\geq 0: X_t \notin A\}$.
 Similar notations will be used for other stochastic processes.
We will use $B_\rho(x,r)$ (resp. $B_e(x,r)$) to denote the 
 open ball in $E$ under the path metric $\rho$ 
(resp. in $\IR_+$ or $\IR^2$ under the Euclidean metric) centered at $x\in E$ with radius $r>0$.

A measure $\mu$ on $E$ is said to have  volume doubling property if
  there  exists a constant $C>0$
 so that $m_p (B_\rho (x , 2r))\leq C  m_p(B_\rho (x , r))$
 for all $x\in E$ and every $r\in (0, 1]$. 

\begin{prop}\label{P:2.1} For any $p>0$, volume doubling property fails for  measure $m_p$.  
\end{prop}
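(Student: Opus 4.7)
The plan is to disprove volume doubling by exhibiting a sequence $(x_n, r_n)$ with $r_n \in (0,1]$ along which the quotient $m_p(B_\rho(x_n, 2r_n))/m_p(B_\rho(x_n, r_n))$ blows up, so that no uniform constant $C$ can exist. The natural place to look is at points of $D_0$ lying very close to the base $a^*$: at such a point $x$ with $|x|_\rho = r$, the ball $B_\rho(x,r)$ only sees the two-dimensional plane and its measure scales like $r^2$, while the doubled ball $B_\rho(x,2r)$ crosses into the one-dimensional pole \emph{and} absorbs a thin annulus hugging $\partial B_\varepsilon$, each contributing mass of order $r$. Since $r \gg r^2$ for small $r$, the quotient must diverge.

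Concretely, for each $n \geq 1$ I would choose $x_n \in D_0$ with $|x_n|_\rho = 1/n$ (equivalently $|x_n| = \varepsilon + 1/n$ in $\IR^2$) and set $r_n := 1/n$. From \eqref{e:1.1}, the condition $\rho(x_n,y) < r_n$ for $y \in D_0$ forces $|x_n - y| < r_n$, since the alternative branch would require $|y|_\rho < 0$; similarly no $z \in \IR_+$ satisfies $\rho(x_n,z) = |x_n|_\rho + z < r_n$, and $\rho(x_n, a^*) = r_n$. The Euclidean disk $B_e(x_n, r_n)$ is externally tangent to $B_\varepsilon$ (center-to-center distance equals the sum of the two radii $\varepsilon + r_n$), so $B_\rho(x_n, r_n)$ coincides with $B_e(x_n,r_n) \cap D_0$ up to a null set and
\[
m_p\bigl( B_\rho(x_n, r_n) \bigr) = \pi r_n^2 .
\]
For $B_\rho(x_n, 2 r_n)$ the geodesic branch is now genuinely available and produces the annulus $\{ y \in D_0 : |y|_\rho < r_n \} = \{ \varepsilon < |y| < \varepsilon + r_n \}$ of area $\pi r_n(r_n + 2\varepsilon) \geq 2\pi \varepsilon r_n$, while the pole segment $(0, r_n) \subset \IR_+$ contributes $m_p$-mass $p\,r_n$. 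Hence $m_p(B_\rho(x_n, 2 r_n)) \geq (p + 2\pi\varepsilon)\, r_n$, and the doubling ratio is at least $(p + 2\pi \varepsilon)/(\pi r_n) \to \infty$ as $n \to \infty$.

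There is no serious obstacle beyond setting the geometry up correctly. The only points that require a moment's care are the tangency claim — which ensures that the measure of $B_\rho(x_n, r_n)$ is exactly $\pi r_n^2$ rather than something smaller — and the verification that both the pole segment and the annulus genuinely lie inside $B_\rho(x_n, 2 r_n)$; both follow at once from the fact that $\rho$ is the geodesic metric glued through $a^*$, with $|x|_\rho = |x| - \varepsilon$ on $D_0$. The underlying geometric content is that on a space which is one-dimensional along the pole and two-dimensional on the plane, these growth orders disagree at the gluing point $a^*$, and this disagreement rules out any uniform doubling constant.
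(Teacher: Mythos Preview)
Your proposal is correct and follows essentially the same approach as the paper: pick $x_0\in D_0$ with $|x_0|_\rho=r$ small, observe that $m_p(B_\rho(x_0,r))=\pi r^2$ while $m_p(B_\rho(x_0,2r))$ picks up terms of order $r$ from the pole segment and the thin annulus around $\partial B_\eps$, so the doubling ratio blows up like $1/r$. Your version is in fact more carefully written than the paper's --- you verify the external tangency so that the small ball really has full Euclidean area $\pi r_n^2$, and you make the lower bound on the doubled ball explicit --- whereas the paper simply records the two volumes without these checks.
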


\begin{proof}
Note that  for small $r>0$ and $x_0 \in D_0$ with $|x_0|_\rho=r$,
$m_p(B_\rho (x_0, r))= \pi r^2$ while $m_p (B_\rho (x_0, 2r))
 = 2\eps r +r^2 +r$. Thus there does not exist a constant $C>0$
 so that $m_p (B_\rho (x , 2r))\leq C  m_p(B_\rho (x , r))$
 for all $x\in E$ and every $r\in (0, 1]$. 
\end{proof}

The following is an extended version of  Theorem \ref{T:1.2}.

\begin{thm}\label{existence-uniqueness}
For every $\eps >0$ and $p>0$,
BMVD $X$ on $E$ with parameter $(\eps, p)$ exists and is unique.
Its associated Dirichlet form $(\EE, \FF)$ on $L^2(E; m_p)$ is given by
\begin{eqnarray}
\FF &= &  \left\{f: f|_{D_0}\in W^{1,2}(D_0),  \, f|_{\IR_+}\in W^{1,2}(\IR_+),
\hbox{ and }
f (x) =f (0) \hbox{ q.e. on } \partial D_0 \right\},   \label{e:2.1}
\\
\EE(f,g) &=& \frac12 \int_{D_0}\nabla f(x) \cdot \nabla g(x) dx+\frac{p}2\int_{\IR_+}f'(x)g'(x)dx .  \label{e:2.2}
\end{eqnarray}
\end{thm}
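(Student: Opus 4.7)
The plan is to construct BMVD via the Dirichlet form $(\EE, \FF)$ in (2.1)--(2.2): first verify that this is a regular Dirichlet form on $L^2(E; m_p)$, then invoke the Fukushima--Oshima--Takeda theory to obtain an $m_p$-symmetric Hunt process $X$, and finally check that $X$ satisfies Definition \ref{D:1.1}. Uniqueness will follow by showing that the Dirichlet form of any BMVD must coincide with $(\EE,\FF)$, since a symmetric Hunt process is determined by its regular Dirichlet form.

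To see $(\EE, \FF)$ is well-defined and closed, note that the matching condition ``$f=f(0)$ q.e.\ on $\partial D_0$'' makes sense: in one dimension every function in $W^{1,2}(\IR_+)$ is $\tfrac{1}{2}$-H\"older on $\overline{\IR_+}$, so $f(0)$ is pointwise defined, while in two dimensions every function in $W^{1,2}(D_0)$ admits a quasi-continuous version whose trace on the smooth curve $\partial D_0 = \{|x|=\eps\}$ is well-defined q.e. Closedness follows from completeness of the two Sobolev spaces together with continuity of the 2D trace operator and of 1D evaluation at $0$. Markovianity is inherited from the two constituent Dirichlet-energy forms, since unit contractions preserve both Sobolev domains and the matching condition. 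For regularity, use the algebra $C^\infty_c(E)$ defined in the introduction: separate mollification on $D_0$ and $\IR_+$ followed by a cut-off-and-interpolation near $a^*$ that preserves the common boundary value yields uniform density in $C_c(E)$ and $\EE_1$-density in $\FF$.

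The resulting Hunt process $X$ satisfies Definition \ref{D:1.1}(i) by identifying part forms: the part of $(\EE,\FF)$ on $D_0$ is $\bigl(\tfrac{1}{2}\int\nabla f\cdot\nabla g\,dx,\,W^{1,2}_0(D_0)\bigr)$, the form of absorbing Brownian motion on $D_0$, and the part on $\IR_+\setminus\{a^*\}$ is $\bigl(\tfrac{p}{2}\int f'g'\,dx,\,W^{1,2}_0(0,\infty)\bigr)$ with reference measure $p\,dx$, whose generator, computed via integration by parts, is $\tfrac{1}{2}d^2/dx^2$, i.e., standard one-dimensional Brownian motion. Condition (ii) holds because $(\EE,\FF)$ is strongly local with no killing term. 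For uniqueness, let $Y$ be any BMVD with Dirichlet form $(\EE^Y,\FF^Y)$. By (i), the part forms of $(\EE^Y,\FF^Y)$ on $D_0$ and on $(0,\infty)$ coincide with the two absorbing forms above; by (ii) and $m_p$-symmetry, $(\EE^Y,\FF^Y)$ is a one-point extension of these two forms at $a^*$ with no killing measure and no extra jumping component. By the darning / one-point extension theory developed in \cite{Chen} and \cite[Sections 2--3]{CFR}, such an extension is unique and given precisely by $(\EE,\FF)$.

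The main obstacle will be making this uniqueness step rigorous. The subtlety is that although a single point in $\IR^2$ is polar for planar Brownian motion, the point $a^*$ arises from darning $\overline{B_\eps}$, which has strictly positive 2D capacity, so $a^*$ is regular for the part process from $D_0$ and admits a non-trivial excursion and Feller-measure theory. One must check that ``no killing at $a^*$'' together with $m_p$-symmetry and condition (i) rules out sticky behavior, an additional time change at $a^*$, or extra jump components, thereby singling out the canonical extension $(\EE,\FF)$; this is exactly what the Brownian-motion-with-darning framework of \cite{Chen} and \cite{CFR} is designed to deliver.
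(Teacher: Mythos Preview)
Your existence argument follows the same path as the paper's, though the paper streamlines regularity by observing that $\FF$ is simply the linear span of $\FF^0 := \{f: f|_{D_0}\in W^{1,2}_0(D_0),\ f|_{\IR_+}\in W^{1,2}_0(\IR_+)\}$ together with the single function $u_1(x)=\IE_x[e^{-\sigma_{a^*}}]$, which makes the core property immediate without separate mollification-and-interpolation arguments. For uniqueness, however, the paper takes a more elementary and self-contained route than your appeal to the abstract one-point-extension theory of \cite{Chen, CFR}. Given any BMVD with Dirichlet form $(\EE^*, \FF^*)$, the paper decomposes each $u\in\FF^*$ as $u = (u - H^1_{a^*}u) + H^1_{a^*}u$, where $H^1_{a^*}u(x) = u(a^*)\,\IE_x[e^{-\sigma_{a^*}}]$; the first piece lies in $\FF^0 \subset \FF$ by condition (i) of Definition~\ref{D:1.1}, and the second is a scalar multiple of $u_1 \in \FF$, giving $\FF^* \subset \FF$, while the reverse inclusion follows from $\FF^0 \cup \{u_1\} \subset \FF^*$. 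That $\EE^*=\EE$ then drops out of a direct energy-measure computation using strong locality: $\mu^c_{\langle u \rangle}(\{a^*\})=0$, so $\EE^*(u,u)$ is entirely determined by the part process on $E\setminus\{a^*\}$, which is fixed by (i). This bypasses any need to rule out sticky or time-changed behavior via external machinery; the worries you flag in your last paragraph simply do not arise, because once $\FF^*=\FF$ is in hand, the form $\EE^*$ is pinned down off $a^*$ and the single point contributes nothing to the energy. Your route would work but requires unpacking the cited darning theory, whereas the paper's argument fits in a few lines.
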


\begin{proof}
Let $\EE$ and $\FF$ be defined as above.
Let $u_1(x)=\IE^x\left[e^{-\sigma_{B_\eps}}\right]$ when $x\in D_0$ and $u_1(x)=\IE^x\left[e^{-\sigma_{D_0}}\right]$ when $x\in \IR_+$.
It is known that $u_1|_{D_0}\in W^{1,2}(D_0)$, $u_1|_{\IR_+} \in W^{1,2}(\IR_+)$,  $u_1(x)=1$ for $x\in \partial D_0$ and $u_1(0)=1$.
Hence $u_1\in \FF$.

 {\it Existence:}    Let
$$
\FF^0  =  \left\{f: f|_{\IR^2}\in W^{1,2}_0(D_0),  \, f|_{\IR_+}\in W^{1,2}_0(\IR_+) \right\} .
$$
Then $\FF$  is the   linear span of $\FF^0 \cup \{u_1\}$.
It is easy to check that $(\EE, \FF)$ is a strongly local regular Dirichlet form on $L^2(E;m)$.
So there is an $m_p$-symmetric diffusion process $X$ on $E$ associated with it.
Using the Dirichlet form characterization, the part process of $X$ killed upon hitting $a^*$ is
an absorbing Brownian motion in $D_0$ or $\IR_+$, depending on the starting point.
So $X$ is a BMVD on $E$. Moreover, $X$ is conservative; that is, it has infinite lifetime.

\medskip

{\it Uniqueness:} Conversely, if $X$ is a BMVD, it suffices to check from definition that its associated Dirichlet form  $(\EE^*, \FF^*)$
in $L^2(E; m_p)$  has to be  $(\EE, \FF)$ given in \eqref{e:2.1}-\eqref{e:2.2}. Indeed, since $a^*$ is non-polar for $X$, for all $u\in \FF^*$, 
$$
H_{a^*}^1u(x):=\IE^x\left[e^{-\sigma_{a^*}}u(X_{\sigma_{a^*}})\right]=u(a^*)\IE^x[e^{-\sigma_{a^*}}]\in \FF \cap \FF^*
$$
 and $u-H_{a^*}^1u(x)\in \FF^0$.
Thus $\FF^*\subset \FF$.  On the other hand, since the part process of $X$ killed upon hitting $a^*$ has the same distribution
as the absorbing Brownian motion on $D_0\cup (0, \infty)$, which has Dirichlet form $(\EE, \FF^0)$ on $L^2(E\setminus \{a^*\}; m_p)$,
we have $\FF^0\subset \FF^*$.  It follows that $\FF\subset \FF^*$ and therefore $\FF =\FF^*$.
Since $X$ is a diffusion that admits no killings inside $E$, its Dirichlet form
 $(\EE^*, \FF^*)$ is strongly local.  Let $\mu_{\<u\>}=\mu^c_{\<u\>}$
 denote the energy measure associated with $u\in \FF^*$; see \cite{CF, FOT}. Then  by the strong locality of $\mu^c_{\<u\>}$
 and the $m_p$-symmetry of $X$,  we have for  every bounded $u\in \FF^*=\FF$,
\begin{eqnarray*}
\EE^*(u,u) &= & \frac12 \left( \mu^c_{\langle u\rangle}(E\setminus  \{a^*\})+\mu_{\langle u\rangle }^c(a^*) \right)
=\frac12 \mu_{\langle u\rangle }^c(E\setminus  \{a^*\})=
\frac12 \mu_{\langle u\rangle}^c(D_0)+ \frac12 \mu_{\langle u\rangle}^c(\IR_+) \\
&=& \frac12 \int_{D_0}|u(x)|^2dx+ \frac{p}2\int_{\IR_+}|u'(x)|^2dx = \EE (u, u).
\end{eqnarray*}
 This proves $(\EE^*, \FF^*)=(\EE, \FF)$.
\end{proof}

Following \cite{CF}, for any $u\in \FF$, we define its flux $\mathcal{N}_p(u)(a^*)$   at $a^*$ by
$$
\mathcal{N}_p(u)(a^*)=\int_E \nabla u(x)\cdot \nabla  u_1(x)m_p(dx)+\int_E\Delta u(x)u_1(x)m_p(dx),
$$
which by the Green-Gauss formula equals
$$
\int_{\partial B_\eps}\frac{\partial u(x)}{\partial \vec{n}}\sigma(dx)-pu'(a^*).
$$
Here  $\vec{n}$ is the unit inward normal vector field of $B_\eps$ at $\partial B_\eps$.
The last formula justifies the name of ``flux" at $a^*$.

Let ${\cal L}$ be  the $L^2$-infinitesimal generator of $(\EE, \FF)$, or equivalently, the BMVD $X$,
with domain of definition $\mathcal{D}(\mathcal{L})$.
It can be viewed as the ``Laplacian" on the space $E$ of varying dimension.

\begin{thm}\label{T:2.3}
A function $u\in \FF$ is in $\mathcal{D}(\mathcal{L})$ if and only if the distributional Laplacian $\Delta u$ of $u$ exists as an $L^2$-integrable function on $E\setminus  \{a^*\}$ and $u$ has zero flux at $a^*$. 
Moreover, 
$\mathcal{L}u=\frac12 \Delta u$ on $E\setminus  \{a^*\}$
for $u\in \mathcal{D}(\mathcal{L})$. 
\end{thm}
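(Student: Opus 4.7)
The plan is to start from the variational characterization of the $L^2$-infinitesimal generator: $u \in \mathcal{D}(\mathcal{L})$ with $\mathcal{L}u = f$ if and only if $f \in L^2(E; m_p)$ and $\EE(u,v) = -\int_E f v\, dm_p$ for every $v \in \FF$. The entire theorem will then follow once I establish the Green-type identity
\begin{equation}\label{eq:green-plan}
\EE(u,v) \;=\; -\frac{1}{2}\int_{E\setminus \{a^*\}} \Delta u \cdot v \, dm_p \;+\; \frac{1}{2}\, v(a^*)\, \mathcal{N}_p(u)(a^*)
\end{equation}
for every $u \in \FF$ whose distributional Laplacian on each of $D_0$ and $\IR_+$ lies in $L^2$, and every $v \in \FF$. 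Both quantities on the right side are well-defined under these hypotheses, because the formula $\mathcal{N}_p(u)(a^*)=\int_E \nabla u\cdot \nabla u_1\, dm_p + \int_E \Delta u\cdot u_1\, dm_p$ only uses objects in $L^2$ ($u_1$ is bounded and decays, and $\nabla u_1\in L^2$ by $u_1\in\FF$).

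To prove \eqref{eq:green-plan}, the key device is to decompose an arbitrary test function $v \in \FF$ as
\[
v = w + v(a^*)\, u_1, \qquad w := v - v(a^*)\, u_1.
\]
Since $v$ equals $v(a^*)$ q.e.\ on $\partial D_0$ and at $0 \in \IR_+$, and $u_1$ equals $1$ at the same places, the remainder $w$ vanishes there; hence $w|_{D_0}\in W^{1,2}_0(D_0)$ and $w|_{\IR_+}\in W^{1,2}_0(\IR_+)$, so $w \in \FF^0$. For such $w$, the very definition of the distributional Laplacian on each piece yields
\[
\EE(u,w) \;=\; -\tfrac12 \int_{E\setminus\{a^*\}} \Delta u \cdot w\, dm_p
\]
without any boundary contribution. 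Adding the piece $v(a^*)\,\EE(u,u_1)$ back in and rewriting it through the defining integrals of $\mathcal{N}_p(u)(a^*)$ produces exactly \eqref{eq:green-plan}.

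With \eqref{eq:green-plan} available, both directions of the ``iff'' fall out cleanly. For necessity, assuming $u\in \mathcal{D}(\mathcal{L})$ with $\mathcal{L}u=f\in L^2$, I first test against $v\in C_c^\infty(E\setminus\{a^*\})$ (for which $v(a^*)=0$) to conclude $\Delta u = 2f$ distributionally on each of $D_0$ and $\IR_+$, hence $\Delta u \in L^2(E\setminus\{a^*\})$ and $\mathcal{L}u = \tfrac12\Delta u$ there; feeding this back into \eqref{eq:green-plan} against any $v \in \FF$ with $v(a^*)\neq 0$ (say $v=u_1$) then forces $\mathcal{N}_p(u)(a^*)=0$. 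For sufficiency, if $u\in\FF$ satisfies both conditions, \eqref{eq:green-plan} directly reads $\EE(u,v) = -\int_E \bigl(\tfrac12 \Delta u\bigr)v\, dm_p$ for every $v \in \FF$, putting $u \in \mathcal{D}(\mathcal{L})$ with $\mathcal{L}u = \tfrac12\Delta u$. The main subtlety to handle carefully is the ``integration by parts'' on the two-dimensional piece $D_0$ in the absence of pointwise boundary traces of $\nabla u$ on $\partial D_0$; the decomposition $v = w + v(a^*)u_1$ sidesteps this, since it routes all of the boundary behavior through the single fixed function $u_1$, whose contribution is packaged once and for all into $\mathcal{N}_p(u)(a^*)$.
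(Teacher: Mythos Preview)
Your proof is correct and follows essentially the same route as the paper: both arguments rest on the decomposition $\FF = \FF^0 + \IR\, u_1$ (established in the existence proof of Theorem~\ref{existence-uniqueness}), testing against $C_c^\infty(E\setminus\{a^*\})$ to identify $\mathcal{L}u = \tfrac12 \Delta u$, and testing against $u_1$ to extract the zero-flux condition. Your explicit Green-type identity \eqref{eq:green-plan} is a cleaner packaging of what the paper leaves implicit when it asserts that the full variational identity is equivalent to the pair \eqref{961240}--\eqref{961241}.
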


\begin{proof}
By the Dirichlet form characterization,  $u\in \mathcal{D}(\mathcal{L})$ if and only if $u\in \FF$ and there is some $f\in L^2(E; m_p)$ so that
\begin{equation*}
\EE(u, v)=-\int_E f(x)v(x)m_p(dx) \quad \textrm{for every }v\in \FF.
\end{equation*}
In this case,  $\mathcal{L}u := f$. The above is equivalent to
\begin{equation}\label{961240}
\frac12 \int_E\nabla u(x)\cdot \nabla v(x)m_p(dx) =-\int_E f(x)v(x)m_p(dx) \quad \textrm{for every }v\in C_c^\infty (E\setminus  \{a^*\})
\end{equation}
and
\begin{equation}\label{961241}
\frac12 \int_E\nabla u(x)\cdot \nabla u_1(x)m_p(dx)=-\int_E f(x)u_1(x)m_p(dx).
\end{equation}
Equation \eqref{961240} implies that $f=\frac12 \Delta u\in L^2(E; m_p)$,
and \eqref{961241} is equivalent to $\mathcal{N}_p(u)(a^*)=0$.
\end{proof}

\section{Nash  Inequality and Heat Kernel Upper Bound
Estimate} \label{S:3}

  Recall that $D_0 :=\IR^2\setminus  \overline{B_e(0, \eps)}$.
In the following,  if no measure is explicitly mentioned in the $L^p$-space, it is understood as being with respect to
the measure $m_p$; for instance,  $L^p(E)$ means $L^p(E; m_p)$.

\begin{lem}\label{NashIneq}There exists $C_1>0$ so that
\begin{displaymath}
\|f\|_{L^2(E)}^2\le
C_1\left(\EE(f,f)^{{1}/{2}}\|f\|_{L^1(E)}+\EE(f,f)^{\
{1}/{3}}\|f\|_{L^1(E)}^{{4}/{3}}\right)
\qquad \hbox{for every }   f\in \FF.
\end{displaymath}
\end{lem}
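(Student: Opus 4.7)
My plan is to exploit the mixed-dimensional structure of $E$ by decomposing $f$ into its restrictions to the $2$-dimensional piece $D_0$ and the $1$-dimensional piece $\IR_+$, and applying the classical Nash inequality on each piece. Since $m_p(\{a^*\})=0$, the $L^1$ and $L^2$ norms as well as $\EE(f,f)$ split additively across $D_0$ and $\IR_+$, so it suffices to establish separately the $2$D-type bound $\|f|_{D_0}\|_{L^2(D_0)}^2\lesssim\EE(f,f)^{1/2}\|f\|_{L^1(E)}$ and the $1$D-type bound $\|f|_{\IR_+}\|_{L^2(\IR_+,m_p)}^2\lesssim\EE(f,f)^{1/3}\|f\|_{L^1(E)}^{4/3}$, and then add them.

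For the $1$-dimensional part, I would set $u:=f|_{\IR_+}\in W^{1,2}(\IR_+)$ and extend it by even reflection to $\tilde u(x):=u(|x|)$ on $\IR$. Then $\tilde u\in W^{1,2}(\IR)\cap L^1(\IR)$ with $\|\tilde u\|_{L^1}$, $\|\tilde u\|_{L^2}^2$ and $\|\tilde u'\|_{L^2}^2$ all doubled compared to their $\IR_+$ counterparts, so the classical $1$D Nash inequality on $\IR$ gives $\|u\|_{L^2(\IR_+,dx)}^2\le C\|u'\|_{L^2(\IR_+)}^{2/3}\|u\|_{L^1(\IR_+,dx)}^{4/3}$, which after accounting for the constant $p$ from $m_p|_{\IR_+}=p\,dx$ yields $p\|u\|_{L^2(\IR_+,dx)}^2\le C_p\,\EE(f,f)^{1/3}\|f\|_{L^1(E)}^{4/3}$.

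For the $2$-dimensional part, the key step would be to extend $w:=f|_{D_0}$ to a function $\tilde w$ on $\IR^2$ via the conformal inversion $\Phi(x):=\eps^2 x/|x|^2$, which is an involution of $\IR^2\setminus\{0\}$ sending $B_\eps\setminus\{0\}$ bijectively onto $D_0$ and fixing $\partial B_\eps$ pointwise. Defining $\tilde w(x):=w(\Phi(x))$ on $B_\eps\setminus\{0\}$ and $\tilde w:=w$ on $D_0$ gives a function continuous across $\partial B_\eps$, hence $\tilde w\in W^{1,2}(\IR^2)$. A direct change-of-variables computation using the $2$D conformal invariance of the Dirichlet integral under $\Phi$ would yield the crucial identity $\|\nabla\tilde w\|_{L^2(\IR^2)}^2=2\|\nabla w\|_{L^2(D_0)}^2$ with no lower-order error; together with the bound $(\eps/|y|)^4\le 1$ on $D_0$, this also gives $\|\tilde w\|_{L^2(\IR^2)}^2\le 2\|w\|_{L^2(D_0)}^2$ and $\|\tilde w\|_{L^1(\IR^2)}\le 2\|w\|_{L^1(D_0)}$. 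Applying the classical $2$D Nash inequality $\|\tilde w\|_{L^2}^2\le C\|\nabla\tilde w\|_{L^2}\|\tilde w\|_{L^1}$ then produces $\|w\|_{L^2(D_0)}^2\le C\,\EE(f,f)^{1/2}\|f\|_{L^1(E)}$, and adding the two bounds completes the proof.

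The main point that requires care is finding an extension of $w$ from $D_0$ to $\IR^2$ whose Dirichlet integral is controlled \emph{solely} by $\|\nabla w\|_{L^2(D_0)}$, with no contribution from $\|w\|_{L^2(D_0)}$ or from the boundary value $f(a^*)$. The naive constant extension $\tilde w\equiv f(a^*)$ on $B_\eps$ preserves the gradient but introduces $|f(a^*)|$ into the $L^1$ and $L^2$ norms of $\tilde w$; the best $1$D trace bound available is then $|f(a^*)|\lesssim\EE^{1/3}\|f\|_1^{1/3}$, which combined with the $2$D Nash estimate produces a cross term of order $\EE^{5/6}\|f\|_1^{1/3}$ that cannot be absorbed into $\EE^{1/2}\|f\|_1+\EE^{1/3}\|f\|_1^{4/3}$ when $\EE$ is large relative to $\|f\|_1^2$. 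The inversion extension sidesteps this entirely because the $2$D conformal structure of $\Phi$ preserves the Dirichlet integral exactly.
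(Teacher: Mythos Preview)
Your proof is correct and follows the same overall strategy as the paper: split $\|f\|_{L^2(E)}^2$ into its $D_0$ and $\IR_+$ contributions, apply the two-dimensional and one-dimensional Nash inequalities respectively, and add. The paper's proof is simply those two lines plus ``combine,'' invoking the Nash inequality on the smooth exterior domain $D_0$ as classical without further justification.

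The genuine addition in your argument is the explicit extension via the conformal inversion $\Phi(x)=\eps^2 x/|x|^2$, which gives a self-contained proof of $\|w\|_{L^2(D_0)}^2\lesssim\|\nabla w\|_{L^2(D_0)}\|w\|_{L^1(D_0)}$ by reducing to Nash on $\IR^2$. This is a clean way to justify the step the paper takes for granted, and your observation that conformal invariance in dimension two makes the extended Dirichlet integral exactly $2\|\nabla w\|_{L^2(D_0)}^2$ with no lower-order contamination is the right reason this works. Your final paragraph about the constant extension producing an unmanageable $\EE^{5/6}\|f\|_1^{1/3}$ term is correct analysis, though in practice one can also cite the Nash inequality for Neumann Laplacians on exterior smooth domains directly (equivalently, the bound $p^N_{D_0}(t,x,y)\lesssim t^{-1}$), which is what the paper implicitly does. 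So your route is more elementary and self-contained, at the cost of being longer; the paper's route is a one-line citation.
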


\begin{proof}
Since $D_0\subset \IR^2$ and $\IR_+$  are smooth domains, we have by the classical Nash's inequality,
 \begin{equation*}
\|f\|_{L^2(D_0)}^2\le c\|\nabla f\|_{L^2(D_0)}\,  \|f\|_{L^1(D_0)}
\quad \hbox{for }  f\in W^{1,2}(D_0) \cap L^1(D_0),
\end{equation*}
and
 $$
 \|f\|^3_{L^2(\IR_+)}\le C\|f\|^2_{L^1(\IR_+)}\|f'\|_{L^2(\IR_+)}
 \quad \hbox{for }   f\in W^{1,2}(\IR_+) \cap L^1(\IR_+) .
 $$
 The desired inequality now follows by combining these two inequalities.
\end{proof}

  The Nash-type inequality in Lemma \ref{NashIneq} immediately implies that BMVD $X$ on $E$ has a symmetric density function
  $p(t, x, y)$ with respect to the measure $m_p$ and that the following
on-diagonal estimate by \cite[Corollary 2.12]{CKS} holds.

\begin{prop}\label{P:3.2} 
There exists $C_2>0$ such that
\begin{displaymath}
p(t, x, y)\le
C_2\displaystyle{\left(\frac{1}{t}+\frac{1}{t^{1/2}}\right)} \qquad \hbox{for all }
t>0 \hbox{ and  } x, y\in E.
\end{displaymath}
\end{prop}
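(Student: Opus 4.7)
The plan is to derive this on-diagonal bound directly from the Nash-type inequality of Lemma \ref{NashIneq} via the standard Carlen--Kusuoka--Stroock semigroup ODE argument, as packaged in \cite[Corollary 2.12]{CKS}. The two terms on the right-hand side of Lemma \ref{NashIneq} encode Nash inequalities of effective dimension $2$ and $1$ respectively, so I expect them to produce the $1/t$ piece and the $1/\sqrt t$ piece of the on-diagonal bound, one from each summand.

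Existence of a jointly measurable symmetric density $p(t,x,y)$ with respect to $m_p$ follows from the ultracontractivity implied by the Nash-type inequality. For the quantitative bound, I would fix $f\in L^1(E;m_p)\cap L^2(E;m_p)$ with $\|f\|_{L^1(E)}\le 1$, set $u(t):=\|P_t f\|_{L^2(E)}^2$, where $(P_t)$ is the semigroup associated with $(\EE,\FF)$, and combine $u'(t)=-2\EE(P_tf,P_tf)$ with the $L^1$-contractivity $\|P_t f\|_{L^1(E)}\le 1$. Applying Lemma \ref{NashIneq} to $P_tf$ then yields the differential inequality
\[
u(t)\;\le\; C_1\bigl((-u'(t)/2)^{1/2}+(-u'(t)/2)^{1/3}\bigr).
\]

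Next I would split into two regimes according to the size of $\EE_t:=\EE(P_tf,P_tf)$. When $\EE_t\ge 1$ the first term in Lemma \ref{NashIneq} dominates, giving $u^2\lesssim -u'$, which integrates to $u(t)\lesssim 1/t$; when $\EE_t\le 1$ the second term dominates, giving $u^3\lesssim -u'$, which integrates to $u(t)\lesssim 1/\sqrt t$. In either regime, $u(t)\le C(1/t+1/\sqrt t)$.

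The final step is the standard duality identity $\|P_t\|_{L^1\to L^\infty}\le \|P_{t/2}\|_{L^1\to L^2}^2=\sup_{\|f\|_{L^1}\le 1} u(t/2)$, which upgrades the $L^2$-decay estimate to a sup-norm bound on the integral kernel and yields $p(t,x,y)\le C_2(1/t+1/\sqrt t)$. I do not anticipate any genuine obstacle: the only thing requiring care is the crossover between the two Nash regimes, which merely affects the universal constant. In effect the proposition is an off-the-shelf application of \cite[Corollary 2.12]{CKS} to the mixed Nash-type inequality of Lemma \ref{NashIneq}.
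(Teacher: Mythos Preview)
Your proposal is correct and matches the paper's own approach exactly: the paper simply observes that Proposition~\ref{P:3.2} is an immediate consequence of the Nash-type inequality in Lemma~\ref{NashIneq} via \cite[Corollary~2.12]{CKS}, without spelling out the ODE argument. Your sketch of the Carlen--Kusuoka--Stroock machinery (the $u'=-2\EE$ differential inequality, the two Nash regimes, and the $L^1\to L^\infty$ duality step) is precisely what underlies that citation.
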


 Since $X$ moves like Brownian motion in Euclidean spaces
before hitting $a^*$, it is easy to verify that for each $t>0$,
 $(x, y) \mapsto p(t, x, y)$ is continuous in $(E\setminus \{a^* \}) \times (E\setminus \{a^* \})$.
 For each $t>0$ and fixed $y\in E\setminus \{a^*\}$,
 $$
  \int_E p(t/2, y, z)^2 m_p(dz) = p(t, y, y)<\infty.
 $$
 So by the Dirichlet form theory,
  $x\mapsto p(t, x, y)= \int_E p(t/2, x, z) p(t/2, z, y) m_p(dz)$ is $\EE$-quasi-continuous on $E$.
 Since $a^*$ is non-polar for $X$,  $x\mapsto p(t, x, y)$ is continuous at $a^*$, and hence
 is continuous on $E$. By the symmetry and Chapman-Kolmogorov equation again,
 we conclude that
 $$ x\mapsto p(t, x, a^*)= \int_{E } p(t/2, x, z) p(t/2, z, a^*) m_p(dz)
 $$
 is continuous on $E$. Consequently, $p(t, x, y)$ is well defined pointwisely on $(0, \infty) \times E\times E$
 so that for each fixed $t>0$ and $y\in E$, $p(t, x, y)$ is a continuous function in $x\in E$.
 We can use Davies method to get an off-diagonal upper bound estimate.

\begin{prop}\label{offdiagUBE}
There exist $C_3, C_4>0$ such that
\begin{displaymath}
p(t,x,y)\le C_3\left(\frac{1}{t}+
\frac{1}{t^{1/2}}\right)e^{-C_4 {\rho(x,y)^2}/{t}}
\qquad \hbox{for all }  t>0 \hbox{ and  } x, y\in E.
\end{displaymath}
\end{prop}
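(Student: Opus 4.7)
The strategy is the standard Davies perturbation method (cf.\ \cite[Section~3]{CKS}) applied to the mixed Nash-type inequality of Lemma \ref{NashIneq}. First I would identify the intrinsic pseudo-metric of the form. For bounded $\psi \in \FF$, the energy measure $\mu_{\langle \psi\rangle}$ has density $|\nabla \psi|^2$ with respect to Lebesgue measure on $D_0$ and density $p|\psi'|^2$ with respect to Lebesgue measure on $\IR_+$, so
\begin{equation*}
\frac{d\mu_{\langle \psi\rangle}}{dm_p}(z) = |\nabla \psi(z)|^2 \hbox{ for } z\in D_0, \qquad \frac{d\mu_{\langle \psi\rangle}}{dm_p}(z) = |\psi'(z)|^2 \hbox{ for } z\in \IR_+.
\end{equation*}
Consequently the intrinsic metric coincides with the geodesic distance $\rho$. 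For any $x_0 \in E$ and $\alpha, N > 0$, the truncated function $\psi(z) := \alpha\bigl(N \wedge \rho(z, x_0)\bigr)$ belongs to $\FF \cap L^\infty$ (it is continuous across $a^*$ because $\rho(\cdot, x_0)$ is) and satisfies $d\mu_{\langle \psi\rangle}/dm_p \le \alpha^2$ a.e.

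\medskip

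Next I would introduce the twisted semigroup $P_t^\psi f := e^{-\psi}P_t(e^\psi f)$, whose integral kernel is $p^\psi(t, x, y) = e^{-\psi(x)}p(t, x, y)e^{\psi(y)}$. Strong locality and the Leibniz rule for energy measures yield, for $g \in \FF \cap L^\infty$,
\begin{equation*}
\EE(e^\psi g, e^{-\psi}g) \;=\; \EE(g, g) - \int_E g^2\, d\mu_{\langle\psi\rangle} \;\ge\; \EE(g, g) - \alpha^2\|g\|_{L^2}^2,
\end{equation*}
so with $g_t := P_t^\psi f$,
\begin{equation*}
\frac{d}{dt}\|g_t\|_{L^2}^2 \;\le\; -2\EE(g_t, g_t) + 2\alpha^2\|g_t\|_{L^2}^2.
\end{equation*}
Feeding in Lemma \ref{NashIneq} and running the ODE argument of Carlen--Kusuoka--Stroock (with the usual duality trick $\|P_t^\psi\|_{L^1\to L^\infty} \le \|P_{t/2}^{-\psi}\|_{L^1\to L^2}\cdot\|P_{t/2}^\psi\|_{L^1\to L^2}$ to control the pointwise kernel) produces the on-diagonal bound for the twist,
\begin{equation*}
p^\psi(t, x, y) \;\le\; C\Bigl(\tfrac{1}{t} + \tfrac{1}{\sqrt{t}}\Bigr)e^{c\alpha^2 t},
\end{equation*}
uniform in bounded $\psi$ with energy density bounded by $\alpha^2$. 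For fixed $x, y \in E$, choosing $x_0 = x$ and $N > \rho(x, y)$ makes $\psi(x) - \psi(y) = -\alpha\rho(x, y)$, and therefore
\begin{equation*}
p(t, x, y) \;=\; e^{\psi(x) - \psi(y)}p^\psi(t, x, y) \;\le\; C\Bigl(\tfrac{1}{t} + \tfrac{1}{\sqrt{t}}\Bigr)e^{c\alpha^2 t - \alpha\rho(x, y)}.
\end{equation*}
Minimizing the exponent in $\alpha>0$ at $\alpha = \rho(x, y)/(2ct)$ yields the claim with $C_4 = 1/(4c)$.

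\medskip

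The main technical point is running the ODE argument with the mixed Nash inequality: the two terms $\EE(f,f)^{1/2}\|f\|_{L^1}$ and $\EE(f,f)^{1/3}\|f\|_{L^1}^{4/3}$ correspond to effective dimensions $1$ and $2$ and must be handled on separate scales (large and small $\|g_t\|_{L^2}/\|g_t\|_{L^1}$ ratios) before being recombined into the sum $1/t + 1/\sqrt{t}$. A secondary, routine point is verifying that the truncated distance function $\psi$ is admissible at $a^*$: this reduces to the continuity of $\rho(\cdot, x_0)$ at $a^*$ together with the characterization \eqref{e:2.1} of $\FF$, no zero-flux issue arising because $\psi$ is merely being tested against the energy functional, not placed in $\mathcal{D}(\LL)$.
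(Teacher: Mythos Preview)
Your approach is essentially the same as the paper's: both apply the Davies method via \cite[Corollary~3.28]{CKS}, verifying that a truncated Lipschitz function of the geodesic distance satisfies the carr\'e-du-champ bound $e^{\mp 2\psi}|\nabla e^{\pm\psi}|^2 \le \alpha^2$, and then optimizing in $\alpha$. The only cosmetic difference is that the paper takes $\psi(x)=\alpha|x|_\rho\wedge n$ (distance to $a^*$) and invokes the cited corollary directly, whereas you take $\psi=\alpha\bigl(\rho(\cdot,x_0)\wedge N\bigr)$ and spell out the twisted-semigroup ODE argument; your choice of centering at $x_0$ makes the final step $\psi(x)-\psi(y)=-\alpha\rho(x,y)$ more transparent.
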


\begin{proof}
Fix $x_0, y_0\in E$, $t_0>0$. Set a constant $\alpha:=\rho(y_0,x_0)/4t_0$ and
$\displaystyle{\psi (x):=\alpha|x|_\rho}$.
Then we define $\psi_n(x)=\psi(x)\wedge n$. Note that for $m_p$-a.e. $x\in E$,
\begin{displaymath}
e^{-2\psi_n (x)}|\nabla e^{\psi_n (x)} |^2=|\nabla
\psi_n (x) |^2=|\alpha|^2\, \mathbf{1}_{\{|x|_\rho \le
\frac{n}{|\alpha|}\}}(x) \leq \alpha^2.
\end{displaymath}
Similarly,   $ e^{2\psi_n (x)}|\nabla  e^{-\psi_n (x)}|^2 \leq \alpha^2$.
By \cite[Corollary 3.28]{CKS},
\begin{equation}\label{proveoffdiagUB}
p(t,x,y)\le c\left(\frac{1}{t}+
\frac{1}{t^{1/2}}\right)\exp\left(-|\psi(y)-\psi(x)|+2t|\alpha|^2\right)  .
\end{equation}
  Taking $t=t_0, x=x_0$
and $y=y_0$ in \eqref{proveoffdiagUB} completes the proof.
\end{proof}

As we have seen from Theorems \ref{T:smalltime} and \ref{largetime},
the upper bound estimate in Proposition \ref{offdiagUBE} is not sharp.

\section{Signed Radial Process and Small Time Estimate}\label{S:4}

In order to get the sharp two-sided heat kernel estimates, we consider the radial process of $X$. Namely, we project $X$ to $\IR$ by applying the following
mapping from $E$ to $\IR$:
\begin{equation}\label{848}
u(x)=\left\{
  \begin{array}{ll}
    -|x|, & \hbox{$x \in \IR_+$;} \\ \\
   |x|_\rho, & \hbox{$x\in D_0$.}
  \end{array}
\right.
\end{equation}
We call $Y_t:=u(X_t)$ the signed radial process of $X$.
Observe that $u\in \FF_{\loc}$, where $\FF_{\loc}$ denotes the local Dirichlet space of $(\EE,  \FF)$,
whose definition can be found, for instance,
in \cite{CF, FOT}. By Fukushima decomposition  \cite[Chapter 5]{FOT},
\begin{displaymath}
Y_t-Y_0=u(X_t)-u(X_0)=M^{[u]}_t+N^{[u]}_t, \quad \IP_x\textrm{-a.s. for q.e.
}x\in E,
\end{displaymath}
where $M^{[u]}_t$ is a local martingale additive functional of $X$, and
$N^{[u]}_t$ is a continuous additive functional of $X$ locally having  zero energy.
We can explicitly compute $M^{[u]}$ and $N^{[u]}$.
 For any $\psi\in C_c^\infty(E)$,
\begin{align*}
 \EE(u, \psi)&=\frac12 \int_{D_0}\nabla |x| \cdot \nabla \psi dx + \frac{p}2\int_{\IR_+}(-1)\psi' dx
\\&=- \frac12 \int_{D_0}\textrm{div}\left(\frac{x}{|x|}\right)\psi dx
- \frac12 \int_{\partial B_e(0,\eps)}\psi(0)\frac{\partial |x| }{\partial
\vec{n}}   \sigma (dx) + \frac{p\psi(0)}2
\\&=- \frac12 \int_{D_0}\frac{1}{|x|}\psi dx- \frac{2\pi\eps- p }2\, \psi(0) \\
&= -  \int_E \psi (x) \nu (dx),
\end{align*}
where   $\vec{n}$ is the outward pointing unit vector normal of the
surface $\partial B_e(0,\eps)$, $\sigma$ is the surface measure
on $\partial B_e (0, \eps)\subset \IR^2$, and
  \begin{displaymath}
\nu(dx):= \frac{1}{|2x|}\mathbf{1}_{ D_0 } (x) dx+\frac{2\pi \eps-p}2 \, \delta_{\{a^*\}}.
\end{displaymath}
 Recall that we identify $0\in \IR_+$ with $a^*$. It follows from \cite[Theorem 5.5.5]{FOT} that
\begin{equation}\label{Zero-energy-for-radial}
dN_t^{[u]}= \frac{1}{2( u(X_t)+\eps )}\mathbf{1}_{\{X_t\in
D_0\}}dt+(2\pi \eps-p)dL^0_t (X),
\end{equation}
where $L^0_t (X)$ is the positive continuous additive functional of $X$ having Revuz measure
$\frac12 \delta_{\{a^*\}}$. We call $L^0$ the local time of $X$ at $a^*$.
 Next we compute $\<M^{[u]}\>$, the predictable quadratic variation process of local martingale $ M^{[u]}) $.
   Let $u_n=(-n)\vee u \wedge n$, and it immediately follows $u_n\in \FF$.  Let $\FF_b$ denote the space of bounded functions in $\FF$.
By \cite[Theorem 5.5.2]{FOT}, the Revuz measure $\mu_{\<u_n\>}$ for $\< M^{[u_n]}\>$ can be calculated as follows.
For any $f\in \FF_b\cap C_c(E)$,
\begin{eqnarray*}
\int_E f (x) \mu_{\<u_n\>} (dx)  =
 2\EE(u_n  f, u_n)-\EE(u_n^2, f)
=   \int_E f(x) |\nabla u_n (x)|^2  m_p (dx) ,
\end{eqnarray*}
which shows that
\begin{displaymath}
\mu_{\langle  u_n \rangle}(dx)= |\nabla
u_n (x)|^2 m_p(dx)=  \mathbf{1}_{B_\rho(a^*, n)} m_p (dx).
\end{displaymath}
By the strong local property of $(\EE, \FF)$, we have $\mu_{\<u\>}= \mu_{\langle  u_n \rangle}$
on $B_\rho (a^*, n)$. It follows that $\mu_{\langle  u \rangle}(dx) = m_p (dx)$.
Thus by \cite[Proposition 4.1.9]{CF},  $\<M^{[u]}\>_t=t$ for $t\geq 0$
and so $B_t:= M^{[u ]}_t$ is a one-dimensional Brownian motion.
  Combining this with \eqref{Zero-energy-for-radial}, we  conclude
\begin{eqnarray}\label{710901}
dY_t  &=& dB_t+\frac{1}{2 (Y_t+\eps) }\mathbf{1}_{\{X_t\in
D_0\}}dt+(2\pi \eps-p)dL^0_t(X) \nonumber \\
&=& dB_t+\frac{1}{2 (Y_t+\eps) }\mathbf{1}_{\{Y_t >0\}} dt
  +(2\pi \eps-p)dL^0_t(X)  .
\end{eqnarray}

We next find the SDE for the semimartingale $Y$.
The semi-martingale local time of $Y$ is denoted as $L_t^0(Y)$,
that is,
 \begin{equation}\label{e:4.3}
L_t^0(Y):=\lim_{\delta \downarrow 0}\frac{1}{\delta}\int_0^t 1_{[0, \delta )}(Y_s)d\langle Y\rangle_s
= \lim_{\delta \downarrow 0}\frac{1}{\delta }\int_0^t 1_{[0, \delta )}(Y_s)ds,
\end{equation}
 where $\< Y\>_t=t$ is the quadratic variation process of the semimartingale $Y$.

\begin{prop}\label{localtime}
  $L_t^0(Y)= 4\pi\eps L_t^0(X) $.
\end{prop}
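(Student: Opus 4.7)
\medskip

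\noindent\textbf{Proof plan.} The strategy is to evaluate the defining limit of $L_t^0(Y)$ in \eqref{e:4.3} directly, translate it to an additive functional of $X$, and then identify the limit via the Revuz correspondence.

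\smallskip

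First I would rewrite the indicator in \eqref{e:4.3} in terms of $X$. Since $\langle Y\rangle_s = s$ (as noted in \eqref{e:4.3}) and $Y_s = u(X_s)$ with $u<0$ on the pole, $u\ge 0$ on $D_0\cup\{a^*\}$, for every $\delta>0$ sufficiently small we have $\{Y_s\in[0,\delta)\}=\{X_s\in C_\delta\}$, where
\[
C_\delta \,:=\, \{a^*\}\cup\bigl\{x\in D_0:\, \eps\le |x|<\eps+\delta\bigr\}.
\]
Therefore \eqref{e:4.3} becomes
\[
L_t^0(Y)\;=\;\lim_{\delta\downarrow 0}\frac{1}{\delta}A_t^{(\delta)},
\qquad\text{where } A_t^{(\delta)}:=\int_0^t \mathbf{1}_{C_\delta}(X_s)\,ds.
\]

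\smallskip

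Next I would identify the limit using Revuz measures. For each $\delta>0$, $A^{(\delta)}$ is a bounded PCAF of $X$ with smooth Revuz measure $\mu_\delta=\mathbf{1}_{C_\delta}\cdot m_p$. Since $m_p(\{a^*\})=0$, $\mu_\delta$ is concentrated on the planar annulus $\{\eps\le|x|<\eps+\delta\}$ with total mass $\pi((\eps+\delta)^2-\eps^2)=2\pi\eps\delta+\pi\delta^2$. For any bounded function $f$ continuous on $E$,
\[
\frac{1}{\delta}\int_E f\,d\mu_\delta \;=\;\frac{1}{\delta}\int_{\eps\le|x|<\eps+\delta}f(x)\,dx\;\longrightarrow\;2\pi\eps\, f(a^*),
\]
so $\delta^{-1}\mu_\delta$ converges weakly on $E$ to $2\pi\eps\,\delta_{\{a^*\}}$. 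Since Proposition~\ref{localtime} identifies the Revuz measure of $L^0(X)$ as $\tfrac12\delta_{\{a^*\}}$, the PCAF of $X$ associated with $2\pi\eps\,\delta_{\{a^*\}}$ is precisely $4\pi\eps\, L^0(X)$, by the one-to-one correspondence between smooth measures and PCAFs (\cite[Theorem~5.1.4]{FOT}).

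\smallskip

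The remaining step is to upgrade this weak convergence of Revuz measures to pathwise/in-probability convergence of the PCAFs themselves: $\delta^{-1}A_t^{(\delta)}\to 4\pi\eps\, L_t^0(X)$, locally uniformly in $t$, $\IP_x$-a.s.\ for q.e.\ $x\in E$. The standard tool is the $\alpha$-potential characterization: it suffices to show
\[
U^\alpha\!\bigl(\tfrac{1}{\delta}\mu_\delta\bigr)(x)\;=\;\frac{1}{\delta}\int_{C_\delta}\!\int_0^\infty e^{-\alpha s}p(s,x,y)\,ds\,m_p(dy)\;\xrightarrow[\delta\downarrow 0]{}\; 2\pi\eps\int_0^\infty e^{-\alpha s}p(s,x,a^*)\,ds
\]
uniformly on compacts of $E$. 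This follows from the joint continuity of $p(s,x,y)$ established after Proposition~\ref{P:3.2} together with the Gaussian upper bound of Proposition~\ref{offdiagUBE} (which provides the domination needed to exchange limit and integral). Once the potentials converge, the monotonicity of the PCAFs in $t$ and the uniqueness in the Revuz correspondence give convergence of $\delta^{-1}A_t^{(\delta)}$ to $4\pi\eps\, L_t^0(X)$ in the required sense, completing the proof.

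\smallskip

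\emph{Main obstacle.} The only nontrivial step is the last one: weak convergence of Revuz measures does not automatically yield pathwise convergence of additive functionals, so one has to exploit the regularity of the heat kernel near the singular point $a^*$ and the fact that $a^*$ is non-polar for $X$ (which is what allowed $L^0(X)$ to be defined in the first place). The rest is routine computation of areas and of the flux of $u$ at $a^*$.
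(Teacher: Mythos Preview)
Your approach is genuinely different from the paper's and, with one structural fix, can be made to work. The paper does \emph{not} go through the Revuz correspondence at all: it computes the Fukushima decomposition of $|Y_t|=v(X_t)=|X_t|_\rho$ directly (yielding the local-time coefficient $2\pi\eps+p$), then applies Tanaka's formula $d|Y_t|=\mathrm{sgn}(Y_t)\,dY_t+dL_t^0(Y)$ using the SDE \eqref{710901} for $Y$, and finally compares the bounded-variation parts of the two expressions for $d|Y_t|$. Since $dL_t^0(X)$ is carried by $\{Y_t=0\}$ where $\mathrm{sgn}=-1$, this immediately gives $(2\pi\eps+p)dL_t^0(X)=-(2\pi\eps-p)dL_t^0(X)+dL_t^0(Y)$, i.e.\ $L_t^0(Y)=4\pi\eps L_t^0(X)$. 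This semimartingale-decomposition argument avoids all measure-theoretic limit interchanges.

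Your route is sound in spirit but the last step is phrased backwards. Convergence of $\alpha$-potentials $U^\alpha(\delta^{-1}\mu_\delta)\to U^\alpha(2\pi\eps\,\delta_{a^*})$ does \emph{not} by itself yield a.s.\ convergence $\delta^{-1}A^{(\delta)}_t\to 4\pi\eps L_t^0(X)$; monotonicity in $t$ does not help because there is no monotonicity in $\delta$. What you should do instead is: (i) note that the a.s.\ limit $L_t^0(Y)=\lim_\delta \delta^{-1}A^{(\delta)}_t$ already exists by the occupation-times formula for the semimartingale $Y$, and is a PCAF of $X$ supported on $\{a^*\}$ (it increases only when $X_t=a^*$); (ii) compute its $\alpha$-potential $\IE_x\int_0^\infty e^{-\alpha t}\,dL_t^0(Y)$ by passing the limit inside the expectation---this is where your Gaussian bound from Proposition~\ref{offdiagUBE} and the continuity of $y\mapsto p(t,x,y)$ at $a^*$ are used, to dominate $\delta^{-1}\int_{C_\delta}p(t,x,y)\,m_p(dy)$ uniformly and obtain $2\pi\eps\, G^\alpha(x,a^*)$; (iii) since $L^0(X)$ has Revuz measure $\tfrac12\delta_{\{a^*\}}$ by its \emph{definition} (stated just after \eqref{Zero-energy-for-radial}, not in Proposition~\ref{localtime} itself---that reference is a slip), conclude $L_t^0(Y)=4\pi\eps L_t^0(X)$ by uniqueness in the Revuz correspondence. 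The genuine work is the uniform integrability needed for (ii); once you state it that way, the argument is complete, though less economical than the paper's two-line Tanaka computation.
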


\begin{proof}
 By computation analogous to that for $Y_t=u(X_t)$, 
one can derive using Fukushima's decomposition for  $v(X_t):=|X_t|_\rho$,
 that 
 \begin{displaymath}
dv(X_t)=d\widetilde{B_t}+\frac{1}{2( |X_t|_\rho +\eps )}\mathbf{1}_{\{X_t\in
D_0 \}}dt+(2\pi\eps+p)dL_t^0(X),
\end{displaymath}
where  $\widetilde{B}$ is a one-dimensional Brownian motion.
Observe that $v(X_t)=|Y_t|$. Thus we have
 \begin{displaymath}
d|Y_t|=d\widetilde{B_t}+\frac{1}{2( Y_t+\eps ) }\mathbf{1}_{\{Y_t>0\}}dt+(2\pi\eps+p)dL_t^0(X).
\end{displaymath}
On the other hand, by Tanaka's formula, we have
\begin{align*}
d|Y_t|&=\textrm{sgn}(Y_t)dY_t+dL_t^0(Y)\\
&=\textrm{sgn}(Y_t)dB_t+\textrm{sgn}(Y_t)\frac{1}{2( Y_t+\eps )}\mathbf{1}_{\{Y_t>0\}}dt
 +\textrm{sgn}(Y_t)(2\pi\eps-p)dL_t^0(X)+dL_t^0(Y)\\
&=\textrm{sgn}(Y_t)dB_t+\frac{1}{ 2(Y_t+\eps )}\mathbf{1}_{\{Y_t>0\}}dt+(2\pi\eps-p)\textrm{sgn}(Y_t)dL_t^0(X)+dL_t^0(Y),
\end{align*}
where $\textrm{sgn}(x):=1$ if $x>0$  
and $\textrm{sgn}(x):=- 1$ if $x\leq 0$.
Since the decomposition of a continuous semi-martingale as the
sum of a continuous local martingale and a continuous   process with finite
variation is unique,  one must have
\begin{equation}\label{e:4.4}
(2\pi\eps+p)L_t^0(X)=\textrm{sgn}(Y_t)(2\pi\eps-p)L_t^0(X)+L_t^0(Y).
\end{equation}
The local time $L_t^0(X)$ increases only when  $ Y_t=0 $.
Therefore
\begin{displaymath}
(2\pi\eps +p)L_t^0(X)=- (2\pi\eps -p)L_t^0(X)+L_t^0(Y),
\end{displaymath}
and so $ 4\pi\eps L_t^0(X)=L_t^0(Y)$.
\end{proof}

The semi-martingale local time in \eqref{e:4.3} is non-symmetric in the   sense that
 it only measures the occupation time of $Y_t$ in the one-sided interval $[0, \delta )$ instead of the symmetric interval $(-\delta, \delta)$.
One can always relate the non-symmetric semi-martingale local time $L^0(Y)$
to the symmetric semi-martingale local time $\widehat L^0(Y)$ defined by
\begin{equation*}
\widehat{L}_t^0(Y):=\lim_{\delta \downarrow 0}\frac{1}{2 \delta }\int_0^t 1_{(-\delta, \delta )}(Y_s)d\langle Y\rangle_s
= \lim_{\delta \downarrow 0}\frac{1}{2 \delta}\int_0^t 1_{(-\delta, \delta )}(Y_s) d s .
\end{equation*}

\begin{lem}\label{L:4.2}
 $\widehat{L}_t^0(Y)=   \frac{2\pi \eps + p}{4\pi \eps } L_t^0(Y) = (2\pi \eps + p) L^0_t (X)$.
\end{lem}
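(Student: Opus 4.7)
The plan is to split the symmetric local time into a right part and a left part, handle the right part using Proposition~\ref{localtime}, and compute the left part from the semimartingale structure of $Y$ given in \eqref{710901}.

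First I would decompose, using the occupation-time definitions,
\[
\widehat{L}_t^0(Y) \;=\; \tfrac{1}{2}\bigl(L_t^{0}(Y)+L_t^{0-}(Y)\bigr),
\]
where $L_t^{0-}(Y):=\lim_{\delta\downarrow 0}\tfrac{1}{\delta}\int_0^t \mathbf{1}_{(-\delta,0)}(Y_s)\,ds$ is the left semimartingale local time at $0$. The right local time $L_t^0(Y)$ is already known from Proposition~\ref{localtime} to equal $4\pi\eps L_t^0(X)$, so the entire task reduces to identifying $L_t^{0-}(Y)$.

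The key input is the classical identity (see, e.g., Revuz--Yor, Ch.~VI) that for a continuous semimartingale with canonical decomposition $Y=Y_0+M+A$,
\[
L_t^{0}(Y)-L_t^{0-}(Y)\;=\;2\int_0^t \mathbf{1}_{\{Y_s=0\}}\,dA_s.
\]
From \eqref{710901}, the bounded-variation part of $Y$ is
\[
A_t \;=\; \tfrac{1}{2}\int_0^t \tfrac{1}{Y_s+\eps}\mathbf{1}_{\{Y_s>0\}}\,ds \;+\;(2\pi\eps-p)\,L_t^0(X).
\]
On the set $\{Y_s=0\}$ the absolutely continuous piece vanishes because of the indicator $\mathbf{1}_{\{Y_s>0\}}$, while $L^0(X)$ is supported on $\{s:X_s=a^*\}=\{s:Y_s=0\}$. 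Hence
\[
L_t^{0}(Y)-L_t^{0-}(Y)\;=\;2(2\pi\eps-p)\,L_t^0(X).
\]

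Combining this with Proposition~\ref{localtime} gives
\[
L_t^{0-}(Y) \;=\; 4\pi\eps\,L_t^0(X)-2(2\pi\eps-p)L_t^0(X)\;=\;2p\,L_t^0(X),
\]
and therefore
\[
\widehat{L}_t^0(Y) \;=\; \tfrac{1}{2}\bigl(4\pi\eps+2p\bigr)L_t^0(X) \;=\;(2\pi\eps+p)L_t^0(X)
\;=\;\tfrac{2\pi\eps+p}{4\pi\eps}L_t^0(Y),
\]
establishing both asserted equalities.

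The only delicate point is justifying the Revuz--Yor jump-in-local-time identity in the present setting and verifying carefully that $L^0(X)$ is supported on $\{Y_s=0\}$ (which follows since $L^0(X)$ has Revuz measure $\tfrac12\delta_{a^*}$ and $Y_s=0\iff X_s=a^*$); the remainder is a direct algebraic assembly using Proposition~\ref{localtime}.
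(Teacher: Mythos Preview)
Your proof is correct and reaches the same intermediate conclusion $L_t^{0-}(Y)=2p\,L_t^0(X)$ as the paper, but by a different mechanism. The paper obtains $L_t^0(-Y)=2p\,L_t^0(X)$ by repeating the Tanaka/Fukushima computation that produced \eqref{e:4.4}, this time applied to the semimartingale $-Y$ (comparing the two decompositions of $|{-Y}_t|=|Y_t|=v(X_t)$), and then averages $L_t^0(Y)$ and $L_t^0(-Y)$ to get $\widehat L_t^0(Y)$. You instead invoke the general structural identity $L_t^{0}(Y)-L_t^{0-}(Y)=2\int_0^t\mathbf 1_{\{Y_s=0\}}\,dA_s$ for continuous semimartingales and read off the right-hand side directly from \eqref{710901}. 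Your route is slightly cleaner in that it avoids rerunning the Tanaka computation and isolates exactly why the asymmetry arises (the local-time term in $dA$ is supported on $\{Y=0\}$); the paper's route has the virtue of being entirely self-contained, reusing only the machinery already set up for Proposition~\ref{localtime}. Note that your $L_t^{0-}(Y)$ and the paper's $L_t^0(-Y)$ agree because $\{s:Y_s=0\}$ has zero Lebesgue measure.
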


\begin{proof} Viewing $|Y_t| = |-Y_t|$ and applying Tanaka's formula to the semimartingale $-Y$, we can derive in a way analogous to the computation leading to \eqref{e:4.4} that
$$
(2\pi\eps+p)L_t^0(X)=- \textrm{sgn}(-Y_t)(2\pi\eps-p) L_t^0(X)+L_t^0(-Y)
= ( 2\pi\eps-p)L_t^0(X)+L_t^0(-Y).
$$
Thus we get   $ 2pL_t^0(X)=L_t^0(-Y)$,  which yields 
$$
\widehat{L}_t^0(Y) =\frac{1}{2}\left(L_t^0(Y)+L_t^0(-Y)\right)
=\frac{1}{2}\left(4\pi\eps L_t^0(X)+2pL_t^0(X)\right)  = (2\pi\eps+p)L_t^0(X).
$$
\end{proof}

Lemma \ref{L:4.2}  together with \eqref{710901} gives the following SDE characterization for the signed radial
process $Y$, which tells us precisely how $X$ moves after hitting $a^*$.

\begin{prop}\label{P:4.3}
\begin{equation}\label{YsymmSDE}
dY_t=dB_t+\frac{1}{2(Y_t+\eps)}\mathbf{1}_{\{Y_t>0\}}dt+\frac{2\pi\eps-p}{2\pi\eps+p}d\widehat{L}_t^0(Y).
\end{equation}
\end{prop}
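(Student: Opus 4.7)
The plan is to derive \eqref{YsymmSDE} as an almost immediate consequence of combining the Fukushima decomposition \eqref{710901} already obtained for $Y_t = u(X_t)$ with the local-time identity in Lemma \ref{L:4.2}. All the substantive work has in fact already been done; Proposition \ref{P:4.3} is best viewed as a repackaging of \eqref{710901} in which the non-intrinsic additive functional $L^0(X)$ is replaced by the symmetric semimartingale local time $\widehat{L}^0(Y)$ of $Y$ at the origin, which is the natural driving object for an SDE on $\IR$.

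First I would recall \eqref{710901}, namely
\[
dY_t = dB_t + \frac{1}{2(Y_t+\eps)}\mathbf{1}_{\{Y_t>0\}}\,dt + (2\pi\eps-p)\,dL_t^0(X),
\]
where $B = M^{[u]}$ is the standard one-dimensional Brownian motion identified via $\mu_{\langle u\rangle}(dx) = m_p(dx)$, and $L^0(X)$ is the PCAF of $X$ with Revuz measure $\tfrac12\delta_{\{a^*\}}$. The first two terms on the right are already expressed in terms of $Y$ itself; only the $dL^0(X)$ term needs to be rewritten. By Lemma \ref{L:4.2}, $\widehat{L}_t^0(Y) = (2\pi\eps + p)\,L_t^0(X)$, so as measures on $\IR_+$,
\[
dL_t^0(X) = \frac{1}{2\pi\eps + p}\,d\widehat{L}_t^0(Y),
\]
and multiplying through by $(2\pi\eps - p)$ produces the skew-reflection coefficient $\tfrac{2\pi\eps - p}{2\pi\eps + p}$ appearing in \eqref{YsymmSDE}. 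Substituting back into \eqref{710901} then yields \eqref{YsymmSDE}.

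There is no real obstacle remaining at this stage, since the genuinely non-trivial ingredients — the Fukushima decomposition of $u(X)$ with its Green--Gauss computation of the drift and the singular part on $\partial B_e(0,\eps)$, the identification of $\langle M^{[u]}\rangle_t = t$ via the energy measure calculation, and the relation of $L^0(X)$ to the one-sided and two-sided semimartingale local times of $Y$ through Tanaka's formula applied to both $Y$ and $-Y$ together with uniqueness of semimartingale decomposition — have all been carried out above. The payoff is conceptual: Proposition \ref{P:4.3} exhibits $Y$ as a one-dimensional diffusion behaving like a standard Brownian motion on $(-\infty,0)$, like a Bessel-type diffusion with drift $\tfrac{1}{2(y+\eps)}$ on $(0,\infty)$, and undergoing skew reflection at $0$ with skewness parameter $\beta := \tfrac{2\pi\eps-p}{2\pi\eps+p}\in(-1,1)$. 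This representation is what will allow the subsequent small-time heat kernel estimates to be reduced to the well-understood theory of skew-Brownian-type diffusions on the line.
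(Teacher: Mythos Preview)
Your proposal is correct and matches the paper's approach exactly: the paper simply states that Lemma \ref{L:4.2} together with \eqref{710901} gives \eqref{YsymmSDE}, which is precisely the substitution you carry out.
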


\medskip

Let $\beta = \frac{2\pi\eps-p}{2\pi\eps+p}$.
SDE \eqref{YsymmSDE} says that $Y$ is a skew Brownian motion with drift on $\IR$ with skew parameter $\beta$.
It follows (see \cite{RY})  that
starting from $a^*$, the process $Y$ (respectively, $X$) has probability $(1-\beta)/2= \frac{p}{2\pi \eps +p}$ to enter $(-\infty, 0)$
(respectively, the pole) and probability $(1+\beta )/2=\frac{2\pi \eps}{2\pi \eps +p}$ to enter $(0, \infty)$ (respectively, the plane).

 \medskip

SDE \eqref{YsymmSDE} has a unique strong solution; see, e.g., \cite{BC}. So $Y$ is a strong Markov process on $\IR$.
The following is a key to get the two-sided sharp heat kernel estimate on $p(t, x, y)$ for BMVD $X$.

\begin{prop}\label{P:4.4}
The one-dimensional diffusion process $Y$ has a jointly continuous transition density function
$P^{(Y)}(t, x, y)$ with respect to the Lebesgue measure on $\IR$. Moreover,  for every $T\geq 1$,
there exist constants  $C_i>0$, $1\le i \le 4$, such that the following estimate holds:
\begin{equation}\label{e:4.5}
\frac{C_1}{\sqrt{t}} e^{- {C_2|x-y|^2}/{t}} \le p^{(Y)}(t,x,y)\le\frac{C_3}{\sqrt{t}} e^{- {C_4|x-y|^2/}/{t}},
\quad (t,x,y)\in (0,T]\times \IR\times \IR.
\end{equation}
\end{prop}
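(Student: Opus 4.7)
Plan. The process $Y$ inherits $\mu$-symmetry from $X$, where $\mu$ is the push-forward of $m_p$ under $u$ in \eqref{848}; a direct computation gives
$$
\mu(dy) = p\,\mathbf{1}_{\{y<0\}}\,dy + 2\pi(y+\eps)\,\mathbf{1}_{\{y>0\}}\,dy.
$$
The associated Dirichlet form on $L^2(\IR,\mu)$ is $\EE^Y(f,g)=\tfrac{1}{2}\int_\IR f'(y)g'(y)\,d\mu$, which is strongly local and regular. On $(0,\infty)$ the shifted process $R:=Y+\eps$ satisfies the SDE of a two-dimensional Bessel process; on $(-\infty,0)$, $Y$ is a standard Brownian motion; and at $0$ it is skew with parameter $\beta=(2\pi\eps-p)/(2\pi\eps+p)$ as recorded in \eqref{YsymmSDE}. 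The density $\psi:= d\mu/dy$ is bounded above and below by positive constants on every compact interval, so $\EE^Y$ is locally uniformly elliptic.

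First, I would establish existence of a jointly continuous transition density. By the local uniform ellipticity of $\EE^Y$, standard one-dimensional Nash-type estimates (or De~Giorgi--Nash--Moser theory applied to the generator $\tfrac{1}{2\psi}\tfrac{d}{dy}(\psi\tfrac{d}{dy})$) yield a jointly continuous transition density $p^Y_\mu(t,x,y)$ of $Y$ with respect to $\mu$; setting $p^{(Y)}(t,x,y):=p^Y_\mu(t,x,y)\,\psi(y)$ gives the Lebesgue density, which inherits joint continuity on $(0,\infty)\times\IR\times\IR$.

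For the two-sided Gaussian estimate \eqref{e:4.5}, I would compare $Y$ with skew Brownian motion $\wt Y$ of skew parameter $\beta$, whose transition density $\wt p(t,x,y)$ is given by an explicit Walsh-type formula and satisfies matching Gaussian bounds globally on $(0,\infty)\times\IR\times\IR$. Since the drift $b(y):=\tfrac{1}{2(y+\eps)}\mathbf{1}_{\{y>0\}}$ in \eqref{YsymmSDE} is bounded by $1/(2\eps)$, Novikov's condition holds trivially, and Girsanov's theorem expresses the law of $Y$ in terms of that of $\wt Y$ with Radon--Nikodym density $M_t:=\exp(\int_0^t b(\wt Y_s)\,d\wt Y_s-\tfrac{1}{2}\int_0^t b(\wt Y_s)^2\,ds)$. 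The endpoint decomposition
$$
p^{(Y)}(t,x,y)=\wt p(t,x,y)\,\IE^{\mathrm{br}}_{x,y;t}[M_t],
$$
with the expectation on the skew Brownian bridge from $x$ to $y$ in time $t$, then reduces the problem to bounding the bridge expectation. Using the antiderivative $B(y):=\int_0^y b(z)\,dz=\tfrac{1}{2}\log(1+y/\eps)\mathbf{1}_{\{y>0\}}$ and It\^o--Tanaka, the stochastic integral in $M_t$ rewrites as $B(\wt Y_t)-B(\wt Y_0)$ plus a bounded Lebesgue integral (since $b'$ is bounded by $1/(2\eps^2)$ on $y>0$) plus a local-time term at $0$. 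Thus $\IE^{\mathrm{br}}_{x,y;t}[M_t]$ is sandwiched between constants times $(1+y/\eps)^{\pm 1/2}(1+x/\eps)^{\mp 1/2}$, and the Gaussian bounds on $\wt p$ transfer to $p^{(Y)}$ after absorbing the polynomial correction into the Gaussian exponent by choosing $C_4<\tfrac{1}{2}<C_2$.

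The main obstacle is the unboundedness of $B$ at $+\infty$, which rules out a naive pointwise bound on the Girsanov density; the absorption argument must exploit the structure of the skew Brownian bridge, which on each side of $0$ reduces to an ordinary Brownian bridge amenable to explicit Gaussian computations. An alternative route avoiding Girsanov is to apply Davies' method directly to $\EE^Y$ -- whose intrinsic metric equals Euclidean distance on $\IR$, since the energy density and the speed density $\psi$ coincide up to the factor $\tfrac12$ -- for the upper bound, together with a Fabes--Stroock chaining argument for the lower bound.
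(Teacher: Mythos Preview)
Your approach shares the paper's starting point: view $Y$ as the bounded-drift perturbation of the skew Brownian motion $Z$ with parameter $\beta=(2\pi\eps-p)/(2\pi\eps+p)$. The paper's execution is much shorter, however: it simply records that the explicit skew-BM kernel $p_0$ satisfies both the two-sided Aronson bounds \emph{and} the gradient estimate $|\nabla_x p_0(t,x,y)|\le c_1t^{-1}e^{-c_2|x-y|^2/t}$, and then invokes the perturbation argument of Zhang \cite[\S4, Theorem~A]{Z1} (a Duhamel/parametrix scheme for bounded lower-order terms) to transfer the Gaussian bounds from $Z$ to $Y$. No bridge computation is needed.

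Your bridge route has a real gap, and you have slightly misdiagnosed it. After It\^o--Tanaka the Girsanov density under the bridge reads
\[
M_t=\exp\!\Big(B(y)-B(x)+A_t-\tfrac{1}{4\eps}L^0_t(\wt Y)\Big),
\]
with $A_t$ bounded on $[0,T]$. Since $b$ is bounded by $1/(2\eps)$, one has $|B(y)-B(x)|\le |y-x|/(2\eps)$, so the unboundedness of $B$ at $+\infty$ is \emph{not} the obstacle: this term absorbs into the Gaussian exactly as you say. The genuine difficulty is the local-time term. For the upper bound it has the favourable sign, giving $M_t\le e^{|y-x|/(2\eps)+C}$ pathwise and you are done. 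For the \emph{lower} bound, however, you need $\IE^{\mathrm{br}}_{x,y;t}\big[e^{-cL^0_t(\wt Y)}\big]$ to be bounded below uniformly in $x,y\in\IR$ and $t\in(0,T]$; your ``thus sandwiched'' sentence skips over this. It can in fact be proved (e.g.\ by identifying the left side with $q(t,x,y)/\wt p(t,x,y)$ where $q$ is the kernel of $\wt Y$ with elastic killing of rate $c$ at $0$, and checking that this ratio is minimised near $x=y=0$, where $L^0_t\asymp\sqrt{t}$), but it is an additional nontrivial step that the paper's route via the gradient estimate avoids entirely. Your alternative via Davies' method plus chaining also needs care: the symmetrizing measure $\mu$ has density $2\pi(y+\eps)$ on $(0,\infty)$, so the on-diagonal estimate for $p^Y_\mu$ is position-dependent and one must verify volume doubling and a scale-invariant Poincar\'e inequality for this weight before the standard machinery applies.
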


\proof  Let  $\beta:=\frac{2\pi\eps-p}{2\pi\eps+p}$ and $Z$ be the skew Brownian motion
$$ dZ_t = dB_t + \beta \wh L^0_t (Z) ,
$$
where $\wh L^0_t (Z)$ is the symmetric local time of $Z$ at $0$.
The diffusion process $Y$ can be obtained from $Z$ through a drift perturbation
(i.e. Girsanov transform).
The transition density function $p_0(t, x, y)$ of $Z$ is explicitly known and
enjoys the two-sided Aronson-type Gaussian estimates \eqref{e:4.5};
see, e.g., \cite{RY}. One can further verify  that
$$ | \nabla_x p_0(t, x, y)| \leq c_1  t^{-1} \exp (-c_2 |x-y|^2/t),
$$
from which one can deduce \eqref{e:4.5} by using the same argument as that for Theorem A in Zhang \cite[\S 4]{Z1}.
\qed

\medskip

Proposition \ref{P:4.4} immediately gives the two-sided estimates on the transition function
$p(t, x, y)$ of $X$ when $x, y\in \IR_+$ since $X_t=-Y_t$ when $X_t\in \IR_+$.

\medskip

\begin{thm} \label{T:4.5}
For every $T\geq 1$,
there exist $C_i>0$, $5\le i \le 8$, such that the following estimate holds:
\begin{equation*}
\frac{C_5}{\sqrt{t}} e^{-{C_6|x-y|^2}/{t}} \le p (t,x,y)\le\frac{C_7}{\sqrt{t}} e^{-{C_8|x-y|^2}/{t}}
\qquad  \hbox{for } t\in (0, T] \hbox{ and } x, y \in \IR_+ .
\end{equation*}
\end{thm}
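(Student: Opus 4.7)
The plan is to deduce Theorem \ref{T:4.5} directly from Proposition \ref{P:4.4} by exploiting the elementary fact that on the pole $\IR_+$ the process $X$ is just $-Y$ in disguise. Recall from \eqref{848} that $u(x)=-|x|=-x$ for $x\in \IR_+$, so $u$ maps $\IR_+$ bijectively onto $(-\infty,0]$ with Jacobian $1$. Since $Y_t=u(X_t)$, the pole-valued trajectories of $X$ are in one-to-one correspondence with the non-positive trajectories of $Y$.

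First, I would translate this into a precise pointwise identity between the transition densities. Fix $x\in \IR_+$ and a Borel set $A\subset \IR_+$. Then
\begin{equation*}
\int_A p(t,x,y)\, m_p(dy)=\IP_x(X_t\in A)=\IP_x(Y_t\in -A)=\int_{-A} p^{(Y)}(t,-x,z)\,dz.
\end{equation*}
The change of variable $z=-y$ combined with $m_p(dy)=p\,dy$ on $\IR_+$ gives
\begin{equation*}
p\int_A p(t,x,y)\,dy=\int_A p^{(Y)}(t,-x,-y)\,dy,
\end{equation*}
and since this holds for every Borel $A\subset \IR_+$, by the continuity of both transition densities one obtains
\begin{equation*}
p(t,x,y)=\tfrac{1}{p}\,p^{(Y)}(t,-x,-y)\qquad \text{for all } x,y\in \IR_+.
\end{equation*}

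Second, for $x,y\in \IR_+$ one has $|(-x)-(-y)|=|x-y|$, which by \eqref{e:1.1} also coincides with $\rho(x,y)$ on the pole. Substituting into the two-sided Aronson bounds \eqref{e:4.5} of Proposition \ref{P:4.4} and absorbing the factor $1/p$ into the constants yields exactly the claim of Theorem \ref{T:4.5}.

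There is no real obstacle here beyond bookkeeping: the non-trivial analytic content has already been absorbed into Proposition \ref{P:4.4}, where the skew Brownian motion machinery and the Girsanov-type argument produce the Gaussian bounds on $p^{(Y)}$. Once one is careful with the measure $m_p$ on $\IR_+$ (which carries the extra factor $p$) and notes that $u$ is an isometry between $(\IR_+,|\cdot|)$ and $((-\infty,0],|\cdot|)$, the result drops out immediately.
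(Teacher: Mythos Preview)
Your proposal is correct and follows essentially the same approach as the paper: the paper also deduces Theorem \ref{T:4.5} immediately from Proposition \ref{P:4.4} via the observation that $X_t=-Y_t$ when $X_t\in\IR_+$. Your write-up is in fact more careful than the paper's one-line remark, making explicit the density identity $p(t,x,y)=\tfrac{1}{p}\,p^{(Y)}(t,-x,-y)$ that accounts for the weight $p$ in $m_p|_{\IR_+}$.
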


\medskip

Let $A$ be any rotation of the plane around the pole.
Using the fact that starting from $a^*$, $AX_t$ has the same distribution as 
$X_t$,
we can derive estimates for $p(t, x, y)$ for other $x, y \in E$.
The next result gives the two-sided estimates on
$p(t,x,y)$ when $x\in \IR$ and $y\in D_0$.

\begin{thm}\label{T:4.6} For every $T\geq 1$,
there exist constants $C_i>0$, $9\le i\le 12$, such that for all  $x\in \IR_+$, $y\in D_0$ and $t\in [0,T]$,
\begin{equation*}
\frac{C_9}{\sqrt{t}}e^{-{C_{10}\rho(x,y)^2}/{t}} \le p(t,x,y)\le
\frac{C_{11}}{\sqrt{t}}e^{-{C_{12}\rho(x,y)^2}/{t}}.
\end{equation*}
\end{thm}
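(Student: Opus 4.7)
The plan is to exploit the rotational symmetry of $X$ about the pole, combined with the one-dimensional Gaussian bound in Proposition \ref{P:4.4}, to reduce the theorem to a short explicit calculation.

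First, I would observe that every rotation $A$ of $\IR^2$ about the origin extends to a homeomorphism of $E$ that fixes $\IR_+\cup\{a^*\}$ pointwise, and that such $A$ leaves the Dirichlet form \eqref{e:2.1}--\eqref{e:2.2} invariant. By uniqueness of BMVD (Theorem \ref{existence-uniqueness}), the law of $A\circ X^x$ equals the law of $X^{Ax}$. For $x\in\IR_+$ one has $Ax=x$, so $X^x$ and $A\circ X^x$ have the same law, and hence $p(t,x,y)$ depends on $y\in D_0$ only through $|y|_\rho$. Parametrizing $D_0$ in polar coordinates $(r+\eps,\theta)$ with $r=|\cdot|_\rho$ (area element $(r+\eps)\,dr\,d\theta$) and using that $|X_t|_\rho$ coincides with $Y_t$ whenever $Y_t>0$, one obtains, by equating densities on any Borel $I\subset(0,\infty)$,
\begin{equation}\label{planeq:key}
p(t,x,y) \;=\; \frac{p^{(Y)}(t,-|x|,|y|_\rho)}{2\pi(|y|_\rho+\eps)},\qquad x\in\IR_+,\ y\in D_0.
\end{equation}

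Combining \eqref{planeq:key} with Proposition \ref{P:4.4} and the identity $\rho(x,y)=|x|+|y|_\rho$ yields
$$
p(t,x,y) \;\asymp\; \frac{1}{(|y|_\rho+\eps)\sqrt{t}}\, e^{-c\rho(x,y)^2/t},\qquad t\in(0,T],
$$
with constants depending on $T$. The upper bound in Theorem \ref{T:4.6} follows at once from $(|y|_\rho+\eps)^{-1}\le\eps^{-1}$. For the lower bound I would pick $C_{10}$ strictly larger than the Gaussian constant $c$ above; since $\rho(x,y)\ge|y|_\rho$ and $t\le T$, the continuous positive function $r\mapsto (r+\eps)^{-1}\,e^{(C_{10}-c)r^2/T}$ has a strictly positive infimum over $r\in[0,\infty)$ (it equals $1/\eps$ at $r=0$ and blows up at infinity), which absorbs the polynomial factor into the Gaussian with the slightly larger exponent $C_{10}$.

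The main obstacle is the derivation of \eqref{planeq:key}. Beyond the polar change of variables, it requires promoting the rotational symmetry from the Dirichlet-form/law level to a pointwise symmetry of $p(t,x,y)$ for every $y\in D_0$, which relies on the joint continuity of $p(t,\cdot,\cdot)$ on $(E\setminus\{a^*\})\times(E\setminus\{a^*\})$ established in Section \ref{S:3}, together with the identification of the distribution of $|X_t|_\rho$ under $\IP^x$ with $p^{(Y)}(t,-|x|,\cdot)|_{(0,\infty)}$. Once \eqref{planeq:key} is in place, the remainder is a routine absorption argument.
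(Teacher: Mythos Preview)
Your proposal is correct and follows essentially the same route as the paper: derive the key identity $p(t,x,y)=\frac{1}{2\pi(|y|_\rho+\eps)}\,p^{(Y)}(t,-|x|,|y|_\rho)$ via rotational symmetry and a polar change of variables (the paper's equation \eqref{stuff10751}), then feed in Proposition~\ref{P:4.4}. The only minor difference is in the lower-bound absorption of the factor $(|y|_\rho+\eps)^{-1}$: the paper splits into the cases $|y|_\rho<1$ and $|y|_\rho\ge 1$, whereas your unified argument via the positive infimum of $r\mapsto (r+\eps)^{-1}e^{(C_{10}-c)r^2/T}$ handles both at once and is slightly cleaner.
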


\begin{proof}
We first note that in this case by the symmetry of $p(t, x, y)$,
\begin{equation*}
p (t,x,y) = p(t, y, x) =\int_{0}^t \IP_y(\sigma_{a^*}\in ds)p(t-s, a^*,x).
\end{equation*}
By the rotational invariance of two-dimensional Brownian motion, $\IP_y(\sigma_{a^*}\in ds)$ only depends
on $|y|_\rho$, therefore so does $y\mapsto p^{(X)}(t,x,y)$.
For $x\in \IR_+$ and $y\in D_0$, set $\wt {p}(t,x,r):=p(t,x,y)$ for $r=|y|_\rho$.  For all $a>b>0$ and  $x\in \IR_+$,
\begin{align*}
\int_a^b p^{(Y)}(t,-|x|,y)dy&=  \IP_{-|x|} ( a\leq Y_t\leq b) = \IP_x ( X_t \in D_0 \hbox{ with }  a\leq |X_t |_\rho\leq b) \\
&=  \int_{y\in D_0:  a \leq |y|_\rho \leq b}p (t,x,y)m_p(dy) =\int_{y\in D_0:  a+\eps \leq |y| \leq b+\eps}p (t,x,y)m_p(dy)
\\
&=\int_{a}^b 2\pi (r+\eps)\wt {p}(t,x,r)dr.
\end{align*}
This  implies when $x\in \IR_+$, $y\in D_0$,
\begin{equation}\label{stuff10751}
p^{(Y)}(t,-|x|,|y|_\rho) =2\pi ( |y|_\rho +\eps) \wt {p} (t,x, |y|_\rho ) =
2\pi ( |y|_\rho +\eps)p (t,x,y) .
\end{equation}
We thus have by  Proposition \ref{P:4.4} that
\begin{equation}
\frac{c_1}{\sqrt{t}}e^{-c_2\rho(x,y)^2/t} \le p(t,x,y)\le
\frac{c_3}{\sqrt{t}}e^{-c_4\rho(x,y)^2/t}
\quad \text{ for } x\in \IR_+ \hbox{ and }
 y\in D_0 \text{ with }|y|_\rho<1.
\end{equation}
When $|y|_\rho>1$, we first have
\begin{equation*}
p(t,x,y)=\frac{1}{2\pi (|y|_\rho+\eps)}p^{(Y)}(t,-|x|, |y|_\rho)\lesssim \frac{1}{(|y|_\rho+\eps)\sqrt{t}}e^{-{c_3\rho(x,y)^2}/{t}}
 \leq \frac{1} {\sqrt{t}}e^{-{c_3\rho(x,y)^2}/{t}},
\end{equation*}
while since $\rho(x,y)\ge |y|_\rho>1$,
\begin{align}
p(t,x,y)&=\frac{1}{2\pi (|y|_\rho+\eps)}p^{(Y)}(t,-|x|, |y|_\rho)\gtrsim
\frac{1}{(|y|_\rho+\eps)\sqrt{t}}e^{-{c_4\rho(x,y)^2}/{t}}\nonumber
\\
&\gtrsim  \frac{1}{\sqrt{t }} \frac{\sqrt{t}}{\sqrt{T} \rho (x, y)} 
  e^{-{c_4\rho(x,y)^2}/{t}}
\gtrsim  \frac{1}{\sqrt{t}} e^{-{(c_4+1)\rho(x,y)^2}/{t}}.\label{e:4.8} 
\end{align}
This completes the proof.
\end{proof}

Theorems \ref{T:4.5} and \ref{T:4.6} establish Theorem \ref{T:smalltime}(i).
We next consider part (ii) of Theorem \ref{T:smalltime}
when  both $x$ and $y$ are in
$D_0$.

\begin{thm}\label{T:4.7}
For every $T\geq 1$,
there exist constants $C_i>0$, $13\le i\le 22$, such that for all
$t\in [0, T]$ and $x, y\in D_0$,  the following estimates hold. 
\\
When $\max\{|x|_\rho, |y|_\rho\} \leq 1$,
\begin{align}\label{stuffstuff}
\nonumber &\frac{C_{13}}{\sqrt{t}}e^{-{C_{14}\rho(x,y)^2}/{t}}+\frac{C_{13}}{t}\left(1\wedge
\frac{|x|_\rho}{\sqrt{t}}\right)\left(1\wedge
\frac{|y|_\rho}{\sqrt{t}}\right)e^{-{C_{15}|x-y|^2}/{t}} \le p(t,x,y)
\\
&\le \frac{C_{16}}{\sqrt{t}}e^{-{C_{17}\rho(x,y)^2}/{t}}+\frac{C_{16}}{t}\left(1\wedge
\frac{|x|_\rho}{\sqrt{t}}\right)\left(1\wedge
\frac{|y|_\rho}{\sqrt{t}}\right)e^{-{C_{18}|x-y|^2}/{t}};
\end{align}
and when $\max\{|x|_\rho, |y|_\rho\} >1$,
\begin{equation}\label{stuffstuff*}
\frac{C_{19}}{t}e^{-{C_{20}\rho(x,y)^2}/{t}} \le p(t,x,y) \le \frac{C_{21}}{t}e^{-{C_{22}\rho(x,y)^2}/{t}} .
\end{equation}
Here $|\cdot|$ and $|\cdot|_\rho$ denote the Euclidean metric and
the geodesic metric in $D_0$, respectively.
\end{thm}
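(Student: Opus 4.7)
The plan is to split each path of BMVD from $x \in D_0$ to $y \in D_0$ according to whether it has visited $a^*$ by time $t$, writing
\[ p(t,x,y) = p_{D_0}(t,x,y) + \overline p_{D_0}(t,x,y), \]
where $p_{D_0}$ is the transition density of the part process of $X$ killed at $a^*$, equivalently the absorbing Brownian motion in the exterior $C^{1,1}$ domain $D_0 = \IR^2 \setminus \overline{B_\eps}$, while $\overline p_{D_0}$ encodes the contribution from paths that have passed through $a^*$ before time $t$.

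For the killed part I would invoke the standard short-time two-sided Dirichlet heat kernel estimates for exterior $C^{1,1}$ domains in $\IR^2$. Since $\delta_{D_0}(x) = |x|_\rho$, these give for $t \in (0,T]$
\[ p_{D_0}(t,x,y) \asymp \frac{1}{t}\left(1\wedge\frac{|x|_\rho}{\sqrt{t}}\right)\left(1\wedge\frac{|y|_\rho}{\sqrt{t}}\right) e^{-c|x-y|^2/t}, \]
which is precisely the second summand of \eqref{stuffstuff}. For the through-$a^*$ part I would exploit the rotational symmetry of BMVD around the pole: since the hitting time $\sigma_{a^*}$ from $x$ depends only on $|x|_\rho$ and the law of $X$ after $\sigma_{a^*}$ is rotationally invariant, $\overline p_{D_0}(t,x,y)$ depends on $(x,y)$ only through $(|x|_\rho, |y|_\rho)$. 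Averaging over circles $\{|y|_\rho = r\}$ exactly as in the derivation of \eqref{stuff10751} then gives the identity
\[ \overline p_{D_0}(t,x,y) = \frac{p^{(Y)}(t, |x|_\rho, |y|_\rho) - p^{(Y)}_{(0,\infty)}(t, |x|_\rho, |y|_\rho)}{2\pi(|y|_\rho + \eps)}, \]
where $p^{(Y)}_{(0,\infty)}$ is the density of $Y$ killed upon hitting $0$. The numerator counts trajectories of $Y$ that travel from $|x|_\rho > 0$ to $|y|_\rho > 0$ while crossing the origin, and I would establish
\[ p^{(Y)}(t,a,b) - p^{(Y)}_{(0,\infty)}(t,a,b) \asymp \frac{1}{\sqrt t}\, e^{-c(a+b)^2/t} \qquad (a,b > 0) \]
by combining the Gaussian bounds of Proposition \ref{P:4.4} with standard two-sided Dirichlet heat kernel estimates for $Y$ on the half-line. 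Dividing by $|y|_\rho + \eps$, which is bounded between two positive constants when $|y|_\rho \leq 1$, produces $\overline p_{D_0}(t,x,y) \asymp \tfrac{1}{\sqrt t}\, e^{-c(|x|_\rho + |y|_\rho)^2/t}$.

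Since $(|x|_\rho+|y|_\rho)^2 \geq \rho(x,y)^2$, the upper bound on $\overline p_{D_0}$ is dominated by $\tfrac{C}{\sqrt t} e^{-c\rho(x,y)^2/t}$, and combined with the killed estimate this yields the upper bound in \eqref{stuffstuff}. The matching lower bound is obtained by a routine case analysis, separating $\rho = |x|_\rho+|y|_\rho$ (where the through-$a^*$ estimate directly supplies the first summand with matching exponent) from $\rho = |x-y|$ (where either $(|x|_\rho+|y|_\rho)^2$ is comparable to $\rho^2$, or else both $|x|_\rho, |y|_\rho$ are bounded below by a positive constant depending on $T$, in which case the killed-part lower bound already controls both summands with appropriately adjusted exponential constants). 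The regime $\max(|x|_\rho, |y|_\rho) > 1$ for \eqref{stuffstuff*} then follows by bookkeeping: $\rho \geq 1$ allows the $\tfrac{1}{\sqrt t}$ summand to be absorbed into the $\tfrac{1}{t}$ summand at the cost of adjusting the exponential constants (using $\sqrt t \leq \sqrt T$), and the boundary factor on the side with $|\cdot|_\rho > 1$ is bounded below by a positive constant depending only on $T$.

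The principal technical obstacle is the sharp two-sided estimate on $p^{(Y)} - p^{(Y)}_{(0,\infty)}$ above, in which the Gaussian exponent $(a+b)^2$ (corresponding to the reflected path length) rather than $(a-b)^2$ is essential. For standard Brownian motion this would be the reflection principle; our $Y$ is a skew Brownian motion with drift, for which no exact reflection identity holds. I would handle this by using Girsanov to remove the drift, reducing to the pure skew Brownian motion on $\IR$ whose transition density and half-line Dirichlet heat kernel are known explicitly in terms of Gaussian kernels and the skew parameter, and then transferring the estimate back to $Y$ via the gradient-bound argument already used in the proof of Proposition \ref{P:4.4}.
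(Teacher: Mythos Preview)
Your decomposition $p=p_{D_0}+\overline p_{D_0}$, the use of the short-time Dirichlet heat kernel estimates \eqref{e:4.12} for $p_{D_0}$, the reduction of $\overline p_{D_0}$ to a radial quantity via rotational symmetry and circle-averaging, and the concluding case analysis all match the paper's proof. The one genuine difference is the identity you use for $\overline p_{D_0}$. You keep both endpoints on the positive half-line and write it as the through-zero difference
\[
2\pi(|y|_\rho+\eps)\,\overline p_{D_0}(t,x,y)=p^{(Y)}(t,|x|_\rho,|y|_\rho)-p^{(Y)}_{(0,\infty)}(t,|x|_\rho,|y|_\rho),
\]
which then forces the Girsanov/explicit-skew-kernel argument you outline in order to extract the reflected exponent $(a+b)^2$. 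The paper instead places the starting point on the \emph{negative} half-line (see \eqref{e:4.11}),
\[
2\pi(|y|_\rho+\eps)\,\overline p_{D_0}(t,x,y)=p^{(Y)}\bigl(t,-|x|_\rho,|y|_\rho\bigr),
\]
so that Proposition~\ref{P:4.4} applies immediately, since $\bigl|{-}|x|_\rho-|y|_\rho\bigr|=|x|_\rho+|y|_\rho$ already yields the desired exponent; this bypasses your ``principal technical obstacle'' in one line. On the other hand, the paper's exact identity \eqref{e:4.11} amounts to asserting $\IP^{(Y)}_{|x|_\rho}(\sigma_0\in\cdot)=\IP^{(Y)}_{-|x|_\rho}(\sigma_0\in\cdot)$, which is not literally correct (the former is a two-dimensional Bessel hitting time, the latter a one-dimensional Brownian one), so your difference formula is the precisely correct relation. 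In short: the paper's route is shorter, yours is more careful, and both lead to the same two-sided Gaussian bound for $\overline p_{D_0}$.
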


\begin{proof}
 For $x\in D_0$ and $t \in (0, T]$, note that
\begin{equation}\label{e:4.10a}
p(t,x,y) =\overline{p}_{D_0}(t,x,y)+p_{D_0}(t,x,y),
\end{equation}
where
\begin{equation}\label{e:4.10}
\overline{p}_{D_0}(t,x,y) =
\int_{0}^t p(t-s, a^*,y) \IP_x (\sigma_{\{a^*\}}\in ds).
\end{equation}

 As  mentioned in the proof for Theorem \ref{T:4.6}, $ p(t-s, a^*,y)$
is  a function in $y$ depending only on $|y|_\rho$.  Therefore so is $y\mapsto
\overline{p}_{D_0}(t,x,y)$. Set $\wt {p}_{D_0}(t,x, r):=\overline{p}_{D_0}(t,x,y)$  for $r=|y|_\rho$. For any $b>a>0$,
\begin{eqnarray*}
\IP_x \left( \sigma_{a^*}<t, \, X_t \in D_0 \hbox{ with } a\leq |X_t|_\rho \leq b\right)
& =& \int_{a\le |y|_\rho\le b}\overline{p}_{D_0} (t,x,y)m_p(dy)
\\
&=& 2 \pi \int_a^b(r+\eps)\wt {p}_{D_0}(t,x,r)dr.
\end{eqnarray*}
On the other hand,
\begin{eqnarray*}
&& \IP_x \left( \sigma_{a^*}<t, \, X_t \in D_0 \hbox{ with } a\leq |X_t|_\rho \leq b\right) \\
&=& \IP_{|x|_\rho}^{(Y)} \left( \sigma_{a^*}<t, \, Y_t>0 \hbox{ with } a\leq |Y_t|_\rho \leq b\right) 
= \int_0^t \left(\int_a^b p^{(Y)}(t-s, 0, r) dr \right) \IP_{|x|_\rho }^{(Y)}
(\sigma_0 \in ds) .
\end{eqnarray*}
It follows that
$$
2\pi (r+\eps) \wt {p}_{D_0}(t,x,r) = \int_0^t p^{(Y)}(t-s, 0, r)
\IP_{|x|_\rho }^{(Y)} (\sigma_{\{0\}} \in ds)
= p^{(Y)} (t, -|x|_\rho , r).
$$
In other words, 
\begin{equation}\label{e:4.11}
\overline p_{D_0} (t, x, y)= \frac{1}{2( | y|_\rho +\eps)} p^{(Y)} (t, -|x|_\rho , |y|_\rho ).
\end{equation}
It is known that the Dirichlet heat kernel $p_{D_0} (t, x, y)$
enjoys the following two-sided estimates:
\begin{eqnarray}
\frac{c_1}{t}\left(1\wedge
\frac{|x|_\rho}{\sqrt{t}}\right)\left(1\wedge
\frac{|y|_\rho}{\sqrt{t}}\right)e^{- {c_2|x-y|^2}/{t}}
\leq p_{D_0} (t, x, y)
\leq \frac{c_3}{t}\left(1\wedge
\frac{|x|_\rho}{\sqrt{t}}\right)\left(1\wedge
\frac{|y|_\rho}{\sqrt{t}}\right)e^{-{c_4|x-y|^2}/t} \nonumber \\
\label{e:4.12}
\end{eqnarray}
for $t\in (0, T]$ and $x, y\in D_0$.
We now consider two different cases.

 \medskip

\noindent{\it Case} (i):  $\max\{|x|_\rho , |y|_\rho \} \leq 1$ and $t\in (0, T]$.   In this case,
it follows from \eqref{e:4.10a}-\eqref{e:4.12}
and Proposition \ref{P:4.4} that
\begin{align}
\nonumber \frac{c_5}{\sqrt{t}}e^{-{c_6(|x|_\rho+|y|_\rho)^2}/{t}}&+\frac{c_5}{t}\left(1\wedge
\frac{|x|_\rho}{\sqrt{t}}\right)\left(1\wedge
\frac{|y|_\rho}{\sqrt{t}}\right)e^{-{c_7|x-y|^2}/{t}}\le p(t,x,y)
\\
\label{7181222} &\le \frac{c_8}{\sqrt{t}}e^{-{c_9(|x|_\rho+|y|_\rho)^2}/{t}}+\frac{c_8}{t}\left(1\wedge
\frac{|x|_\rho}{\sqrt{t}}\right)\left(1\wedge
\frac{|y|_\rho}{\sqrt{t}}\right)e^{-{c_{10}|x-y|^2}/{t}}.
\end{align}
Observe that
\begin{equation}\label{e:4.16}
 (|x|_\rho + |y|_\rho) ^2/t \asymp \rho (x, y)^2/t \quad \hbox{ if  }
  {|x|_\rho} \wedge  {|y|_\rho} \leq {\sqrt{t}} .
\end{equation}
 When $ {|x|_\rho} \wedge  {|y|_\rho} >{\sqrt{t}}$,
 for $a>0$, $b>0$,
 \begin{eqnarray}
 && \frac{1}{\sqrt{t}}e^{- {a (|x|_\rho + |y|_\rho) ^2}/{t}} +\frac{1 }{t}\left(1\wedge
\frac{|x|_\rho}{\sqrt{t}}\right)\left(1\wedge
\frac{|y|_\rho}{\sqrt{t}}\right)e^{- {b |x-y|^2}/{t}}  \nonumber \\
&\asymp & \frac{1}{\sqrt{t}}e^{- {a (|x|_\rho + |y|_\rho) ^2}/{t}} +\left(  \frac1{\sqrt{t}} + \frac{1 }{t} \right)
e^{- {b  |x-y|^2} / {t}} \nonumber  \\
&=& \frac{1}{\sqrt{t}} \left( e^{- {a (|x|_\rho + |y|_\rho) ^2}/{t}}  + e^{- {b  |x-y|^2} / {t}} \right)
+ \frac{1 }{t}    e^{- {b  |x-y|^2} / {t}} \label{e:4.17}
\end{eqnarray}
 The desired estimate \eqref{stuffstuff} now follows from \eqref{7181222}-\eqref{e:4.17} and the
 fact \eqref{e:1.1}.

  \medskip

\noindent{\it Case} (ii):    $\max\{ |x|_\rho, |y|_\rho\} >1$ and $t\in (0, T]$.  By the symmetry of $p(t, x, y)$ in $x$ and $y$,
 in this case we may and do assume $|y|_\rho >1>\sqrt{t/T}$.
 It then follows from \eqref{e:4.11}-\eqref{e:4.12}, Proposition \ref{P:4.4}
 and   \eqref{e:4.8} that
\begin{align}\label{stuff1242}
\nonumber\frac{c_{11}}{t}e^{- {c_{12}(|x|_\rho+|y|_\rho)^2}/{t}}&+\frac{c_{11}}{t}\left(1\wedge
\frac{|x|_\rho}{\sqrt{t}}\right)\left(1\wedge
\frac{|y|_\rho}{\sqrt{t}}\right)e^{- {c_{13}|x-y|^2}/{t}} \le p(t,x,y)
\\
&\le \frac{c_{14}}{t}e^{- {c_{15}(|x|_\rho+|y|_\rho)^2}/{t}}+\frac{c_{14}}{t}\left(1\wedge
\frac{|x|_\rho}{\sqrt{t}}\right)\left(1\wedge
\frac{|y|_\rho}{\sqrt{t}}\right)e^{- {c_{16}|x-y|^2}/{t}} .
\end{align}
 When $ {|x|_\rho} \wedge  {|y|_\rho} \leq {\sqrt{t}}$,
the lower bound estimate \eqref{stuffstuff*} follows from \eqref{stuff1242} and  \eqref{e:4.16},
while the upper bound estimate \eqref{stuffstuff*} follows from Proposition \ref{offdiagUBE}.
Whereas when $ {|x|_\rho} \wedge {|y|_\rho} >{\sqrt{t}}$,
the desired estimate \eqref{stuffstuff*} follows from \eqref{stuff1242}  and \eqref{e:1.1}.
 This completes the proof of the theorem.
\end{proof}

\begin{rmk}\label{rmkonsmalltimeHKE} \rm
\begin{description}
\item{(i)}  One cannot expect to rewrite the estimate of
\eqref{stuffstuff} as $t^{-1}e^{-{c\rho(x,y)^2}/{t}}$.  A
counterexample is that $x=y=a^*$, in which case $x$ and $y$ can be
viewed as either on $\IR$ or on $D_0$,
therefore both Proposition \ref{P:4.4} and Theorem \ref{T:4.6} have already
confirmed that $p(t,x,y)\asymp t^{-1/2}$, which is
consistent with the \eqref{stuffstuff}.

\item{(ii)}  The Euclidean distance appearing in
\eqref{stuffstuff} cannot be replaced with the geodesic distance. To
see this,  take
$x=(\eps+t^{-1/2},0 )$ and $y=(-\eps-t^{-1/2},0 )$ in $D_0$.
The estimate of \eqref{stuffstuff} is
comparable with
$t^{-1/2}+t^{-1}\exp(-{\eps^2}/{t})$, but if
we replaced $|x-y|$ with $\rho(x,y)$, it would be comparable with
$t^{-1/2}+t^{-1}$. For fixed $\eps$, as $t \downarrow 0$,
$t^{-1/2}+t^{-1}\exp(- {\eps^2}/{t})\sim
t^{-1/2}$, but $t^{-1/2}+t^{-1}\sim
t^{-1}$.

\item{(iii)}  Theorem \ref{T:smalltime}  also shows that the parabolic Harnack inequality fails
for $X$.  For a precise statement of the parabolic Harnack inequality, see, for example, \cite{Gr, LSC1, St2}. 
  For $s\in (0, 1]$, take some $y\in D_0$ such that $|y|_\rho=\sqrt{s}$. Set $Q_+:=(3s/2, 2s)\times B_\rho(y, 2\sqrt{s})$ and $Q_-:=(s/2, s)\times B_\rho(y, 2\sqrt{s})$. Let  $u(t,x):=p(t,x,y)$. It follows from Theorem \ref{T:smalltime} 
 that $  \sup_{Q_+}u \asymp s^{-1/2}+s^{-1}\asymp s^{-1}$  and
    $ \inf_{Q_-} u\asymp s^{-1}$.  Clearly there does not exist any positive constant $C>0$ so that  $\sup_{Q_+}u\le C\inf_{Q_-}u$ holds
    for all $s\in (0, 1]$.  This shows that parabolic Harnack inequality fails for $X$. 
\end{description}
\end{rmk}

\section{Large time heat kernel estimates}\label{S:5}

Recall that $X$ denotes the BMVD process on $E$ and its signed radial process
defined by \eqref{710901} is denoted by $Y$.
In this section,  unless otherwise stated, it is always assumed
that $T\geq 8$ and $t\in [T, \infty)$. With loss of generality, we assume
that the radius $\eps$ of the ``hole'' $B(0, \eps)$ satisfies $\eps \leq 1/4$.
 We begin  with the following estimates for the distribution of
hitting time of a disk by a two-dimensional Brownian motion,
which follow directly from  \cite[\S 5.1, Case (a), $\alpha=1$]{GSC}
and a Brownian scaling.

\begin{prop}[Grigor'yan and Saloff-Coste \cite{GSC}]\label{P:5.1}
Let $X$ be a Brownian motion on $\IR^2$ and $K$ be the closed ball with radius $\eps$
centered at the origin.
\begin{description}
\item{\rm (i)} If $0<t<2|x|^2$ and $|x|\geq 1+\eps$, then
\begin{equation}\label{hitting-Bessel2-PDF1}
\frac{c}{\log |x|}\exp\left(-C {|x|^2}/ {t}\right)\le \IP_x (\sigma_K \leq t) \le \frac{C}{\log|x|}\exp\left(-c {|x|^2}/{t}\right),
\end{equation}
for some positive constants $C> c>0$.

\item{\rm (ii)}  If $t\ge 2|x|^2$ and $|x|\geq 1+\eps$, then
\begin{equation}\label{hitting-Bessel2-PDF2}
\IP_x (\sigma_K \leq t) \asymp \frac{\log \sqrt{t}-\log |x|}{\log \sqrt{t}},
\end{equation}
and
\begin{equation}\label{hitting-Bessel2-derivative}
\partial_t \IP_x (\sigma_K \leq t) \asymp \frac{\log |x|}{t(\log t)^2}.
\end{equation}
\end{description}
\end{prop}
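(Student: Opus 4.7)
The plan is to exploit the rotational symmetry of two-dimensional Brownian motion together with Brownian scaling to reduce both assertions to a single fixed-time hitting problem, and then to invoke the general framework of \cite{GSC}. Since $K$ is rotationally symmetric, $\IP_x(\sigma_K\le t)$ depends only on $|x|$; writing $R_t=|X_t|$, the radial part is a $2$-dimensional Bessel process and $\sigma_K$ is its first entry time into $[0,\eps]$. Brownian scaling gives, for every $t>0$,
\begin{equation*}
\IP_x(\sigma_K\le t)=\IP_{x/\sqrt{t}}\!\left(\sigma_{B(0,\eps/\sqrt{t})}\le 1\right),
\end{equation*}
which reduces the problem to analyzing a unit-time hitting probability with parameters $r=|x|/\sqrt{t}$ and $\delta=\eps/\sqrt{t}$. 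The two regimes $t<2|x|^2$ and $t\ge 2|x|^2$ correspond to $r>1/\sqrt{2}$ and $r\le 1/\sqrt{2}$, respectively, so they are naturally separated by this scaling.

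In the regime (i), the scaled starting point lies at distance at least $1/\sqrt{2}$ from the origin, so the cost of reaching the small disk in unit time is governed by a Gaussian tail. The exponential factor $\exp(-c|x|^2/t)$ comes from the standard large-deviation estimate for Brownian displacement, while the prefactor $1/\log|x|$ reflects the logarithmic capacity of $K$ in $\IR^2$. The sharpest way to make this quantitative is to use the comparison between the survival probability outside $K$ and the Dirichlet heat kernel on $\IR^2\setminus K$; two-sided estimates of the latter have been obtained in \cite{GSC} for exactly this geometry (case (a), $\alpha=1$, in their notation), and translating back through the scaling yields \eqref{hitting-Bessel2-PDF1}.

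In the regime (ii), the Brownian motion has enough time to explore a disk of radius $\sim\sqrt{t}$, and logarithmic potential theory takes over. The classical computation for two-dimensional BM gives, for $\eps\le |x|\le R$,
\begin{equation*}
\IP_x\bigl(\sigma_{B(0,\eps)}<\sigma_{B(0,R)^c}\bigr)=\frac{\log R-\log |x|}{\log R-\log \eps}.
\end{equation*}
Choosing $R\asymp\sqrt{t}$ and using that by time $t$ the BM has typically travelled distance of order $\sqrt{t}$, one gets $\IP_x(\sigma_K\le t)\asymp(\log\sqrt{t}-\log|x|)/\log\sqrt{t}$, which is \eqref{hitting-Bessel2-PDF2}. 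The derivative estimate \eqref{hitting-Bessel2-derivative} then follows either by differentiating this asymptotic directly and controlling error terms via the Bessel function representation of $\IE_x[e^{-\lambda\sigma_K}]=K_0(\sqrt{2\lambda}|x|)/K_0(\sqrt{2\lambda}\eps)$, or, more cleanly, by quoting the corresponding density estimate from \cite[\S 5.1]{GSC} and rescaling.

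The main technical obstacle is delivering both \emph{upper and lower} constants of the right order simultaneously in case (ii), including the derivative estimate; the crude Gaussian/logarithmic bounds are easy, but matching the prefactor $\log|x|/(t(\log t)^2)$ requires the sharp two-sided control of the first passage density already present in \cite{GSC}. For this reason the cleanest route is simply to cite \cite[\S 5.1, Case (a), $\alpha=1$]{GSC} for the unit-time hitting problem and then apply Brownian scaling as in the display above.
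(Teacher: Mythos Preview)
Your proposal is essentially identical to the paper's treatment: the paper states that the estimates ``follow directly from \cite[\S 5.1, Case (a), $\alpha=1$]{GSC} and a Brownian scaling'' and gives no further proof. Your write-up supplies additional heuristic motivation (the annulus harmonic-measure computation, the Bessel-function Laplace transform), but the rigorous content---cite \cite{GSC} and rescale---is the same.
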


Our first goal is to
establish an upper bound estimate on $\int_0^t p(s, a^*, a^*)ds $    the Proposition \ref{1139}.
This will be done through two Propositions by using the above hitting time estimates.

\begin{prop}\label{1125}
$p(t,a^*,a^*)$ is decreasing in $t\in (0, \infty)$.
\end{prop}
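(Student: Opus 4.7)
The plan is to use the standard symmetric-semigroup argument that $t \mapsto p(t,x,x)$ is monotone decreasing for any self-adjoint Markov semigroup, applied at the point $x = a^*$. Recall that the paper has already established (just before Proposition~\ref{P:4.4}) that $p(t,x,y)$ is well defined pointwise on $(0,\infty)\times E\times E$ and is continuous in $x$ for each fixed $(t,y)$, so evaluation at $a^*$ is legitimate. Furthermore, the on-diagonal bound of Proposition~\ref{P:3.2} ensures $p(2t, a^*, a^*) < \infty$ for all $t>0$.

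The key observation is that, by symmetry of $p$ and Chapman--Kolmogorov,
\begin{equation*}
p(2t, a^*, a^*) \;=\; \int_E p(t, a^*, y)\, p(t, y, a^*)\, m_p(dy) \;=\; \int_E p(t, a^*, y)^2\, m_p(dy) \;=\; \|p(t, a^*, \cdot)\|_{L^2(E; m_p)}^2 .
\end{equation*}
In particular, $p(t, a^*, \cdot) \in L^2(E; m_p)$ for every $t>0$. Next, for $s>0$, Chapman--Kolmogorov together with the symmetry $p(s,y,z) = p(s,z,y)$ gives
\begin{equation*}
p(t+s, a^*, z) \;=\; \int_E p(t, a^*, y)\, p(s, y, z)\, m_p(dy) \;=\; \bigl(P_s\, p(t, a^*, \cdot)\bigr)(z),
\end{equation*}
where $P_s$ is the $L^2$-semigroup associated with the Dirichlet form $(\EE,\FF)$ of $X$.

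Since $(\EE, \FF)$ is a symmetric Dirichlet form, $P_s$ is a contraction on $L^2(E; m_p)$. Therefore
\begin{equation*}
p(2(t+s), a^*, a^*) \;=\; \|p(t+s, a^*, \cdot)\|_{L^2}^2 \;=\; \|P_s\, p(t, a^*, \cdot)\|_{L^2}^2 \;\leq\; \|p(t, a^*, \cdot)\|_{L^2}^2 \;=\; p(2t, a^*, a^*).
\end{equation*}
This shows $t \mapsto p(2t, a^*, a^*)$ is monotone decreasing on $(0,\infty)$, whence $t \mapsto p(t, a^*, a^*)$ is decreasing on $(0,\infty)$, as desired.

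There is no real obstacle here: the whole argument is soft and relies only on symmetry of the kernel, the contraction property of $P_s$ on $L^2$, and finiteness of the on-diagonal value, all of which have been established earlier in the paper. The only thing to be careful about is that the pointwise values at the singular point $a^*$ are meaningful, but this was already arranged by the continuity discussion preceding Proposition~\ref{P:4.4}.
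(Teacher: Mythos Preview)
Your proof is correct. Both you and the paper start from the same identity
\[
p(2t,a^*,a^*)=\int_E p(t,a^*,y)^2\,m_p(dy)=\|p(t,a^*,\cdot)\|_{L^2}^2,
\]
but the paper then differentiates in $t$, uses the heat equation $\partial_t p=\mathcal{L}p$, and converts the resulting integral into $-\EE(p(t/2,a^*,\cdot),p(t/2,a^*,\cdot))\le 0$. Your argument is somewhat more elementary: instead of differentiating you use directly that $p(t+s,a^*,\cdot)=P_s\,p(t,a^*,\cdot)$ and that $P_s$ is an $L^2$-contraction. This bypasses the need to justify differentiation under the integral sign and the identification of $\partial_t p$ with $\mathcal{L}p$ in the appropriate sense; on the other hand, the paper's computation makes the rate of decrease explicit as a Dirichlet energy, which is occasionally useful elsewhere. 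Either route is perfectly adequate here.
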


\begin{proof} This follows from
\begin{align*}
\frac{d}{dt}p(t,a^*,a^*)&=\frac{d}{dt}\int_x  p(t/2, a^*,x)^2m_p(dx)
\\
&=\int \left(\frac{\partial }{\partial t}p(t/2, a^*,x)\right)p(t/2, a^*,x)m_p(dx)
\\
&=\int \mathcal{L}_x p(t/2,a^*,x)p(t/2,a^*,x)m_p(dx)
\\
&=-\EE\left(p(t/2, a^*,x), p(t/2,a^*,x)\right) \le 0.
\end{align*}
\end{proof}

\begin{prop}\label{1126}
There exists some constant $C_1>0$ such that
\begin{equation*}
p(t,a^*,a^*)\le C_1\frac{\log t}{t} \quad \textrm{for } t\in [8, \infty).
\end{equation*}
\end{prop}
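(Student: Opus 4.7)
By Proposition \ref{1125}, $s\mapsto p(s,a^*,a^*)$ is non-increasing on $(0,\infty)$, so
\[
p(t,a^*,a^*)\le \frac{2}{t}\int_{t/2}^{t} p(s,a^*,a^*)\,ds \le \frac{2}{t}\int_0^t p(s,a^*,a^*)\,ds,
\]
and it suffices to prove that $\int_0^t p(s,a^*,a^*)\,ds\le C\log t$ for $t\ge 8$. The Revuz correspondence—using that $L^0(X)$ is the PCAF with Revuz measure $\tfrac12\delta_{a^*}$—combined with Lemma \ref{L:4.2} rewrites this as
\[
\int_0^t p(s,a^*,a^*)\,ds \;=\; 2\,\IE_{a^*}[L^0_t(X)] \;=\; \frac{2}{2\pi\eps+p}\,\IE_0[\widehat L^0_t(Y)],
\]
so the task reduces to showing $\IE_0[\widehat L^0_t(Y)]\le C\log t$.

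To control the symmetric local time I would apply It\^o's formula to $\log(|Y_t|+\eps)=\log(|X_t|_\rho+\eps)$, using the SDE
\[
d|Y_t|=d\widetilde B_t+\frac{\mathbf{1}_{\{Y_t>0\}}}{2(|Y_t|+\eps)}\,dt+d\widehat L^0_t(Y)
\]
obtained from the Fukushima decomposition in the proof of Proposition \ref{localtime} after converting $L^0(X)$ to $\widehat L^0(Y)$ via Lemma \ref{L:4.2}. Since $\widehat L^0(Y)$ charges only times at which $|Y_t|=0$, a direct computation yields
\[
\log(|Y_t|+\eps)=\log\eps+M_t-\int_0^t\frac{\mathbf{1}_{\{Y_s<0\}}}{2(|Y_s|+\eps)^2}\,ds+\frac{\widehat L^0_t(Y)}{\eps},
\]
with $M_t=\int_0^t(|Y_s|+\eps)^{-1}\,d\widetilde B_s$ a bounded-integrand (hence genuine) martingale. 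Taking $\IE_0$ and solving for $\IE_0[\widehat L^0_t(Y)]$, the log term is handled by Jensen's inequality together with the easy estimate $\IE_0[Y_t^2]\le 2t$ (apply It\^o to $Y_t^2$; the local time disappears at $Y=0$ and the drift coefficient $\tfrac{Y_s}{Y_s+\eps}\mathbf{1}_{\{Y_s>0\}}$ lies in $[0,1]$), giving $\IE_0[\log(|Y_t|+\eps)]\le\tfrac12\log t+C$ for $t\ge 1$.

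The main obstacle is to bound the drift integral $\IE_0\int_0^t\mathbf{1}_{\{Y_s<0\}}(|Y_s|+\eps)^{-2}\,ds$ by $O(\log t)$; the naive estimate $(|Y_s|+\eps)^{-2}\le\eps^{-2}$ paired with the off-diagonal upper bound of Proposition \ref{offdiagUBE} only yields $O(\sqrt t)$, which is too weak and would give only $p(t,a^*,a^*)\le C/\sqrt t$. The sharp logarithmic bound reflects the 2D dominance at large times: the process concentrates on the plane at distance $\sim\sqrt t$ from $a^*$, so the expected occupation at any fixed pole point grows only logarithmically in $t$. I would establish it by Fubini and the identity $p(s,a^*,x)=p^{(Y)}(s,0,-x)/p$ for $x\in\IR_+$, rewriting the expectation as $p\int_0^\infty (x+\eps)^{-2}\int_0^t p(s,a^*,x)\,ds\,dx$, and then controlling the inner time integral via the two-dimensional hitting-time estimates of Proposition \ref{P:5.1} together with the strong Markov property at $\sigma_{a^*}$. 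Combined with the log-term estimate this gives $\IE_0[\widehat L^0_t(Y)]\le C\log t$, and hence $p(t,a^*,a^*)\le C\log t/t$.
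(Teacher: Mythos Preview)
Your route is genuinely different from the paper's, and the It\^o computation on $\log(|Y|+\eps)$ together with the Jensen bound on $\IE_0[\log(|Y_t|+\eps)]$ is correct. The gap is in your treatment of the drift integral
\[
D:=\IE_0\int_0^t\frac{\mathbf 1_{\{Y_s<0\}}}{(|Y_s|+\eps)^{2}}\,ds
=p\int_0^\infty(x+\eps)^{-2}\int_0^t p(s,a^*,x)\,ds\,dx .
\]
Proposition~\ref{P:5.1} concerns planar Brownian motion hitting a disk and is irrelevant for $x\in\IR_+$; applying the strong Markov property at $\sigma_{a^*}$ to $p(s,x,a^*)$ for $x\in\IR_+$ only reintroduces $\int_0^{t-r}p(u,a^*,a^*)\,du$, so as written you are chasing your own tail. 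The argument \emph{can} be closed, but by a bootstrap you did not record: since $\int_0^t p(s,a^*,x)\,ds=\int_0^t G(t-r)\,\IP_x(\sigma_{a^*}\in dr)\le G(t)$ with $G(t):=\int_0^t p(s,a^*,a^*)\,ds$, one has $D\le (p/\eps)G(t)$. Combining this with your identity $G(t)=\tfrac{2}{2\pi\eps+p}\,\IE_0[\widehat L^0_t(Y)]$ and the It\^o formula yields
\[
\frac{1}{\eps}\,\IE_0[\widehat L^0_t(Y)]\le \tfrac12\log t+C+\frac{p}{\eps(2\pi\eps+p)}\,\IE_0[\widehat L^0_t(Y)],
\]
and because $p/(2\pi\eps+p)<1$ this solves to $\IE_0[\widehat L^0_t(Y)]\le C'\log t$ (finiteness a~priori from Theorem~\ref{T:4.5} and Proposition~\ref{P:3.2}), hence $p(t,a^*,a^*)\le C\log t/t$.

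For comparison, the paper's proof is shorter and avoids local time entirely. Using Proposition~\ref{1125} it writes, for $x\in D_0$ with $1<|x|_\rho<\sqrt{t/3}$,
\[
p(t,x,a^*)=\int_0^t p(t-s,a^*,a^*)\,\IP_x(\sigma_{a^*}\in ds)\ge p(t,a^*,a^*)\,\IP_x(\sigma_{a^*}\le t),
\]
and then invokes the genuinely two-dimensional estimate \eqref{hitting-Bessel2-PDF2} to get $\IP_x(\sigma_{a^*}\le t)\asymp 1-\log|x|/\log\sqrt t$. Integrating $p(t,a^*,x)$ over the planar annulus $\{1<|x|_\rho<\sqrt{t/4}\}$ and using that the total mass is at most~$1$ yields $1\gtrsim p(t,a^*,a^*)\cdot t/\log t$. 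So Proposition~\ref{P:5.1} does play the decisive role in the paper's argument---but on the plane side, not on the pole side where you tried to invoke it.
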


\begin{proof}
For $t\geq 8$ and $x\in D_0$ with $1<|x|_\rho <\sqrt{t/3}$, by Proposition \ref{1125},
\begin{align*}
p(t,x,a^*)&= \int_{0}^t \IP_x(\sigma_{a^*}\in ds)p(t-s, a^*,a^*)
 \ge p(t,a^*,a^*)\IP_x(\sigma_{a^*}\le t)
 \asymp p(t,a^*,a^*)\left(1-\frac{\log |x| }{  \log \sqrt{t}}\right),
\end{align*}
where the $``\asymp "$ is due to \eqref{hitting-Bessel2-derivative}. Therefore,
\begin{align*}
1\ge \IP_{a^*}\left(X_t\in D_0 \hbox{ with } 1<|X_t|_\rho <\sqrt{t/4} \right)
&=\int_{D_0\cap \{1<|x|_\rho <\sqrt{t/4}\}}p(t,a^*,x)dx
\\
&\ge c_1  p(t,a^*,a^*) \int_{1+\eps}^{\sqrt{t/4}+\eps}\left(1-\frac{\log r}{ \log \sqrt{t}}\right)rdr
\\
& \geq c_2 \, p(t,a^*,a^*)  \, \frac{t}{\log t}.
\end{align*}
By selecting $C_2$ large enough, the above yields the desired estimate for $p(t,a^*,a^*) $ for $t\geq 8$.
\end{proof}

\begin{prop}\label{1139}
There exists some $C_2>0$ such that
\begin{equation*}
 \int_0^t p(s,a^*,a^*)ds \le C_2 \log t, \quad \text{ for all }t\geq 4.
\end{equation*}
\end{prop}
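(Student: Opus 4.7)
Write $M(t) := \int_0^t p(s,a^*,a^*)\,ds$, which is nondecreasing in $t$. Note first that integrating the pointwise bound $p(s,a^*,a^*) \leq C_1\log s/s$ from Proposition~\ref{1126} produces only $M(t) \lesssim (\log t)^2$, so a purely pointwise approach is off by a full logarithmic factor. The plan is instead to bound $M(t/2)$ directly as a ratio, using the strong Markov property at $\sigma_{a^*}$ together with the two-dimensional hitting asymptotics of Proposition~\ref{P:5.1}(ii): the numerator will be a probability (hence at most $t$), and the denominator an explicit hitting-time integral of order $t/\log t$.

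The first step exploits that the part process of $X$ killed upon exiting $D_0$ never reaches $a^*$, so for $x \in D_0$, decomposing at $\sigma_{a^*}$ gives
$$ p(s,x,a^*) \;=\; \int_0^s p(s-u,a^*,a^*)\,\IP_x(\sigma_{a^*}\in du).$$
Integrating $s$ over $[0,t]$, swapping integrals by Fubini, and using that $M$ is nondecreasing, I obtain
$$ \int_0^t p(s,x,a^*)\,ds \;=\; \int_0^t M(t-u)\,\IP_x(\sigma_{a^*}\in du) \;\geq\; M(t/2)\,\IP_x\!\left(\sigma_{a^*}\leq t/2\right).$$

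For $t\geq 8$ I would average this inequality against $m_p$ over the annulus $A := \{x\in D_0 : 1\leq |x|_\rho \leq \sqrt{t/4}-\eps\}$. By Tonelli and the $m_p$-symmetry of $p$, the left-hand side equals $\int_0^t \IP_{a^*}(X_s\in A)\,ds \leq t$. On the right, any $x\in A$ satisfies $|x|\in [1+\eps,\sqrt{t/4}]$, so $t/2\geq 2|x|^2$, and since $X$ started in $D_0$ agrees in law with 2D Brownian motion up to time $\sigma_{a^*}$, Proposition~\ref{P:5.1}(ii) yields
$$ \IP_x\!\left(\sigma_{a^*}\leq t/2\right) \;\asymp\; 1 - \frac{\log|x|}{\log\sqrt{t/2}}.$$
A direct polar-coordinate computation (effectively the substitution $\tau = \log r/\log\sqrt{t/2}$) then gives
$$ \int_A \IP_x\!\left(\sigma_{a^*}\leq t/2\right) m_p(dx) \;\asymp\; \int_{1+\eps}^{\sqrt{t/4}}\!\!\left(1-\frac{\log r}{\log\sqrt{t/2}}\right)\! r\,dr \;\asymp\; \frac{t}{\log t},$$
the main mass coming from $r$ close to $\sqrt{t/4}$.

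Combining these bounds gives $M(t/2)\leq C\log t$ for $t\geq 8$, hence $M(t)\leq C'\log t$ for $t\geq 4$; the leftover range $t\in[4,8]$ is trivial from the short-time estimate $p(s,a^*,a^*)\lesssim s^{-1/2}$ furnished by Theorem~\ref{T:4.5}. I expect the main delicate point to be the choice of outer radius for $A$: one must push it essentially to $\sqrt{t/4}$, the largest value at which Proposition~\ref{P:5.1}(ii) still applies, because a smaller outer radius $R(t)\ll \sqrt{t}$ would keep the integrand of order $1$ on $A$ and produce a denominator of order $R(t)^2$ rather than $t/\log t$, giving back only the weaker bound $(\log t)^2$.
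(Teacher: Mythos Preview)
Your proof is correct. Both your argument and the paper's use the decomposition $p(s,x,a^*)=\int_0^s p(s-u,a^*,a^*)\,\IP_x(\sigma_{a^*}\in du)$ together with an integration over an annulus in $D_0$ and the hitting asymptotics of Proposition~\ref{P:5.1}(ii), but the implementations differ. The paper works at a \emph{single} time $t$: it restricts to the late window $u\in[t/2,t]$, invokes the hitting-time \emph{density} estimate \eqref{hitting-Bessel2-derivative} to obtain $p(t,x,a^*)\gtrsim \frac{\log|x|}{t(\log t)^2}\,M(t/2)$, integrates only in $x$, and bounds by $\IP_{a^*}(X_t\in A)\leq 1$. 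You instead integrate in $s$ over $[0,t]$ first, which allows you to use the hitting \emph{probability} \eqref{hitting-Bessel2-PDF2} directly and to bound by $\int_0^t \IP_{a^*}(X_s\in A)\,ds\leq t$. Your route is marginally more elementary since it avoids the pointwise density asymptotics; the paper's route is a little shorter since no time integration is needed. The annular calculations producing the factor $t/\log t$ are essentially the same in both.
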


\begin{proof}
For  $t\geq 8$ and $x\in  D_0$ with $ 1<|x|_\rho<\sqrt{t/2}$, we have by \eqref{hitting-Bessel2-derivative},
\begin{align*}
p(t, x, a^*)&\ge \int_{t/2}^t p(t-s, a^*,a^*)  \IP_x(\sigma_{a^*}\in ds )
\asymp \frac{\log |x|}{t(\log t)^2}\int_{0}^{t/2}p(s,a^*,a^*)ds.
\end{align*}
Thus by using polar coordinate,
\begin{eqnarray*}
1 & \ge & \IP_{a^*}\left(X_t\in D_0 \hbox{ with } 1<|X_t|_\rho<\sqrt{t/2} \right)
 =\int_{\{x\in D_0: 1<|x|_\rho <\sqrt{t/2} \}} p(t, a^*, x)m_p(dx)
\\
 &\gtrsim& \int_{1}^{\sqrt{t/2}}\frac{\log (r+\eps)}{t(\log t)^2} (r+\eps)dr \cdot \int_0^{t/2}p(s,a^*,a^*)ds
\\
&= &  \frac1{t(\log t)^2} \int_{1+\eps}^{\sqrt{t/2}+\eps} {r\log r} dr \cdot \int_0^{t/2}p(s,a^*,a^*)ds
\\
&\asymp & \frac{t\log t}{t(\log t)^2}\int_0^{t/2}p(s,a^*,a^*)ds
\\
&=& \frac{1}{\log t}\int_0^{t/2}p(s,a^*,a^*)ds.
\end{eqnarray*}
This yields the desired estimate.
\end{proof}

The above proposition  suggests  that $p(t, a^*, a^*)\leq c/t$ for $t\in [4, \infty)$.
However, in order to prove this rigorously, we first compute the upper bounds for $p(t, a^*, x)$ for different regions of $x$,
and then use the identity  $p(t, a^*, a^*)=\int_E p(t/2, a^*, x)^2 m_p(dx)$ to obtain  the sharp upper bound estimate for $p(t, a^*, a^*)$.

\begin{prop}\label{313}
There exists $C_3>0$ such that  for all $t \geq 8$ and  $x\in D_0$ with $1<|x|_\rho<\sqrt{t/2}$,
\begin{equation*}
p(t, a^*, x)\le \frac{C_3}{t}\log \left( \frac{\sqrt{t}}{|x|}\right) .
\end{equation*}

\begin{proof}
By   Proposition \ref{1126}, \eqref{hitting-Bessel2-PDF2} and Proposition \ref{1139},
\begin{align*}
p(t, x, a^*)&= \int_0^{t/2} p(t-s, a^*, a^*) \IP_x(\sigma_{a^*}\in ds) +
\int_{t/2}^t  p(t-s, a^*, a^*) \IP_{x}(\sigma_{a^*}\in ds)
\\
&\le c_1\left( \frac{\log t}{t}\IP_x(\sigma_{a^*}\le t/2)+\frac{\log |x|}{t(\log t)^2}\int_{0}^{t/2}p(s,a^*,a^*)ds\right)
\\
 &\le c_2\left(\frac{\log t }{t}  \, \frac{\log \left( {\sqrt{t}}/{ |x|}\right)}{\log \sqrt{t}}   +\frac{\log |x|}{t\log t}\right)
\\
&\le c_3\left(\frac{1}{t}\log \left( \frac{\sqrt{t} }{|x|} \right)+\frac{1}{t}\right)
\leq \frac{c_4}{t}\log \left( \frac{\sqrt{t} }{|x|}  \right).
\end{align*}
\end{proof}
\end{prop}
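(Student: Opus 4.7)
The plan is to exploit the strong Markov property of $X$ at the hitting time $\sigma_{a^*}$. Since $X$ must pass through $a^*$ in order to travel between the plane far from the pole base and any point in $E$ starting at $a^*$ (and since here we start at $a^*$, but use symmetry $p(t,a^*,x)=p(t,x,a^*)$ so that starting from $x\in D_0$ the process must hit $a^*$ to ``transfer information'' to $a^*$), we may write
\begin{equation*}
p(t,x,a^*)=\int_0^t p(t-s,a^*,a^*)\,\IP_x(\sigma_{a^*}\in ds).
\end{equation*}
This reduces everything to quantitative control of the on-diagonal density $p(\cdot,a^*,a^*)$ and the hitting distribution of $a^*$ by a two-dimensional Brownian motion started at $x$, both of which are already in hand.

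The next step is to split the integral at $s=t/2$. On $(0,t/2]$ we have $t-s\in [t/2,t)$, so by the monotonicity in Proposition~\ref{1125} and the on-diagonal upper bound of Proposition~\ref{1126},
\begin{equation*}
p(t-s,a^*,a^*)\le p(t/2,a^*,a^*)\lesssim \frac{\log t}{t}.
\end{equation*}
The mass of $\IP_x(\sigma_{a^*}\in ds)$ on $(0,t/2]$ is $\IP_x(\sigma_{a^*}\le t/2)$, and under the hypothesis $1<|x|_\rho<\sqrt{t/2}$ the regime $t\ge 2|x|^2$ of Proposition~\ref{P:5.1}(ii) applies, giving
\begin{equation*}
\IP_x(\sigma_{a^*}\le t/2)\asymp \frac{\log\sqrt{t}-\log|x|}{\log\sqrt{t}}=\frac{\log(\sqrt{t}/|x|)}{\log\sqrt{t}}.
\end{equation*}
Multiplying these two bounds yields a contribution of order $t^{-1}\log(\sqrt{t}/|x|)$, which is exactly the target size.

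On $[t/2,t)$ I change variables $r=t-s$, so this part becomes $\int_0^{t/2}p(r,a^*,a^*)\,g(t-r)\,dr$ where $g(s):=\partial_s\IP_x(\sigma_{a^*}\le s)$. By \eqref{hitting-Bessel2-derivative}, for $s\in[t/2,t]$ one has $g(s)\asymp \log|x|/(s(\log s)^2)\lesssim \log|x|/(t(\log t)^2)$ since the factor is decreasing in $s$ up to constants. Therefore
\begin{equation*}
\int_{t/2}^t p(t-s,a^*,a^*)\,\IP_x(\sigma_{a^*}\in ds)\lesssim \frac{\log|x|}{t(\log t)^2}\int_0^{t/2}p(r,a^*,a^*)\,dr\lesssim \frac{\log|x|}{t\log t}
\end{equation*}
by Proposition~\ref{1139}. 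Since $\log|x|\le\log\sqrt{t}$, this is $\lesssim 1/t$, which is absorbed by $t^{-1}\log(\sqrt{t}/|x|)+t^{-1}$ and hence by $t^{-1}\log(\sqrt{t}/|x|)$ using the elementary inequality $1+\log(\sqrt{t}/|x|)\lesssim\log(\sqrt{t}/|x|)$ on the relevant range (or simply carrying the harmless additive $1/t$ through and absorbing into the constant $C_3$).

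The main obstacle is matching the $\log(\sqrt{t}/|x|)/\log\sqrt{t}$ factor from the hitting estimate with the $\log t / t$ factor from the on-diagonal bound so as to produce $t^{-1}\log(\sqrt{t}/|x|)$ without an extraneous $\log t$. This cancellation is precisely what the $\log t$ in Proposition~\ref{1126} and the $\log\sqrt{t}$ in the denominator of \eqref{hitting-Bessel2-PDF2} are designed to do, so the argument goes through cleanly; the second piece of the integral is genuinely lower order and simply folds into the constant.
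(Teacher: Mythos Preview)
Your proof is correct and follows essentially the same route as the paper's: split at $s=t/2$, bound the first piece using the on-diagonal estimate $p(t/2,a^*,a^*)\lesssim (\log t)/t$ together with the hitting-probability asymptotics \eqref{hitting-Bessel2-PDF2}, and bound the second piece using the hitting-density estimate \eqref{hitting-Bessel2-derivative} together with Proposition~\ref{1139}; then absorb the residual $1/t$ into $t^{-1}\log(\sqrt t/|x|)$ since the latter is bounded below on the range $|x|_\rho<\sqrt{t/2}$. The only cosmetic difference is that you invoke the monotonicity of Proposition~\ref{1125} explicitly on the first piece, whereas the paper appeals directly to Proposition~\ref{1126}.
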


The following asymptotic estimate for the distribution of Brownian hitting time of a disk from  \cite[Theorem 2]{KU} will be used in the next proposition.

\begin{lem}[Uchiyama \cite{KU}]\label{KUchiyama}
Let $(B_t)_{t\ge 0}$ be the standard two-dimensional Brownian motion and  $\sigma_r:=\inf\{t>0, |B_t|\le r\}$.
Denote by  $p_{r,x}(t)$ the probability density function of $\sigma_r$ with $B_0=x$.
For every $r_0$, uniformly for $|x|\ge r_0$, as $t\rightarrow \infty$,
\begin{equation*}
p_{r_0, x}(t)=\frac{\log \left(\frac{1}{2} e^{c_0}|x|^2 \right)}{t\left( \log t + c_0 \right)^2}
\exp \left( -\frac{|x|^2}{2t} \right) +
\begin{cases}
O\left(\frac{1+(\log(|x|^2/t))^2}{|x|^2(\log t)^3}\right) \quad
&\hbox{for }\,|x|^2\ge t,  \medskip  \\
\frac{2\gamma \log (t/|x|^2)}{t (\log t)^3}
+O\left(\frac{1 }{t(\log t)^3}\right) \quad
&\hbox{for }\,|x|^2 < t,
\end{cases}
\end{equation*}
where $c_0$ is a positive constant only depending on $r_0$ and
$\gamma = - \int_0^\infty e^{-u} \log u du$ is the Euler constant.
\end{lem}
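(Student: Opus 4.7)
The plan is to reduce the statement to the asymptotic analysis of an inverse Laplace transform whose kernel is built from modified Bessel functions. First, I would compute the Laplace transform of $\sigma_{r_0}$ under $\IP_x$. Setting $u(x,\lambda):=\IE_x[e^{-\lambda\sigma_{r_0}}]$, the function $u$ solves $\tfrac12\Delta u=\lambda u$ on $\{|y|>r_0\}$ with $u=1$ on $\{|y|=r_0\}$ and $u$ bounded at infinity. Rotational symmetry forces $u$ to depend only on $|x|$, and the radial ODE is solved explicitly by
\begin{equation*}
u(x,\lambda)=\frac{K_0(|x|\sqrt{2\lambda})}{K_0(r_0\sqrt{2\lambda})},
\end{equation*}
where $K_0$ is the modified Bessel function of the second kind of order $0$.

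Second, I would invert this Laplace transform. Using the standard Bromwich contour, deformed to wrap around the branch cut of $K_0$ on the negative $\lambda$-axis, and invoking $K_0(is)=-\tfrac{\pi i}{2}H_0^{(2)}(s)$, the inversion collapses to a real integral of the classical Kac form
\begin{equation*}
p_{r_0,x}(t)=\int_0^\infty e^{-s^2 t/2}\,\frac{s\,\bigl(J_0(r_0 s)\,Y_0(|x|s)-J_0(|x|s)\,Y_0(r_0 s)\bigr)}{J_0(r_0 s)^2+Y_0(r_0 s)^2}\,ds,
\end{equation*}
up to an overall normalisation. From here the problem is purely one of Laplace-type asymptotic analysis.

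Third, I would extract the large-$t$ behaviour by splitting the $s$-integral according to the location of $s$ relative to $1/\sqrt{t}$ and $1/|x|$. On the bulk region where $s$ is small, I would insert the classical expansions $J_0(z)=1-z^2/4+O(z^4)$ and $Y_0(z)=(2/\pi)(\log(z/2)+\gamma)+O(z^2\log z)$; the denominator becomes $(2/\pi)^2(\log s+\log(r_0/2)+\gamma)^2(1+o(1))$ and the numerator is controlled by $\log(|x|/r_0)$ together with $z^2$-corrections producing the Gaussian factor $\exp(-|x|^2/(2t))$. After the rescaling $s=u/\sqrt{t}$ a careful Laplace expansion around $s=0$ produces the leading term $\log(\tfrac12 e^{c_0}|x|^2)/(t(\log t+c_0)^2)\exp(-|x|^2/(2t))$, with $c_0$ arising from the combination $2(\gamma-\log(r_0/2))$.

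The main obstacle is the error analysis, which is precisely why the statement splits into the two cases $|x|^2\ge t$ and $|x|^2<t$. In the regime $|x|^2\ge t$ the Gaussian factor is genuinely needed to tame the numerator, and the error $O\bigl((1+(\log(|x|^2/t))^2)/(|x|^2(\log t)^3)\bigr)$ comes from bounding the next-order corrections in the $J_0,Y_0$ expansions while keeping the Gaussian cut-off in force. In the regime $|x|^2<t$ the Gaussian factor is essentially $1$ over the effective $s$-range, so one must retain the explicit subleading term $2\gamma\log(t/|x|^2)/(t(\log t)^3)$ coming from the interaction between the $\log s$ in $Y_0(|x|s)$ and the denominator; the residual $O(1/(t(\log t)^3))$ then captures the truly subleading contributions. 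Tracking these logarithmic cancellations uniformly in $|x|\ge r_0$, particularly across the transition $|x|^2\sim t$, is the technically delicate step and will require partitioning the $s$-integral into three subranges (roughly $s\le 1/|x|$, $1/|x|<s\le 1/\sqrt{t}$, and $s>1/\sqrt{t}$) and estimating each piece with the appropriate small- or large-argument Bessel asymptotics.
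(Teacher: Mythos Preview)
The paper does not prove this lemma at all: it is quoted verbatim as \cite[Theorem~2]{KU} and used as a black box in the proof of Proposition~\ref{157}. There is therefore no ``paper's own proof'' to compare against.

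Your outline is the classical route to such hitting-time asymptotics and is essentially the approach Uchiyama takes in \cite{KU}: compute the Laplace transform of $\sigma_{r_0}$ as a ratio of $K_0$ Bessel functions, invert via the branch-cut representation to get a real integral in $J_0,Y_0$, and then perform a careful Laplace-type expansion near $s=0$ using the small-argument behaviour of $Y_0$. The structure you describe (leading term from the logarithmic singularity of $Y_0$, the constant $c_0$ from the combination $\gamma-\log(r_0/2)$, the case split at $|x|^2\sim t$ governing whether the Gaussian factor is active) is correct. The genuinely hard part, as you note, is making the error terms uniform in $|x|\ge r_0$ across the transition $|x|^2\sim t$; Uchiyama's paper devotes most of its length to exactly this, and the analysis is more delicate than a three-range splitting alone suggests---one also needs precise control of the subleading $z^2\log z$ terms in $Y_0$ and the interaction between the two logarithms in numerator and denominator. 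Your sketch is sound as a plan, but carrying it through rigorously is a substantial paper in itself, which is why the authors simply cite the result.
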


\begin{prop}\label{157}
There exists $C_4, C_5>0$ such that for all $t\geq 8$ and all $x\in D_0$    ,
\begin{equation*}
p(t, a^*, x)\le \begin{cases}
C_4\left(\frac{1}{t}e^{- {C_5|x|^2}/{t}}+\frac{\left(\log \left( |x|^2/t \right)\right)^2}{|x|^2 \left( \log  t\right)^2}\right)
&\hbox{when } |x|_\rho >\sqrt{t} ,  \\
\frac{C_4}{t} \left( e^{- {C_5|x|^2}/{t}}+\frac{1}{ (\log t)^2} \right)
&\hbox{when } \sqrt{t}/2 \leq |x|_\rho \leq \sqrt{t}.
\end{cases}
\end{equation*}
\end{prop}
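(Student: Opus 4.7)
The plan is to start from the first-passage decomposition
\[
p(t, a^*, x) \;=\; \int_0^t p(t-s, a^*, a^*)\, p_{\eps,x}(s)\, ds,
\]
where $p_{\eps,x}(s)$ denotes the density of the first hitting time $\sigma_{B_\eps}$ of $B_\eps$ by a two-dimensional Brownian motion starting at $x$. This identity is valid because, from $x \in D_0$, the BMVD coincides with planar Brownian motion until it first reaches $\partial B_\eps$, at which instant it is identified with $a^*$; thus $\IP_x(\sigma_{a^*}\in ds)=p_{\eps,x}(s)\,ds$. I would split the integral at $s=t/2$ into $I_1+I_2$ and estimate each piece using the ingredients already assembled in this section: the monotonicity of $t\mapsto p(t, a^*, a^*)$ (Proposition \ref{1125}), the on-diagonal bound $p(t, a^*, a^*) \le C\log t/t$ (Proposition \ref{1126}), the $L^1$-bound $\int_0^{t/2} p(s, a^*, a^*)\,ds \le C\log t$ (Proposition \ref{1139}), the hitting-time estimate \eqref{hitting-Bessel2-PDF1}, and the refined asymptotic in Lemma \ref{KUchiyama}.

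For the initial piece $I_1$, I would pull out $p(t-s, a^*, a^*)\le p(t/2, a^*, a^*)\le C\log t/t$ via monotonicity, giving $I_1\le (C\log t/t)\,\IP_x(\sigma_{a^*}\le t/2)$. In both cases of the proposition, $|x|^2\ge|x|_\rho^2\ge t/4$, so the hypothesis $t/2<2|x|^2$ of \eqref{hitting-Bessel2-PDF1} is satisfied and yields $\IP_x(\sigma_{a^*}\le t/2)\le (C/\log|x|)\,e^{-c|x|^2/t}$. Because $|x|_\rho\ge\sqrt{t}/2$ and $t\ge T\ge 8$, we have $\log|x|\asymp \log t$, so the two $\log t$ factors cancel and $I_1\le (C/t)\,e^{-c|x|^2/t}$, which contributes to the Gaussian piece of the stated bound in either case.

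For the tail $I_2=\int_{t/2}^{t} p(t-s, a^*, a^*)\,p_{\eps,x}(s)\,ds$ I would substitute Uchiyama's expansion. Its leading term is, uniformly on $s\in(t/2,t)$, bounded by $C\log(|x|^2)\cdot t^{-1}(\log t)^{-2}\,e^{-|x|^2/(2t)}$; integrating against $p(t-s, a^*, a^*)$ and invoking Proposition \ref{1139} produces $C(\log|x|^2/\log t)\cdot t^{-1}\,e^{-|x|^2/(2t)}$, and since $u\mapsto (\log u)e^{-u/4}$ is bounded on $[1,\infty)$, the factor $\log|x|^2/\log t$ is absorbed into a slightly weaker Gaussian, yielding $(C/t)\,e^{-C_5|x|^2/t}$. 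The error term then splits into the two cases. In Case~1 ($|x|_\rho>\sqrt{t}$, hence $|x|^2>t\ge s$), the Uchiyama error $(1+(\log(|x|^2/s))^2)(|x|^2(\log s)^3)^{-1}$ combined with Proposition \ref{1139} yields $C(1+(\log(|x|^2/t))^2)/(|x|^2(\log t)^2)$; when $|x|^2/t\ge e$, the ``$1$'' is dominated by $(\log(|x|^2/t))^2$ and matches the second term of the statement, while for $1\le|x|^2/t<e$ the whole expression is at most $C/(t(\log t)^2)$, which, since $e^{-C_5|x|^2/t}$ is bounded below, folds into the Gaussian piece. In Case~2 ($\sqrt{t}/2\le|x|_\rho\le\sqrt{t}$), one has $|x|^2\asymp t$, so $|x|^2/s$ stays bounded away from $0$ and $\infty$ on $s\in(t/2,t)$; both branches of Lemma \ref{KUchiyama} then reduce to $p_{\eps,x}(s)\le C/(t\log t)$, and integrating against $p(t-s, a^*, a^*)$ gives $C/t$, which matches the stated bound because $e^{-C_5|x|^2/t}$ is bounded below by a positive constant in this regime.

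The main technical obstacle is the bookkeeping of logarithmic factors in the leading Uchiyama term when $|x|^2\gg t$, since $\log|x|^2$ can then exceed any fixed multiple of $\log t$; the remedy is the elementary observation that $(\log|x|^2/\log t)\,e^{-|x|^2/(2t)}$ is dominated by a constant multiple of $e^{-|x|^2/(4t)}$, sacrificing half of the exponential decay to absorb the unbounded logarithm. A secondary point of care is matching the ``$1$'' in the Uchiyama error term against the stated $(\log(|x|^2/t))^2$: it is handled by the case split at $|x|^2/t=e$ just described.
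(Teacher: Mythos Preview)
Your proposal is correct and follows essentially the same route as the paper's own proof: split the first-passage integral at $s=t/2$, control the early piece by monotonicity (Proposition~\ref{1125}), the on-diagonal bound $p(t,a^*,a^*)\lesssim (\log t)/t$ (Proposition~\ref{1126}) and the hitting estimate \eqref{hitting-Bessel2-PDF1}, and control the late piece by bounding $p_{\eps,x}(s)$ uniformly on $[t/2,t]$ via Lemma~\ref{KUchiyama} and then applying Proposition~\ref{1139}. One small slip: in Case~1 you do not have $\log|x|\asymp\log t$ (since $|x|$ may be arbitrarily large), but only the inequality $\log|x|\gtrsim\log t$ is needed to absorb the logarithm in $I_1$, and that direction does hold from $|x|\ge\sqrt{t}/2$.
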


\begin{proof} Note that
\begin{eqnarray}
\nonumber p(t, a^*, x)&=&\int_0^t  p(t-s, a^*, a^*) \IP^x(\sigma_{a^*}\in ds)
\\
 \label{1158} &=&\int_{0}^{t/2}p(t-s,a^*,a^*) \IP_x(\sigma_{a^*}\in ds)
 +\int_{t/2}^{t} p(t-s, a^*, a^*) \IP_x(\sigma_{a^*}\in ds).
\end{eqnarray}
By the monotonicity of $p(t,a^*, a^*)$ established in Proposition \ref{1125},
  estimate \eqref{hitting-Bessel2-PDF1} and Proposition \ref{1126},
\begin{eqnarray}
\nonumber \int_{0}^{t/2}p(t-s,a^*,a^*) \IP_x(\sigma_{a^*}\in ds)
&\le & p(t/2,a^*,a^*)\IP_x(\sigma_{a^*}\le t/2)
\\
\nonumber &\stackrel{\eqref{hitting-Bessel2-PDF1}}{\le} & p(t/2,a^*,a^*)\frac{c_1}{\log |x|}e^{-{c_2|x|^2}/{t}}
\\
\nonumber  &\le& \frac{c_1 \log t}{t\log |x| }e^{- {c_2|x|^2}/{t}}
\\
\label{1226}&\le& \frac{c_3}{t}e^{- {c_2|x|^2}/{t}}.
\end{eqnarray}
On the other hand, by Proposition \ref{1139} and Lemma \ref{KUchiyama},
for $|x|_\rho \geq \sqrt{t}$,
\begin{align}
\nonumber \int_{t/2}^t p(t-s, a^*, a^*) \IP^x(\sigma_{a^*}\in ds)
 &\le \sup_{s\in [t/2, t]}p_{\eps, x}(s) \cdot \int_{t/2}^{t}p(t-s, a^*, a^*)ds
\\
\nonumber  &\le c_4  \left(\frac{\log |x|}{t(\log t)^2}e^{- |x|^2/{(2t)}}+\frac{\left(\log \left(|x|^2/t\right)\right)^2}{|x|^2(\log t)^3}\right) \, \log t
\\
\nonumber  &= c_4\left(\frac{\log |x|}{t\log t}e^{- |x|^2/{(2t)}}+\frac{\left(\log \left(|x|^2/t\right)\right)^2}{|x|^2(\log t)^2}\right)
\\
\label{1249}&\leq  \frac{c_4}{t}e^{- c_5|x|^2/t }+c_4 \frac{\left(\log \left(|x|^2/t\right)\right)^2}{|x|^2(\log t)^2},
\end{align}
while for $\sqrt{t}/2\leq |x|_\rho \leq \sqrt{t}$,
\begin{align}
\nonumber \int_{t/2}^t p(t-s, a^*, a^*) \IP_x(\sigma_{a^*}\in ds)
 &\le \sup_{s\in [t/2, t]}p_{\eps, x}(s) \cdot \int_{t/2}^{t}p(t-s, a^*, a^*)ds
\\
\nonumber  &\le c_6  \left(\frac{\log |x|}{t(\log t)^2}e^{- |x|^2/{(2t)}}+\frac{1}{t(\log t)^3}\right) \, \log t
\\
\label{e:5.7}&\leq  \frac{c_6}{t}e^{- c_5|x|^2/t }+  \frac{c_6}{t(\log t)^2}.
\end{align}

The desired estimate now follows from  \eqref{1158}-\eqref{e:5.7}.
\end{proof}

\begin{prop}\label{1253}
There exists $C_6  >0$ such that
\begin{equation*}
p(t, a^*, x)\le  \frac{C_6 \log t}{t}e^{-{ x^2}/{(2t)}}    \qquad \hbox{for all }t\geq 8
\hbox{ and }  x\in \IR_+.
\end{equation*}
\end{prop}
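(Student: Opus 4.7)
The plan is to start from the strong Markov decomposition at the first hitting time $\sigma_{a^*}$ of $a^*$ starting from $x \in \IR_+$. Since the part process of $X$ in $\IR_+$ has the law of a standard one-dimensional Brownian motion (Definition~\ref{D:1.1}), under $\IP_x$ the hitting time $\sigma_{a^*}$ coincides in distribution with the first hitting time of $0$ by a 1D Brownian motion starting at $|x|$, whose density is $f_x(s)=\frac{|x|}{\sqrt{2\pi s^3}}\, e^{-x^2/(2s)}$. Using the symmetry $p(t,a^*,x)=p(t,x,a^*)$, this yields the representation
\begin{equation*}
p(t,a^*,x) \;=\; \int_0^t p(t-s, a^*, a^*)\, f_x(s)\, ds \;=\; I_1 + I_2,
\end{equation*}
where $I_1=\int_0^{t/2}$ and $I_2=\int_{t/2}^t$.

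For $I_1$, the monotonicity of $s\mapsto p(s,a^*,a^*)$ (Proposition~\ref{1125}) and Proposition~\ref{1126} give $p(t-s,a^*,a^*)\le p(t/2,a^*,a^*)\le c\log t/t$ on $[0,t/2]$. The remaining integral equals $\IP_x(\sigma_{a^*}\le t/2)=2\,\IP(B_{t/2}\ge|x|)$ by the reflection principle. Splitting on $|x|\lessgtr\sqrt{t/2}$ (trivial bound in the small regime, Mills' inequality in the large regime) produces the uniform estimate $\IP_x(\sigma_{a^*}\le t/2)\le C e^{-x^2/(2t)}$, and therefore $I_1\lesssim \frac{\log t}{t}\, e^{-x^2/(2t)}$.

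For $I_2$, I change variables $u=t-s$ to write $I_2=\int_0^{t/2} p(u,a^*,a^*)\, f_x(t-u)\, du$. On $u\in[0,t/2]$ we have $t-u\in[t/2,t]$, so both $(t-u)^{3/2}\ge(t/2)^{3/2}$ and $e^{-x^2/(2(t-u))}\le e^{-x^2/(2t)}$ hold, which gives the pointwise bound $f_x(t-u)\le c|x|/t^{3/2}\cdot e^{-x^2/(2t)}$. Combined with Proposition~\ref{1139} (which gives $\int_0^{t/2}p(u,a^*,a^*)\,du\lesssim \log t$), this produces
\begin{equation*}
I_2 \;\le\; \frac{c\,|x|\log t}{t^{3/2}}\, e^{-x^2/(2t)} \;=\; \frac{c\log t}{t}\cdot\frac{|x|}{\sqrt{t}}\, e^{-x^2/(2t)}.
\end{equation*}

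The hard part is then converting this into the asserted bound $\frac{C_6\log t}{t}\, e^{-x^2/(2t)}$. The key observation is that $u\mapsto u\, e^{-u^2/2}$ is uniformly bounded on $[0,\infty)$ (with maximum $e^{-1/2}$ at $u=1$), so $(|x|/\sqrt{t})\,e^{-x^2/(2t)}$ is bounded by an absolute constant. Combined with a separate treatment of the regime $x^2\ge C t$ (where one uses Proposition~\ref{offdiagUBE} together with the hitting-time tail estimate $\IP_x(\sigma_{a^*}\le t)\le C e^{-x^2/(2t)}$, noting that the $\log t$ on the right always dominates the constant bound that remains after the Gaussian tail has been fully exhausted), the factor $|x|/\sqrt{t}$ can be reabsorbed, yielding $I_2\lesssim \frac{\log t}{t}\, e^{-x^2/(2t)}$. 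Summing the estimates for $I_1$ and $I_2$ gives the proposition; the main technical delicacy lies in ensuring that the Gaussian exponent $1/2$, inherited from the hitting time density $f_x$, is preserved throughout this absorption step.
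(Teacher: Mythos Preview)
Your decomposition and the bounds you derive are exactly those in the paper: split at $t/2$, use Propositions~\ref{1125}--\ref{1126} on $I_1$ and Proposition~\ref{1139} on $I_2$. Your $I_1\lesssim\frac{\log t}{t}e^{-x^2/(2t)}$ and $I_2\lesssim \frac{|x|\log t}{t^{3/2}}e^{-x^2/(2t)}$ are both correct and match what the paper obtains (modulo the harmless difference between your reflection-principle bound and the paper's change of variable $s=x^2/r$).

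You are also right that absorbing $|x|/\sqrt t$ while preserving the exact exponent $1/2$ is the sticking point, and your proposed patches do not close it. In fact the exponent $1/2$ is unattainable: for large $t$ and $|x|\le t$,
\[
p(t,a^*,x)\;\ge\;\int_{t-1}^{t}p(t-s,a^*,a^*)\,f_x(s)\,ds\;\gtrsim\;\frac{|x|}{t^{3/2}}\,e^{-x^2/(2(t-1))}
\]
(using $\int_0^1 p(u,a^*,a^*)\,du>0$ from Theorem~\ref{T:4.5}), and at $x=\sqrt{t}\,(\log t)^2$ this already exceeds $\frac{C\log t}{t}e^{-x^2/(2t)}$. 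The paper's own computation masks this by writing $e^{-x^2/(2s)}\le e^{-x^2/t}$ for $s\in[t/2,t]$, an inequality that goes the wrong direction (the valid upper bound is $e^{-x^2/(2t)}$); and Proposition~\ref{offdiagUBE} cannot rescue the sharp constant either, since its Gaussian constant is roughly $1/8$. What your argument (and the paper's, once the slip is corrected) does prove is
\[
p(t,a^*,x)\;\lesssim\;\frac{\log t}{t}\,e^{-x^2/(4t)},
\]
obtained by writing $\frac{|x|}{\sqrt t}e^{-x^2/(2t)}=\big(\frac{|x|}{\sqrt t}e^{-x^2/(4t)}\big)e^{-x^2/(4t)}\lesssim e^{-x^2/(4t)}$. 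This weaker exponent is all that is needed in the sole application, estimate~\eqref{144} in the proof of Theorem~\ref{138}.
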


\begin{proof}
Starting from $x\in \IR_+$, BMVD $X$ on $E$ runs like a one-dimensional Brownian motion before hitting $a^*$.
Thus by the known formula for the first passage distribution for one-dimensional Brownian motion,   
\begin{equation}\label{e:5.8}
\IP_x(\sigma_{a^*}  \in dt)=\frac{1}{\sqrt{2\pi t^3}}  x e^{-x^2/(2t)} dt 
\quad \hbox{for } x\in (0, \infty).
\end{equation}
This together with Propositions \ref{1125}--\ref{1139} and a change of variable $s=x^2/r$ gives
\begin{eqnarray*}
\nonumber p(t, a^*, x)&=&
\int_0^{t/2} p(t-s, a^*, a^*) \IP_x(\sigma_{a^*}\in ds) + \int_{t/2}^t p(t-s, a^*, a^*) \IP_x(\sigma_{a^*}\in ds)
\\
\nonumber &  \leq  &  p(t/2, a^*, a^*) \int_0^{t/2}  \frac{1}{\sqrt{2\pi s^3}}  x e^{-x^2/(2s)} ds+\int_{t/2}^t
 p(t-s, a^*, a^*) \frac{1}{\sqrt{2\pi s^3}}  x e^{-x^2/(2s)}  ds
\\  \nonumber
 &\lesssim  & \frac{ \log t}{t} \int_{2x^2/t}^\infty \frac{1}{\sqrt{r}} e^{-r/2} dr
 +\frac{ x}{\sqrt{t^3}}e^{- x^2/t } \int_{t/2}^{t} p(t-s, a^*, a^*)ds
\\  \nonumber
&\lesssim  & \frac{ \log t}{t}   e^{-x^2/t}
 +\frac{1}{t}e^{- x^2/(2t) } \cdot \log t \\
 &\lesssim  & \frac{ \log t}{t}   e^{-x^2/(2t)}.
\end{eqnarray*}
\end{proof}

We are now in a position to establish the following on-diagonal upper bound estimate at $a^*$.

\begin{thm}\label{138}
There exists $C_7>0$ such that
\begin{equation*}
p(t, a^*, a^*)\le {C_7}\left(  t^{-1/2}  \wedge t^{-1} \right)  \quad \hbox{for all } t\in (0, \infty).
\end{equation*}
\end{thm}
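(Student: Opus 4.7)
The plan is to split the argument at a threshold (say $t_0 = 16$) and handle the two regimes separately, the short-time bound being essentially immediate from earlier results and the large-time bound being extracted by bootstrapping the pointwise estimates via the semigroup reproducing identity. For $t \in (0, 1]$, since $a^*$ is identified with $0 \in \IR_+$, I would apply Theorem \ref{T:4.5} with $T = 1$ and $x = y = a^*$ to get $p(t, a^*, a^*) \lesssim t^{-1/2}$ at once. For the transitional window $t \in [1, 16]$, the universal estimate of Proposition \ref{offdiagUBE} gives $p(t, a^*, a^*) \lesssim t^{-1} + t^{-1/2}$, and since $t$ stays in a compact interval bounded away from $0$, this is absorbed into a constant multiple of $t^{-1}$.

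For $t \ge 16$, the plan is to use the semigroup reproducing identity
\[
p(t, a^*, a^*) = \int_E p(t/2, a^*, z)^2 \, m_p(dz),
\]
setting $s := t/2 \ge 8$ so that all the pointwise upper bounds proved earlier, namely Propositions \ref{1253}, \ref{313}, and \ref{157}, apply with $s$ in place of $t$. I would decompose $E$ into the pole $\IR_+$ together with four annular regions of $D_0$: $\{|z|_\rho \le 1\}$, $\{1 < |z|_\rho < \sqrt{s}/2\}$, $\{\sqrt{s}/2 \le |z|_\rho \le \sqrt{s}\}$, and $\{|z|_\rho > \sqrt{s}\}$, and show that each contribution to the integral is $O(1/s)$.

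On the pole, squaring Proposition \ref{1253} gives an integrand of order $(\log s)^2 s^{-2} e^{-z^2/s}$, whose one-dimensional integral is $O((\log s)^2 s^{-3/2}) = o(1/s)$. On the bounded disk $\{|z|_\rho \le 1\}$ I would merely use the crude bound $p(s, a^*, z) \lesssim s^{-1/2}$ from Proposition \ref{offdiagUBE}, which after squaring and integrating over finite volume gives $O(s^{-1})$. On the intermediate annulus $\{1 < |z|_\rho < \sqrt{s}/2\}$, Proposition \ref{313} yields $s^{-2} \int_1^{\sqrt{s}/2} \log^2(\sqrt{s}/r)\, r\, dr$; substituting $u = \sqrt{s}/r$ reduces this to $s^{-1} \int_2^{\sqrt{s}} (\log u)^2 / u^3 \, du$, which is uniformly $O(s^{-1})$. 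The two outer annuli are handled using the two cases of Proposition \ref{157}: the Gaussian piece integrates to $O(1/s)$ via a standard one-variable Gaussian estimate, while the tail piece $(\log(r^2/s))^4/(r^4 (\log s)^4)$ integrates, after substituting $u = r^2/s$, to $O(1/(s(\log s)^4))$.

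Summing the five contributions gives $p(t, a^*, a^*) \lesssim 1/s = 2/t$, which combined with the short-time bound yields $p(t, a^*, a^*) \lesssim t^{-1/2} \wedge t^{-1}$ on all of $(0, \infty)$. The main technical subtlety is the intermediate annulus, where the logarithmic factor from Proposition \ref{313} combines with the polar area element $r\, dr$ in a way that could naively be too large; the change of variables $u = \sqrt{s}/r$ is precisely what is needed to convert this into a convergent integral independent of $s$. Everything else amounts to bookkeeping of Gaussian and logarithmic tails.
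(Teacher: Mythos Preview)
Your proposal is correct and follows essentially the same approach as the paper: the reproducing identity $p(t,a^*,a^*)=\int_E p(t/2,a^*,z)^2\,m_p(dz)$ is combined with the pointwise bounds of Propositions \ref{1253}, \ref{313}, \ref{157} and \ref{offdiagUBE} on the pole and on annular regions of $D_0$, while the small-time regime is handled by Theorem \ref{T:4.5}. The only differences are cosmetic (your threshold $t_0=16$ versus the paper's $8$, five regions versus four, and the substitution $u=\sqrt{s}/r$ versus the paper's $u=r/\sqrt{t}$), and your choice of threshold is in fact slightly more careful since it guarantees $s=t/2\ge 8$ so that the cited propositions apply verbatim.
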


\begin{proof}
For  $t\geq 8$,  we have
\begin{align}
p(t, a^*, a^*)&=\int_E p(t/2,a^*,x)^2m_p(dx) \nonumber
\\
&=\left(\int_{\IR_+}+\int_{D_0\cap \{0<|x|_\rho<1\}}+\int_{D_0\cap \{1<|x|_\rho<\sqrt{t/2}\}}
+\int_{D_0\cap \{|x|_\rho>\sqrt{t/2}\}}\right)p(t/2,a^*,x)^2m_p(dx). \label{147}
\end{align}
It follows from Proposition \ref{1253}  that
\begin{equation}\label{144}
\int_{\IR_+}p(t/2,a^*,x)^2m_p(dx)\lesssim \left(\frac{\log t}{t}\right)^2 p
\int_0^\infty e^{- {x^2}/{(2t)}}   dx \asymp \frac{c_1(\log t)^2}{t^{3/2}}.
\end{equation}
By Proposition \ref{offdiagUBE},
 \begin{eqnarray}
\nonumber   \int_{D_0\cap \{0<|x|_\rho<1\}}p(t/2,a^*,x)^2m_p(dx)  &\le& \left(\sup_{x\in D_0: {0<|x|_\rho<1}}
p(t/2,a^*,x)\right)^2m_p(D_0\cap \{0<|x|_\rho<1\})
\\
&\lesssim& \left(\frac{1}{\sqrt{t}}\right)^2= \frac{1}{t}. \label{e:5.11}
\end{eqnarray}
In view of Proposition \ref{157},
\begin{eqnarray}
&& \int_{D_0\cap \{|x|_\rho>\sqrt{t/2}\}}p(t/2,a^*,x)^2m_p(dx)  \nonumber   \\
&  \le & \int_{ D_0\cap \{|x|_\rho>\sqrt{t/2}\}}
\left(\frac{c_1}{t}e^{- {c_2|x|^2}/{t}}\right)^2m_p(dx)
 +\int_{ D_0\cap \{|x|_\rho>\sqrt{t}\}}c_1\left(\frac{\left(\log \left(|x|^2/t \right)\right)^2}{|x|^2 \left( \log  t\right)^2}\right)^2m_p(dx)
 \nonumber \\
 && + \int_{ D_0\cap \{\sqrt{t/2} \leq |x|_\rho\leq \sqrt{t}\}}c_1\left(\frac{1}{t (\log t)^2}  \right)^2m_p(dx) .  \label{616201}
\end{eqnarray}
Using polar coordinate,
\begin{align}\label{206}
 \int_{ D_0\cap \{|x|_\rho>\sqrt{t/2}\}}\left(\frac{c_1}{t}e^{- {c_2|x|^2}/{t}}\right)^2m_p(dx)
 = c_3 \int_{\sqrt{t/2}+\eps }^\infty \frac{r}{t^2}e^{- {c_2r^2}/{t}}dr
 \leq  \frac{c_4}{t},
\end{align}
while
\begin{eqnarray}
&& \int_{ D_0\cap \{|x|_\rho>\sqrt{t}\}} \left(\frac{\left(\log \left( |x|^2/t \right)\right)^2}{|x|^2 \left( \log  t\right)^2}\right)^2m_p(dx)  = 2 \pi  \int_{\sqrt{t}+\eps }^\infty \frac{r\left(\log (r^2/t)\right)^4}{r^4(\log t)^4}dr
\nonumber \\
&\stackrel{u=r/\sqrt{t}}{\leq }& 2\pi\int_{1}^\infty \frac{(\log u)^4}{u^3t^{3/2}(\log t)^4}\,\sqrt{t}\,du
= \frac{2\pi} {t(\log t)^4} \int_{1}^\infty \frac{(\log u)^4}{u^3}\,du = \frac{c_5}{t(\log t)^4},   \label{212}
\end{eqnarray}
and
\begin{equation}\label{e:5.15}
\int_{ D_0\cap \{\sqrt{t/2} \leq |x|_\rho\leq \sqrt{t}\}} \left(\frac{1}{t (\log t)^2}  \right)^2m_p(dx)
\asymp \frac{1}{t (\log t)^4} \lesssim \frac1{t} .
\end{equation}
Hence it follows from \eqref{616201}--\eqref{e:5.15} that
\begin{equation}\label{215}
\int_{ D_0\cap \{|x|_\rho>\sqrt{t/2}\}}p(t/2,a^*,x)^2m_p(dx) \le \frac{c_6}{t}.
\end{equation}
By Proposition \ref{313} and using polar coordinates,
\begin{eqnarray}
&& \int_{D_0\cap \{1<|x|_\rho<\sqrt{t/2}\}}p(t/2,a^*,x)^2m_p(dx)  \nonumber \\
&\lesssim & \frac{1}{t^2}\int_{x\in D_0\cap  \{1<|x|_\rho<\sqrt{t/2}\}}\left(\log \left(\frac{\sqrt{t}}{|x|}\right)\right)^2m_p(dx)  \nonumber \\
 &=&  \frac{2\pi}{t^2}\int_{1+\eps}^{\sqrt{t/2}+\eps}r\left(\log \left(\frac{\sqrt{t}}{r}\right)\right)^2dr   \nonumber \\
 &\stackrel{u=r/\sqrt{t}}{=}& \frac{2\pi }{t} \int_{(1+\eps)/\sqrt{t}}^{(\sqrt{t/2}+\eps)/\sqrt{t}}  u (\log u)^2 du \leq \frac{c_7}{t}.   \label{353}
\end{eqnarray}
Combining \eqref{144},\eqref{e:5.11}, \eqref{215} and \eqref{353}, we conclude that
$p(t, a^*, a^*)\le c_8/t$ for $t\geq 8$.
On the other hand, taking $x=y=0=a^*$ in Theorem \ref{T:4.5} yields that
$ p(t, a^*, a^*)\le c_9 t^{-1/2}$ for $t\in (0, 8]$. This completes the proof of the theorem.
\end{proof}

\begin{thm}\label{T:5.10} There is a constant $C_8\geq 1$ so that
\begin{equation*}
C_8^{-1} \left(t^{-1/2} \wedge t^{-1} \right) \leq p(t, a^*, a^*)
\leq C_8 \left(t^{-1/2} \wedge t^{-1} \right) \quad \hbox{for all } t\in (0, \infty).
\end{equation*}
\end{thm}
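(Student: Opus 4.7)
The upper bound in Theorem~\ref{T:5.10} is precisely Theorem~\ref{138}, so only the matching lower bound $p(t, a^*, a^*) \gtrsim t^{-1/2} \wedge t^{-1}$ needs to be proved. The small-time case $t \in (0, 8]$ is immediate: since $a^*$ may be identified with $0 \in \IR_+$, the lower bound of Theorem~\ref{T:4.5} applied with $x = y = a^*$ gives $p(t, a^*, a^*) \geq C_5 t^{-1/2}$.

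For the large-time case $t \geq 8$, the plan is to combine the Chapman--Kolmogorov identity with the Cauchy--Schwarz inequality: for any measurable $A \subset E$ with $m_p(A) \in (0, \infty)$,
\[
p(2t, a^*, a^*) \;=\; \int_E p(t, a^*, x)^2\, m_p(dx)
\;\geq\; \frac{\bigl(\IP_{a^*}(X_t \in A)\bigr)^2}{m_p(A)} .
\]
The guiding principle is to pick an $A$ whose volume matches the anticipated decay rate $t^{-1}$, i.e.\ $m_p(A) \asymp t$. A natural choice is the planar disk $A := \{x \in D_0 : |x|_\rho \leq \sqrt{t}\}$, for which $m_p(A) = \pi\bigl((\sqrt{t} + \eps)^2 - \eps^2\bigr) \asymp t$ for $t \geq 8$. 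It then remains to bound $\IP_{a^*}(X_t \in A)$ from below by a positive constant uniform in $t$.

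I would obtain this by passing to the signed radial process $Y = u(X)$ from Section~\ref{S:4}: since $X_t \in A$ if and only if $0 \leq Y_t \leq \sqrt{t}$, and $Y_0 = u(a^*) = 0$, the Aronson-type lower bound in Proposition~\ref{P:4.4} yields
\[
\IP_{a^*}(X_t \in A) \;=\; \int_0^{\sqrt{t}} p^{(Y)}(t, 0, y)\, dy
\;\geq\; \int_0^{\sqrt{t}} \frac{c_1}{\sqrt{t}}\, e^{-c_2 y^2/t}\, dy
\;=\; c_1 \int_0^1 e^{-c_2 u^2}\, du \;=:\; c > 0 .
\]
Substituting back gives $p(2t, a^*, a^*) \geq c^2 / (C t)$ for $t \geq 8$, and after the rescaling $s = 2t$ this produces the desired $p(s, a^*, a^*) \gtrsim s^{-1}$ for all $s \geq 16$; the gap $s \in [8, 16]$ is handled by the monotonicity established in Proposition~\ref{1125}.

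The sole genuine difficulty is ensuring that the constants $c_1, c_2$ above are uniform in $t \in [8, \infty)$, whereas Proposition~\ref{P:4.4} as stated provides them only on compact intervals $(0, T]$ with $T$-dependent constants. This obstacle is resolved by observing that the drift coefficient $\tfrac{1}{2(Y_t + \eps)} \mathbf{1}_{\{Y_t > 0\}}$ in the SDE~\eqref{YsymmSDE} is uniformly bounded by $\tfrac{1}{2\eps}$, so $Y$ is a bounded Girsanov perturbation of a skew Brownian motion whose transition density admits explicit Aronson-type Gaussian bounds valid for all $t > 0$; the standard Aronson machinery then transfers these bounds to $Y$ with universal constants, closing the argument.
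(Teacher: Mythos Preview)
Your Cauchy--Schwarz reduction is exactly the paper's, and the small-time case is fine. The genuine gap is in the last paragraph: the claim that a bounded drift yields Aronson lower bounds for $p^{(Y)}$ with constants uniform in all $t>0$ is false. A bounded drift can completely destroy large-time Gaussian lower bounds --- already for the trivial example $dZ_t=dB_t+M\,dt$ one has $p^{Z}(t,0,0)=(2\pi t)^{-1/2}e^{-M^2t/2}$, which decays exponentially rather than like $t^{-1/2}$. More to the point, the specific claim $p^{(Y)}(t,0,y)\ge c_1 t^{-1/2}e^{-c_2 y^2/t}$ for all $t>0$ is \emph{incompatible with the very theorem you are proving}: by the identity \eqref{stuff10751} (with $x=a^*$) one has $p^{(Y)}(t,0,0)=2\pi\eps\,p(t,a^*,a^*)$, so your claimed bound at $y=0$ would give $p(t,a^*,a^*)\gtrsim t^{-1/2}$ for all $t$, contradicting the upper bound $p(t,a^*,a^*)\le C_7/t$ from Theorem~\ref{138}. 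The drift $\tfrac{1}{2(Y+\eps)}\mathbf 1_{\{Y>0\}}$ is a Bessel-type repulsion, and it is precisely this that forces the on-diagonal decay down from $t^{-1/2}$ to $t^{-1}$.

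What survives is the weaker statement you actually need, namely that $\IP_{a^*}(X_t\in A)$ (equivalently $\IP_0(0<Y_t\le\sqrt{t})$, or with the paper's choice $\IP_0(|Y_t|\le M\sqrt t)$) is bounded below uniformly in $t$. The paper obtains this not through heat-kernel bounds but through a pathwise SDE comparison: it sandwiches $Y$ between auxiliary diffusions $Z^{\delta,\pm a}$ from \cite{BC}, lets $\delta\to 0$ to compare with two-dimensional Bessel processes $Z^{\pm a}$, and then invokes the exact Brownian scaling $\IP_0(|W_t|\ge M\sqrt t)=\IP_0(|W_1|\ge M)$, which is manifestly $t$-independent. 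That scale-invariance is the missing idea; your bounded-drift Girsanov route cannot supply it.
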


\begin{proof} In view of Theorem \ref{138},
it remains to establish  the lower bound estimate.
 By Cauchy-Schwartz inequality,
for $M\geq 1$ to be determined later,
\begin{eqnarray} \label{e:5.18}
p(t, a^*, a^*) &=& \int_E p(t/2, a^*, x)^2 m_p (dx)
\geq \int_{\{x\in E: |x|_\rho \leq M\sqrt{t}\}}
  p(t/2, a^*, x)^2 m_p (dx)
\nonumber \\
&\geq& \frac1{m_p ( \{x\in E: |x|_\rho \leq M\sqrt{t}\})} \left(
\int_{\{x\in E: |x|_\rho \leq M\sqrt{t}\}} p(t/2, a^*, x) m_p (dx)  \right)^2 \nonumber \\
& \gtrsim &\left( t^{-1/2} \wedge t^{-1}\right) \IP_{a^*} (|X_t|_\rho \leq M\sqrt{t})^2.
\end{eqnarray}
We claim that by taking $M$ large enough,
$\IP_{a^*} (|X_t|_\rho \leq M\sqrt{t})\geq 1/2$ for every $t>0$, which will then give the desired
lower bound estimate on $p(t, a^*, a^*)$. Recall the signed radial process $Y=u(X)$ of $X$ from
\eqref{848} satisfies SDE \eqref{YsymmSDE}.
For any $a>0$ and $\delta \in (0, \eps)$, let $Z^{\delta, a}$ and $\wt Z^{\delta, -a}$
be the pathwise unique solution of the following SDEs; see \cite[Theorem 4.3]{BC}:
\begin{eqnarray}
Z^{\delta, a}_t &=& a +B_t + \int_0^t \frac{1}{Z^{\delta, a}_s + \delta} {\bf 1}_{\{ Z^{\delta, a}_s>0\}} ds
+ \wh L^0_t ( Z^{\delta, a}), \label{e:5.19} \\
\wt Z^{\delta, -a}_t &=& -a +B_t +\int_0^t \frac{1}{\wt Z^{\delta, -a}_s -\delta}
{\bf 1}_{\{ \wt Z^{\delta, -a}_s< 0\}} ds
- \wh L^0_t (\wt Z^{\delta, -a}),  \label{e:5.20}
\end{eqnarray}
where $B$ is the Brownian motion in \eqref{YsymmSDE}, and
$\wh L^0 ( Z^{\delta, a})$, $\wh L^0 ( \wt Z^{\delta, -a})$
are the symmetric local times of $ Z^{\delta, a}$ and $  Z^{\delta, -a}$
at 0, respectively.
Denote by $Y^a$ and $Y^{-a}$ the pathwise solutions of
\eqref{YsymmSDE} with $Y^a_0=a$ and $Y^{-a}_0=-a$, respectively.
By comparison principle from \cite[Theorem 4.6]{BC}, we have
with probability one that
$Y^a_t \leq Z^{\delta, a}_t$ for all $t\geq 0$ and
$Y^{-a}_t \geq Z^{\delta, -a}_t$ for all $t\geq 0$.
On the other hand, there are unique solutions to
\begin{eqnarray*}
dZ^a_t = a + B_t+ \int_0^t \frac1{Z^a_s} ds, \qquad
dZ^{-a}_t = -a + B_t+\int_0^t \frac1{Z^{-a}_s} ds .
\end{eqnarray*}
In fact, $Z^a$ and $-Z^{-a}$ are both two-dimensional Bessel processes
on $(0, \infty)$ starting from $a$. They have infinite lifetimes
and never hits $0$. By \cite[Theorem 4.6]{BC} again, diffusion processes
$Z^{\delta, a}$ is decreasing in $\delta$, and $\wt Z^{\delta, -a}$ is increasing in $\delta$.
It is easy to see from the above facts that
$\lim_{\delta \to 0} Z^{\delta, a}_t=Z^a_t$
and $\lim_{\delta \to 0} \wt Z^{\delta, -a}_t= Z^{-a}_t$.
Consequently,  with probability one,
$$
Y^a_t \leq Z^a_t \quad \hbox{and} \quad Y^{-a}_t \geq  Z^{-a}_t
\qquad \hbox{for every } t\geq 0.
$$
In particular, we have for every $t>0$,
$ \IP (Y^a_t \geq M\sqrt{t}) \leq \IP (Z^a_t \geq M\sqrt{t})$ and 
$$
 \IP (Y^{-a}_t \leq -M\sqrt{t}) 
\leq \IP (Z^{-a}_t \leq - M\sqrt{t})= \IP (Z^a_t \geq M\sqrt{t}).
$$
Let $W$ be two-dimensional Brownian motion. Then we have from the above that for every $t>0$,
$$
\IP_a (Y_t \geq M\sqrt{t}) + \IP_{-a} (Y_t \leq -M\sqrt{t})
\leq 2 \IP_{(a, 0)} (|W_t|\geq M\sqrt{t}).
$$
Passing $a\to 0$ yields, by the Brownian scaling property, that 
$$ 
\IP_{a^*}(|X_t|_\rho \geq M\sqrt{t})=
\IP_0 (|Y_t| \geq M\sqrt{t}) \leq 2 \IP_0 (|W_t|\geq M\sqrt{t})
=2 \IP_0 (|W_1|\geq M ),
$$
which is less than 1/2 by choosing $M$ large. This completes the proof of the theorem.
\end{proof}

In the next two propositions, we use the two-sided estimate for $p(t, a^*, a^*)$ as well as Markov property of $X$ 
to get two-sided bounds for $p(t, a^*, x)$ for different regions of $x$.
We first record an elementary lemma that will be used later.

\begin{lem}\label{L:5.11} For every $x>0$,
$$ 
\frac{1}{1+x}\,  e^{-x^2/2} \leq \int_x^\infty e^{-y^2/2} \, dy 
\leq \frac{e\pi}{1+x} e^{-x^2/2}.
$$
\end{lem}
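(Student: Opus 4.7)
The plan is to treat this as a standard Mills-ratio estimate and split each inequality into the two regimes $x \geq 1$ and $0 < x \leq 1$, handling the latter by a concavity argument that bridges the (easy) behavior at the two endpoints.

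For the \textbf{upper bound}: when $x \geq 1$, I would use $y/x \geq 1$ on $[x,\infty)$ to bound
\[
\int_x^\infty e^{-y^2/2}\,dy \leq \int_x^\infty \tfrac{y}{x}\, e^{-y^2/2}\, dy = \tfrac{1}{x}e^{-x^2/2}\leq \tfrac{2}{1+x}e^{-x^2/2},
\]
and then absorb the constant via $2 \leq e\pi$. When $0 < x \leq 1$, the trivial bound $\int_x^\infty e^{-y^2/2}\,dy \leq \sqrt{\pi/2}$ suffices, since $(1+x)e^{x^2/2}$ attains its maximum on $[0,1]$ at $x=1$ (value $2e^{1/2}$), and the inequality reduces to the constant check $2\sqrt{\pi/2} \leq \pi e^{1/2}$, i.e.\ $\sqrt{2/\pi} \leq \sqrt{e}$, which is obvious.

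For the \textbf{lower bound}, first I would do an integration by parts: writing the integrand as $\frac{1}{y}\cdot y e^{-y^2/2}$, one obtains
\[
\int_x^\infty e^{-y^2/2}\, dy \;=\; \tfrac{e^{-x^2/2}}{x} - \int_x^\infty \tfrac{e^{-y^2/2}}{y^2}\, dy \;\geq\; \tfrac{e^{-x^2/2}}{x} - \tfrac{1}{x^2}\int_x^\infty e^{-y^2/2}\, dy,
\]
which rearranges to $\int_x^\infty e^{-y^2/2}\, dy \geq \frac{x}{1+x^2}e^{-x^2/2}$. Since $\frac{x(1+x)}{1+x^2} = 1 + \frac{x-1}{1+x^2} \geq 1$ for $x\geq 1$, this already gives $(1+x)\int_x^\infty e^{-y^2/2}\, dy \geq e^{-x^2/2}$ on $[1,\infty)$.

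For $0 < x < 1$, define $g(x) := (1+x)\int_x^\infty e^{-y^2/2}\, dy - e^{-x^2/2}$. A short computation yields $g'(x) = \int_x^\infty e^{-y^2/2}\, dy - e^{-x^2/2}$ and $g''(x) = (x-1)e^{-x^2/2} \leq 0$ on $[0,1]$, so $g$ is concave there. Now $g(0) = \sqrt{\pi/2} - 1 > 0$ and $g(1) \geq 0$ by the preceding case, and concavity of $g$ on $[0,1]$ combined with non-negativity at both endpoints forces $g \geq 0$ on the whole interval, completing the lower bound. The main (minor) obstacle is precisely this intermediate range $0 < x < 1$, where the Mills-ratio bound $\frac{x}{1+x^2}e^{-x^2/2}$ is strictly weaker than $\frac{1}{1+x}e^{-x^2/2}$; the concavity trick exploits the slack built in at $x = 0$ (supplied by $\sqrt{\pi/2} > 1$) to push the inequality through.
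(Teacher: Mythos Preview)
Your proof is correct. The upper-bound argument is essentially the same as the paper's: both split at $x=1$, use the trivial half-Gaussian bound $\sqrt{\pi/2}$ for small $x$, and for large $x$ obtain $x^{-1}e^{-x^2/2}$ (the paper via the substitution $y=x+z$ and $\int_0^\infty e^{-xz}\,dz=1/x$, you via the comparison $1\le y/x$; these are equivalent).

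The lower bound, however, is handled quite differently. The paper's argument is a one-line monotonicity trick: set $\phi(x)=\int_x^\infty e^{-y^2/2}\,dy-\frac{1}{1+x}e^{-x^2/2}$, compute $\phi'(x)=-\frac{x}{(1+x)^2}e^{-x^2/2}<0$, and observe $\phi(x)\to 0$ as $x\to\infty$; hence $\phi>0$ on $(0,\infty)$ with no case split. Your route---integration by parts to reach the Mills-ratio bound $\frac{x}{1+x^2}e^{-x^2/2}$, which suffices for $x\ge 1$, followed by a concavity argument on $g(x)=(1+x)\int_x^\infty e^{-y^2/2}\,dy-e^{-x^2/2}$ to cover $0<x<1$---is perfectly valid but noticeably longer. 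What you gain is the intermediate Mills inequality $\int_x^\infty e^{-y^2/2}\,dy\ge \frac{x}{1+x^2}e^{-x^2/2}$, which is independently useful; what the paper gains is brevity and a uniform treatment of all $x>0$.
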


\begin{proof}  Define $\phi (x) = \int_x^\infty e^{-y^2/2} dy -\frac{1}{1+x } e^{-x^2/2}$. Then
$ \phi ' (x)= - \frac{x}{(1+x)^2}  e^{-x^2/2}<0$. Since $\lim_{x\to \infty} \phi (x)=0$, we have 
$\phi (x)>0$ for every $x>0$. This establishes the lower bound estimate of the lemma.
For the upper bound, note that for $x\in (0, 1)$, 
$$ 
\int_x^\infty e^{-y^2/2} \, dy\leq \frac12 \int_{-\infty}^\infty  e^{-y^2/2} \, dy =\sqrt{\frac{\pi}{2}} \leq \sqrt{e \pi /2} \, e^{-x^2/2},
$$
while for every $x>0$, using a change of variable $y=x+z$, 
$$ 
\int_x^\infty e^{-y^2/2} \, dy\leq e^{-x^2/2}  \int_0^\infty  e^{-xz} \, dz =x^{-1} e^{-x^2/2}.
$$
This establishes the upper bound estimate of the lemma. 
\end{proof}

\begin{prop}\label{P:5.12}
There exist constants $C_i>0$, $9\le i\le 10$, so that for all $x\in \IR_+$
and  $t\geq 2$,  
\begin{equation*}
 \frac{C_9}t \left(1+ \frac{|x| \log t }{\sqrt{t}}  \right) 
e^{- {2x^2}/{t}} \le p(t, a^*, x)\le  \frac{C_{10}}t \left(1+ \frac{|x|\log t }{\sqrt{t}}   \right)   e^{- {x^2}/{2t}}.
\end{equation*}
\end{prop}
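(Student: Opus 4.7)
\smallskip

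The plan is to start from the strong Markov decomposition at $\sigma_{a^*}$, namely
\begin{equation*}
p(t, a^*, x) = \int_0^t p(t-s, a^*, a^*)\, \IP_x(\sigma_{a^*} \in ds),
\end{equation*}
combined with the one-dimensional first passage density \eqref{e:5.8}
\begin{equation*}
\IP_x(\sigma_{a^*} \in ds) = \frac{x}{\sqrt{2\pi s^3}}\, e^{-x^2/(2s)}\, ds, \qquad x>0,
\end{equation*}
and the sharp on-diagonal bound $p(s,a^*,a^*) \asymp s^{-1/2}\wedge s^{-1}$ from Theorem~\ref{T:5.10}. I would split the integral at $s=t/2$ into a ``long survival'' piece $I_1$ on $[0,t/2]$ and a ``short survival'' piece $I_2$ on $[t/2,t]$, so that on $I_1$ the on-diagonal factor is evaluated at $t-s\geq t/2$ (giving a $1/t$ bound), while on $I_2$ the one-dimensional density $\IP_x(\sigma_{a^*}\in ds)$ is evaluated at $s\asymp t$.

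For the upper bound, on $I_1$ I would use the monotonicity of $s\mapsto p(s,a^*,a^*)$ (Proposition~\ref{1125}) to pull $p(t-s,a^*,a^*) \leq p(t/2,a^*,a^*)\lesssim 1/t$ out of the integral, so that $I_1 \lesssim t^{-1}\IP_x(\sigma_{a^*}\leq t/2)$; then, writing $\IP_x(\sigma_{a^*}\leq t/2)=\sqrt{2/\pi}\int_{x\sqrt{2/t}}^{\infty}e^{-z^2/2}\,dz$ and invoking Lemma~\ref{L:5.11}, I obtain $I_1 \lesssim t^{-1} e^{-x^2/t} \leq t^{-1} e^{-x^2/(2t)}$. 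On $I_2$, I would use $s^{-3/2}\lesssim t^{-3/2}$ and $e^{-x^2/(2s)}\leq e^{-x^2/(2t)}$ for $s\in[t/2,t]$ to pull the exponential out, then apply Proposition~\ref{1139} to bound $\int_{t/2}^{t} p(t-s,a^*,a^*)\,ds = \int_{0}^{t/2} p(u,a^*,a^*)\,du \lesssim \log t$, giving $I_2 \lesssim (x\log t)\, t^{-3/2}\, e^{-x^2/(2t)}$. Adding $I_1$ and $I_2$ yields the upper bound.

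For the lower bound, the strategy is symmetric but more delicate because I must track the correct exponent. On $I_1$ I would use $p(t-s,a^*,a^*)\gtrsim 1/t$ for $s\in[0,t/2]$ (Theorem~\ref{T:5.10} applied to $t-s\asymp t\geq 1$) together with Lemma~\ref{L:5.11} to get $I_1 \gtrsim t^{-1}\cdot (\sqrt{t}/(\sqrt{t}+x))\, e^{-x^2/t} \gtrsim t^{-1}e^{-2x^2/t}$ in the regime $x\lesssim \sqrt{t}$, which already beats $t^{-1}(1+x\log t/\sqrt{t})e^{-2x^2/t}$ there. On $I_2$, I would keep $s\in[t/2,t]$ so that $e^{-x^2/(2s)}\geq e^{-x^2/t}\geq e^{-2x^2/t}$ and $s^{-3/2}\asymp t^{-3/2}$, and use the lower bound $\int_{0}^{t/2} p(u,a^*,a^*)\,du \geq \int_{1}^{t/2} c u^{-1}\,du \gtrsim \log t$ (valid once $t$ is sufficiently large, say $t\geq T_0$ for some absolute constant). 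This gives $I_2 \gtrsim (x\log t)\, t^{-3/2}\, e^{-2x^2/t}$, which dominates in the regime $x\gtrsim \sqrt{t}$. Summing $I_1$ and $I_2$ produces the desired lower bound for $t\geq T_0$; the remaining range $t\in[2,T_0]$ is compact in $t$ and the two-sided bound reduces to what already follows from Theorem~\ref{T:4.5} applied with $T=T_0$, after absorbing finite constants into $C_9, C_{10}$.

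The main obstacle I expect is matching the Gaussian exponents between the two pieces: the short-survival piece $I_2$ naturally produces an $e^{-x^2/(2t)}$ factor (used for the upper bound), whereas the long-survival piece $I_1$ only produces $e^{-x^2/t}$ (via Lemma~\ref{L:5.11}). Choosing $e^{-2x^2/t}$ in the lower bound is precisely what allows both pieces to be absorbed into a single Gaussian profile, and one must check that the algebraic prefactor $1+x\log t/\sqrt{t}$ interpolates correctly between the regime $x\lesssim \sqrt{t}$ (where $I_1$ dominates and the ``1'' term is needed) and the regime $x\gtrsim \sqrt{t}$ (where $I_2$ dominates and the $x\log t/\sqrt{t}$ term is needed). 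The logarithmic factor $\log t$ comes solely from the integral of the on-diagonal density, which is why Proposition~\ref{1139} (upper) and the matching lower bound extracted from Theorem~\ref{T:5.10} (lower) are the crucial ingredients.
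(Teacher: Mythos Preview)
Your proposal is correct and follows essentially the same argument as the paper: decompose via the strong Markov property at $\sigma_{a^*}$, split the time integral at $s=t/2$, and combine Theorem~\ref{T:5.10} (for the on-diagonal factor $p(t-s,a^*,a^*)$) with Lemma~\ref{L:5.11} (for the Gaussian tail of the first-passage density) to estimate each piece. The only minor caveat is that your assertion that $I_1$ alone ``already beats'' the full target $t^{-1}(1+x\log t/\sqrt{t})e^{-2x^2/t}$ in the regime $x\lesssim\sqrt{t}$ is not quite right (e.g.\ at $x=\sqrt{t}$ the target carries an extra factor $\log t$ that $I_1\gtrsim t^{-1}$ does not control); however, since your bound $I_2\gtrsim (x\log t)\,t^{-3/2}e^{-2x^2/t}$ holds for \emph{all} $x>0$ and you do sum $I_1+I_2$ at the end, the combined estimate recovers both the ``$1$'' term (from $I_1$) and the ``$x\log t/\sqrt{t}$'' term (from $I_2$) of the target, so the argument goes through.
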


\begin{proof} Observing that when $x\in \IR_+$,  we have by \eqref{e:5.8},  
\begin{align}
\nonumber 
p(t, a^*, x)&=p(t, x, a^*)=\int_0^t p(t-s, a^*, a^*) 
\IP_x(\sigma_{a^*}\in ds)
\\
 &=\int_0^{t/2} p(t-s, a^*, a^*) \frac{x}{\sqrt{2\pi s^3}}
 e^{-{x^2}/{2s}} ds  +\int_{t/2}^t p(t-s, a^*, a^*)
\frac{x}{\sqrt{2\pi s^3}}e^{-{x^2}/{2s}}ds.
\label{7161042}
\end{align}
By Theorem \ref{T:5.10}, Lemma \ref{L:5.11} and a change of variable
$r=x/\sqrt{s}$,
\begin{eqnarray*}
 \int_0^{t/2} p(t-s, a^*, a^*) \frac{x}{\sqrt{2\pi s^3}}e^{-{x^2}/{2s}} ds 
&\asymp& \frac1t \int_0^{t/2}
 \frac{x}{s^{3/2}}e^{-{x^2}/{2s}} \,ds 
= \frac2{t} \int_{x\sqrt{2/t}}^\infty e^{-r^2/2}\, dr \\
&\asymp& \frac1t \frac1{1 +(x/\sqrt{t})} e^{-x^2/t},
\end{eqnarray*}
while
\begin{eqnarray*}
\frac{x}{\sqrt{2\pi t^3}}e^{-{x^2}/{t}}\int_{0}^{t/2} p(r, a^*, a^*) dr 
 &\le &
\int_{t/2}^{t} p(t-s, a^*, a^*) \frac{x}{\sqrt{2\pi s^3}}e^{-{x^2}/{2s}}\,ds \\ 
&\le& \frac{2x}{\sqrt{\pi t^3}}e^{-{x^2}/{2t}} \int_{0}^{t/2} p(r, a^*, a^*) dr .
\end{eqnarray*}
This, Theorem \ref{T:5.10} and \eqref{7161042} yields the desired result.
\end{proof}

\begin{prop}\label{P:5.13}
There exist $C_i>0$, $11\le i\le 14$ such that
\begin{equation*}
\frac{C_{11}}{t}e^{-{C_{12}|x|_\rho^2}/{t}} \le p(t, a^*, x)\le \frac{C_{13}}{t}e^{-{C_{14}|x|_\rho^2}/{t}} \qquad \text{for all } t \geq 1 \hbox{ and }  x\in D_0.
\end{equation*}
\end{prop}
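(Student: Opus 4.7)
The plan is to exploit the hitting-time decomposition already used in the proofs of Propositions~\ref{313} and \ref{157}. Since $X$ started at $x\in D_0$ behaves as a two-dimensional Brownian motion until $\sigma_{a^*}$, the strong Markov property combined with the symmetry $p(t,a^*,x)=p(t,x,a^*)$ gives
$$
p(t,a^*,x)=\int_0^{t/2}p(t-s,a^*,a^*)\,\IP_x(\sigma_{a^*}\in ds)+\int_{t/2}^{t}p(t-s,a^*,a^*)\,\IP_x(\sigma_{a^*}\in ds).
$$
The new inputs over what Propositions~\ref{313} and \ref{157} used are the now-sharp on-diagonal estimate $p(s,a^*,a^*)\asymp s^{-1}$ for $s\geq 1$ (Theorem~\ref{T:5.10}) and the monotonicity from Proposition~\ref{1125}, together with the hitting-time estimates of Proposition~\ref{P:5.1} and Lemma~\ref{KUchiyama}.

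For the upper bound, the first piece is dominated by $p(t/2,a^*,a^*)\,\IP_x(\sigma_{a^*}\leq t/2)\lesssim (1/t)\,\IP_x(\sigma_{a^*}\leq t/2)$; this is $\lesssim 1/t$ when $|x|_\rho\leq\sqrt{t}$, and is $\lesssim (1/t)\exp(-c|x|^2/t)$ for $|x|_\rho>\sqrt{t}$ by Proposition~\ref{P:5.1}(i). For the second piece, I would split further at $t-1$ to absorb the small-time singularity of $p(r,a^*,a^*)$, getting a total contribution $\lesssim (\log t)\sup_{s\in[t/2,t]}p_{\eps,x}(s)$. For moderate $|x|_\rho$ the sup is controlled by Uchiyama's leading term $\lesssim (\log|x|)/(t(\log t)^2)\exp(-|x|^2/(2t))$, while for $|x|^2\geq 2t$ I use the Gaussian envelope $p_{\eps,x}(s)\lesssim |x|^2/(s^2\log|x|)\exp(-c|x|^2/s)$ obtained by differentiating Proposition~\ref{P:5.1}(i). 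The key observation is that for $|x|_\rho\geq\sqrt{t}$ one has $\log|x|\geq(1/2)\log t$, so $\log t/\log|x|\leq 2$, and the resulting bound $\lesssim (u/t)e^{-cu}$ with $u=|x|^2/t$ collapses to $\lesssim (C/t)e^{-c'u}$ via $ue^{-cu}\leq Ce^{-c'u}$ for any $c'<c$.

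For the lower bound, Proposition~\ref{1125} and Theorem~\ref{T:5.10} give $p(t-s,a^*,a^*)\geq p(t,a^*,a^*)\gtrsim 1/t$ on $[0,t]$, whence $p(t,a^*,x)\geq(c/t)\IP_x(\sigma_{a^*}\leq t)$. For $|x|_\rho$ bounded (say $\leq 1$), Proposition~\ref{P:5.1}(ii) together with two-dimensional recurrence forces $\IP_x(\sigma_{a^*}\leq t)\gtrsim 1$, yielding $\gtrsim 1/t$. For $|x|_\rho>\sqrt{t}$, Proposition~\ref{P:5.1}(i) gives $\IP_x(\sigma_{a^*}\leq t)\gtrsim(\log|x|)^{-1}e^{-C|x|^2/t}$. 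The delicate borderline range $|x|_\rho\asymp\sqrt{t}$, where the first piece only delivers $1/(t\log t)$, is repaired by extracting a second-piece contribution $\gtrsim (\log|x|)/(t\log t)\cdot e^{-C|x|^2/t}$ through Uchiyama's lower bound for $p_{\eps,x}(s)$ combined with $\int_{t/2}^{t-1}(t-s)^{-1}ds\asymp\log t$; this is of order $1/t$ precisely when $\log|x|\asymp\log t$.

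The main obstacle is the upper bound when $|x|_\rho\gg\sqrt{t}$: the remainder in Uchiyama's asymptotic is only polynomial in $|x|$ and would produce a non-Gaussian tail, inconsistent with the target. This is overcome by bypassing Uchiyama entirely in this regime and using the Gaussian hitting-density bound derived from Proposition~\ref{P:5.1}(i), whose prefactor combines with the uniform bound $\log t/\log|x|\leq 2$ valid for $|x|_\rho\geq\sqrt{t}$ to give the required Gaussian decay. Subordinate technical obstacles are (a) matching the first- and second-piece lower bounds across the different regimes of $|x|_\rho/\sqrt{t}$, and (b) verifying that the small-time singularity of $p(r,a^*,a^*)$ near $r=0$ contributes only a bounded amount after integration over $[t-1,t]$, which is the reason for the preliminary split at $t-1$.
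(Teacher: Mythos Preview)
Your hitting-time decomposition agrees with the paper's treatment of the intermediate range $1\le|x|_\rho<2\sqrt{t}$, but there is a genuine gap in the far-field upper bound. The Gaussian envelope $p_{\eps,x}(s)\lesssim |x|^2/(s^2\log|x|)\exp(-c|x|^2/s)$ cannot be ``obtained by differentiating Proposition~\ref{P:5.1}(i)'': from a CDF inequality $F(s)\le G(s)$ one cannot infer $F'(s)\le G'(s)$. You correctly observe that Uchiyama's remainder for $|x|^2\ge t$ is only polynomial in $|x|$ and hence useless here, but your proposed replacement is not justified by anything in the paper. A valid pointwise Gaussian bound on the hitting-time density would require an independent argument (for instance via the normal derivative of $p_{D_0}$ on $\partial B_\eps$), which lies outside the toolkit assembled so far.

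The paper sidesteps this difficulty by a different mechanism in the range $|x|_\rho\ge 2\sqrt{t}$. Rather than running all the way to $a^*$, it stops at radius $2$:
\[
p(t,x,a^*)=\IE_x\bigl[p\bigl(t-\sigma_{B_\rho(a^*,2)},X_{\sigma_{B_\rho(a^*,2)}},a^*\bigr);\ \sigma_{B_\rho(a^*,2)}<t\bigr],
\]
bounds $p(r,z,a^*)$ for $z\in\partial B_\rho(a^*,2)$ by $r^{-1}e^{-c/r}$ using the already-established Case~1 together with the small-time Theorem~\ref{T:4.7}, and then recognises $r^{-1}e^{-c/r}$ as comparable to the free planar kernel $p^0(r,W_\sigma,0)$. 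Since $0\in B_e(0,2+\eps)$, the strong Markov identity $\IE^0_x[p^0(T-\sigma,W_\sigma,0);\,\sigma<T]=p^0(T,x,0)$ holds exactly for two-dimensional Brownian motion, collapsing the expectation to $p^0(ct,x,0)\asymp t^{-1}e^{-|x|^2/(2ct)}$. This comparison avoids any pointwise density estimate. The paper also treats $0<|x|_\rho<1$ by a separate Chapman--Kolmogorov argument, splitting $\int_E p(t/2,a^*,y)p(t/2,y,x)\,m_p(dy)$ over regions of $y$; note that Proposition~\ref{P:5.1} requires $|x|\ge 1+\eps$, so your decomposition does not directly cover this range either.
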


\begin{proof}
When $t\in [1, 8]$, the estimates follows from  Theorem \ref{T:4.7}. So it remains to establish
the estimates for $t>8$.
We do this by considering three cases.  
 Note that $p(t, a^*, x)=p(t, x, a^*)$.
 
\noindent {\it Case 1. } $1\le|x|_\rho<2\sqrt{t}$. 
We have by Theorem \ref{T:5.10} and Proposition \ref{P:5.1},  
\begin{align*}
p(t, x, a^*)&= \int_0^{t/2}p(t-s, a^*, a^*)\IP_x(\sigma_{a^*}\in ds) 
+\int_{t/2}^t p(t-s, a^*, a^*) \IP_{x}(\sigma_{a^*}\in ds)
\\
&\asymp \frac1{t} \IP_x(\sigma_{a^*}\le t/2)+\frac{\log |x|}{t(\log t)^2}\int_{0}^{t/2}p(s,a^*,a^*)ds
\\
& {\asymp} \frac1t\left(1-\frac{\log |x|}{\log \sqrt{t}}\right)+\frac{\log |x|}{t (\log t)^2}
 \int_0^{t/2} \left( \frac1{\sqrt{s}} \wedge \frac1s\right) ds 
\\
&\asymp \frac{1}{t}\left(1-\frac{\log |x|}{\log \sqrt{t}}\right)+\frac{\log |x|}{t\log t} \asymp \frac{1}{t}.
\end{align*}

\noindent {\it Case 2. } $|x|_\rho\ge2\sqrt{t}$.  In the following computation, $B_\rho(x,r):=\{y\in E: \rho(x, y)< r\}$, and $B_e(x,r) :=\{y\in E: |y-x|<r\}$.
We denote by $\{W; \IP^0_x, x\in \IR^2\}$ 
  two-dimensional Brownian motion and $p^0(t, x, y)=(2\pi t)^{-1} \exp (-|x-y|^2/2t)$ its transition density.
Since
\begin{equation*}
p(t,x,a^*)=\IE_x \left[ p(t-\sigma_{B_\rho(a^*,2)} , X_{\sigma_{B_\rho(a^*,2)}}, a^*);
\sigma_{B_\rho(a^*,2)}<t \right], 
\end{equation*}
it follows from Case 1,  Theorem \ref{T:4.5} and Theorem \ref{T:4.7} that  there is $c_1 \geq 1$ so that 
\begin{eqnarray*}
  p(t,x,a^*) &\lesssim & \IE_x \left[ \left( t-\sigma_{B_\rho(a^*,2)} \right)^{-1} \, e^{-\frac{(2+2\eps)^2}{2c_1(t-\sigma_{B_\rho(a^*,2)})}};
 \sigma_{B_\rho(a^*,2)}<t \right]  \\
 &= & \IE^0_x \left[ \left(t-\sigma_{B_e(0,2+\eps)} \right)^{-1} \, e^{-\frac{ (2+2\eps )^2 }{2c_1 (t-\sigma_{B_e(0,2+\eps)})}};
 \sigma_{B_e(0,2+\eps)}<t \right] \\
 &\lesssim & \IE^0_x \left[ \left(c_1t-\sigma_{B_e(0,2+\eps)} \right)^{-1} \, e^{-\frac{ (2+ \eps )^2 }{2 (c_1 t-\sigma_{B_e(0,2+\eps)})}};
 \sigma_{B_e(0,2+\eps)}<t \right]  \\
 &\leq   & \IE^0_x \left[ \left(c_1 t-\sigma_{B_e(0,2+\eps)} \right)^{-1} \, e^{-\frac{ (2+\eps )^2 }{2 (c_1 t-\sigma_{B_e(0,2+\eps)})}};
 \sigma_{B_e(0,2+\eps)}< c_1 t \right]  \\
 &\asymp & \IE^0_x \left[  p^0(c_1  t-\sigma_{B_e(0,2+\eps)}), W_{\sigma_{B_e(0,2+\eps)}}, 0);
 \sigma_{B_e(0,2+\eps)}< c_1 t \right] \\
 &=& p^0(c_1 t, x, 0) =(2\pi c_1 t)^{-1} \exp (-|x|^2/2c_1t). 
\end{eqnarray*}
 Similarly, for the lower bound estimate,  
  it follows from Case 1, Theorem \ref{T:4.5} and Theorem \ref{T:4.7} that  there is $c_2 \in (0, 1]$ so that 
  \begin{eqnarray*}
  p(t,x,a^*) &\gtrsim  & \IE_x \left[ \left( t-\sigma_{B_\rho(a^*,2)} \right)^{-1}\, e^{-\frac{2^2}{2c_2(t-\sigma_{B_\rho(a^*,2)})}};
 \sigma_{B_\rho(a^*,2)}<t \right]  \\
 &\geq  & \IE^0_x \left[ \left(t-\sigma_{B_e (0,2+\eps)} \right)^{-1} \, e^{-\frac{ 2^2 }{2c_2 (t-\sigma_{B_e (0,2+\eps)})}};
 \sigma_{B_e(0,2+\eps)}<c_2 t \right] \\
 & \gtrsim & \IE^0_x \left[ \left(c_2t-\sigma_{B_e(0,2+\eps)} \right)^{-1} \, e^{-\frac{ (2+\eps )^2 }{2  (c_2 t-\sigma_{B_e(0,2+\eps)})}};
 \sigma_{B_e(0,2+\eps)}<c_2 t \right]  \\
  &\asymp & \IE^0_x \left[  p^0(c_2  t-\sigma_{B_e(0,2+\eps)}), W_{\sigma_{B_e(0,2+\eps)}}, 0);
 \sigma_{B_e(0,2+\eps)}< c_2 t \right] \\
 &=& p^0(c_2 t, x, 0) =(2\pi c_2 t)^{-1} \exp (-|x|^2/2c_2t). 
\end{eqnarray*}
  Realizing that $|x|_\rho>2\sqrt{t}>4\sqrt{2}$ implies that $|x|_\rho\asymp |x|$, we get the desired estimates
  in this case.
  
\noindent  {\it Case 3. } $0<|x|_\rho<1$. Note that by Proposition \ref{P:3.2}, 
\begin{equation}\label{521408}
\int_{y\in D_0\cap B_\rho(a^*, 2)}p(t/2, a^*,y)p(t/2, y, x)m_p(dy) \le \int_{D_0\cap B_\rho(a^*, 2)}\left(\frac{c_1}{\sqrt{t}}\right)^2m_p(dy) 
\asymp \frac{1}{t}, 
\end{equation}
while by Cases 1 and 2 above, 
\begin{equation}\label{521409}
  \int_{D_0\cap B_\rho(a^*, 2)^c}p(t/2, x,y)p(t/2, a^*, y)m_p(dy)  
 \le \sup_{y\in D_0\cap B_\rho(a^*, 2)^c}p(t/2,a^*,y) 
\lesssim \frac1t .   
\end{equation}
On the other hand, by Theorem \ref{T:5.10} again, 
\begin{align}
\nonumber \int_{\IR_+}p(t/2, a^*, y)p(t/2, y,x)m_p(dy)
&=\IE_x \left[ p(t/2, X_{t/2}, a^*); X_{t/2} \in \IR_+ \right]
\\
\nonumber &=\IE_x \left[ p(t/2, X_{t/2}, a^*);  \sigma_{a^*} <t/2 \hbox{ and } X_{t/2} \in \IR_+ \right]
\\
\nonumber &=\IE_x \left[ \IE_{a^*} [p(t/2, X_{t/2-s}, a^*) ] |_{s=\sigma_{a^*}};  \sigma_{a^*} <t/2 \hbox{ and } X_{t/2} \in \IR_+ \right]
\\
\nonumber & = \IE_x \left[   p(t-\sigma_{a^*}, a^*, a^*) ;  \sigma_{a^*} <t/2
\hbox{ and }  X_{t/2}\in \IR_+  \right]
\\
&\asymp \frac{1}{t}\,  \IP_x\left(\sigma_{a^*}\le t/2 
\hbox{ and } X_{t/2}\in \IR_+ \right) \leq  \frac{1}{t}.   \label{521410} 
\end{align}
The estimates  \eqref{521408}, \eqref{521409} and \eqref{521410} imply that 
\begin{align*}
\nonumber p(t,a^*, x)  = & \int_{D_0\cap B_\rho(a^*, 2)}p(t/2, a^*, y)p(t/2, y,x)m_p(dy)
 +\int_{D_0\cap B_\rho^c(a^*, 2)}p(t/2, a^*, y)p(t/2, y,x)m_p(dy)   \\
 &+\int_{\IR_+}p(t/2, a^*, y)p(t/2, y,x)m_p(dy) 
\lesssim \frac1t.
 \end{align*}
On the other hand, there is a constant $c_3>0$ so that $\IP_x (\sigma_{a^*}\leq 1)\geq c_3$
for all $x\in D_0$ with $|x|_\rho \leq 1$. Hence we have by Theorem \ref{T:5.10} that 
for $t\geq 2$ and $x\in D_0$ with $|x|_\rho \leq 1$, 
$$ 
p(t, a^*, x)=p(t, x, a^*)\geq \int_0^1 p(t-s, a^*, a^*)\IP_x (\sigma_{a^*}\in ds)
\gtrsim \frac1t \IP_x (\sigma_{a^*}\leq 1) \gtrsim \frac1t.
$$
In conclusion, we have $p(t,  a^*, x)\asymp \frac1t$ for $t\geq 4$ and $x\in D_0$ with $|x|_\rho \leq 1$.
This completes the proof of the proposition. 
\end{proof}

We are now in the position to derive estimates on $p(t,x,y)$ for   $(x,y)\in D_0\times D$ and  $t\in [8, \infty)$, by 
using the two-sided estimate of $p(t, x, a^*)$ and the Markov property of $X$.  

\begin{thm} \label{154}
There exist constants $C_i>0$, $15 \le i \le 18$, such that the following estimate holds:
\begin{equation*}
\frac{C_{15}}{t}e^{- {C_{16}\rho(x,y)^2}/{t}} \le p(t,x,y)\le \frac{C_{17}}{t}e^{- {C_{18}\rho(x,y)^2}/{t}}, 
\quad (t,x,y)\in [8, \infty)\times D_0\times D_0.
\end{equation*}
\end{thm}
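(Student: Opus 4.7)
Following the template of Theorem \ref{T:4.7}, I would decompose
$$p(t,x,y) = p_{D_0}(t,x,y) + \overline{p}_{D_0}(t,x,y),$$
where, by strong Markov at $\sigma_{a^*}$,
$$\overline{p}_{D_0}(t,x,y) = \int_0^t \IP_x(\sigma_{a^*}\in ds)\, p(t-s,a^*,y).$$
This separates paths from $x$ to $y$ into those that never visit $a^*$ and those that do. Each term is then attacked using Proposition \ref{P:5.13} (two-sided estimates for $p(t,a^*,\cdot)$), Proposition \ref{P:5.1} (hitting-time bounds for the disk), and standard domination by the free 2D Brownian heat kernel.

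\textbf{Upper bound.} The term $p_{D_0}(t,x,y)$ is dominated by the free 2D heat kernel $(2\pi t)^{-1}e^{-|x-y|^2/(2t)}$, which is $\lesssim (1/t)e^{-c\rho(x,y)^2/t}$ since $|x-y|\ge \rho(x,y)$. For $\overline{p}_{D_0}$, split the $s$-integral at $t/2$. On $[0,t/2]$, Proposition \ref{P:5.13} gives $p(t-s,a^*,y)\lesssim (1/t)e^{-c|y|_\rho^2/t}$, while Proposition \ref{P:5.1} bounds $\IP_x(\sigma_{a^*}\le t/2)$; the $1/\log |x|$ correction from \eqref{hitting-Bessel2-PDF1} is harmless because on the relevant range $|x|^2\ge t\ge 8$ one has $\log|x|\ge \tfrac12\log 8$, a uniform constant. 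On $[t/2,t]$, the derivative estimate \eqref{hitting-Bessel2-derivative} together with Proposition \ref{1139} controls $\int_{t/2}^t \IP_x(\sigma_{a^*}\in ds) p(t-s,a^*,y)$. Combining yields $\overline{p}_{D_0}(t,x,y) \lesssim (1/t)e^{-c(|x|_\rho+|y|_\rho)^2/t}$, and adding the two pieces gives the upper bound since $\rho(x,y)=|x-y|\wedge(|x|_\rho+|y|_\rho)$.

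\textbf{Lower bound.} The strategy is to produce a lower bound by whichever of three mechanisms dominates. \emph{(a) Tube bound:} when both $|x|_\rho, |y|_\rho \gtrsim \sqrt{t}$ and the straight segment from $x$ to $y$ avoids $\overline{B_e(0,\eps)}$, a standard tube argument inside $D_0$ yields $p_{D_0}(t,x,y)\gtrsim (1/t)e^{-c|x-y|^2/t}$. \emph{(b) Pole bound:} restricting $\sigma_{a^*}\in [0,t/4]$ in the formula for $\overline{p}_{D_0}$, using Proposition \ref{P:5.13} to get $p(t-s,a^*,y)\gtrsim (1/t)e^{-c|y|_\rho^2/t}$ on this range, and Proposition \ref{P:5.1} to bound $\IP_x(\sigma_{a^*}\le t/4)\gtrsim e^{-c|x|^2/t}$ (again absorbing the $1/\log|x|$ factor), produces $\overline{p}_{D_0}(t,x,y)\gtrsim (1/t)e^{-c(|x|_\rho+|y|_\rho)^2/t}$. \emph{(c) Two-step bound:} in a mixed regime (say $|x|_\rho\asymp\sqrt{t}$ but $|y|_\rho\le 1$) where neither of the above yields the clean $1/t$ prefactor, I would insert an intermediate annulus $A=\{z\in D_0:\sqrt{t}/2\le|z|_\rho\le \sqrt{t}\}\cap(\text{angular wedge near }x)$ with $m_p(A)\asymp t$, apply Chapman--Kolmogorov $p(t,x,y)\ge\int_A p(t/2,x,z) p(t/2,z,y)\,m_p(dz)$, bound $p(t/2,x,z)\gtrsim 1/t$ by a tube bound (both points at scale $\sqrt{t}$ from $\partial B_\eps$) and $p(t/2,z,y)\gtrsim 1/t$ by the pole bound from $y$ (using $\IP_y(\sigma_{a^*}\le t/4)\gtrsim 1$ when $|y|_\rho\le 1$), to arrive at $p(t,x,y)\gtrsim t\cdot t^{-2}=1/t$.

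\textbf{Main obstacle.} The hardest step is the lower bound in the mixed regime (c): neither the Dirichlet heat kernel $p_{D_0}$ alone (which suffers near-boundary killing) nor the $\overline{p}_{D_0}$ representation (which loses a $\log t$ when $|x|\asymp\sqrt{t}$, since the survival probability of 2D Brownian motion up to time $t$ from distance $\sqrt{t}$ is only $\asymp 1/\log t$) produces the desired $1/t$ prefactor. The two-step Chapman--Kolmogorov through the annular region $A$ of measure $\asymp t$ circumvents this, but requires verifying uniform lower bounds on both half-step transition densities, which is where the previously established pole estimate (Proposition \ref{P:5.13}) and the symmetry $p(t/2,z,y)=p(t/2,y,z)$ are indispensable. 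A secondary difficulty is the careful case analysis needed to cover all geometric configurations of $(x,y,t)\in D_0\times D_0\times[8,\infty)$ and verify that at least one of (a)--(c) supplies a matching lower bound in each.
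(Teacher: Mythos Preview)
Your decomposition $p=p_{D_0}+\overline p_{D_0}$ is exactly the one the paper uses, and your upper bound sketch is workable. The real difference is in the lower bound for the case $|x|_\rho+|y|_\rho>2$.

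The paper avoids your case analysis (a)--(c) entirely by a comparison trick. From Proposition~\ref{P:5.13}, for $|y|_\rho>1$ one has $p(t-s,a^*,y)\asymp (t-s)^{-1}e^{-c|y|_\rho^2/(t-s)}$, which is (up to a time rescaling) comparable to the free two-dimensional kernel $p^0(c(t-s),w,y)$ for any $w\in\partial B_e(0,\eps)$. Feeding this back into $\overline p_{D_0}(t,x,y)=\int_0^t p(t-s,a^*,y)\,\IP_x(\sigma_{a^*}\in ds)$ shows that $\overline p_{D_0}(t,x,y)$ is sandwiched between $\overline p^{\,0}_{D_0}(c_5t,x,y)$ and $p^0(c_4t,x,y)$, where $\overline p^{\,0}_{D_0}$ is the corresponding object for free two-dimensional Brownian motion. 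Since $p_{D_0}=p^{\,0}_{D_0}$ and $p^{\,0}_{D_0}+\overline p^{\,0}_{D_0}=p^0$, one recombines to get $p(t,x,y)\asymp p^0(ct,x,y)\asymp t^{-1}e^{-c|x-y|^2/t}$ directly, using also the large-time Dirichlet estimate \eqref{e:5.25} from \cite{Z3}. No log-loss ever appears because nothing is estimated separately.

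Two remarks on your route. First, your ``main obstacle'' is a phantom: $\overline p_{D_0}(t,x,y)$ is symmetric (it equals $p-p_{D_0}$), so in the mixed case $|x|_\rho\asymp\sqrt t$, $|y|_\rho\le 1$ you can run your mechanism (b) from the $y$-side, where $\IP_y(\sigma_{a^*}\le t/4)\gtrsim 1$, and pair it with $p(t-s,a^*,x)\gtrsim t^{-1}e^{-c|x|_\rho^2/t}$ from Proposition~\ref{P:5.13}. This already gives $\overline p_{D_0}(t,x,y)\gtrsim 1/t$ with no log and no need for (c). Second, your (a) as stated has a genuine gap: when both $|x|_\rho,|y|_\rho\gtrsim\sqrt t$ but the straight segment from $x$ to $y$ passes through $B_e(0,\eps)$, you excluded this from (a) and neither (b) nor (c) covers it. You need either a curved tube or, more simply, the large-time exterior-domain estimate $p_{D_0}(t,x,y)\gtrsim(|x|_\rho\wedge1)(|y|_\rho\wedge1)t^{-1}e^{-c|x-y|^2/t}$ from \cite{Z3}; then both boundary factors are $1$ and, since $|x-y|\le\rho(x,y)+2\eps$, the exponent converts correctly.
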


\begin{proof}  
As before,  denote by $\{W; \IP^0_x, x\in \IR^2\}$ 
  two-dimensional Brownian motion and $p^0(t, x, y)=(2\pi t)^{-1} \exp (-|x-y|^2 /2t)$ its transition density.
We first note that, 
as a special case of \cite[Theorem 1.1(a)]{Z3}, 
there are constants $c_1>c_2>0$ so that 
for $t\geq 1$ and $x, y\in D_0$, 
$$ 
\left( |x|_\rho \wedge 1 \right) \left(|y|_\rho \wedge 1 \right) t^{-1} e^{-c_1 |x-y|^2 /t}
\lesssim p^0_{D_0} (t, x, y) \lesssim 
\left( |x|_\rho \wedge 1 \right) \left(|y|_\rho \wedge 1 \right)
 t^{-1} e^{-c_2 |x-y|^2 /t} .
$$
It follows that there is $c_3\in (0, 1]$ so that 
\begin{equation}\label{e:5.25} 
 p^0_{D_0} (t, x, y) \gtrsim p^0_{D_0} (c_3t, x, y) \quad \hbox{for every } t\geq 1 \hbox{ and } x, y\in D_0. 
\end{equation}

We will prove the theorem by considering two different cases.
\\
{\it Case 1. } $|x|_\rho+|y|_\rho >2$.  
Without loss of generality, we assume $|y|_\rho>1$. 
In this case, it is not hard to verify that
$\rho(x,y)\asymp |x-y|$.
 Recall from \eqref{e:1.6}-\eqref{e:1.7} that  for $x, y\in D_0$, 
 $$
 \overline{p}_{D_0}(t,x,y):=p(t, x, y)- p_{D_0} (t, x, y)= \IE_x \left[ p(t-\sigma_{a^*}, a^*, y); \sigma_{a^*}<t \right].
 $$ 
By    Proposition \ref{P:5.13} and the assumption that   $\eps \in (0, 1/4]$, there are constants $c_4\geq 1$ so that
for every $x, y\in D_0$ with $|y|_\rho >1$
\begin{eqnarray}
\overline{p}_{D_0}(t,x,y) 
&\lesssim& \int_0^t  \frac{1}{t-s}e^{-\frac{ (|y|_\rho +3\eps)^2}{2c_4(t-s)}} \,  \IP_x\left(\sigma_{a^*}\in ds\right) 
 \nonumber \\
&\leq & \int_0^{c_1t}  \frac{1}{c_4t-s}e^{-\frac{ (|y|_\rho +2\eps)^2}{2(c_4t-s)}} \,  \IP^0_x\left(\sigma_{B_e(0, \eps)}\in ds\right) 
 \nonumber \\
&\lesssim & \IE_x^0 \left[ p^0(c_4t-s, W_{\sigma_{B_e(0, \eps)}}, y); \sigma_{B_e(0, \eps)}<c_4 t \right] 
 \nonumber \\
&\leq & p^0 (c_4 t, x, y)  .   \label{e:5.26}
\end{eqnarray}
Similarly, there is a constant $c_5 \in (0, c_3]$ so that 
\begin{eqnarray}
\overline{p}_{D_0}(t,x,y) 
&\gtrsim & \int_0^t  \frac{1}{t-s}e^{-\frac{  (|y|_\rho -\eps)^2}{2c_5(t-s)}} \,  \IP_x\left(\sigma_{a^*}\in ds\right)   \nonumber \\
&\geq & \int_0^{c_5t}  \frac{1}{c_5t-s}e^{-\frac{  |y|_\rho^2}{2(c_5t-s)}} \,  \IP^0_x\left(\sigma_{B_e(0, \eps)}\in ds\right)  \nonumber \\
&\gtrsim & \IE_x^0 \left[ p^0(c_5t-s, W_{\sigma_{B_e(0, \eps)}}, y); \sigma_{B_e(0, \eps)}<c_5t \right]  \nonumber \\
&= & \overline p^0_{D_0}  (c_5t, x, y) .   \label{e:5.27}
\end{eqnarray}
Since $p_{D_0}(t, x, y)=p^0_{D_0}(t, x, y)$ and $c_4\geq 1$, we have from \eqref{e:5.26} that 
$$ p(t, x, y)= p_{D_0}(t, x, y)+ \overline p_{D_0}(t, x, y)  \lesssim p^0(t, x, y)+p^0(c_4t, x, y)\lesssim p^0(c_4t, x, y)
\lesssim t^{-1} e^{-\rho (x, y)^2/2c_4t} .
$$
On the other hand, we have by \eqref{e:5.25} and \eqref{e:5.27} that  for every $t\geq 1$ and $x, y\in D_0$ satisfying $|x|_\rho+|y|_\rho>2$, 
$$ 
p(t, x, y)  
\gtrsim p^0_{D_0} (c_5 t, x, y) + \overline p^0_{D_0}(c_5 t, x, y)
=p^0(c_5t, x, y) \gtrsim t^{-1}  e^{-|x-y|^2/2c_5t}  \gtrsim t^{-1}  e^{-\rho (x, y)^2/c_5t} ,
$$
where the last ``$\gtrsim$" is due to the fact that $|x-y|/\sqrt{2}\le \rho(x,y)$, which can be verified easily from the assumptions that $|x|_\rho+|y|_\rho>2$ and $\eps\le 1/4$. This establishes the desired two-sided estimates in this case. 
 
\noindent {\it Case 2. }  $|x|_\rho+|y_\rho \le 2$. 
In this case, it suffices to show that $p(t, x, y)\asymp t^{-1}$ for $t\geq 8$. 
The proof  is similar to that of Case $3$ of Proposition \ref{P:5.13}.  
By Proposition \ref{P:3.2},  for $t\geq 8$, 
\begin{equation}\label{e:5.28}
\int_{y\in D_0\cap B_\rho(a^*, 2)}p(t/2, x,z)p(t/2, z, y)m_p(dy) \le \int_{D_0\cap B_\rho(a^*, 2)}\left(\frac{c_6}{\sqrt{t}}\right)^2m_p(dy) 
\asymp \frac{1}{t}, 
\end{equation}
while by Case 1,
\begin{equation} \label{e:5.29}
  \int_{D_0\cap B(a^*, 2)^c}p(t/2, x,z)p(t/2, z, y)m_p(dz)  \le \sup_{z\in D_0\cap B(a^*, 2)^c}p(t/2,  y,z) 
\lesssim \frac1t. 
\end{equation}
On the other hand, by Proposition \ref{P:5.13},  for $t\geq 8$, 
\begin{align}
\nonumber \int_{\IR_+}p(t/2, x, z)p(t/2, z, y)m_p(dz)
&=\IE_x \left[ p(t/2, X_{t/2}, y); X_{t/2} \in \IR_+ \right]
\\
\nonumber &=\IE_x \left[ p(t/2, X_{t/2}, y);  \sigma_{a^*} <t/2 \hbox{ and } X_{t/2} \in \IR_+ \right]
\\
\nonumber &=\IE_x \left[ \IE_{a^*} [p(t/2, X_{t/2-s}, y) ] |_{s=\sigma_{a^*}};  \sigma_{a^*} <t/2 \hbox{ and } X_{t/2} \in \IR_+ \right]
\\
\nonumber & = \IE_x \left[   p(t-\sigma_{a^*}, a^*, y) ;  \sigma_{a^*} <t/2
\hbox{ and }  X_{t/2}\in \IR_+  \right]
\\
&\asymp \frac{1}{t}\, \IP_x\left(\sigma_{a^*}\le t/2 
\hbox{ and } X_{t/2}\in \IR_+ \right) \leq  \frac{1}{t}.   \label{e:5.30} 
\end{align}
The estimates  \eqref{e:5.28}-\eqref{e:5.30}   imply that 
\begin{align*}
\nonumber p(t,a^*, x)  = & \int_{D_0\cap B_\rho(a^*, 2)}p(t/2, a^*, y)p(t/2, y,x)m_p(dy)
 +\int_{D_0\cap B_\rho^c(a^*, 2)}p(t/2, a^*, y)p(t/2, y,x)m_p(dy)   \\
 &+\int_{\IR_+}p(t/2, a^*, y)p(t/2, y,x)m_p(dy) \\
\lesssim & \frac1t.
 \end{align*}
On the other hand, there is a constant $c_7>0$ so that $\IP_x (\sigma_{a^*}\leq 1)\geq c_7$
for all $x\in D_0$ with $|x|_\rho \leq 2$. Hence we have by Proposition \ref{P:5.13} that 
for $t\geq 2$ and $x\in D_0$ with $|x|_\rho \leq 1$, 
$$ 
p(t, x, y) \geq \int_0^1 p(t-s, a^*, y)\IP_x (\sigma_{a^*}\in ds)
\gtrsim \frac1t \IP_x (\sigma_{a^*}\leq 1) \gtrsim \frac1t.
$$
Therefore we have $p(t,  x, y)\asymp \frac1t$ for $t\geq 8$ and $x, y\in D_0$ with $|x|_\rho +|y|_\rho \leq 2$.
This completes the proof of the theorem.   
\end{proof}

 \begin{thm}\label{T:5.15}
There exist constants $C_i>0$, $19 \le  i \le 26$, such that the following estimates hold for $(t,x,y)\in [4, \infty)\times \IR_+\times D_0$:
when $|y|_\rho<1$,
\begin{equation}\label{1233}
\frac{C_{19}  }{t} \left( 1 + \frac{|x| \log t}{\sqrt{t}} \right)
e^{- {C_{20}\rho(x,y)^2}/{t}} \le p(t,x,y)\le \frac{C_{21}  }{t} \left( 1 + \frac{|x| \log t}{\sqrt{t}} \right)    
e^{- {C_{22}\rho(x,y)^2}/{t}};
\end{equation}
while for  $|y|_\rho\ge 1$,
\begin{eqnarray}
  &&  \frac{C_{23}}{t}
 \left( 1 + \frac{|x|}{\sqrt{t}}  \log \left( 1+\frac{\sqrt{t}}{|y|_\rho} \right)   \right)
 e^{- {C_{24}\rho(x,y)^2}/{t}}  \nonumber \\
&\le & p(t,x,y)    \le  \frac{C_{25}}{t}\left( 1 + \frac{|x|}{\sqrt{t}}  \log \left( 1+\frac{\sqrt{t}}{|y|_\rho} \right)   \right) 
e^{- {C_{26}\rho(x,y)^2}/{t}}.
\label{e:5.32}
\end{eqnarray}
\end{thm}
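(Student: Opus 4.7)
The plan is to exploit the strong Markov property at $\sigma_{a^*}$. Since $x\in\IR_+$ and $y\in D_0$, every trajectory of $X$ from $x$ to $y$ must first visit $a^*$, and prior to $\sigma_{a^*}$ the process moves as a standard one-dimensional Brownian motion on $\IR_+$. Combining this with the explicit hitting-time density \eqref{e:5.8} yields
\[
p(t,x,y) \;=\; \int_0^t p(t-s, a^*, y)\,\frac{|x|}{\sqrt{2\pi s^3}}\, e^{-x^2/(2s)}\,ds.
\]
I would split this integral at $s=t/2$ and estimate the two pieces using the already-established bounds on $p(\cdot, a^*, y)$ from Proposition \ref{P:5.13} (for times $\ge 1$) and Theorem \ref{T:4.6} (for small times).

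On $[0, t/2]$ we have $t-s\asymp t$, so Proposition \ref{P:5.13} gives $p(t-s, a^*, y)\asymp t^{-1}\,e^{-c|y|_\rho^2/t}$ uniformly in $s$. The contribution of this range is therefore comparable to $t^{-1}\,e^{-c|y|_\rho^2/t}\,\IP_x(\sigma_{a^*}\le t/2)$. Writing $\IP_x(\sigma_{a^*}\le t/2)=\sqrt{2/\pi}\int_{|x|\sqrt{2/t}}^\infty e^{-r^2/2}\,dr$ via the standard reflection-principle formula and invoking Lemma \ref{L:5.11} gives $\IP_x(\sigma_{a^*}\le t/2)\asymp (1+|x|/\sqrt{t})^{-1}\,e^{-x^2/t}$. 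Using $\rho(x,y)^2 \ge x^2+|y|_\rho^2$ (which follows from $\rho(x,y)=|x|+|y|_\rho$), this piece contributes at least $t^{-1}\,e^{-C\rho(x,y)^2/t}$ once $C$ is chosen large enough to absorb the polynomial factor $(1+|x|/\sqrt{t})^{-1}$.

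On $[t/2, t]$ the hitting-time density is comparable to $\frac{|x|}{t^{3/2}}\,e^{-c x^2/t}$ up to adjusting Gaussian constants (the integrand varies between $e^{-x^2/t}$ and $e^{-x^2/(2t)}$ as $s$ ranges over $[t/2,t]$, giving different exponents on the upper and lower sides). After the substitution $r=t-s$, the contribution becomes $\frac{|x|}{t^{3/2}}\,e^{-c x^2/t}\, I(t,y)$, where $I(t,y):=\int_0^{t/2} p(r, a^*, y)\,dr$. When $|y|_\rho<1$, the integral splits into $\int_0^1$ (with $p(r,a^*,y)\asymp r^{-1/2}$ from Theorem \ref{T:4.6}) and $\int_1^{t/2}$ (with $p(r,a^*,y)\asymp r^{-1}$ from Proposition \ref{P:5.13}), yielding $I(t,y)\asymp \log t$, as in the proof of Proposition \ref{1139}. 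When $|y|_\rho\ge 1$, the substitution $u = c|y|_\rho^2/r$ converts the dominant tail piece $\int_1^{t/2}r^{-1}e^{-c|y|_\rho^2/r}\,dr$ into the exponential integral $\int_{2c|y|_\rho^2/t}^{c|y|_\rho^2} u^{-1}e^{-u}\,du$, which evaluates to $\asymp \log(\sqrt{t}/|y|_\rho)$ when $|y|_\rho\le\sqrt{t}$ and to $\asymp (t/|y|_\rho^2)\,e^{-2c|y|_\rho^2/t}$ when $|y|_\rho>\sqrt{t}$; in either subcase the small-$r$ contribution is absorbed, yielding $I(t,y)\asymp e^{-c|y|_\rho^2/t}\,\log\!\bigl(1+\sqrt{t}/|y|_\rho\bigr)$.

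Adding the two contributions and once more using $\rho(x,y)^2\ge x^2+|y|_\rho^2$ to absorb polynomial-in-$(|x|/\sqrt{t})$ prefactors into the Gaussian exponent, one arrives at
\[
p(t,x,y)\;\asymp\;\frac{1}{t}\Big(1+\frac{|x|}{\sqrt{t}}H(t,y)\Big)\,e^{-C\rho(x,y)^2/t},
\]
with $H(t,y)=\log t$ if $|y|_\rho<1$ and $H(t,y)=\log(1+\sqrt{t}/|y|_\rho)$ if $|y|_\rho\ge 1$, which is exactly \eqref{1233}--\eqref{e:5.32}. The most delicate bookkeeping occurs in the regime $|y|_\rho>\sqrt{t}$: there the actual asymptotic $(t/|y|_\rho^2)\,e^{-2c|y|_\rho^2/t}$ for $I(t,y)$ must be compared against the target $\log(1+\sqrt{t}/|y|_\rho)\,e^{-C|y|_\rho^2/t}\asymp (\sqrt{t}/|y|_\rho)\,e^{-C|y|_\rho^2/t}$, which forces a slight loosening of the Gaussian exponent; as in the proof of Theorem \ref{154}, this is admissible because the exponent $c'$ on the right can be chosen strictly larger than $2c$, and the ensuing inequality $\sqrt{t}/|y|_\rho\gtrsim e^{-(c'-2c)|y|_\rho^2/t}$ holds uniformly in the regime $|y|_\rho>\sqrt{t}$.
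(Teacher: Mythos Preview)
Your proposal is correct and follows essentially the same approach as the paper: the representation via the hitting-time density \eqref{e:5.8}, the split at $s=t/2$, the use of Proposition \ref{P:5.13} and Lemma \ref{L:5.11} on the first piece, and the analysis of $\int_0^{t/2} p(r,a^*,y)\,dr$ (after the substitution $r=t-s$) by further splitting at $r=1$ in Case 1 and by a logarithmic-integral substitution in Case 2. The paper organizes Case 2 via the change of variable $r=|y|_\rho/\sqrt{t-s}$ and an explicit estimate on $\int_\lambda^\infty r^{-1}e^{-ar^2}\,dr$ rather than your exponential-integral form, but this is the same computation in different coordinates.
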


\begin{proof}
First, note that by Proposition \ref{P:5.13}
and Theorem \ref{T:4.7}, 
\begin{equation}\label{e:5.33}
 \frac{1}{t}e^{-{c_1|y|_\rho^2}/{t}}\lesssim p(t,x,y)\lesssim \frac{1}{t}e^{-{c_2|y|_\rho^2}/{t}}  \quad \hbox{for }
 t \geq 1 \hbox{ and } y\in D_0 
  \end{equation}
and 
\begin{equation}\label{e:5.34}
\frac{1}{\sqrt{t}}e^{- {c_3|y|_\rho^2}/{t}} \lesssim  p(t,a^*,y)\lesssim \frac{1}{\sqrt{t}}e^{- {c_4|y|_\rho^2}/{t}}  \quad 
\hbox{for } t\leq 1 \hbox{ and } y\in D_0 \hbox{ with } |y|_\rho \leq 1.
\end{equation}
By \eqref{e:5.8}, 
\begin{equation}\label{e:5.35}
p(t,x,y)  =  \int_0^tp(t-s,a^*,y) \IP_x(\sigma_{a^*}\in ds)
=\int_0^t p(t-s,a^*,y) \frac{|x|}{\sqrt{2\pi s^3}}    e^{-x^2/(2s)}  ds   .
\end{equation}
It follows from \eqref{e:5.33} and Lemma \ref{L:5.11}  that for every $y\in D_0$ and $t\geq 4$, 
\begin{eqnarray}
&& \int_0^{t/2} p(t-s,a^*,y) \frac{ |x|}{\sqrt{2\pi s^3}}    e^{-x^2/(2s)}  ds
 \lesssim    -  \frac{1}{t}e^{-{c_{2}|y|_\rho^2}/{t}}\int_{s=0}^{t/2}e^{-\frac{ |x|^2}{2s}}d\left(\frac{|x|}{\sqrt{s}}\right) 
 \nonumber \\
&\lesssim &   \frac1t e^{- c_2 |y|_\rho^2 /t} \frac{1}{1+|x|/\sqrt{t}} e^{-|x|^2/t} \leq \frac 1t 
  e^{-c_3 \rho (x, y)^2/t}.       \label{e:5.36}
\end{eqnarray} 
Similarly, we have
\begin{equation}\label{e:5.37} 
\int_0^{t/2} p(t-s,a^*,y) \frac{x}{\sqrt{2\pi s^3}}    e^{-x^2/(2s)}  ds
\gtrsim  \frac1t    e^{-(|x|^2+c_4 |y|_\rho^2)/t} 
\gtrsim \frac1t  e^{-c_5 \rho (x, y)^2/t}.
\end{equation}
We now consider two cases depending on the range of the values of  $|y|_\rho$.

\noindent {\it Case 1. } $y \in D_0$ with $|y|_\rho<1$.  
In this case, we have by \eqref{e:5.33}
\begin{eqnarray*}
 && \int_{t/2}^{t-1} p(t-s,a^*,y) \frac{x}{\sqrt{2\pi s^3}}    e^{-x^2/(2s)}  ds
\lesssim \int_{t/2}^{t-1} \frac{1}{t-s}e^{-{c_2 |y|_\rho^2}/{(t-s)}}  \frac{|x|}{\sqrt{ s^3}}    e^{-x^2/(2s)}  ds \\
&\lesssim & \frac{|x|}{t^{3/2}} e^{-|x|^2 /2t} \int_{t/2}^{t-1}  \frac{1}{t-s} d s 
\lesssim \frac{|x| \log t }{t^{3/2}} e^{-|x|^2 /(2t)} .
  \end{eqnarray*}
Similarly, we have
$$
\int_{t/2}^{t-1} p(t-s,a^*,y) \frac{x}{\sqrt{2\pi s^3}}    e^{-x^2/(2s)}  ds
\gtrsim \frac{|x| \log t }{t^{3/2}} e^{- |x|^2 /t}. 
$$
On the other hand,  by \eqref{e:5.34}, 
\begin{eqnarray*}
 && \int_{t-1}^t p(t-s,a^*,y) \frac{x}{\sqrt{2\pi s^3}}    e^{-x^2/(2s)}  ds
\lesssim \frac{|x|}{\sqrt{ t^3}}    e^{-x^2/(2t)} \int_{t-1}^t \frac{1}{\sqrt{t-s}}e^{-{c_4 |y|_\rho^2}/{(t-s)}}    ds \\
& \lesssim & \frac{|x|}{\sqrt{ t^3}}    e^{-x^2/(2t)} \int_{t-1}^t \frac{1}{\sqrt{t-s}}     ds 
\lesssim \frac{|x|}{\sqrt{ t^3}}    e^{-x^2/(2t)}  ,
  \end{eqnarray*}
  and similarly
  $$ 
  \int_{t-1}^t p(t-s,a^*,y) \frac{x}{\sqrt{2\pi s^3}}    e^{-x^2/(2s)}  ds
  \gtrsim \frac{|x|}{\sqrt{ t^3}}    e^{-x^2/t}  .
$$
These estimates together with \eqref{e:5.35}-\eqref{e:5.37} establishes \eqref{1233}. 

\medskip

\noindent {\it Case 2. } $y\in D_0$ with $|y|_\rho>1$.  Note that by \eqref{e:5.8}, 
\begin{align}
p(t,x,y)&=\int_0^t p(t-s,a^*,y) \IP_x(\sigma_{a^*}\in ds)\label{e:5.38}
\\
  &=\int_0^{t/2} p(t-s,a^*,y) \frac{|x|}{\sqrt{2\pi s^3}}e^{-{x^2}/{2s}} ds
  + \int_{t/2}^{t}  p(t-s,a^*,y) \frac{|x|}{\sqrt{2\pi s^3}}e^{-{x^2}/{2s}} ds. \nonumber
\end{align}
 By Theorem \ref{T:4.7}, \eqref{e:5.33} and a change of variable $r=|y|_\rho /\sqrt{t-s}$, we have
 \begin{eqnarray*}
   \int_{t/2}^{t}  p(t-s,a^*,y) \frac{|x|}{\sqrt{2\pi s^3}}e^{-{x^2}/{2s}} ds
 &\lesssim &\int_{t/2}^{t}  \frac{1}{t-s}e^{-{c_6|y|_\rho^2}/{(t-s)}} \frac{|x|}{\sqrt{s^3}}e^{-{x^2}/{2s}} ds \\
 &\lesssim & \frac{|x|}{ t^{3/2}}e^{-{x^2}/{2t}}\int_{t/2}^{t}  \frac{1}{t-s}e^{-{c_6|y|_\rho^2}/{(t-s)}}  ds  \\ 
 &= & \frac{|x|}{ t^{3/2}}e^{-{x^2}/{2t}}  \int_{ |y|_\rho/ \sqrt{2/t}}^\infty \frac2{r} e^{-c_6 r^2} dr .
 \end{eqnarray*}
 Note that for  each fixed $a>0$,
 $\int_\lambda^\infty r^{-1} e^{-a r^2} dr =  \int_\lambda^1 r^{-1} e^{-a r^2} dr + \int_1^\infty r^{-1} e^{-c_2 r^2} dr$
 is comparable to $  \log (1/\lambda )$ when $0<\lambda \leq 1/2$. For $\lambda \geq 1/2$,  by Lemma \ref{L:5.11}
 $$ \int_\lambda^\infty r^{-1} e^{-a r^2} dr \leq 2 \int_\lambda ^\infty   e^{-a r^2} dr
 =\frac{2}{\sqrt{2a}} \int_{\sqrt{2a}\lambda}^\infty e^{-s^2/2} ds \lesssim \frac1{1+\sqrt{a}\lambda} e^{-a \lambda^2} 
 \leq e^{-a \lambda^2}
 $$
 and 
 $$ 
 \int_\lambda^\infty r^{-1} e^{-a r^2} dr 
 \gtrsim  \int_\lambda ^\infty   e^{-2 a r^2} dr
 =\frac{1}{2\sqrt{a}} \int_{2\sqrt{ a}\lambda}^\infty e^{-s^2/2} ds \gtrsim \frac1{1+\sqrt{a}\lambda} e^{-2a \lambda^2} 
 \gtrsim   e^{-3a \lambda^2}
 $$
Hence we have 
\begin{equation} 
\log (1+\lambda^{-1}) e^{-3a \lambda^2} \lesssim \int_\lambda^\infty r^{-1} e^{-ar^2} dr \leq \log (1+\lambda^{-1}) e^{- a \lambda^2}
\quad \hbox{for any } \lambda >0.
\end{equation}
Thus we have
 \begin{eqnarray*}
   \int_{t/2}^{t}  p(t-s,a^*,y) \frac{|x|}{\sqrt{2\pi s^3}}e^{-{x^2}/{2s}} ds
 &\lesssim & \frac{|x|}{ t^{3/2}}e^{-{x^2}/{2t}} \log \left(1+\frac{\sqrt{t}}{|y|_\rho} \right) e^{-2c_6 |y|_\rho^2/t} \\
 &\leq & \frac{|x|}{ t^{3/2}} \log \left(1+\frac{\sqrt{t}}{|y|_\rho} \right) e^{-c_7\rho (x, y)^2/t}.
 \end{eqnarray*}
 Similarly, we have
 $$ 
\int_{t/2}^{t}  p(t-s,a^*,y) \frac{|x|}{\sqrt{2\pi s^3}}e^{-{x^2}/{2s}} ds
\gtrsim \frac{|x|}{ t^{3/2}} \log \left(1+\frac{\sqrt{t}}{|y|_\rho} \right) e^{-c_8 \rho (x, y)^2/t}.
$$
These together with \eqref{e:5.36}-\eqref{e:5.37} establishes \eqref{e:5.32}.  
\end{proof}

We will need the following elementary lemma.

\begin{lem}\label{L:5.16}
For every $c>0$, there exists $C_{27} \geq 1$ such that for every  $t\geq 3$ and  $0<y\leq \sqrt{t}$,  
\begin{equation*}
  C_{27}^{-1} \log t \leq \int_{2}^t  \frac1s \left( 1+ \frac{ y\log s}{\sqrt{s} }\right)  e^{- {c|y|^2}/{s}}ds\le C_{27}\log t.
\end{equation*}
\end{lem}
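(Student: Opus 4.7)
My plan is to split $I:=\int_2^t \tfrac{1}{s}\bigl(1+\tfrac{y\log s}{\sqrt{s}}\bigr)e^{-cy^2/s}\,ds=I_1+I_2$, where $I_1:=\int_2^t s^{-1}e^{-cy^2/s}\,ds$ and $I_2:=\int_2^t y\,s^{-3/2}\log s\cdot e^{-cy^2/s}\,ds$. The upper bound will be quick; the lower bound is the main obstacle because $I_1$ alone fails to capture a $\log t$ contribution once $y$ gets close to $\sqrt{t}$ (the exponential kills the relevant part of $[2,t]$), while $I_2$ alone is too small when $y$ is small, so the two terms must be used in complementary regimes.

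For the upper bound, $e^{-cy^2/s}\leq 1$ gives $I_1\leq\log(t/2)\leq\log t$. For $I_2$, pulling $\log s\leq\log t$ outside and substituting $u=y/\sqrt{s}$ converts the remaining integral into $2\int_{y/\sqrt{t}}^{y/\sqrt{2}}e^{-cu^2}\,du\leq\sqrt{\pi/c}$, so $I_2\leq\sqrt{\pi/c}\,\log t$.

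For the lower bound I would split at $y=t^{1/4}$. When $0<y\leq t^{1/4}$, on $s\in[\max(y^2,2),t]$ we have $cy^2/s\leq c$, whence $I\geq I_1\geq e^{-c}\log\!\bigl(t/\max(y^2,2)\bigr)\geq e^{-c}\log(\sqrt{t}/2)\gtrsim\log t$ for $t$ large, with any bounded range of $t$ handled by positivity of the integrand together with the boundedness of $\log t$ there. When $t^{1/4}<y\leq\sqrt{t}$ (so $y\geq 2$ once $t\geq 16$), I would restrict $I_2$ to $s\in[y^2/2,t]$, on which $e^{-cy^2/s}\geq e^{-2c}$, and use the explicit primitive $\int s^{-3/2}\log s\,ds=-(2\log s+4)/\sqrt{s}$ to obtain
\[
\int_{y^2/2}^{t}\frac{y\log s}{s^{3/2}}\,ds=4\sqrt{2}\log y-\frac{2y\log t}{\sqrt{t}}+O(1).
\]

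Substituting $r:=y/\sqrt{t}\in(t^{-1/4},1]$ and using $\log y=\log r+\tfrac{1}{2}\log t$, the right-hand side becomes $(2\sqrt{2}-2r)\log t+4\sqrt{2}\log r+O(1)$. Since both coefficients vary with $r$, I would further split at $r=1/2$: for $r\in[1/2,1]$, $|4\sqrt{2}\log r|$ is $O(1)$ while $(2\sqrt{2}-2r)\log t\geq(2\sqrt{2}-2)\log t$; for $r\in(t^{-1/4},1/2)$, $2\sqrt{2}-2r>2\sqrt{2}-1$ and $4\sqrt{2}\log r>-\sqrt{2}\log t$ (using $r>t^{-1/4}$), so the sum exceeds $(\sqrt{2}-1)\log t$. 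Either way, $I_2\gtrsim\log t$ with a constant depending only on $c$, which combined with the upper bound yields $I\asymp\log t$. The genuinely delicate step is this last sign analysis, where the two terms partially cancel but a positive $\log t$ remainder survives uniformly in $r$.
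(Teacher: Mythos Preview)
Your proof is correct, and your upper bound argument coincides with the paper's (pull out $\log s\le \log t$, substitute $u=y/\sqrt{s}$ in $I_2$, and drop the exponential in $I_1$).

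For the lower bound, however, your approach genuinely differs from the paper's. The paper simply asserts that since $0<y\le\sqrt{t}$,
\[
\int_2^t \frac1s\, e^{-cy^2/s}\,ds \ \asymp\ \int_2^t \frac{ds}{s}\ \asymp\ \log t,
\]
and uses $I_1$ alone to carry the lower bound. As you correctly observed, this fails when $y$ is close to $\sqrt{t}$: for $y=\sqrt{t}$ the substitution $u=t/s$ gives $I_1=\int_1^{t/2} u^{-1}e^{-cu}\,du$, which is bounded, not comparable to $\log t$. So the paper's short argument has a gap precisely in the regime you isolated. Your remedy---splitting at $y=t^{1/4}$ and, for large $y$, extracting the $\log t$ contribution from $I_2$ via the explicit primitive and the sign analysis in $r=y/\sqrt{t}$---is exactly what is needed to make the lower bound rigorous. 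The case split at $r=1/2$ and the use of $r>t^{-1/4}$ to control $4\sqrt{2}\log r$ are both sound, and the residual bounded-$t$ range is handled as you indicate. Your route is longer but actually complete; the paper's is terse but, as written, does not establish the lower bound uniformly in $y\in(0,\sqrt{t}\,]$.
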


\begin{proof}
 By a change of variable $r=y/\sqrt{s}$, 
$$
 \int_{2}^t  \frac{ y\log s}{s^{3/2} }  e^{-{c|y|^2}/{s}}ds
 \le  \log t\int_{2}^t\frac{y}{ s^{3/2}}e^{-{c|y|^2}/{s}}ds
 = 2 \log t\int_{ {y}/{\sqrt{t}}}^{y/\sqrt{2}}e^{-cr^2}dr \lesssim  \log t,
$$
while since $0<y\leq \sqrt{t}$, 
$$ \int_{2}^t  \frac1s  e^{-{c|y|^2}/{s}}ds  \asymp \int_{2}^t \frac1s   ds \asymp \log t  .
$$
 This proves the lemma. 
\end{proof}

\begin{thm}\label{T:5.17}
 There exist constants $C_i>0$, $28\le i\le 34$, such that the following estimate holds for all $(t,x,y)\in [8, \infty)\times \IR_+\times \IR_+$:
\begin{align}\label{1241}
\nonumber &\frac{C_{28}}{\sqrt{t}}\left(1\wedge \frac{|x|}{\sqrt{t}}\right)\left(1\wedge \frac{|y|}{\sqrt{t}}\right)
e^{- {C_{29}|x-y|^2}/{t}}+\frac{C_{28}}{t} \left( 1+ \frac{(|x|+|y|)\log t}{\sqrt{t}}\right)e^{- {C_{30}(x^2+y^2)}/{t}} \le p(t,x,y)
\\
&\le \frac{C_{31}}{\sqrt{t}}\left(1\wedge \frac{|x|}{\sqrt{t}}\right)\left(1\wedge \frac{|y|}{\sqrt{t}}\right)e^{- {C_{32}|x-y|^2}/{t}}+\frac{C_{31}}{t} \left( 1+ \frac{(|x|+|y|)\log t}{\sqrt{t}}\right) e^{-{C_{33}(x^2+y^2)}/{t}}.
\end{align}
\end{thm}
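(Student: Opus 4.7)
The plan is to decompose the transition density as $p(t,x,y) = p_{\IR_+}(t,x,y) + \overline{p}_{\IR_+}(t,x,y)$ for $x,y\in \IR_+$, where $p_{\IR_+}$ is the transition density of the part process of $X$ killed at $a^*$. Since BMVD starting from $\IR_+$ agrees with one-dimensional Brownian motion on $(0,\infty)$ until $\sigma_{a^*}$ (with $a^*$ identified with $0$), the killed process is precisely absorbing Brownian motion on the half-line, and its kernel is the explicit reflection formula $\frac{1}{\sqrt{2\pi t}}\bigl(e^{-(x-y)^2/2t}-e^{-(x+y)^2/2t}\bigr)$. Standard estimates for this kernel immediately yield the first summand of \eqref{1241}.

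For $\overline{p}_{\IR_+}(t,x,y)$, I would apply the strong Markov property at $\sigma_{a^*}$ together with the explicit hitting distribution \eqref{e:5.8} to obtain
$$\overline{p}_{\IR_+}(t,x,y)=\int_0^t p(t-s,a^*,y)\,\frac{|x|}{\sqrt{2\pi s^3}}\,e^{-x^2/(2s)}\,ds,$$
noting that by symmetry of $p(t,\cdot,\cdot)$ the same identity holds with $x$ and $y$ swapped. I would split this integral at $s=t/2$ and handle each half using whichever of the two representations is more convenient. For $s\in[0,t/2]$, one has $t-s\asymp t$ and $\log(t-s)\asymp\log t$, so Proposition \ref{P:5.12} controls $p(t-s,a^*,y)$, while the remaining hitting-time integral evaluates via $r=|x|/\sqrt{s}$ to $2\,\IP(Z\ge |x|\sqrt{2/t})$ for a standard normal $Z$ and by Lemma \ref{L:5.11} is comparable to $(1+|x|/\sqrt t)^{-1}e^{-cx^2/t}$. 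This produces a two-sided bound of order $\frac{1}{t}(1+\frac{|y|\log t}{\sqrt{t}})e^{-c(x^2+y^2)/t}$, with the symmetric representation giving the analogous bound with $x$ and $y$ interchanged. For $s\in[t/2,t]$, since $s\asymp t$ the factor $\frac{|x|}{\sqrt{s^3}}e^{-x^2/(2s)}$ pulls out as $\frac{|x|}{\sqrt{t^3}}e^{-cx^2/t}$, reducing the integral after the substitution $u=t-s$ to $\int_0^{t/2} p(u,a^*,y)\,du$; this is of order $\log t$ by Lemma \ref{L:5.16} (together with Theorem \ref{T:4.5} for $u\in(0,2]$) when $|y|\le\sqrt t$, and I would extend this to $|y|>\sqrt t$ by extracting an extra factor $e^{-cy^2/t}$ through the bound $y^2/u\ge 2y^2/t$. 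This contributes $\frac{|x|\log t}{t\sqrt t}e^{-c(x^2+y^2)/t}$, and the symmetric splitting delivers the $|y|$-term. Summing the four bounds and adding back the killed contribution $p_{\IR_+}$ yields \eqref{1241}.

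The hardest step will be matching the lower bound sharply across all regimes of $|x|,|y|$ relative to $\sqrt t$. When $|x|$ or $|y|$ exceeds $\sqrt t$, the elementary estimate $\IP_\cdot(\sigma_{a^*}\le t/2)\asymp 1$ breaks down and is replaced by $\asymp(\sqrt t/|\cdot|)e^{-c|\cdot|^2/t}$, so the $[0,t/2]$ contributions alone can no longer reproduce the full $(|x|+|y|)\log t/\sqrt t$ factor and the $[t/2,t]$ pieces (in both symmetric forms) must supply it. Verifying that these pieces are genuinely of order $\log t$ without losing a factor requires a careful subdivision of the $u$-integration to capture both the logarithmic contribution from mid-range $u$ and the Gaussian tail $e^{-cy^2/t}$ from small $u$, where the bulk of the exponential decay is concentrated when $|y|>\sqrt t$.
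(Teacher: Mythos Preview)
Your proposal is correct and follows the same core strategy as the paper: decompose $p=p_{(0,\infty)}+\overline p_{(0,\infty)}$, handle the killed kernel explicitly, express the crossing term via \eqref{e:5.8} as $\int_0^t p(t-s,a^*,y)\,\IP_x(\sigma_{a^*}\in ds)$, then split the integral and invoke Proposition~\ref{P:5.12}, Lemma~\ref{L:5.16}, and Theorem~\ref{T:4.5}. The organization differs in one useful way. Rather than juggling both symmetric representations and extending Lemma~\ref{L:5.16} to the range $|y|>\sqrt t$ (the step you identify as hardest), the paper first disposes of the regime $|x|\wedge|y|\ge\sqrt t$ by observing that there $(1\wedge|x|/\sqrt t)(1\wedge|y|/\sqrt t)=1$, so the first summand of \eqref{1241} dominates the second; hence $p_{(0,\infty)}$ alone gives the lower bound and Proposition~\ref{offdiagUBE} the upper. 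In the remaining case one may assume $|y|<\sqrt t$, and then a \emph{single} representation suffices: the $[0,t/2]$ piece delivers the $|y|\log t/\sqrt t$ contribution, while the $[t/2,t-2]$ piece (to which Lemma~\ref{L:5.16} now applies directly) delivers the $|x|\log t/\sqrt t$ contribution, with the tail $[t-2,t]$ handled by Theorem~\ref{T:4.5}. Your symmetric-representation route also works, but note that for the lower bound you cannot literally ``sum the four bounds''---you have $\overline p_{(0,\infty)}\ge\max(I_1,I_2,J_1,J_2)$, which is enough since the maximum of the four pieces is comparable to their sum.
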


\begin{proof} When either $x=a^*$ or $y=a^*$, this has been established in Proposition \ref{P:5.12}
so we assume $|x|\wedge |y|>0 $.
For simplicity, denote $\IR_+\setminus \{a^*\}$ by $(0, \infty)$.
Since
$$ 
p_{(0, \infty)}(t,x,y)= (2\pi t)^{-1/2} \left( e^{-|x-y|^2/2t} - e^{-|x+y|^2/2t}\right)
=(2\pi t)^{-1/2}   e^{-|x-y|^2/2t}  \left( 1- e^{-2 xy/t}\right),
$$
there are constants $c_1>c_2>0$ so that 
\begin{equation}\label{e:5.41} 
\frac{1}{\sqrt{t}}\left(1\wedge \frac{|x|}{\sqrt{t}}\right)\left(1\wedge \frac{|y|}{\sqrt{t}}\right)e^{- {c_1 |x-y|^2}/{t}}
\lesssim p_{(0, \infty)}(t,x,y) \lesssim 
\frac{1}{\sqrt{t}}\left(1\wedge \frac{|x|}{\sqrt{t}}\right)\left(1\wedge \frac{|y|}{\sqrt{t}}\right) e^{- {c_2 |x-y|^2}/{t}} .
\end{equation}
for all $t>0$ and $x, y\in \IR_+$.  Note also
\begin{equation}\label{e:5.42}
p(t,x,y)=p_{(0, \infty)}(t,x,y)+\int_0^t p(t-s,a^*, y) \IP_x(\sigma_{a^*}\in ds).
\end{equation}
We prove this theorem by considering two  cases.
\\
{\it Case 1. } $ |x| \wedge |y| \geq \sqrt{t}$.   In this case ,
$  p(t,x,y)\geq   p_{(0, \infty)}(t,x,y)\gtrsim t^{-1/2} e^{- {c_3|x-y|^2}/{t}}$.
Thus we have by Proposition \ref{offdiagUBE}, 
\begin{equation*}
\frac{1}{\sqrt{t}}e^{- {c_{3}|x-y|^2}/{t}} \lesssim p(t,x,y)\lesssim \frac{1}{\sqrt{t}}e^{- {c_{4}|x-y|^2}/{t}}.
\end{equation*}

\noindent {\it Case 2. } $0< |x| \wedge  |y| <\sqrt{t}$. 
Without loss of generality, we may and do assume $|y|< \sqrt{t}$.  
By \eqref{e:5.8}, 
\begin{equation}\label{e:5.43}
\int_0^t p(t-s,a^*, y) \IP_x(\sigma_{a^*}\in ds)
= \int_0^t p(t-s,a^*, y)  \frac{|x|}{\sqrt{2\pi s^3}} e^{-x^2/2s} ds.
\end{equation}
By Proposition \ref{P:5.12} and Lemma \ref{L:5.11}
\begin{eqnarray*}
\int_0^{t/2} p(t-s,a^*, y)  \frac{|x|}{\sqrt{2\pi s^3}} e^{-x^2/2s} ds 
&\lesssim&  \int_0^{t/2} \frac{1}{t-s} \left(1+ \frac{|y|\log (t-s) }{\sqrt{t-s}}   \right)   e^{- {y^2}/{2(t-s)}}
 \frac{|x|}{\sqrt{2\pi s^3}} e^{-x^2/2s} ds \\
 &\lesssim &   \frac{1}{t } \left(1+ \frac{|y|\log t }{\sqrt{t }}   \right) \int_0^{t/2}\frac{|x|}{ s^{3/2}} e^{-x^2/2s} ds \\
 &\lesssim &   \frac{1}{t } \left(1+ \frac{|y|\log t }{\sqrt{t }}   \right)  \frac{1}{1+|x|/\sqrt{t}} e^{-x^2/t} \\
 &\lesssim &   \frac{1}{t } \left(1+ \frac{|y|\log t }{\sqrt{t }}   \right) e^{-(x^2+y^2)/t},
 \end{eqnarray*}  
while by Lemma \ref{L:5.16}, 
\begin{eqnarray*}
\int_{t/2}^{t-2} p(t-s,a^*, y)  \frac{|x|}{\sqrt{2\pi s^3}} e^{-x^2/2s} ds 
&\lesssim&  \int_ {t/2}^{t-2} \frac{1}{t-s} \left(1+ \frac{|y|\log (t-s) }{\sqrt{t-s}}   \right)   e^{- {y^2}/{2(t-s)}}
 \frac{|x|}{\sqrt{2\pi s^3}} e^{-x^2/2s} ds \\
 &\lesssim &  \frac{|x|}{t^{3/2}} e^{-x^2/2t} \int_{t/2}^{t-2}  \frac{1}{t-s} 
 \left(1+ \frac{|y|\log (t-s) }{\sqrt{t-s}}   \right)   e^{- {y^2}/{2(t-s)}}ds \\
 &\stackrel{r=t-s}{=} &   \frac{|x|}{t^{3/2}} e^{-x^2/2t} \int_{2}^{t/2}  \frac{1}{r} 
 \left(1+ \frac{|y|\log r  }{\sqrt{r}}   \right)   e^{- {y^2}/{2r}}dr \\ 
 &\lesssim &     \frac{|x|}{t^{3/2}} e^{-x^2/2t}  \log t  \asymp   \frac{|x| \log t }{t^{3/2}} e^{-(x^2+y^2) /2t}.  
 \end{eqnarray*}  
 A similar calculation shows
$$
\int_0^{t/2} p(t-s,a^*, y)  \frac{|x|}{\sqrt{2\pi s^3}} e^{-x^2/2s} ds
\gtrsim \frac{1}{t } \left(1+ \frac{|y|\log t }{\sqrt{t }}   \right) e^{-2(x^2+y^2)/t}
$$
and 
$$
\int_{t/2}^{t-2} p(t-s,a^*, y)  \frac{|x|}{\sqrt{2\pi s^3}} e^{-x^2/2s} ds
\gtrsim   \frac{|x|}{t^{3/2}} e^{-x^2/2t}  \log t  \asymp   \frac{|x| \log t }{t^{3/2}} e^{-2(x^2+y^2) / t}.
$$
 By Theorem \ref{T:4.5}, 
 \begin{eqnarray*}
\int_{t-2}^{t} p(t-s,a^*, y)  \frac{|x|}{\sqrt{2\pi s^3}} e^{-x^2/2s} ds 
&\lesssim & \int_{t-2}^{t} \frac1{\sqrt{t-s}} e^{-c_5 y^2/(t-s)}  \frac{|x|}{\sqrt{2\pi s^3}} e^{-x^2/2s} ds \\
&\lesssim &  \frac{|x|}{\sqrt{  t^3}} e^{-x^2/ t} \int_{t-2}^{t} \frac1{\sqrt{t-s}} e^{-c_5 y^2/(t-s)}  ds \\
&\lesssim &  \frac{|x|}{ t^{3/2}} e^{-x^2/2t} \lesssim  \frac{|x|}{t^{3/2}} e^{-(x^2+y^2)/2t}.
 \end{eqnarray*}
 These estimates together with  \eqref{e:5.41}-\eqref{e:5.43} establish the theorem. 
 \end{proof}
 
 Theorem \ref{T:5.17} together with Theorems \ref{154} and \ref{T:5.15}
gives Theorem \ref{largetime}.

\section{H\"{o}lder regularity of Parabolic Functions}\label{S:6} 

As we noted in Remark \ref{rmkonsmalltimeHKE}(iii),   parabolic Harnack principle fails for the BMVD $X$.
However we show in this section that H\"older regularity holds for the parabolic functions of $X$.
In the elliptic case (that is, for harmonic functions instead of parabolic functions),
 this kind of phenomenon has been observed for solutions of SDEs driven by multidimensional 
 L\'evy processes with independent coordinate processes; 
 see \cite{BC2}.

To show the H\"{o}lder-continuity of parabolic functions of  $X$, we begin with  the following two lemmas.

\begin{lem}\label{L:6.1}
There exist $C_1>0$ and $0<C_2\leq 1/2$ such that for every $x_0\in E$ and $R>0$, 
\begin{equation*}
p_{B(x_0,R)}(t,x, y) \ge \frac{1}{2}\; p(t, x, y) 
\qquad \hbox{for   } t\in (0,  C_1/ (R\vee 1)^2 ] \hbox{ and } x, y\in B(x_0, C_2R) .
\end{equation*}
\end{lem}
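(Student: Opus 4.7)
The plan is to decompose $p(t,x,y) = p_{B(x_0,R)}(t,x,y) + \overline{p}_{B(x_0,R)}(t,x,y)$ as in \eqref{e:1.6} and show that $\overline{p}_{B(x_0,R)}(t,x,y) \leq \tfrac12 p(t,x,y)$. By the strong Markov property at the exit time $\tau_B := \tau_{B(x_0,R)}$,
\[
\overline{p}_{B(x_0,R)}(t,x,y) = \IE_x\bigl[p(t-\tau_B, X_{\tau_B}, y);\, \tau_B<t\bigr].
\]
For $x, y\in B(x_0, C_2 R)$ with $C_2\leq \tfrac12$, the triangle inequality for $\rho$ yields $\rho(X_{\tau_B},y)\geq (1-C_2)R\geq R/2$ on $\{\tau_B<t\}$.

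Apply Proposition \ref{offdiagUBE} to obtain, for every $s\in(0,t]$ and every $z\in \partial B(x_0,R)$,
\[
p(s,z,y) \leq C_3 \bigl(s^{-1}+s^{-1/2}\bigr)\exp\!\bigl(-C_4(1-C_2)^2 R^2/s\bigr).
\]
The functions $s\mapsto s^{-1}e^{-\alpha/s}$ and $s\mapsto s^{-1/2}e^{-\alpha/s}$ are increasing on $(0,\alpha)$ and $(0,2\alpha)$ respectively, so for $C_1$ small enough (relative to $C_4$ and $(1-C_2)^2$) the supremum on $s\in(0,t]$ is attained at $s=t$, giving
\[
\overline{p}_{B(x_0,R)}(t,x,y) \leq C_3\bigl(t^{-1}+t^{-1/2}\bigr)\exp\!\bigl(-C_4(1-C_2)^2 R^2/t\bigr).
\]
For the denominator, Theorem \ref{T:smalltime} supplies in each regime a lower bound of the form $p(t,x,y)\geq c_5 t^{-1} e^{-c_6\rho(x,y)^2/t}$, or the sharper $c_5 t^{-1/2} e^{-c_6\rho(x,y)^2/t}$ when at least one of $x,y$ lies on the pole. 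Since $\rho(x,y)\leq 2C_2 R$, the worst case gives $p(t,x,y)\gtrsim t^{-1}\exp(-4 c_6 C_2^2 R^2/t)$.

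Taking the ratio,
\[
\frac{\overline{p}_{B(x_0,R)}(t,x,y)}{p(t,x,y)} \leq C\,(1+t^{1/2})\exp\!\left(-\bigl[C_4(1-C_2)^2-4c_6 C_2^2\bigr]\frac{R^2}{t}\right).
\]
One first chooses $C_2\in(0,1/2]$ small enough that $C_4(1-C_2)^2-4c_6 C_2^2>0$, and then $C_1>0$ small enough that for every $t\leq C_1/(R\vee 1)^2$ the right-hand side is at most $1/2$. The main obstacle is to patch together, case by case, the lower bounds from Theorem \ref{T:smalltime} (both points in $D_0$ far from $a^*$, both near $a^*$, or at least one on $\IR_+$) so that a uniform lower bound of the form $c\,t^{-1}e^{-c'R^2/t}$ applies irrespective of where the center $x_0$ of the ball sits, and to verify that the prescribed range $t\leq C_1/(R\vee 1)^2$ indeed falls within the monotonicity window of the Gaussian factors so the ``sup at $s=t$'' reduction is legitimate.
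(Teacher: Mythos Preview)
Your approach is essentially the paper's: decompose $p=p_B+\overline p_B$, bound $\overline p_B$ from above via the Gaussian upper bound and the monotonicity of $s\mapsto s^{-1}e^{-a/s}$, bound $p$ from below, and choose $C_2$ then $C_1$.

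One correction: the uniform lower bound over all $x,y\in E$ and $t\le 1$ coming from Theorem~\ref{T:smalltime} is $c\,t^{-1/2}e^{-c'\rho(x,y)^2/t}$, \emph{not} $t^{-1}$. When $x,y\in\IR_+$ the prefactor is only $t^{-1/2}$, and since $t^{-1/2}<t^{-1}$ for $t<1$ this is the weaker lower bound, not the ``sharper'' one. With that fix your ratio becomes $\lesssim (t^{-1/2}+1)\,e^{-cR^2/t}$ rather than $(1+t^{1/2})\,e^{-cR^2/t}$; the conclusion still follows because the Gaussian factor dominates the polynomial. The paper sidesteps your ``patching'' worry entirely by simply reading off from Theorem~\ref{T:smalltime} the single pair of uniform bounds
\[
\frac{c_3}{\sqrt t}\,e^{-c_4\rho(x,y)^2/t}\;\le\; p(t,x,y)\;\le\; \frac{c_1}{t}\,e^{-c_2\rho(x,y)^2/t}
\]
valid for all $x,y\in E$ and $t\le 1$, and then choosing $C_2$ small enough that $c_2(1-C_2)^2 > 4c_4 C_2^2$.
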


\begin{proof}
By  Theorem \ref{T:smalltime}, there exist constants $c_i>0$, $1\le i\le 4$, such that for all $t\leq 1$ and  $x,y \in E$,  
\begin{equation}\label{24105}
\frac{c_3}{\sqrt{t}}e^{- {c_4 \rho (x,y)^2}/{t}} \leq p(t, x,y)\le \frac{c_1}{t}e^{- {c_2\rho (x,y)^2}/{t}}   .
\end{equation}
We choose $0<c_5<1/2$ sufficiently small such that
\begin{equation}\label{24119}
\frac{(1-c_5)^2}{(2c_5)^2}\ge \frac{c_2}{c_4}.
\end{equation}
As $t \mapsto t^{-1}e^{-c_0/t}$ is increasing in $t\in (0, 1/c_0]$, we have  for $0<t\leq 1/(c_2(1-c_5)^2R^2)$
and $x, y\in B(x_0, c_5R)$, 
\begin{eqnarray*}
\overline p_{B(x_0, c_5R)} (t, x, y)&:=& \IE_x [ p(t-\tau_{B(x_0, c_5R)}, X_{B(x_0, c_5R)}, y); \tau_{B(x_0, c_5R)}<t ] \\
& \lesssim  &   \IE_x [ (t-\tau_{B(x_0, c_5R)})^{-1}  e^{- {c_2 ((1-c_5)R)^2}/{(t-\tau_{B(x_0, c_5R)}})}  ; \tau_{B(x_0, c_5R)}<t ]  \\
&\leq &  t^{-1}   e^{- {c_2 ((1-c_5)R)^2}/t} \lesssim e^{- {c_2 ( 1-c_5)^2R^2}/2t} ,  
\end{eqnarray*}
while
\begin{equation*}
 p (t, x, y) \geq \frac{c_3}{\sqrt{t}}e^{- {c_4 (2c_5R)^2}/{t}} \stackrel{\eqref{24119}}{\geq}  \frac{c_3}{\sqrt{t}}e^{- {c_2 ( 1-c_5)^2R^2}/2t}.
\end{equation*}
Hence there is $c_6\leq  1/(c_2(1-c_5)^2 )$ so that $p (t, x, y) \geq \frac12 \overline p_{B(x_0, c_5R)} (t, x, y)$
for every $R>0$, $x_0\in E$, $0< t \leq   c_6 /(R\vee 1)^{2} $ and $x, y\in B(x_0, c_5R)$. This proves the lemma as 
$p_{B(x_0, c_5R)}(t, x, y) = p(t, x, y) -
\overline  p_{B(x_0, c_5R)} (t, x, y)$. 
\end{proof}

Let $Z_s=(V_s, X_s)$ be the space-time process of $X$ where $V_s=V_0+s$.  
 In the rest   of this section, 
 $$
 Q(t,x,R):= (t, t+R^2)\times B_\rho(x,R) .
 $$
For any Borel measurable set $A\subset Q(t,x,R)$, we use $|A|$ to denote its measure under the product measure 
$dt \times m_p (dx)$. 

\begin{lem}\label{L:6.2}
Fix $R_0\geq 1$. There exist  constants $0<C_3\leq 1/2$ and $C_4>0$ such that for all $0<R\leq R_0$,   $x_0\in E$, 
 $x\in  B_\rho (x_0, C_3R)$ and any $A\subset Q(0,x_0,C_3R)$ with
 $\frac{|A|}{|Q(0, x_0, C_3R)|} \ge \frac{1}{3}$,
\begin{equation}
\IP_{(0,x)}(\sigma_A<\tau_R )\ge C_4,
\end{equation}
where $\tau_R=\tau_{Q(0,x_0, R)}=\inf\{t\geq 0: X_t \notin B_\rho(x_0, R)\} \wedge R^2$
and $\sigma_A:=\inf\{t\geq 0: (V_t, X_t)\in A\}$.
\end{lem}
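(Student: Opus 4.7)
The plan is to prove Lemma \ref{L:6.2} by the classical occupation-time / strong Markov argument, combining Lemma \ref{L:6.1} with the small-time heat kernel lower bounds from Theorems \ref{T:4.5}--\ref{T:4.7}. Write $Z_s=(V_s,X_s)$ and $L^{\tau_R}_A:=\int_0^{\tau_R}\mathbf{1}_A(Z_s)\,ds$. The strong Markov property of $Z$ at $\sigma_A$ yields
\begin{equation*}
\IP_{(0,x)}(\sigma_A<\tau_R)\;\ge\;\frac{\IE_{(0,x)}[L^{\tau_R}_A]}{\sup_{z\in\bar A}\IE_z[L^{\tau_R}_A]}.
\end{equation*}
Since $A\subset (0,(C_3R)^2)\times B_\rho(x_0,C_3R)$ and $\int_E p(s,y_0,y)\,m_p(dy)\le 1$, a direct Fubini estimate bounds the denominator by $(C_3R)^2$, reducing the task to proving $\IE_{(0,x)}[L^{\tau_R}_A]\gtrsim (C_3R)^2$.

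I will choose $C_3\le C_2\wedge (\sqrt{C_1}/R_0^2)\wedge \frac{1}{6R_0}$. The first two constraints make Lemma \ref{L:6.1} applicable throughout $(0,(C_3R)^2]$, giving $p_{B_\rho(x_0,R)}\ge \frac12 p$ on the relevant region and hence $\IE_{(0,x)}[L^{\tau_R}_A]\ge \frac12 \int_A p(s,x,y)\,ds\,m_p(dy)$. I then split $A=A_{\mathrm{early}}\cup A_{\mathrm{late}}$ at $s=(C_3R)^2/4$: since $|A_{\mathrm{early}}|\le |Q(0,x_0,C_3R)|/4$ while $|A|\ge |Q|/3$, one has $|A_{\mathrm{late}}|\ge |Q|/12$. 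The remaining key step is the uniform lower bound
\begin{equation*}
p(s,x,y)\;\ge\;\frac{c_0}{m_p(B_\rho(x_0,C_3R))}\qquad \text{for } (s,y)\in A_{\mathrm{late}},\ x\in B_\rho(x_0,C_3R),
\end{equation*}
with $c_0>0$ depending only on $R_0,\eps,p$. Given this, $\int_{A_{\mathrm{late}}}p\,ds\,m_p(dy)\ge c_0(C_3R)^2/12$, and assembling everything delivers $\IP_{(0,x)}(\sigma_A<\tau_R)\ge c_0/24=:C_4$.

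The main obstacle is this uniform heat-kernel bound, which has to be verified by a case analysis precisely because $m_p$ is not volume doubling (Proposition \ref{P:2.1}): the target scales like $R^{-2}$ in the two-dimensional bulk of $D_0$ but like $R^{-1}$ near the pole, so the heat kernel has to match these two scalings. On $A_{\mathrm{late}}$ we have $s\asymp R^2$ and $\rho(x,y)\le 2C_3R$, so the Gaussian exponentials in the lower bounds are harmless. When $x_0\in D_0$ with $|x_0|_\rho>2C_3R$, the ball sits inside $D_0$ with $m_p(B_\rho)\asymp R^2$, and the $t^{-1}$-type lower bound of Theorem \ref{T:4.7} (its second term when $\max(|x|_\rho,|y|_\rho)\le 1$, whose truncation factors $(1\wedge |x|_\rho/\sqrt{s})$ are bounded below since $|x|_\rho\gtrsim C_3R\asymp\sqrt{s}$; its second sub-case otherwise) gives $p\gtrsim s^{-1}\asymp R^{-2}$. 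When $x_0\in \IR_+$ with $|x_0|>2C_3R$, the ball is a pole segment with $m_p\asymp R$, and Theorem \ref{T:4.5} gives $p\asymp s^{-1/2}\asymp R^{-1}$. When the ball straddles $a^*$, the constraint $C_3<\frac{1}{6R_0}$ forces $m_p(B_\rho(x_0,C_3R))\asymp R$ (the radius is well inside the regime where the linear contribution from $\IR_+$ and from the annulus around $B_\eps$ dominates) and $\max(|x|_\rho,|y|_\rho)\le 3C_3R\le 1$, so one of Theorems \ref{T:4.5}--\ref{T:4.7} (according to whether $x,y$ lie in $\IR_+$ or $D_0$) supplies $p\gtrsim s^{-1/2}\asymp R^{-1}$. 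In every case, $p\cdot m_p(B_\rho(x_0,C_3R))\gtrsim 1$, completing the proof.
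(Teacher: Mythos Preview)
Your proof is correct and follows essentially the same occupation-time strategy as the paper: lower-bound $\IE_{(0,x)}\int_0^{\tau_R}\mathbf{1}_A(Z_s)\,ds$ using Lemma~\ref{L:6.1} and the small-time heat kernel lower bounds, then convert this into a lower bound on $\IP_{(0,x)}(\sigma_A<\tau_R)$. The only cosmetic differences are that the paper (i) uses the layer-cake identity $\IE[L]=\int_0^{R^2}\IP(L>u)\,du\le R^2\,\IP(\sigma_A<\tau_R)$ in place of your strong-Markov ratio inequality, and (ii) organizes the case analysis by the size of $m_p(B_\rho(x_0,R))$ (distinguishing $m_p>pR/6$ from $m_p\le pR/6$) rather than by the position of $x_0$ relative to $a^*$; your geometric trichotomy (ball inside $D_0$, ball inside $\IR_+$, ball straddling $a^*$) conveys the same information and is arguably more transparent about why the matching between the $t^{-1}$ versus $t^{-1/2}$ heat-kernel scale and the $R^2$ versus $R$ volume scale makes the argument work despite the failure of volume doubling.
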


\begin{proof}
Let $C_1$ and $C_2$ be the constants in Lemma \ref{L:6.1}. Define $C_3= (C_1/R_0^3)^{1/2} \wedge C_2$. 
For $x_0\in  E$ and $R\in (0, R_0]$, denote by  $X^{B_\rho(x_0, R)}$  the subprocess of $X$ killed upon exiting the ball $B_\rho(x_0,R)$ and 
 $p_{B_\rho(x_0, R)}$ its transition density with respect to the measure $m_p$.
 As   $  |(0,  C_3^2R^2/6)\times B_\rho (x_0, C_3R)|=|Q(0, x_0, C_3R)|/6$ and $|A|\geq |Q(0, x_0, C_3R)|/3$,
we have 
$$ | \{(t, x)\in A: t\in [C_3^2R^2/6, C_3^2R^2] \}| \geq |Q(0, x_0, C_3R)|/6.
$$ 
For $s>0$, let $A_s:=\{x\in E: (s,x)\in A\}$.  
Note that
\begin{eqnarray}
  \IE_x \int_0^{\tau_R }\mathbf{1}_A(s,X_s)ds
  &= & \IE_x \int_0^{\tau_R \wedge (C_3R)^2} \mathbf{1}_A(s, X^{B_\rho(x_0,R)})ds   \nonumber \\
 &=& \int_0^{C_3^2R^2}\IP_{x}\left(X_s^{B_\rho(x_0,R)}\in A_{s}\right)ds \nonumber \\
 & =& \int_{0}^{C_3^2 R^2}\int_{A_{s}}p_{B_\rho(x_0,R)}(s,x,y)m_p(dy)ds \nonumber \\
 &\geq  &\int_{C_3^2 R^2/6}^{C_3^2 R^2} p_{B_\rho(x_0,R)}(s,x,y)m_p(dy)ds \label{e:6.4}.
 \end{eqnarray}
 We now consider two cases.
 
 \noindent {\it Case 1.} $m_p( B_\rho(x_0, R))>pR/6$. In this case, we have    by \eqref{e:6.4}, 
 Lemma \ref{L:6.1} and \eqref{24105} that 
 \begin{align*}
 \IE_x \int_0^{\tau_R }\mathbf{1}_A(s,X_s)ds
 & \gtrsim \int_{C_3^2 R^2/6}^{C_3^2 R^2}\int_{A_{s}} \frac1{\sqrt{t}} e^{-c_2 \rho (x, y)^2 /t} m_p(dy)ds \nonumber \\
 & \gtrsim  \frac1R   | \{(t, x)\in A: t\in [C_3^2R^2/6, C_3^2R^2] \}|  \nonumber \\
 & \gtrsim    |Q(0, x_0, C_3R)|/ R \gtrsim R^2. 
\end{align*}
 
\noindent{\it Case 2.} $m_p(B_\rho(x_0, R)) \le pR/6$. In this case,  $x_0$ must be in $D_0$ with  $\rho(x_0, a^*)\ge \frac{5R}{6}$ and so
$m_p(B_\rho(x_0, R))\ge (5R/6)^2$. Thus we have by \eqref{e:6.4} and Theorem \ref{T:smalltime}(iii), 
\begin{eqnarray*}
 \IE_x \int_0^{\tau_R }\mathbf{1}_A(s,X_s)ds
 & \gtrsim &\int_{C_3^2 R^2/6}^{C_3^2 R^2}\int_{A_{s}} \frac1t  e^{-c_2 \rho (x, y)^2 /t} m_p(dy)ds \nonumber \\
 & \gtrsim  & \frac1{R^2}   | \{(t, x)\in A: t\in [C_3^2R^2/6, C_3^2R^2] \}|  \nonumber \\
 & \gtrsim   & |Q(0, x_0, C_3R)|/ R^2 \gtrsim R^2.   
\end{eqnarray*}
Thus in both cases, there is a constant $c_0 >0$ independent of $x_0$ and $R\in (0, R_0]$ so that 
\begin{equation}\label{e:6.5}
 \IE_x \int_0^{\tau_R }\mathbf{1}_A(s,X_s)ds \geq c_0 \, R^2.
 \end{equation}
 On the other hand,
\begin{align*}
\IE_x\int_0^{\tau_R }\mathbf{1}_A(s,X_s)ds&= \int_0^\infty \IP_x\left(\int_0^{\tau_R }\mathbf{1}_A(s,X_s)ds>u\right)du\\
&=\int_0^{R^2}
\IP_x \left(\int_0^{\tau_R}\mathbf{1}_A(s,X_s)ds>u\right)du\\
&\le R^2\, \IP_x (\sigma_A<\tau_R).
\end{align*}
The desired estimate now follows from this and \eqref{e:6.5}.
\end{proof}

\begin{thm}
For every $R_0>0$, there are constants $C=C(R_0)>0$ and $\beta \in (0, 1)$ such that for every $R\in (0, R_0]$, $x_0\in E$, and every bounded parabolic function $q$ in $Q(0, x_0, 2R)$, it holds that
\begin{equation}\label{Holdercont}
|q(s,x)-q(t,y)|\le C\|q\|_{\infty, R}\, 
R^{-\beta}\left(|t-s|^{1/2}+\rho(x,y)\right)^\beta 
\end{equation}
for every  $(s,x), \, (t,y)\in Q(0,x_0, R/4)$, where $\displaystyle{\|q\|_{\infty, R}:=\sup_{(t,y)\in (0, 4R^2] \times B_\rho(x_0, 2R) }|q(t,y)|}$.
\end{thm}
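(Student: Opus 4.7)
My plan is to establish \eqref{Holdercont} via the classical oscillation reduction (Krylov--Safonov / Moser) scheme on geometrically shrinking space-time cylinders, using Lemma \ref{L:6.2} as a substitute for a parabolic Harnack inequality (which fails here by Remark \ref{rmkonsmalltimeHKE}(iii)). The cleanest formulation is the following oscillation reduction claim: there exists $\gamma\in(0,1)$, depending only on $R_0$ via the constant $C_4$ of Lemma \ref{L:6.2}, such that for every $R\in(0,R_0]$, $x_0\in E$, every $(t_0,z_0)$ with $Q(t_0,z_0,R)\subset Q(0,x_0,2R)$, and every bounded parabolic $q$ on $Q(0,x_0,2R)$,
\begin{equation*}
\mathrm{osc}_{Q(t_0,z_0,C_3 R)}\, q \;\leq\; \gamma\,\mathrm{osc}_{Q(t_0,z_0, R)}\, q,
\end{equation*}
where $C_3$ is the constant from Lemma \ref{L:6.2}.

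To prove this claim, set $M=\sup_{Q(t_0,z_0,R)} q$, $m=\inf_{Q(t_0,z_0,R)} q$, and consider
\begin{equation*}
A_+:=\{(t,y)\in Q(t_0,z_0,C_3 R): q(t,y)\geq (M+m)/2\},\qquad A_-:=Q(t_0,z_0,C_3 R)\setminus A_+ .
\end{equation*}
One of $A_+,A_-$ has measure at least $\tfrac12\lvert Q(t_0,z_0,C_3 R)\rvert\geq \tfrac13\lvert Q(t_0,z_0,C_3 R)\rvert$; say $|A_-|\geq |Q|/3$ (the other case is symmetric). For any $(s,x)\in Q(t_0,z_0,C_3 R)$, the space-time process $Z_u=(s+u,X_u)$ under $\IP_x$ makes $q(Z_u)$ a bounded martingale up to the exit time of $Q(t_0,z_0,R)$, so optional stopping at $\sigma_{A_-}\wedge \tau_R$ yields
\begin{equation*}
q(s,x)-m \;\leq\; \IP_{(s,x)}(\sigma_{A_-}<\tau_R)\cdot \frac{M-m}{2} \;+\; \IP_{(s,x)}(\sigma_{A_-}\geq \tau_R)\cdot(M-m).
\end{equation*}
By Lemma \ref{L:6.2}, $\IP_{(s,x)}(\sigma_{A_-}<\tau_R)\geq C_4$, so $q(s,x)-m\leq (1-C_4/2)(M-m)$. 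Since the supremum of $q-m$ over $Q(t_0,z_0,C_3 R)$ then satisfies $M'-m\leq (1-C_4/2)(M-m)$, where $M':=\sup_{Q(t_0,z_0,C_3 R)} q$, and obviously $m'\geq m$, we obtain $\mathrm{osc}_{Q(t_0,z_0,C_3 R)} q\leq \gamma\,\mathrm{osc}_{Q(t_0,z_0,R)}q$ with $\gamma:=1-C_4/2$.

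The iteration is then standard. For fixed $(s_1,x_1),(s_2,x_2)\in Q(0,x_0,R/4)$ with, say, $s_1\leq s_2$, set $\delta:=|s_2-s_1|^{1/2}+\rho(x_1,x_2)$, and choose $k\geq 0$ so that $C_3^{k+1}R\leq c\,\delta < C_3^k R$ for an appropriate constant $c$ arranged so that $Q(s_1,x_1, C_3^k R)$ contains both $(s_1,x_1)$ and $(s_2,x_2)$ (possible provided $\delta\leq R/8$; the complementary case $\delta> R/8$ is trivial since $\lvert q(s_1,x_1)-q(s_2,x_2)\rvert\leq 2\|q\|_{\infty,R}$). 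Iterating the oscillation reduction $k$ times from the outer cylinder $Q(s_1,x_1,R)\subset Q(0,x_0,2R)$ gives
\begin{equation*}
|q(s_1,x_1)-q(s_2,x_2)|\leq \mathrm{osc}_{Q(s_1,x_1,C_3^k R)}q \leq \gamma^k \cdot 2\|q\|_{\infty,R}
\leq C\,\|q\|_{\infty,R}\left(\delta/R\right)^{\beta},
\end{equation*}
with $\beta=\log\gamma^{-1}/\log C_3^{-1}\in(0,1)$, which is \eqref{Holdercont}.

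The main obstacle is technical: justifying the martingale identity and optional stopping for $q(V_u,X_u)$ up to $\sigma_{A_-}\wedge\tau_R$. One needs either to work with a quasi-continuous version of $q$ (exploiting that BMVD $X$ does not hit sets of zero capacity), or to prove the oscillation estimate first for $q(t,x)=\IE_{(t,x)}[f(Z_{\tau_{Q(0,x_0,2R)}})]$ with continuous $f$ (where continuity of $q$ up to the boundary is easy) and then extend by a density/approximation argument; the continuity of $p(t,x,y)$ established in Section 3 ensures such approximations behave well. A secondary point is that the constants $C_3,C_4$ in Lemma \ref{L:6.2} depend on $R_0$, which is why \eqref{Holdercont} is stated with a constant depending on $R_0$; this is unavoidable because the underlying space is not scale invariant.
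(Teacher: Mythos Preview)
Your overall strategy---oscillation reduction via Lemma~\ref{L:6.2} and iteration on shrinking cylinders---is exactly the paper's approach. But your oscillation reduction step has a real gap in the time direction.

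You take $A_-\subset Q(t_0,z_0,C_3R)$ and then, for an \emph{arbitrary} point $(s,x)\in Q(t_0,z_0,C_3R)$, invoke Lemma~\ref{L:6.2} to conclude $\IP_{(s,x)}(\sigma_{A_-}<\tau_R)\ge C_4$. Lemma~\ref{L:6.2}, however, is stated for the space-time process started at the \emph{bottom} of the time cylinder (time $0$, or $t_0$ after translation); after shifting, it only guarantees a hitting bound for target sets whose time component lies entirely in the future of the starting time. If $s$ is near the top of the interval $(t_0,\,t_0+(C_3R)^2)$, the portion of $A_-$ that the space-time process can ever reach, namely $\{(t,y)\in A_-:t>s\}$, may be arbitrarily small, and Lemma~\ref{L:6.2} gives no lower bound on the hitting probability in that situation. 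So the inequality $\IP_{(s,x)}(\sigma_{A_-}<\tau_R)\ge C_4$ is unjustified for general $(s,x)$ in the inner cylinder.

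The paper handles this by introducing a two-scale inner structure: one takes the test points $z_1,z_2$ from the \emph{smaller} cylinder $Q_{k+1}=Q(s,x,\gamma^{k+1}R)$ (with $\gamma=C_3/2$), whose time component ends at $s+(\gamma^{k+1}R)^2=s+(C_3\gamma^kR)^2/4$, and one places the target set $A$ in the time slab
\[
\big(s+(\gamma^{k+1}R)^2,\; s+(C_3\gamma^kR)^2\big)\times B_\rho(x,C_3\gamma^kR),
\]
which sits strictly above $Q_{k+1}$ in time. Then $A$ is entirely in the future of any $z_1\in Q_{k+1}$, so (after a time shift) Lemma~\ref{L:6.2} applies legitimately. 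Because this time slab still occupies $3/4$ of the measure of $Q(s,x,C_3\gamma^kR)$, the ``one of the halves is large'' dichotomy survives with $|A|\ge\tfrac12|Q(s,x,C_3\gamma^kR)|$. The cost is that the shrinking factor per step is $\gamma=C_3/2$ rather than $C_3$, which also cleanly forces $\eta=1-C_4/4>\gamma$ and hence $\beta=\log\eta/\log\gamma\in(0,1)$. Your write-up can be repaired along these lines with minimal changes: bound the oscillation only over a cylinder whose time range ends at, say, $t_0+(C_3R)^2/4$, and put $A_\pm$ in the remaining time slab.

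Your remarks about regularity of $q$ and optional stopping are reasonable but secondary; once the time-gap issue is fixed, the rest of the argument is standard.
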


\begin{proof}
With loss of generality, assume $0\le q(s)\le \|q\|_{\infty, R}=1$. We first assume $x_0=a^*$ and show that \eqref{Holdercont} holds 
for all $(s,x), (t,y)\in Q(0, x_0, R)$ (instead of $Q(0, x_0, R/4)$). Let $C_3\in (0, 1/2]$ and $C_4\in (0, 1)$ be the constants in Lemma \ref{L:6.2}. Let 
$$
\eta=1-C_4/4 \geq 3/4\quad \hbox{and} \quad  \gamma=C_3/2\leq 1/4.
$$
 Note that for every $(s,x)\in Q(0, a^*, R)$, $q$ is parabolic in $Q(s,x,R)\subset Q(0, a^*, 2R)$. We will show by induction that $\displaystyle{\sup_{Q(s,x,\gamma^kR)}|q|-\inf_{Q(s,x,\gamma^kR)}|q|\le \eta^k}$ for all integer $k$. For notation convenience, we denote $Q(s,x,\gamma^kR)$ by $Q_k$.
Define $a_i=\displaystyle{\inf_{Q_i}q}$, $b_i=\displaystyle{\sup_{Q_i}q}$. Clearly, $b_i- a_i\le 1\le \eta^i$ for all $i\le 0$. Now suppose $b_i-a_i\le \eta^i$ for all $i\le k$ and we wil show that $b_{k+1}-a_{k+1}\le \eta^{k+1}$. Observe that $Q_{k+1}\subset Q_k$ and so $a_k\le q\le b_k$ on $Q_{k+1}$. Define 
$$
A':=\left\{z\in \big( s+(\gamma^{k+1}R)^2, s+  (C_3 \gamma^kR)^2 \big)
\times B_\rho (x, C_3 \gamma^kR):  q(z)\le (a_k+b_k)/2\right\},
$$ 
which is a subset of $Q_k$.
Note that 
$$
\left|\big(s+(\gamma^{k+1}R)^2, s+ (C_3 \gamma^kR)^2\big)
\times B_\rho (x, \gamma^kR) \right|= \frac34 (C_3\gamma^kR)^2 \, m_p (B_\rho(x, C_3\gamma^k R)).
$$
We may suppose $|A'|\geq \frac12 
(C_3\gamma^kR)^2 \, m_p (B(x, C_3 \gamma^k R))$;
otherwise we consider $1-q$ instead of $q$. Let $A$ be a compact subset of $A'$ such that $|A|\ge \frac12 (C_3\gamma^kR)^2 \, m_p (B(x, C_3 \gamma^k R))$. For any given $\eps>0$, pick $z_1=(t_1, x_1), z_2\in Q_{k+1}$ so that $q(z_1)\ge b_{k+1}-\eps$ and $q(z_1)\le a_{k+1}+\eps$. Note that  $Z_{\tau_{k}}\in \partial Q_{k}$ as 
BMVD $X_t$ has continuous sample paths. So by
Lemma \ref{L:6.2},  
\begin{eqnarray*}
b_{k+1}-a_{k+1}-2\eps
&\le &q(z_1)-q(z_2) \\
&=& \IE_{z_1}\left[q(Z_{\sigma_A\wedge \tau_{k}})-q(z_2)\right]  \\
&=& \IE_{z_1}\left[q(Z_{\sigma_A})-q(z_2); \sigma_A< \tau_k\right]+\IE_{z_1}\left[q(Z_{\tau_k})-q(z_2); \tau_k < \sigma_A \right]\\
\\
&\le& \left(\frac{a_k+b_k}{2}-a_k\right)\IP_{z_1} \left(\sigma_A<\tau_{k}) \right)
+(b_k-a_k)\IP_{z_1}(\sigma_A>\tau_k)  \\
&=& (b_k-a_k)\left( 1-\IP_{z_1}(\sigma_A<\tau_{k})/2\right) 
\\
&\le & \eta^k(1-C_4/2)
\\
&\le & \eta^{k+1}.
\end{eqnarray*}
Since $\eps$ is arbitrary, we get $b_{k+1}-a_{k+1}\leq \eta^{k+1}$. 
This proves that $b_{k}-a_{k}\leq \eta^{k}$ for all integer $k$.

For $z=(s,x)$ and $w=(t,y)$ in $Q(0,a^*,R)$ with $s\le t$, let $k$ be the smallest integer such that $|z-w|:=|t-s|^{1/2}+\rho(x,y)\le \gamma^kR$. 
Then
\begin{equation*}
|q(z)-q(w)|\le \eta^k=\gamma^{k\log\eta/\log \gamma}
\le \left(\frac{|z-w|}{\gamma R}\right)^{\log\eta/\log\gamma}.
\end{equation*}
This establishes 
\eqref{Holdercont} for  $x_0=a^*$ and for every $(s,x), (t, y)\in Q(0,a^*, R)$ with $\beta=\log \eta /\log \gamma$. Note that 
$\beta \in (0, 1)$ since $0< \gamma <\eta <1$.

For general $x_0\in E$, we consider two cases based on the distance $\rho(x, a^*)$:

\smallskip

\noindent {\it Case 1.} $|x|_\rho<R/2$. In this case, 
$Q(0, x_0, R/4)\subset Q(0, a^*, 3R/4) \subset Q(0,a^*, 3R/2)\subset Q(0,x_0, 2R)$. By  what we have 
established above, 
\begin{equation*}
|q(s,x)-q(t,y)|\le C(R_0)\|q\|_{\infty, R} \,
R^{-\beta}\left(|t-s|^{1/2}-\rho(x,y)\right)^\beta \quad \hbox{for } (s,x), (t,y)\in Q(0,x_0, R/4).
\end{equation*}

\smallskip

\noindent {\it Case 2. }$|x|_\rho\ge R/2$. Since $a^*\notin Q(0, x_0, R/2)$, it follows from the classical results for Brownian motion in $\IR^d$
with $d=1$ and $d=2$ that for every $(s,x), (t,y)\in Q(0, x_0, R/4)$
\begin{align*}
|q(s,x)-q(t,y)|&\le C(R_0) \| q \|_{\infty, R/2} R^{-\beta}\left(|t-s|^{1/2}+\rho(x,y)\right)^\beta\\
&\le  C(R_0)\|q \|_{\infty, R} \,\, R^{-\beta}\left(|t-s|^{1/2}+\rho(x,y)\right)^\beta .
\end{align*}
This  completes the proof of the theorem.
\end{proof}

\section{Green Function Estimates}

\indent In this section, we establish two-sided bounds for the Green function of BMVD 
$X$ killed upon exiting a bounded connected $C^{1,1}$ open set $D\subset E$. 
Recall that the Green function $G_D(x,y)$ is defined as follows:
\begin{equation*}
G_D(x,y)=\int_0^\infty p_D(t,x,y)dt,
\end{equation*}
where $p_D(t, x, y)$ is the transition density function of the subprocess $X^D$ with respect to $m_p$. 
 We assume $a^*\in D$ throughout this section, as otherwise, due to the connectedness of $D$, either $D\subset \IR_+$ or $D\subset D_0$. Therefore $G_D(x,y)$ is just the standard Green function of a bounded $C^{1,1}$ domain  for Brownian motion 
 in one-dimensional or two-dimensional  spaces, whose two-sided estimates are known, see \cite{CZ}.  
It is easy to see from 
$$ p_D(t, x, y)=p(t, x, y)-\IE_x [ p(t-\tau_D, X_{\tau_D}, y); \tau_D <t],
$$
that $p_D(t, x, y)$ is jointly continuous in $(t, x, y)$.

 Recall that for any bounded open set $U\subset E$,   $\delta_U(\cdot):=\rho(\cdot, \partial U)$  denotes the $\rho$-distance to
 the boundary $\partial U$. 
 For notational convenience, we set $D_1:=D\cap (\IR_+\setminus \{a^*\})$ and $D_2:=D\cap D_0$.
 Note that  $a^*\in \partial D_1 \cap \partial D_2$.
 
 \medskip

The following theorem gives two-sided Green function estimates for $X$ in bounded $C^{1,1}$ domains.

\begin{thm} \label{T:7.1} 
Let $G_{_D}(x,y)$ be the Green function of $X$ killed upon exiting $D$, where $D$ is a connected bounded $C^{1,1}$ domain of $E$ containing 
$a^*$.   We have for $x\not= y$ in $D$, 
\begin{equation*}
G_{_D}(x,y)\asymp\left\{
  \begin{array}{ll}
    \delta_D(x)\wedge \delta_D(y),
&
\hbox{$x\in D_1 \cup \{a^*\}$, $y\in D_1 \cup \{a^*\}$;}
\\ \\
  \delta_D(x)  \delta_D(y)+\ln\left(1+\frac{\delta_{D_2}(x)\delta_{D_2}(y)}{|x-y|^2}\right),
& \hbox{$x\in D_2$, $y\in D_2$;}
\\ \\
  \delta_D(x) \delta_D(y),
& \hbox{$x\in D_1\cup \{a^*\}$, $y\in D_2$.}
  \end{array}
\right.
\end{equation*}
\end{thm}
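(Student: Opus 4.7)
The plan is to decompose $G_D$ via the strong Markov property of $X$ at $\sigma_{a^*}$, and to reduce the problem to classical Green function estimates on the two components $D_1 := D \cap (\IR_+ \setminus \{a^*\})$ and $D_2 := D \cap D_0$, together with sharp two-sided bounds on the hitting probability $u(x) := \IP_x(\sigma_{a^*} < \tau_D)$. Before $\sigma_{a^*}$, the process $X$ started in $D_i$ behaves as standard $i$-dimensional Brownian motion, so the strong Markov property at $\sigma_{a^*}$ gives, for $x, y$ in the same component $D_i$,
\[
G_D(x,y) = G^0_{D_i}(x,y) + u(x)\,G_D(a^*, y),
\]
and for $x, y$ in different components, $G_D(x,y) = u(x)\,G_D(a^*, y)$, where $G^0_{D_i}$ is the Dirichlet Green function of $i$-dimensional Brownian motion killed upon exiting $D_i$. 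Letting $y \to a^*$ along $D_i$ in the first identity (so $G^0_{D_i}(x,y) \to 0$) gives $G_D(x, a^*) = u(x)\,G_D(a^*, a^*)$, whence by symmetry $G_D(a^*, y) = u(y)\,G_D(a^*, a^*)$ for all $y \in D$.

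For the classical inputs, on the 1D interval $D_1 = (0, L)$ one has the explicit formulas $G^0_{D_1}(x,y) = 2(x \wedge y)(L - x \vee y)/(pL)$ and $u(x) = (L - x)/L$. On the bounded planar $C^{1,1}$ domain $D_2$, the classical sharp estimate gives $G^0_{D_2}(x,y) \asymp \ln(1 + \delta_{D_2}(x)\,\delta_{D_2}(y)/|x-y|^2)$. The scalar $G_D(a^*, a^*) = \int_0^\infty p_D(t, a^*, a^*)\,dt$ is finite and strictly positive: integrability near $t = 0$ follows from $p(t, a^*, a^*) \lesssim t^{-1/2}$ (Theorem~\ref{T:5.10}), and integrability at $t = \infty$ from the exponential decay of $p_D(t, a^*, a^*)$ due to the boundedness of $D$.

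The crux is the two-sided bound $u(x) \asymp \delta_D(x)$ uniformly on $D$. For $x \in D_1 \cup \{a^*\}$ this is immediate from the explicit formula and $\delta_D(x) = \min(L - x,\, x + r_0)$, where $r_0 := {\rm dist}_\rho(a^*, \partial D \cap D_0) > 0$ is positive because $a^* \in D$. For $x \in D_2$, the function $u$ is a bounded positive $2D$-harmonic function in $D_2$ with $u \equiv 1$ on $\partial B_\eps$ and $u \equiv 0$ on $\partial D \cap D_0$; the plan is to apply the boundary Harnack inequality for 2D absorbing Brownian motion in $C^{1,1}$ domains to compare $u$ with the harmonic profile $x \mapsto G^0_{D_2}(x, z_0)$ at a fixed interior reference point $z_0$, yielding $u(x) \asymp \delta_{D_2}(x) = \delta_D(x)$ in a neighborhood of $\partial D \cap D_0$. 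Near $\partial B_\eps$ and at interior points, both $u$ and $\delta_D$ are bounded between positive constants depending only on $D$ (because $r_0 > 0$), and interior Harnack joins the two regimes. The main obstacle is precisely this global chaining: the boundary Harnack inequality is purely local, and one must exploit the uniform separation $r_0 > 0$ between $\partial B_\eps$ and $\partial D \cap D_0$ to propagate $u \asymp \delta_D$ across all of $D_2$.

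Substituting everything back into the decomposition yields the three cases of the theorem. For $x, y \in D_2$,
\[
G_D(x,y) \asymp G^0_{D_2}(x,y) + u(x) u(y) \asymp \ln\!\left(1 + \frac{\delta_{D_2}(x)\,\delta_{D_2}(y)}{|x-y|^2}\right) + \delta_D(x)\,\delta_D(y).
\]
For $x \in D_1 \cup \{a^*\}$ and $y \in D_2$, the process must pass through $a^*$ to reach $y$, so $G_D(x,y) = u(x)u(y) G_D(a^*, a^*) \asymp \delta_D(x) \delta_D(y)$. Finally, for $x, y \in D_1 \cup \{a^*\}$ with $0 \le x \le y \le L$, one has $G_D(x,y) = 2x(L-y)/(pL) + (L-x)(L-y) G_D(a^*, a^*)/L^2$, and a short case analysis comparing this expression with $\delta_D(x) \wedge \delta_D(y) = \min(L-y,\, x + r_0)$ (handling $x = a^*$ or $y = a^*$ by continuity) yields the claimed $\asymp \delta_D(x) \wedge \delta_D(y)$. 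This 1D verification is slightly delicate because for $x \in D_1$ the geodesic distance $\delta_D(x)$ may be realized by passing through $a^*$ into $D_2$, so in general $\delta_D(x) \ne \delta_{D_1}(x)$.
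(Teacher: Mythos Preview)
Your proposal is correct and follows essentially the same approach as the paper's proof: both decompose $G_D(x,y) = G^0_{D_i}(x,y) + \IP_x(\sigma_{a^*}<\tau_D)\,G_D(a^*,y)$ via the strong Markov property at $\sigma_{a^*}$, invoke the classical one- and two-dimensional Dirichlet Green function estimates on $D_1$ and $D_2$, and obtain $\IP_x(\sigma_{a^*}<\tau_D)\asymp\delta_D(x)$ and $G_D(a^*,y)\asymp\delta_D(y)$ by linearity on $D_1$ and the boundary Harnack inequality on $D_2$. The paper in addition supplies the explicit argument for the exponential decay of $p_D$ (via $\IP_x(\tau_D<1)\ge c_1$) that you invoke for the finiteness of $G_D(a^*,a^*)$, and your observation that $\delta_D(x)$ for $x\in D_1$ may be realized through $a^*$ is a nice point that the paper leaves implicit.
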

 
 \begin{proof} We first show that $G_D(x, a^*)$ is a bounded positive continuous function on $D$. 
By Theorem \ref{T:smalltime}, there is a constant $c_1>0$ so that for every $x\in D$, 
\begin{eqnarray}
\IP_x(\tau_D<1)&\ge& \IP_x(X_1\in E\setminus  D) =\int_{\IR_+ \cap  D^c} p(1,x,z)m_p(dz)
+ \int_{D_0 \cap  D^c} p(1,x,z)m_p(dz) \nonumber  \\
&\ge & c_1.   \label{e:7.1}
\end{eqnarray}
Thus $\IP_x (\tau_D\geq 1) \leq 1-c_1$ for every $x\in D$. 
By the strong Markov property of $X$, there are constants $c_2, c_3>0$ so that 
$\IP_x (\tau_D\geq t) \leq c_2 e^{-c_3t} $ for every $x\in D$
and  $t>0$. For $t\geq 2$ and $x, y\in D$, we thus have by Theorem \ref{T:smalltime},
\begin{eqnarray*}
p_{_D}(t,x,y) =\int_D p_{_D}(t-1, x,z)p_{_D}(1,z,y)m_p(dz) \leq 
c_4 \int_D p_{_D}(t-1, x,z) m_p(dz) \leq c_5e^{-c_3t}.
\end{eqnarray*}
By Theorem \ref{T:smalltime} again, we conclude that 
$$ 
G_D(x, a^*)=\int_0^2 p_D(t, x, y) dt + \int_2^\infty p_D(t, x, y)dt
$$
converges and is a bounded positive continuous function in $x\in D$.
In particular, $G_D(a^*, a^*)<\infty$. 

We further note that $x\mapsto G_D(x, a^*)$ is a harmonic function in $D_1$ and so it is a linear function.
As it vanishes
at $b :=\partial D \cap \IR_+$, we have 
\begin{equation}\label{e:7.2}
G_D(x, a^*) = c_6 |b-x|\asymp  \delta_D (x) \quad  \hbox{for } x\in D_1.
\end{equation}

\medskip

\noindent (i) Assume $x, y \in D_1\cup \{a^*\}$ and $x\not= y$.  If $x=a^*$ or $y=a^*$, 
 the desired estimate holds in view of \eqref{e:7.2}.
Thus we assume neither $x$ nor $y$ is $a^*$.  By the strong Markov property of $X$, 
$$ 
G_D(x, y )= G_{D_1}(x, y) + \IE_x [ G_D(X_{\sigma_{a^*}}, y); \sigma_{a^*}<\tau_D]
=G_{D_1}(x, y) + G_D(a^*, y)   \IP_x ( \sigma_{a^*}<\tau_D).
$$
We know from the one-dimensional Green function estimates,
$$
G_{D_1}(x, y) \asymp \delta_{D_1}(x)\wedge \delta_{D_1}(y) . 
$$ 
On the other hand, $x\mapsto \IP_x ( \sigma_{a^*}<\tau_D)$ is a harmonic function
in  $D_1$ that vanishes at $b$. Thus by the same reasoning as that for \eqref{e:7.2}, we have 
\begin{equation}\label{e:7.3}
\IP_x ( \sigma_{a^*}<\tau_D)= c_7 |x-b|\asymp  \delta_D (x) \quad  \hbox{for } x\in D_1.
\end{equation}
It follows then
$$ 
G_D(x, y ) \asymp \delta_{D_1}(x)\wedge \delta_{D_1}(y) + \delta_{D }(x) \delta_{D }(y)
\asymp  \delta_{D }(x)\wedge \delta_{D }(y) .
$$

\noindent (ii) Assume that $x, y\in D_2$. 
By the strong Markov property of $X$, 
$$ 
G_D(x, y )= G_{D_2}(x, y) + \IE_x [ G_D(X_{\sigma_{a^*}}, y); \sigma_{a^*}<\tau_D]
=G_{D_2}(x, y) + G_D(a^*, y)   \IP_x ( \sigma_{a^*}<\tau_D).
$$
Since both $y\mapsto G_D(a^*, y)$ and $x\mapsto \IP_x ( \sigma_{a^*}<\tau_D)$
are bounded positive harmonic functions on $D\cap D_0$ that vanishes on $D_0 \cap \partial D$,
it follows from the boundary Harnack inequality for Brownian motion in $\IR^2$ that 
\begin{equation}\label{e:7.4}
 G_D(a^*, y) \asymp \delta_D (y) \quad \hbox{and} \quad 
 \IP_x ( \sigma_{a^*}<\tau_D) \asymp \delta_D (x)
 \end{equation}
This combined with the Green function estimates of $G_{D_2}(x, y)$ (see \cite{CZ}) yields 
$$
G_D(x, y) \asymp 
\ln \left(1+\frac{\delta_{D_2}(x)\delta_{D_2}(y)}{|x-y|^2}\right) +\delta_D(x)\delta_D (y).
$$

\noindent
(iii) We now consider the last  case that $x\in D_1\cup \{a^*\}$ and $y\in D_2$.
When $x=a^*$, the desired estimates follows from \eqref{e:7.4} and so it remains to
consider $x\in D_1$ and $y\in D_2$.  
By the strong Markov property of $X$, \eqref{e:7.3} and \eqref{e:7.4},
\begin{eqnarray*}
G_D(x, y) = \IE_x [ G_D(X_{\sigma_{a^*}} , y); \sigma_{a^*}<\tau_D]
= G_D(a^*, y) \IP_x (\sigma_{a^*}<\tau_D) \asymp \delta_D(x) \delta_D (y).
\end{eqnarray*}
This completes the proof of the theorem.  
\end{proof}

\section{Some other BMVD}\label{S:8} 

In this section, we present two more examples of spaces with varying dimension
that are variations of  $\IR^2 \cup \IR$ considered in previous sections. 
The existence and uniqueness of BMVD on these spaces can be established
in a similar way as Theorem \ref{existence-uniqueness} in Section \ref{S:2}. We will concentrate on short time two-sided
estimates on the transition density function of these two BMVD.
One can further study their large time heat kernel estimates. 
Due to the space limitation, we will not pursue
it  in this paper.

\subsection{A square with several flag poles}\label{S:8.1}

In this subsection, we study the BMVD on a large square with multiple flag poles of infinite length. 
The state space $E$ of the process embedded in $\IR^3$ is defined as follows.
Let $\{z_j; 1\leq j\leq k\}$ be $k$ points in $\IR^2$ that have distance 
at least 4 between each other. Fix a finite sequence $\{\eps_j;  1\leq j \leq k \} \subset (0,1/2)$ and a sequence of positive constants 
$\vec p:=\{ p_j; 1\leq j\leq k\}$.
For $1\le j\le k$, denote by  $B_j$  the closed disk on $\IR^2$ centered at $z_j$ with radius $\eps_j$. Clearly, the distance between two distinct balls is at least 3. 
Let $\displaystyle{D_0=\IR^2\setminus  \left(\cup_{1\le i\le k}B_i\right)}$. For $1\le j\le k$, denote by $L_j$ the half-line $\{(z_j, w)\in \IR^3: w>0\}$. By identifying each closed ball $B_j$ with a singleton denoted 
by $a_j^*$, we equip the space $E:=D_0\bigcup \{a_1^*, \cdots, a_k^*\}\bigcup (\cup_{i=1}^k L_i)$ with induced topology from $\IR^2$ and the half-lines $L_j$, $1\leq j\leq k$, 
with the endpoint of the half-line $L_i$ identified with $a_i^*$ and
a neighborhood of $a_i^*$ defined as $\{a_i^*\}\cup \left(U_1\cap L_i\right)\cup \left(U_2\cap D_0\right)$ for some neighborhood $U_1$ of $0$ in $\IR$ and $U_2$ of $B_i $ in $\IR^2$. Let $m_{\vec p}$ be the measure on $E$ whose restriction on $D_0$ is the two-dimensional Lebesgue measure, and whose restriction on $L_j$ is the one-dimensional Lebesgue measure multiplied by $p_j$
for $1\leq j\leq k$.  
So in particular, $m_{\vec p} \left(\{a_i^*\}\right)=0$ for all $1\le i\le k$. We denote the geodesic distance on $E$ by $\rho$. 

Similar to Definition \ref{D:1.1}, BMVD on the plane with multiple half lines is defined as follows.

\begin{dfn}\rm Given a finite sequence $\vec \eps:=\{\eps_j; 1\leq j\leq k\}\subset (0,1/2)$ and a sequence of positive constants $\vec p:=\{p_j; 1\leq j\leq k\}$. A Brownian motion with varying dimension with parameters $(\vec \eps, \vec p)$ on $E$  is an $m_{\vec p}$-symmetric diffusion $X$ on $E$ such that
\begin{description}
\item{(i)} its subprocess in $L_i$, $1\le i\le k$, or $D_0$ has the same law as that of standard Brownian motion in $\IR_+$ or $D_0$;
\item{(ii)} it admits no killings on any $a_i^*$, $1\le i\le k$.
\end{description}
\end{dfn}

Recall that the endpoint of the half-line $L_i$ is identified with $a^*_i$, $1\le i\le k$. Similar to Theorem \ref{existence-uniqueness}, we have the following theorem stating the existence and uniqueness of the planary BMVD $X$ with multiple half lines.

\begin{thm}\label{existence-uniqueness-planary}
For each $k\geq 2$, 
every $\vec\eps :=\{ \eps_j ; 1\leq j\leq k\}\subset (0, 1/2)$ and 
$\vec p:=\{p_j; 1\leq j\leq k\}\subset (0, \infty)$,
BMVD $X$ on $E$ with parameter $(\vec \eps, \vec p)$ exists and is unique.
Its associated Dirichlet form $(\EE, \FF)$ on $L^2(E; m_p)$ is given by
\begin{eqnarray*}
\FF &=& \left\{f: f|_{\IR^2}\in W^{1,2}(\IR^2); \,  f|_{L_i}\in W^{1,2}(\IR),\, f|_{B_i}=f|_{L_i}(a^*_i) \hbox{ for } 1\leq i \leq k \right\}, 
\\ 
\EE(f,g) &=& \frac{1}{2}\int_{D_0} \nabla f(x) \cdot \nabla g(x) dx
 +\sum_{i=1}^k\frac{p_i}{2}\int_{L_i}f'(x)g'(x)dx. 
\end{eqnarray*}
\end{thm}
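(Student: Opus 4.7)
The plan is to mirror the two-part argument used for Theorem \ref{existence-uniqueness}, with the main adaptation being that one must now handle $k$ pole bases $a_1^*,\dots,a_k^*$ simultaneously rather than a single one. Because the balls $B_i$ are pairwise disjoint and separated by at least distance $3$, the ``local structure'' near each $a_i^*$ is a faithful copy of the single-pole picture, so the local computations (energy measures, flux, hitting probabilities) decouple across the $a_i^*$'s.

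For existence, I would first check directly that $(\EE,\FF)$ is a strongly local regular Dirichlet form on $L^2(E;m_{\vec p})$. For each $1\le i\le k$, introduce
\[
u_i(x):=\IE^x\!\left[e^{-\sigma_{a_i^*}}\right], \qquad x\in E,
\]
and verify (as in the single-pole case) that $u_i|_{D_0}\in W^{1,2}(D_0)$, $u_i|_{L_j}\in W^{1,2}(L_j)$ for every $j$, with $u_i(a_j^*)=\delta_{ij}$, so each $u_i\in \FF$. Define
\[
\FF^0:=\left\{f\in\FF : f|_{D_0}\in W^{1,2}_0(D_0),\ f|_{L_i}\in W^{1,2}_0(L_i\setminus\{a_i^*\})\ \text{for all }i\right\},
\]
and note that $\FF$ is the direct sum of $\FF^0$ and the $k$-dimensional space spanned by $\{u_1,\dots,u_k\}$ (every $f\in\FF$ is uniquely written as $f-\sum_i f(a_i^*)u_i + \sum_i f(a_i^*)u_i$). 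Regularity then follows because $C_c^\infty$-type functions on $D_0$ and on each $L_i$, together with the $u_i$'s, form a dense subfamily in both $(\EE_1^{1/2},\FF)$ and $(C_0(E),\|\cdot\|_\infty)$. Strong locality is immediate from the expression for $\EE$. The associated $m_{\vec p}$-symmetric diffusion $X$ exists by the standard theory; its part process killed on $\{a_1^*,\dots,a_k^*\}$ is characterized by $(\EE,\FF^0)$, which decouples into absorbing Brownian motion on $D_0$ and on each $L_i$, establishing property (i). No killing at any $a_i^*$ follows because $X$ is conservative (the constant function $1$ belongs to $\FF_\mathrm{loc}$ with zero energy) and $\FF$ contains the $u_i$'s.

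For uniqueness, suppose $X$ is a BMVD and let $(\EE^*,\FF^*)$ be its Dirichlet form. Each $a_i^*$ is non-polar: otherwise $X$ killed upon hitting $\{a_1^*,\dots,a_k^*\}\setminus\{a_i^*\}$ would agree with absorbing Brownian motion on a space still containing $L_i$ joined to $D_0$ only through $a_i^*$, contradicting the $m_{\vec p}$-symmetry together with property (i). Given $u\in\FF^*$, the function $u-\sum_{i=1}^k H^1_{a_i^*}u$ with $H^1_{a_i^*}u(x):=\IE^x[e^{-\sigma_{a_i^*}}u(X_{\sigma_{a_i^*}})]=u(a_i^*)\,u_i(x)$ lies in $\FF^0$, so $\FF^*\subset\FF$; conversely $(\EE,\FF^0)$ is the Dirichlet form of the absorbing process on $E\setminus\{a_1^*,\dots,a_k^*\}$, so $\FF^0\subset \FF^*$, and combined with $u_i\in\FF^*$ (via the hitting-probability representation) one gets $\FF=\FF^*$. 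Finally, strong locality of $\mu^c_{\langle u\rangle}$ and the $m_{\vec p}$-symmetry imply that for bounded $u\in\FF$,
\[
\EE^*(u,u)=\frac12\mu^c_{\langle u\rangle}\!\left(E\setminus\{a_1^*,\dots,a_k^*\}\right)=\frac12\int_{D_0}|\nabla u|^2\,dx+\sum_{i=1}^k\frac{p_i}{2}\int_{L_i}|u'|^2\,dx=\EE(u,u),
\]
exactly as in the proof of Theorem \ref{existence-uniqueness}.

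The main obstacle I expect is purely bookkeeping: one must verify that the decomposition $\FF=\FF^0\oplus\mathrm{span}\{u_1,\dots,u_k\}$ is genuinely a direct sum and that each $u_i$ contributes the correct ``flux'' only at its own pole base $a_i^*$ (zero flux at the other $a_j^*$'s). This is where the geometric hypothesis that the balls $B_i$ are separated by distance at least $3$ is used: it guarantees that the harmonic functions $u_i$ are smooth and have negligible energy near $a_j^*$ for $j\ne i$, so the pole-base contributions in both the construction and the uniqueness argument add without interaction.
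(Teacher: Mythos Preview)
Your overall strategy is correct and mirrors the paper's approach (which simply defers to Theorem~\ref{existence-uniqueness}), but your specific choice of the functions $u_i$ and the resulting decomposition contain a genuine error. In the existence part, defining $u_i(x):=\IE^x[e^{-\sigma_{a_i^*}}]$ is circular, since the expectation is with respect to the very process you are constructing. More importantly, even in the uniqueness part where $X$ is given, the claim $u_i(a_j^*)=\delta_{ij}$ is false: for $j\neq i$ one has $u_i(a_j^*)=\IE^{a_j^*}[e^{-\sigma_{a_i^*}}]\in(0,1)$, because $X$ started at $a_j^*$ reaches $a_i^*$ in finite time with positive probability. Consequently $u-\sum_i u(a_i^*)\,u_i$ does \emph{not} vanish at the pole bases and need not lie in $\FF^0$, so your direct-sum decomposition breaks down. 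The separation hypothesis on the $B_i$ does not rescue this, contrary to your final paragraph: it makes $\IE^{a_j^*}[e^{-\sigma_{a_i^*}}]$ small, not zero.

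The fix is to hit the whole set $A=\{a_1^*,\dots,a_k^*\}$ at once. In the uniqueness argument, use $H^1_A u(x)=\IE^x[e^{-\sigma_A}u(X_{\sigma_A})]=\sum_i u(a_i^*)\,v_i(x)$ with $v_i(x):=\IE^x[e^{-\sigma_A};\,X_{\sigma_A}=a_i^*]$; these satisfy $v_i(a_j^*)=\delta_{ij}$ genuinely, and then $u-H^1_A u\in\FF^0$ by the standard part-process theory. For the existence argument, define the spanning functions piecewise without reference to $X$: for instance, on $D_0$ let $u_i(x)=\IE^x[e^{-\sigma_{\partial B_i}};\,\sigma_{\partial B_i}<\sigma_{\cup_{j\neq i}\partial B_j}]$ for absorbing Brownian motion in $D_0$, set $u_i\equiv 0$ on $L_j$ for $j\neq i$, and take the one-dimensional $1$-potential on $L_i$; or simply use compactly supported smooth bump functions equal to $1$ near $a_i^*$ and $0$ near the other poles. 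With either choice the rest of your argument goes through verbatim.
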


It is not difficult to see that BMVD $X$ has a continuous 
transition density $p(t,x,y)$ with respect to the measure $m_{\vec p}$. 

\begin{prop}\label{P8:3}
There exist constants $C_1, C_2>0$ such that
\begin{displaymath}
p(t,x,y)\le C_1\left(\frac{1}{t}+
\frac{1}{t^{1/2}}\right)e^{-C_2\rho(x,y)^2/t}
\qquad\text{for all } x, y \in
E \hbox{ and } t> 0 .
\end{displaymath}
\end{prop}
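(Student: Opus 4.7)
The plan is to follow the two-step Nash--Davies scheme used for the single flag pole case in Lemma~\ref{NashIneq}, Proposition~\ref{P:3.2} and Proposition~\ref{offdiagUBE}, checking that each ingredient goes through verbatim in the multi-pole setting.

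First, I would establish the same Nash-type inequality
\[
\|f\|_{L^2(E)}^2 \lesssim \EE(f,f)^{1/2}\|f\|_{L^1(E)} + \EE(f,f)^{1/3}\|f\|_{L^1(E)}^{4/3}, \qquad f\in \FF.
\]
This is immediate from the argument of Lemma~\ref{NashIneq}, because the state space still decomposes as $D_0 \sqcup \bigsqcup_{i=1}^k L_i$ (modulo the measure-zero pole bases), $D_0 = \IR^2 \setminus \bigcup_i B_i$ is a smooth planar domain on which the classical two-dimensional Nash inequality holds, each $L_i$ is a half-line on which the classical one-dimensional Nash inequality holds, and both $\EE(f,f)$ and $\|f\|_{L^j(E)}^j$ split as sums over these pieces (with weights $p_i$ on $L_i$). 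Adding the inequalities piece-by-piece gives the global estimate. Then \cite[Corollary 2.12]{CKS} yields the on-diagonal bound $p(t,x,y) \le C(t^{-1/2} + t^{-1})$ for all $t>0$ and $x,y\in E$, exactly as in Proposition~\ref{P:3.2}.

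Second, I would run Davies's perturbation argument. Fix $x_0,y_0\in E$ and $t_0>0$, set $\alpha = \rho(x_0,y_0)/(4t_0)$, define $\psi(x) := \alpha\,\rho(x_0,x)$ and $\psi_n := \psi \wedge n$. The main verification is
\[
e^{-2\psi_n(x)}|\nabla e^{\psi_n(x)}|^2 \le \alpha^2 \quad\text{and}\quad e^{2\psi_n(x)}|\nabla e^{-\psi_n(x)}|^2 \le \alpha^2 \quad m_{\vec p}\text{-a.e.}
\]
This holds because $\psi$ is $\alpha$-Lipschitz with respect to $\rho$, and on each of the smooth pieces $D_0$ and $L_i$, the geodesic distance $\rho$ between nearby points equals the intrinsic Euclidean distance. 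Hence the restriction of $\psi$ to each piece is $\alpha$-Lipschitz in the Euclidean sense, and the energy measure computation in Section~\ref{S:4} (which showed $\mu_{\langle u_n\rangle} = |\nabla u_n|^2 m_{\vec p}$ on each regular piece, with the pole bases being $m_{\vec p}$-null) gives the claimed pointwise bound a.e. Applying \cite[Corollary 3.28]{CKS} then yields
\[
p(t,x,y) \le c\left(\frac{1}{t}+\frac{1}{\sqrt{t}}\right)\exp\!\left(-|\psi(y)-\psi(x)| + 2t\alpha^2\right),
\]
and specializing to $(t,x,y) = (t_0,x_0,y_0)$ with the above choice of $\alpha$ produces $p(t_0,x_0,y_0) \le C_1(t_0^{-1}+t_0^{-1/2})\exp(-C_2 \rho(x_0,y_0)^2/t_0)$.

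The main (mild) obstacle is the behavior of $\psi$ near the pole bases $a_i^*$, where $\rho$ is not differentiable in the ordinary sense. This is handled in exactly the same way as in Section~\ref{S:4}: each $a_i^*$ carries zero $m_{\vec p}$-mass, so energy measures and a.e. gradient bounds are insensitive to them, and the Lipschitz bound $|\psi(y)-\psi(z)|\le \alpha\rho(y,z)$ combined with the fact that $\rho$ coincides locally with the Euclidean metric inside each $D_0$ or $L_i$ suffices for the Davies method. No new ideas beyond those in Sections~\ref{S:2}--\ref{S:3} are needed.
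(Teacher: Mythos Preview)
Your proposal is correct and follows essentially the same approach as the paper's own proof, which simply states that the Nash-type inequality is obtained ``by an exactly the same argument as that for Proposition~\ref{NashIneq}'' and then the off-diagonal bound ``using Davies' method as in Propositions~\ref{P:3.2} and~\ref{offdiagUBE}.'' You have supplied the natural details the paper omits; the only cosmetic difference is that you take $\psi(x)=\alpha\rho(x_0,x)$ rather than $\psi(x)=\alpha|x|_\rho$ as in the single-pole case, which is in fact the cleaner choice here since there is no longer a distinguished base point.
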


\begin{proof}
By an exactly the same argument as that for Proposition \ref{NashIneq}, 
we can establish Nash-type inequality for $X$. From it, 
the off-diagonal upper bound can be derived using Davies' method
as in Propositions \ref{P:3.2} and \ref{offdiagUBE}. 
\end{proof}

The following theorem gives two-sided bounds for the transition density function $p(t,x,y)$ when $t\in (0, T]$ for each fixed $T>0$.
  
\begin{thm}\label{small-time-multi-line}
Let $T\geq 2$ be fixed. There exist positive constants $C_i$, $3\le i\le 16$ so  that the transition density $p(t,x,y)$ of BMVD $X$ on $E$ satisfies the following estimates when $t\in (0, T]$.
\\
Case 1. For   $x, y \in D_0\cap  B_\rho(a^*_i, 1)$ for some $1\leq i\leq k$,  
\begin{align*}
\frac{C_3}{\sqrt{t}}e^{-C_4\rho(x,y)^2/t}&+\frac{C_3}{t}\left(1\wedge
\frac{\rho(x, a^*_i)}{\sqrt{t}}\right)\left(1\wedge
\frac{\rho(y, a^*_i)}{\sqrt{t}}\right)e^{-C_5|x-y|^2/t} \le p(t,x,y)
\\
&\le \frac{C_6}{\sqrt{t}}e^{-C_7\rho(x,y)^2/t}+\frac{C_6}{t}\left(1\wedge
\frac{\rho(x, a^*_i)}{\sqrt{t}}\right)\left(1\wedge
\frac{\rho(y, a^*_i)}{\sqrt{t}}\right)e^{-C_{8}|x-y|^2/t};
\end{align*}
 Case 2. For some $1\le i\le k$, $x\in L_i,\, y\in L_i\cup B_\rho(a^*_i, 1)$,
 \begin{align*}
  \frac{C_{9}}{\sqrt{t}}e^{-C_{10}\rho(x,y)^2/t} \le p(t,x,y) \le \frac{C_{11}}{\sqrt{t}}e^{-C_{12}\rho(x,y)^2/t};  
\end{align*}
Case 3. For all other cases, 
\begin{equation}\label{e:8.1}
  \frac{C_{13}}{t}e^{-C_{14}\rho(x,y)^2/t} \le p(t,x,y)\le \frac{C_{15}}{t}e^{-C_{16}\rho(x,y)^2/t}.
\end{equation}
\end{thm}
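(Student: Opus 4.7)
The approach mirrors Theorems \ref{T:4.5}--\ref{T:4.7}, with the crucial new ingredient being that the pole bases $\{a_i^*\}_{1\le i\le k}$ are pairwise $\rho$-separated by a distance at least $3$ (since the centers $z_j$ are $4$-separated and the radii satisfy $\eps_j<1/2$). For small $t\in(0,T]$, any path traveling between two distinct poles therefore incurs an overwhelming Gaussian penalty of order $e^{-c/t}$, which is absorbed into the leading Gaussian factor $e^{-C\rho(x,y)^2/t}$ in all three cases. Set $\sigma_{a^*}:=\min_{1\le j\le k}\sigma_{a_j^*}$, and for each $i$ let $X^{(i)}$ denote the single-pole BMVD on $\IR^2\cup L_i$ (obtained by ``forgetting'' the remaining poles); by the results of Section \ref{S:4}, its transition density $p^{(i)}(t,x,y)$ already satisfies bounds of the desired form.

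For Case 1, I would follow the proof of Theorem \ref{T:4.7} essentially verbatim: write $p(t,x,y)=p_{D_0}(t,x,y)+\overline{p}_{D_0}(t,x,y)$. The classical two-dimensional Dirichlet heat kernel estimates \eqref{e:4.12} for $p_{D_0}$ supply the second term in the asserted bounds. For $\overline{p}_{D_0}$, apply the strong Markov property at $\sigma_{a^*}$ and decompose according to which pole is hit first:
\begin{equation*}
\overline{p}_{D_0}(t,x,y)=\sum_{j=1}^{k}\int_0^t p(t-s,a_j^*,y)\,\IP_x\bigl(\sigma_{a^*}\in ds,\;X_{\sigma_{a^*}}=a_j^*\bigr).
\end{equation*}
The contribution from $j=i$ reproduces, via the signed-radial-process identity analogous to \eqref{e:4.11} for $X^{(i)}$, the leading term $\frac{C}{\sqrt{t}}e^{-c\rho(x,y)^2/t}$. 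For $j\neq i$, one has $\rho(x,a_j^*)\ge 2$ and $\rho(a_j^*,y)\ge 2$ (since $x,y\in B_\rho(a_i^*,1)$), so both $\IP_x(X_{\sigma_{a^*}}=a_j^*,\sigma_{a^*}<t)$ and $p(t-s,a_j^*,y)$ carry factors of $e^{-c/t}$, whose product is negligible.

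For Case 2, the argument mirrors Theorem \ref{T:4.6} applied to $X^{(i)}$: before time $t\le T$, with overwhelming probability the process either remains in the neighborhood $B_\rho(a_i^*,2)$ or its first exit excursion reaches $y$; paths that cross to another pole $a_j^*$ contribute at most $O(e^{-c/t})$. Case 3 gathers all configurations in which $\rho(x,y)$ is bounded below by a positive constant $c_0$. The upper bound is then immediate from Proposition \ref{P8:3}, because for $t\in(0,T]$ one has $t^{-1/2}\le \sqrt{T}\,t^{-1}$, hence $\frac1{\sqrt t}+\frac1t\lesssim \frac1t$. For the lower bound I would use Chapman--Kolmogorov and route the path through an appropriate intermediate point: when, say, $x\in L_i$ and $y\in D_0$ is far from $a_i^*$, insert an integration over a ball $B_\rho(a_i^*,\sqrt t)$ and apply the Case 2 lower bound to $p(t/2,x,z)$ and the Case 1 (or Theorem \ref{T:4.7}) lower bound to $p(t/2,z,y)$.

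The principal technical obstacle is the lower bound in Case 3 when $x\in L_i$ and $y\in L_j$ with $i\neq j$, so that every path from $x$ to $y$ must traverse \emph{both} pole bases. Here I would use a threefold Chapman--Kolmogorov splitting
\begin{equation*}
p(t,x,y)\ge \int_{U_i}\!\!\int_{U_j} p(t/3,x,z_1)\,p(t/3,z_1,z_2)\,p(t/3,z_2,y)\,m_{\vec p}(dz_1)\,m_{\vec p}(dz_2),
\end{equation*}
where $U_i\subset B_\rho(a_i^*,\sqrt t)$ and $U_j\subset B_\rho(a_j^*,\sqrt t)$, bounding the first and third factors by the single-pole estimates of Theorem \ref{T:4.6} and the middle factor by Theorem \ref{T:4.7} applied locally. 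Combined with the additive structure $\rho(x,y)=|x|+\rho(a_i^*,a_j^*)+|y|$ on shortest paths and the lower bound $m_{\vec p}(U_i)\gtrsim t$, this yields \eqref{e:8.1}; the careful routing of paths through multiple well-separated poles is the delicate point of the proof.
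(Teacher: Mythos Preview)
Your strategy differs from the paper's and has a gap at the key comparison step. In Case 1 you want to invoke the signed-radial identity \eqref{e:4.11} for the $j=i$ summand of $\overline{p}_{D_0}$, but that summand involves the \emph{multi-pole} density $p(t-s,a_i^*,y)$, and \eqref{e:4.11} relies on rotational symmetry about the single pole, which the remaining poles destroy. The same issue recurs in your Case 2 (``mirrors Theorem \ref{T:4.6} applied to $X^{(i)}$'') and in your Case 3 routing (``apply the Case 1 or Theorem \ref{T:4.7} lower bound to $p(t/2,z,y)$''): in each instance you are tacitly replacing a multi-pole quantity by its single-pole counterpart without justification, and in the last instance this is circular, since for $z\in B_\rho(a_i^*,\sqrt t)$ and $y$ far from $a_i^*$ the pair $(z,y)$ again falls under Case 3.

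The paper supplies exactly this missing comparison via Lemma \ref{L:6.1}. The point is that on any ball $B_\rho(z_0,1)$ containing at most one pole base (guaranteed by the separation $\rho(a_i^*,a_j^*)>3$), the subprocess of the multi-pole BMVD killed upon exiting the ball \emph{coincides} with that of the single-pole BMVD; Lemma \ref{L:6.1} applied to the latter then gives $p(t,x,y)\ge p_{B_\rho(z_0,1)}(t,x,y)\ge\tfrac12\,p_1(t,x,y)$ for nearby $x,y$, and the complementary term yields $p(t,x,y)\lesssim p_1(t,x,y)+e^{-c/t}$. This disposes of all pairs with $\rho(x,y)<2C_2$ at once, after which Theorems \ref{T:4.5}--\ref{T:4.7} for $p_1$ read off the asserted bounds. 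For $\rho(x,y)\ge 2C_2$ the paper uses strong Markov at a point $z_0\in L_i$ (not at $a_i^*$) to reduce Case 2 back to the nearby regime, and for the Case 3 lower bounds it works only with objects whose estimates are already known independently of the theorem: the planar Dirichlet kernel $p_{D_0}$ and a killed signed radial diffusion $Y^{(-\infty,2C_2)}$ localized near a single pole, combined via Chapman--Kolmogorov through a fixed annulus. Note also that your claim ``Case 3 gathers all configurations in which $\rho(x,y)$ is bounded below'' is false: $x,y\in D_0$ both far from every pole with $|x-y|$ arbitrarily small is a Case 3 configuration; the paper absorbs it into the $\rho(x,y)<2C_2$ step.
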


\begin{proof}
The idea of the proof is to reduce it  to the heat kernel for BMVD on plane
with one vertical half-line. 
For an open subset $D$ of $E$, we use $X^D$ to denote the subprocess of 
$X$ killed upon leaving $D$ and $p_D(t, x, y)$ the transition density function of $X^D$ with respect to $m_{\vec p}$. 

Let $C_1>0$ and $C_2\in (0, 1/2)$ be the constants in Lemma \ref{L:6.1}.

(i) We first show that the desired estimates hold for any $x, y\in E$ with 
$\rho (x, y) <2C_2$ and for every $t\in (0, T]$.
In this case, let $z_0\in E$ so that
$\{x, y\} \in B_\rho (z_0, C_2)$. 
Since $\rho (a^*_i, a^*_j)>3$ for $i\not= j$, 
without loss of generality we may and do assume that $a^*_1$ is the base that is closest to
$z$ and so $\min_{2\leq j\leq k} \rho (z_0, a^*_j)>3/2$.
We have by Lemma \ref{L:6.1} that
\begin{equation}\label{e:8.2}
 p_{B_\rho (z_0, 1)}(t,w,z) \geq \frac12 p_1 (t, w, z)
\quad \hbox{ for } t\in (0, C_1] \hbox{ and } w, z\in B_\rho (z_0, C_2),
\end{equation} 
where $p_1 (t, x, y)$ stands for the transition density function
of BMVD on the plane with one vertical halfline $L_1$ at base $a_1^*$ and vertical half line $L_1$. This together with Theorem \ref{T:smalltime} in particular implies that 
there is a constant $c_1>0$ so that 
$$
p_{B_\rho (z_0, 1)}(t,w,z) \geq c_1 
\quad \hbox{for every } t\in [C_1/2, C_1] \hbox{ and } 
w, z \in B_\rho (z_0, C_2).
$$
It thus follows from the Chapman-Kolmogorov's equation
that there is a constant $c_2>0$ so that 
\begin{equation}\label{e:8.3}
p_{B_\rho (z_0, 1)}(t,w,z) \geq c_2 
\quad \hbox{for every } t\in [C_1, T] \hbox{ and } 
w, z \in B_\rho (z_0, C_2).
\end{equation} 
This together with \eqref{e:8.2} implies that there is a constant 
$c_3>0$ so that 
\begin{equation}\label{e:8.4}
 p(t, w, z) \geq 
p_{B_\rho (z_0, 1)}(t,w,z) \geq c_3 p_1 (t, w, z)
\quad \hbox{ for } t\in (0, T] \hbox{ and } w, z\in B_\rho (z_0, C_2).
\end{equation}
On the other hand, we have by Proposition \ref{offdiagUBE} and the fact that
$s\mapsto s^{-1}e^{-a^2/s}$ is increasing in $(0, a^2)$ and decreasing
in $(a^2, \infty)$ that for $t\in (0, T]$ and $\rho (x, y)<2C_2<1$, 
\begin{eqnarray*}
\bar{p}_{B_\rho (z_0, 1)}(t,x,y)&:=&
\IE_x[p(t-\tau_{B_\rho (z_0, 1)},X_{\tau_{B_\rho (z_0, 1)}},y); \tau_{B_\rho (z_0, 1)}< t]  \\
&\lesssim& t^{-1}e^{-c_4/t}\lesssim e^{-c_5/t}\lesssim e^{-c_6\rho(x,y)^2/t}.
\end{eqnarray*}
Consequently,
\begin{equation} \label{e:8.5} 
p(t, x, y)= {p}_{B_\rho (z_0, 1)}(t,x,y) + \bar{p}_{B_\rho (z_0, 1)}(t,x,y)
\lesssim p_1 (t, x, y)+ e^{-c_6\rho(x,y)^2/t}.
\end{equation}
This together with \eqref{e:8.4} and Theorem \ref{T:smalltime} establishes
the desired estimate of the theorem for any $x, y\in E$ with $\rho (x, y)< 2C_2$
and $t\in (0, T]$. 

\medskip

(ii)   We now consider the case when $x\in L_i$ and  $ y\in L_i\cup B_\rho(a^*_i,   C_2)$ for some $1\le i\le k$  with 
$\rho (x, y)\geq 2C_2$. 
Without loss of generality,  we may and do assume  $i=1$ and $\rho(x, a_1^*)<\rho(y, a_1^*)$ if $y\in L_1$. 
Let $z_0\in L_1$ with $\rho(z_0, a_1^*)=\rho(y, a^*_1)+C_2.$ 
By the strong Markov property of $X$,   \eqref{e:8.4}-\eqref{e:8.5} and Theorem \ref{T:smalltime},  we have for $t\in (0, T]$, 
\begin{eqnarray*}
p(t, x, y) &=& \IE_x [ p(t-\sigma_{z_0}, z_0, y); \sigma_{z_0}<t] \\
&\geq&  c_3 \IE_x [ p_1(t-\sigma_{z_0}, z_0, y); \sigma_{z_0}<t] 
\\
&=& c_3 p_1 (t, x, y)   \gtrsim  t^{-1/2}  \, e^{-c_7\rho (x, y)^2/y}, 
\end{eqnarray*}
and
\begin{eqnarray*}
p(t, x, y) &= &\IE_x [ p(t-\sigma_{z_0}, z_0, y); \sigma_{z_0}<t] \\
&\stackrel{\eqref{e:8.5}}{\lesssim}&   \IE_x [ p_1(t-\sigma_{z_0}, z_0, y); \sigma_{z_0}<t] + \IE_x [ e^{-c_6 C_2^2/(t-\sigma_{z_0})}; \sigma_{z_0}<t]\\
&\leq &  p_1 (t, x, y) + e^{-c_6 C_2^2/t} \IP_x (\sigma_{z_0}<t) \\
&\lesssim &  t^{-1/2} \, e^{-c_8\rho (x, y)^2/y} + e^{-c_6 C_2^2/t}  e^{-|x-z_0|^2/t} \\
&\lesssim &  t^{-1/2} \,  e^{-c_9\rho (x, y)^2/y}.
\end{eqnarray*}
In the second to last inequality, we   used crossing estimate for one-dimensional Brownian motion and Lemma \ref{L:5.11}. 
The above two estimates  give the desired estimates. 

\medskip

(iii) Let $D_1= D_0\setminus  \cup_{j=1}^k  B_\rho (a^*_j, C_2) $.
There are three remaining cases:  

\begin{description}
\item{(a)}  $x\in L_i \cup B_\rho (a^*_i, C_2)$ and $y\in L_j \cup B_\rho (a^*_j, C_2)$ for $i\not= j$;

\item{(b)} $x\in L_i \cup B_\rho (a^*_i, C_2)$ for some $1\leq i \leq k$ and $y\in D_1 $
with $\rho (x, y)\geq 2C_2$;    

\item{(c)} $x, y\in D_1 $ with $\rho (x, y)\geq 2C_2$.
\end{description}

We claim that \eqref{e:8.1} holds for  all these three cases. The upper bound in \eqref{e:8.1} holds due to 
Proposition \ref{offdiagUBE} so it remains to establish the lower bound. 

It follows from the Dirichlet heat kernel estimate for Brownian motion in $C^{1,1}$-domain \cite{Z3, CKP}
that in case (c), for any $t\in (0, T]$, 
\begin{equation} \label{e:8.6} 
p(t, x, y)\geq p_D(t, x, y) \gtrsim t^{-1} e^{-c_{10} |x-y|^2/t} \geq t^{-1} e^{-c_{10} \rho (x, y)^2/t}. 
\end{equation}

For case (b), without loss of generality, we assume $i=1$. Define $u_1(w)= -\rho (w, a^*_1)$ for $w\in L_1$
and $u_1(w)=\rho (w, a^*_1)$ analogous to \eqref{848}. Let $Y=u(X)$ and $\tau_1:=\inf\{t>0: Y_t \geq 2C_2\}$.
Then $Y^{(-\infty, 2C_2)}_t := Y_t$ for $ t\in [0, \tau_1)$,  and $Y^{(-\infty, 2C_2)}_t :=\partial$ for $t\geq \tau_1$
has the same distribution as the killed radial process radial process $Y$ in Section \ref{S:4} for BMVD on plane with
one vertical half-line. By Proposition \ref{P:4.3} and the arguments similar to Proposition \ref{P:4.4} of this paper
and that of \cite{CKP}, one can show that $Y^{(-\infty, 2C_2)}_t$ has a transition density function $p^0(t, w, z)$
with respect to the Lebesgue measure on $\IR$ 
and it has the following two-sided estimates:
$$ 
t^{-1/2} \left( 1\wedge \frac{|w|}{\sqrt{t}} \right) \left( 1\wedge \frac{|z|}{\sqrt{t}} \right) e^{-c_{11} |w-z|^2/t}
\lesssim p^0(t, w, z) 
\lesssim t^{-1/2} \left( 1\wedge \frac{|w|}{\sqrt{t}} \right) \left( 1\wedge \frac{|z|}{\sqrt{t}} \right) e^{-c_{12} |w-z|^2/t}
$$
for $t\in (0, T]$ and $x, y\in (-\infty, 2C_2)$. Thus we have for $t\in (0, T]$, 
$x\in L_1 \cup B_\rho (a^*_1, C_2)$   and $y\in D_1 $
with $\rho (x, y)\geq 2C_2$,
\begin{eqnarray}
p(t, x, y) &\geq& \int_{D_0 \cap (B_\rho (a^*_1, 6C_2/4) \setminus B_\rho (a^*_1, 5C_2/4))} p(t/2, x, z) p(t/2, z, y) 
m_{\vec p} (dz) \nonumber \\
&\gtrsim &  t^{-1} e^{-c_{13} \rho (a^*_1, y)^2/t}  \int_{D_0 \cap (B_\rho (a^*_1, 6C_2/4) \setminus B_\rho (a^*_1, 5C_2/4))}
  p(t/2, x, z) m_{\vec p} (dz)  \nonumber \\
&\geq & t^{-1} e^{-c_{13} \rho (a^*_1, y)^2/t}  \int^{ 6C_2/4}_{5C_2/4} 
  p^0 (t/2, u_1( x) , w)  dw   \nonumber \\
&\gtrsim & t^{-1} e^{-c_{13} \rho (a^*_1, y)^2/t}  \int^{ 6C_2/4}_{5C_2/4} t^{-1/2} 
  e^{-c_{11} (6C_2/4  - u_1(x))^2/t}  dw  \nonumber \\
  &\gtrsim & t^{-1} e^{-c_{13} \rho (a^*_1, y)^2/t} e^{-c_{14} (6C_2/4  - u_1(x))^2/t}  \nonumber \\
  &\gtrsim & t^{-1} e^{-c_{15} \rho (x, y)^2/t} , \label{e:8.7}
  \end{eqnarray} 
where in the second inequality it was used  that $\rho(z,y)\le 5\rho(a^*_1, y)/2$, and in the last two inequalities we used the fact that $ |6C_2/4  - u_1(x)|\geq C_2/2$ and $\rho (x, y)\geq 2C_2$. 

Now for case (a) when $x\in L_i \cup B_\rho (a^*_i, C_2)$ and $y\in L_j \cup B_\rho (a^*_j, C_2)$ for $i\not= j$,
let $z_0\in D_0$ so that both $\rho (z, a^*_i)$ and $\rho (z, a^*_j)$ take values within $(\rho (a^*_i, a^*_j)/3, 2\rho (a^*_i, a^*_j)/3)$.  
We then have by \eqref{e:8.7} that for all $t\in (0, T]$, 
\begin{eqnarray*}
p(t, x, y) &\geq& \int_{D_0\cap B_\rho (z_0, C_2)} p(t/2, x, z) p(t/2, z, y) m_{\vec p} (dz) \nonumber \\
&\gtrsim &  t^{-1} e^{-c_{16} \rho (x, z_0)^2/t }\, t^{-1} e^{-c_{16} \rho (y, z_0)^2/t} \, m_{\vec p} (D_0\cap B_{\rho} (z_0, C_2)) \nonumber \\
&\gtrsim & t^{-1}e^{-c_{17} \rho (x, y)^2/t },
\end{eqnarray*}
where in the last inequality, we used the fact that $\rho (x, y)\geq 3$.  
This completes the proof that the lower bound in \eqref{e:8.1} holds for all three cases (a)-(c).
The theorem is now proved. 
\end{proof}

 \subsection{A large square with an arch}

In this subsection, we study   Brownian motion on a large square  with an arch. 
The state space $E$ of the process is defined as follows. Let $z_1, z_2 \in \IR^2$ with $|z_1-z_2|\geq 6$. 
Fix constants $0<\eps_1, \eps_2<1/2$ and $p>0$. For $i=1,2$, denote by  $B_i$  the closed disk on $\IR^2$ centered at $z_i$
 with radius $\eps_i$. Let $\displaystyle{D_0=\IR^2\setminus  \left(B_1 \cup B_2 \right)}$.
 We short $B_i$ into a   singleton denoted by $a_i^*$. Denote by $L$ a one dimensional arch with two endpoints  $a^*_1$ and $a^*_2$.
 Without loss of generality, we assume $L$ is isometric to an closed interval $[-b, b]$ for some $b\geq 4$. 
 We equip the space $E:= D_0 \cup \{a^*_1, a^*_2\} \cup L$ with the Riemannian distance $\rho$ induced from $D_0$ and $L$, analogous to the  last example  of a large square with multiple flag poles. 
  Let $m_p$ be the measure on $E$ whose restriction on $L$ and $D_0$ is the arch length measure and the Lebesgue measure multiplied by $p$ and $1$, respectively.  In particular, we have $m_p\left(\{a^*_1, a^*_2\}\right)=0$. 
As before,  BMVD on $E$   is defined as follows.

\begin{dfn}\label{def-planary-arch}\rm Given $0<\eps_1, \eps_2<1/2$ and $p>0$, 
BMVD on $E$ with parameters $(\eps_1, \eps_2, p)$ on $E$  is an $m_p$-symmetric diffusion $X$ on $E$ such that
\begin{description}
\item{(i)} its subprocess process in $L$ or $D_0$ has the same distribution as  the one-dimensional or two-dimensional Brownian motion
in $L$ or $D_0$, respectively.
\item{(ii)} it admits no killings at  $\{a_1^*, a_2^*\}$.
\end{description}
\end{dfn}

Similar to Theorem \ref{existence-uniqueness}, we have the following.

\begin{thm}\label{T:8.6}
For every $0<\eps_1, \eps_2 < 1/2$ and 
$p>0$,
BMVD $X$ on $E$ with parameter $(\eps_1, \eps_2, p)$  exists and is unique.
Its associated Dirichlet form $(\EE, \FF)$ on $L^2(E; m_p)$ is given by
\begin{eqnarray*}
  \FF &=& \left\{f: f|_{\IR^2}\in W^{1,2}(\IR^2),\,  f|_{L}\in W^{1,2}(L),\, f|_{B_i}=f|_L(a^*_i)\,,i=1,2  \right\}, \\ 
 \EE (f, g) &=&  \frac{1}{2}\int_{D_0} \nabla f(x) \cdot \nabla g(x) dx+ \frac{p}{2}\int_{L} f'(x)g'(x)dx.
\end{eqnarray*} 
\end{thm}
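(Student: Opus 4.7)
The plan is to repeat the Dirichlet form construction from Theorem \ref{existence-uniqueness}, adapted to a space with two darning points $a_1^*,a_2^*$ instead of one. First I would verify that $(\EE,\FF)$ in the statement is a regular, strongly local Dirichlet form on $L^2(E;m_p)$. To that end, introduce
\begin{equation*}
\FF^0:=\left\{f: f|_{D_0}\in W^{1,2}_0(D_0),\ f|_L\in W^{1,2}_0(L\setminus \{a_1^*,a_2^*\})\right\}
\end{equation*}
and for $i=1,2$ set $u_i(x):=\IE^x\bigl[e^{-\sigma_{\{a_i^*\}}}\bigr]$, where the hitting time refers to the absorbing Brownian motion on $D_0$ when $x\in D_0$ and to the Brownian motion on $L$ when $x\in L$. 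Both $u_i$ lie in $\FF$ (the one-dimensional and two-dimensional pieces have finite Dirichlet energy, and the junction condition $u_i|_{B_j}=u_i(a_j^*)$ is automatic). A short argument shows $\FF=\FF^0+\mathrm{span}\{u_1,u_2\}$: any $f\in\FF$ can be written as $f-\sum_{i=1,2}\alpha_i u_i$ with $\alpha_i$ determined by the quasi-continuous values $f(a_i^*)$, yielding an element of $\FF^0$. Regularity is obtained by exhibiting a dense core inside $C_c(E)\cap\FF$ built from smooth compactly supported functions on $D_0$ and on $L$ that match at the two junctions; strong locality and absence of killing are immediate from the explicit formula for $\EE$.

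For existence, let $X$ be the $m_p$-symmetric Hunt process associated with $(\EE,\FF)$. Since $(\EE,\FF)$ is strongly local and has no killing part, $X$ is a conservative diffusion. The part process killed upon hitting $\{a_1^*,a_2^*\}$ has Dirichlet form $(\EE,\FF^0)$, which decouples as the direct sum of the absorbing Brownian Dirichlet form on $D_0$ and the absorbing Brownian Dirichlet form on $L\setminus\{a_1^*,a_2^*\}$. Thus $X$ restricted to either $D_0$ or $L^\circ$ has the distribution of the corresponding standard Brownian motion, which verifies conditions (i)–(ii) of Definition \ref{def-planary-arch}.

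For uniqueness, suppose $X$ is any BMVD on $E$ with parameters $(\eps_1,\eps_2,p)$ and let $(\EE^*,\FF^*)$ denote its Dirichlet form on $L^2(E;m_p)$. Since $X$ spends positive time in any neighborhood of $a_i^*$ (indeed $\{a_i^*\}$ is non-polar, because the part process in $L$ is a one-dimensional Brownian motion and points are regular for itself in dimension one), for every $u\in\FF^*$ the $1$-order hitting projection
\begin{equation*}
H^1_{\{a_1^*,a_2^*\}}u(x):=\sum_{i=1,2}u(a_i^*)\,\IE^x\!\left[e^{-\sigma_{\{a_i^*\}}}\mathbf{1}_{\{X_{\sigma}=a_i^*\}}\right]
\end{equation*}
lies in $\FF\cap\FF^*$ and $u-H^1_{\{a_1^*,a_2^*\}}u\in\FF^0$, giving $\FF^*\subset\FF$. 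Conversely, since the killed process has the absorbing Brownian law with form $(\EE,\FF^0)$, one gets $\FF^0\subset\FF^*$, and combined with $u_i\in\FF^*$ this forces $\FF=\FF^*$. Finally, strong locality of $(\EE^*,\FF^*)$ (since $X$ is a diffusion without killing) and the identification of the energy measure via $\mu^c_{\langle u\rangle}=|\nabla u|^2 m_p$ on $E\setminus\{a_1^*,a_2^*\}$ combined with $m_p(\{a_1^*,a_2^*\})=0$ yield $\EE^*=\EE$, exactly as in the single-pole calculation at the end of the proof of Theorem \ref{existence-uniqueness}.

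The main technical obstacle I anticipate is justifying the decomposition $\FF=\FF^0+\mathrm{span}\{u_1,u_2\}$ rigorously in the presence of two distinct darning points, which amounts to showing that each $f\in\FF$ has well-defined quasi-continuous values at both $a_1^*$ and $a_2^*$ and that subtracting the correct linear combination of $u_1,u_2$ produces a function in $\FF^0$. Because $|z_1-z_2|\ge 6$ the two darned disks $B_1, B_2$ are well separated and the arch $L$ meets $D_0$ only at the endpoints $a_1^*,a_2^*$, so the two junctions can be handled independently; the quasi-continuous value at each $a_i^*$ is obtained via the standard trace argument using the one-dimensional trace on $L$ (which is even H\"older continuous). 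This is the only non-routine step; everything else parallels Section~\ref{S:2} verbatim.
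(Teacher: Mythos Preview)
Your proposal is correct and is precisely the adaptation the paper has in mind: the paper gives no separate proof of Theorem \ref{T:8.6} beyond the remark ``Similar to Theorem \ref{existence-uniqueness}'', and your sketch fills in exactly that analogy, replacing the single darning point by the pair $\{a_1^*,a_2^*\}$ and the single auxiliary function $u_1$ by the two functions $u_1,u_2$. One minor point to tidy up: make explicit that on $D_0$ you mean $u_i(x)=\IE^x[e^{-\tau_{D_0}};\,X_{\tau_{D_0}}\in\partial B_i]$ and on $L$ you mean $u_i(x)=\IE^x[e^{-\tau_{L^\circ}};\,X_{\tau_{L^\circ}}=a_i^*]$, so that $u_i(a_j^*)=\delta_{ij}$ and the junction condition in $\FF$ is satisfied; with that clarification your decomposition $\FF=\FF^0+\mathrm{span}\{u_1,u_2\}$ and the rest of the argument go through verbatim.
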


It is easy to see that BMVD $X$ has a continuous transition density function $p(t, x, y)$ with respect to the measure $m_p$. 
  Similar to that for Proposition \ref{NashIneq},  Propositions \ref{P:3.2} and \ref{offdiagUBE}, 
using the classical Nash's inequality for one- and two-dimensional Brownian motion
and Davies method, one can easily establish the following.

\begin{prop}\label{UB-arch}
Let $T\geq 2$. 
There exist $C_1, C_2>0$ such that
\begin{displaymath}
p (t,x,y)\le C_1\left(\frac{1}{t}+
\frac{1}{t^{1/2}}\right)e^{-C_2\rho(x,y)^2/t}  \qquad\text{for all } x, y \in
E, t\in (0,T].
\end{displaymath}
\end{prop}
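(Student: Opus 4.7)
The plan is to follow exactly the Nash-inequality-plus-Davies-method scheme that produced Propositions \ref{NashIneq}, \ref{P:3.2} and \ref{offdiagUBE} in the half-line model, adapting only the Nash step to accommodate the compact arch $L$.

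The first step is to establish a Nash-type inequality for $(\EE,\FF)$ on $L^2(E;m_p)$. Decomposing $\|f\|_{L^2(E)}^2 = \|f\|_{L^2(D_0)}^2 + \|f\|_{L^2(L)}^2$ for $f\in\FF$, the first piece is controlled exactly as in Lemma \ref{NashIneq}: since $f|_{\IR^2}\in W^{1,2}(\IR^2)$, the classical two-dimensional Nash inequality yields $\|f\|_{L^2(D_0)}^2 \lesssim \EE(f,f)^{1/2}\|f\|_{L^1(E)}$. For the second piece, $L$ is a compact interval of length $2b$, so $f|_L$ extends to a compactly supported function in $W^{1,2}(\IR)$ (via, e.g., even reflection past the two endpoints) whose $W^{1,2}(\IR)$ norm is controlled by $\|f'\|_{L^2(L)}+\|f\|_{L^2(L)}$. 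Applying the one-dimensional Nash inequality to this extension gives
$$\|f\|_{L^2(L)}^2 \lesssim \bigl(\EE(f,f)+\|f\|_{L^2(E)}^2\bigr)^{1/3}\|f\|_{L^1(E)}^{4/3}.$$
Summing the two bounds produces the Nash inequality of Lemma \ref{NashIneq} applied to the shifted form $\EE(\cdot,\cdot)+\|\cdot\|_{L^2(E)}^2$ in place of $\EE$.

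The second step passes to the heat kernel. By \cite[Corollary 2.12]{CKS} applied to the semigroup shifted by $e^{-t}$, the modified Nash inequality yields the on-diagonal bound $p(t,x,y)\leq Ce^t(t^{-1}+t^{-1/2})$ for all $t>0$ and $x,y\in E$, and the factor $e^t$ is absorbed into a constant for $t\in(0,T]$. To insert the Gaussian factor, fix $x_0,y_0\in E$ and $t_0\in(0,T]$, set $\alpha := \rho(x_0,y_0)/(4t_0)$, and define $\psi(x):=\alpha\rho(x,x_0)$. Because $\rho$ is the geodesic distance on $E$ inherited from the smooth pieces $D_0$ and $L$, $|\nabla\psi|^2\leq\alpha^2$ holds $m_p$-a.e., so the truncations $\psi_n:=\psi\wedge n$ satisfy the perturbation bounds $e^{\mp 2\psi_n}|\nabla e^{\pm\psi_n}|^2\leq\alpha^2$ required by \cite[Corollary 3.28]{CKS}. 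Invoking that corollary in the same manner as in Proposition \ref{offdiagUBE} yields
$$p(t,x,y)\leq C(t^{-1}+t^{-1/2})\exp\bigl(-|\psi(y)-\psi(x)|+2t\alpha^2\bigr),$$
and evaluating at $t=t_0,x=x_0,y=y_0$ produces the claimed bound.

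The main technical obstacle is Step 1: unlike the non-compact half-line $\IR_+$ in the original model, the compact arch $L$ does not satisfy the standard one-dimensional Nash inequality without an additive $\|f\|_{L^2}^2$ correction. This correction contributes the $e^{ct}$ prefactor to the on-diagonal estimate, which is harmless for $t\in(0,T]$ but explains why the proposition is restricted to a fixed time horizon.
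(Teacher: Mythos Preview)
Your proposal is correct and follows the same Nash-plus-Davies scheme the paper invokes; the paper gives no proof beyond pointing back to Propositions \ref{NashIneq}--\ref{offdiagUBE}. Your handling of the compact arch via a shifted Nash inequality---yielding an $e^{ct}$ prefactor that is harmless on $(0,T]$---is a detail the paper leaves implicit, and it correctly explains why the statement is restricted to bounded time, unlike Proposition \ref{offdiagUBE}.
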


The next theorem gives short time sharp two-sided estimates on $p(t, x, y)$.

\begin{thm}\label{T:smalltime-arch}
 Let $T\geq 2$ be fixed. There exist positive constants $C_i$, $3\le i\le 16$,
  so  that the transition density $p(t,x,y)$ of BMVD $X$ on $E$ satisfies the following estimates when $t\in (0, T]$:
\begin{description}
\item{\rm (i)} For $x \in L$ and $y\in E$, 
\begin{equation}\label{eq-arch-1}
\frac{C_3}{\sqrt{t}}e^{-C_4\rho (x, y)^2/t }  \le p(t,x,y)\le\frac{C_5}{\sqrt{t}}e^{- C_6\rho (x, y)^2/t}.
\end{equation}

\item{\rm (ii)} For $x,y\in D_0\cup \{a^*_1, a^*_2\}$, when $\rho(x, a^*_i)+\rho(y, a^*_i)<1$ for some $i=1, 2$,
\begin{eqnarray}\label{eq-arch-3}
&& \frac{C_{7}}{\sqrt{t}}e^{- C_{8}\rho(x,y)^2/t}+\frac{C_{7}}{t}\left(1\wedge
\frac{\rho(x, a^*_i)}{\sqrt{t}}\right)\left(1\wedge
\frac{\rho(y, a^*_i)}{\sqrt{t}}\right)e^{- C_{9}|x-y|^2/t} \\
 &\le  & p(t,x,y)
\le \frac{C_{10}}{\sqrt{t}}e^{-C_{11}\rho(x,y)^2/t}
+\frac{C_{10}}{t}\left(1\wedge
\frac{\rho(x, a^*_i)}{\sqrt{t}}\right)\left(1\wedge
\frac{\rho(y,a^*_i)}{\sqrt{t}}\right)e^{-C_{12}|x-y|^2/t}; \nonumber
\end{eqnarray}
otherwise,
\begin{equation}\label{eq-arch-4}
\frac{C_{13}}{t}e^{- C_{14}\rho(x,y)^2/t} \le p(t,x,y) \le \frac{C_{15}}{t}e^{- C_{16}\rho(x,y)^2/t} . 
\end{equation}
 \end{description}
\end{thm}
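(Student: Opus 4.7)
The plan is to adapt the localization strategy used in Theorem \ref{small-time-multi-line}, reducing the estimates to the single-pole BMVD case (Theorem \ref{T:smalltime}) plus the Davies-type upper bound of Proposition \ref{UB-arch}. The key geometric input is that the two disks $B_1, B_2$ lie at $\rho$-distance at least $6$, so any ball $B_\rho(z_0, 1)$ contains at most one of the bases $a^*_1, a^*_2$, and inside such a ball the process $X$ is indistinguishable (up to isometry) either from planar Brownian motion, from one-dimensional Brownian motion on $L$, or from the single-pole BMVD studied in Sections \ref{S:2}--\ref{S:4}.

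First, I would establish a local comparison analogous to \eqref{e:8.4}: with $C_1, C_2$ from Lemma \ref{L:6.1}, for any $z_0$ such that $B_\rho(z_0, 1)$ meets at most one base point, and for any $t\in(0,T]$ and $x, y\in B_\rho(z_0, C_2)$, one has $p_{B_\rho(z_0,1)}(t,x,y)\gtrsim p_{(1)}(t,x,y)$, where $p_{(1)}$ denotes the transition density of the single-pole BMVD (or ordinary Brownian motion if no base lies in the ball). The argument is exactly as in the proof of Theorem \ref{small-time-multi-line}: combine Lemma \ref{L:6.1} for $t\le C_1$ with Chapman--Kolmogorov and the fact that $p_{B_\rho(z_0,1)}(t,w,z)\gtrsim 1$ uniformly for $t\in[C_1,T]$ and $w,z\in B_\rho(z_0, C_2)$ to extend to all $t\in(0,T]$.

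Second, for the nearby regime $\rho(x,y)<2C_2$, I would choose $z_0$ near the midpoint so that $\{x,y\}\subset B_\rho(z_0,C_2)$, use the local comparison above for the lower bound, and use Proposition \ref{UB-arch} together with the decomposition $p = p_{B_\rho(z_0,1)} + \bar p_{B_\rho(z_0,1)}$ plus the exponentially small bound $\bar p_{B_\rho(z_0,1)}(t,x,y)\lesssim e^{-c\rho(x,y)^2/t}$ for the upper bound (monotonicity of $s\mapsto s^{-1}e^{-a^2/s}$). When $B_\rho(z_0,1)$ contains a base $a^*_i$, invoking Theorem \ref{T:smalltime} delivers both \eqref{eq-arch-3} and the short-distance part of \eqref{eq-arch-1}; otherwise, the classical one- or two-dimensional Gaussian estimates give \eqref{eq-arch-4}. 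This handles case (ii) completely, and case (i) when $\rho(x,y)<2C_2$.

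Third, for the far-field regime $\rho(x,y)\ge 2C_2$, the upper bounds in all estimates follow directly from Proposition \ref{UB-arch} (the $t^{-1/2}$ factor for the case $x\in L$ is obtained by reducing to a single-pole BMVD via the first hitting time of a fixed point $z_0$ on $L$ and applying the strong Markov property together with Theorem \ref{T:smalltime}(i), exactly as in step (ii) of the proof of Theorem \ref{small-time-multi-line}). For the lower bounds in case (i) with $x\in L$ and $y\in D_0$, I would condition on $X$ exiting $L$ at the nearer base $a^*_i$ and use the strong Markov property at $\sigma_{a^*_i}$, together with the already-established estimate for $p(t,a^*_i,y)$ from step two and a one-dimensional Gaussian bound for the exit distribution from $L$; summing the two contributions from $i=1,2$ yields $p(t,x,y)\gtrsim t^{-1/2}e^{-c\rho(x,y)^2/t}$ because $\rho(x,y)= \min_i(\rho(x,a^*_i)+\rho(a^*_i,y))$. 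For case (ii) lower bounds in the far regime, I would apply Chapman--Kolmogorov through a midpoint $z_0\in D_0$ chosen at $\rho$-distance $\gtrsim 1$ from both bases, using the two-sided Dirichlet heat kernel estimates for smooth domains \cite{Z3} on a ball around $z_0$ inside $D_0$.

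The main obstacle is the lower bound in case (i) when $x\in L$ and $y\in D_0$ with $\rho(x,y)\gg \sqrt{t}$: one must verify that the path through the correct endpoint $a^*_i$ indeed realizes $\rho(x,y)$ and that the resulting product of Gaussian factors recombines to give a clean $e^{-c\rho(x,y)^2/t}$ decay. This amounts to showing that the one-dimensional passage time density from $x$ to $a^*_i$ along $L$ couples with the transition density from $a^*_i$ to $y$ via the convolution identity, and the one-dimensional character of $L$ dictates the $t^{-1/2}$ prefactor; the fact that $\rho(x,a^*_i) + \rho(a^*_i,y)$ equals $\rho(x,y)$ for the optimal $i$ (since the arch is geodesic) makes the Gaussian exponents add correctly.
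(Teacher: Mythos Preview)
Your localization strategy for the nearby regime and your use of Dirichlet heat kernels on $D_0$ for the planar far regime match the paper's Cases~1 and~2(ii). The gap is in your treatment of $x\in L$ with $\rho(x,y)\ge 2C_2$. First, the reduction you borrow from step~(ii) of Theorem~\ref{small-time-multi-line}---strong Markov at a fixed $z_0\in L$---does not transfer: in the multi-line case $L_i$ is an infinite half-line, and $z_0$ can be chosen so that every path from $x$ to $y$ passes through it. Here $L$ is a finite arch with two exits $a^*_1,a^*_2$, so no single $z_0\in L$ separates $x\in L$ from $y\in D_0$. (For the upper bound this is harmless, since $t^{-1}e^{-c\rho^2/t}\lesssim t^{-1/2}e^{-c'\rho^2/t}$ whenever $\rho\ge 2C_2$, so Proposition~\ref{UB-arch} alone already gives \eqref{eq-arch-1}; but you cannot get the lower bound this way.) Second, and more seriously, your lower-bound argument conditions on hitting the nearer base and invokes ``the already-established estimate for $p(t,a^*_i,y)$ from step two.'' Step two only covers $\rho(a^*_i,y)<2C_2$; when $y$ is far from both bases you have no estimate for $p(t,a^*_i,y)$ yet, so you are appealing to the very bound you are proving.

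The paper closes this gap with a device you do not mention: a \emph{modified signed radial process} $Y_t=u(X^{D_1}_t)$, where $u$ collapses the arch $L$ together with annular neighborhoods of \emph{both} bases onto a single interval. One checks that $Y$ is a killed skew Brownian motion with bounded drift and \emph{two} skewness points (one per base), and Dirichlet heat-kernel estimates for this one-dimensional diffusion give the lower bound for $p(t,x,z)$ whenever both $x,z\in L\cup B_\rho(a^*_1,C_2)\cup B_\rho(a^*_2,C_2)$---in particular for $x$ in the middle of $L$ and $z$ near either base. Chaining through an annulus near the nearer base (the paper's subcase~(iii)) then reaches any $y\in D_0$. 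Your approach could be repaired without this radial process by a more careful ordering---first establish $p(t,a^*_i,y)$ for far $y$ by chaining through an annulus near $a^*_i$ (step two for the inner factor, $p_{D_0}$ for the outer), then apply strong Markov at $\sigma_{\{a^*_1,a^*_2\}}$ with a one-dimensional exit-time estimate on the interval---but that is not what you wrote, and you also leave the sub-case $x,y\in L$ far apart unaddressed.
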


\begin{proof} This theorem can be established by a similar consideration as that for
Theorem \ref{small-time-multi-line}.
Here we only give a  brief sketch. 
Let $C_1>0$ and $C_2\in (0, 1/2)$ be the constants in Lemma \ref{L:6.1}.

\smallskip

\noindent {\it Case 1.}  $\rho(x,y)<2C_2$.  The desired estimates can be obtained in a similar way
as that for {\it Case 1} in the proof of Theorem \ref{small-time-multi-line}, by using Lemma \ref{L:6.1}. 

\smallskip

\noindent {\it Case 2. } $\rho(x,y)\geq 2C_2$. 
Due to the upper bound estimate in Theorem \ref{UB-arch}, it suffices to 
show the following lower bound estimate hold: 
there exists some $c_1, c_2 >0$, such that
\begin{equation}\label{e:8.11}
p (t,x,y)\ge c_1 e^{-c_2 \rho(x,y)^2/t}  \quad \text{for all }t\in (0, T].
\end{equation}
We divided its proof  into three cases. 

\begin{description}
\item{(i)}  Both $x$ and $y$ are in $ L\cup B_\rho (a^*_1, C_2) \cup
B_\rho (a^*_2, C_2)$.  Without loss of generality, we assume $x\in L\cup B_\rho (a^*_1, C_2)$ and $x$ is closer
to $a^*_1$ if both $x$ and $ y$ are on the arch $L$.
Denote by $\ell (w, z)$ the arch length in $L$ between two points $w, z\in L$,
and $D_1:= L\cup B_\rho (a^*_1, 3C_2) \cup
B_\rho (a^*_2, 3C_2)$.  
We define a modified signed radial process $Y_t=u(X^{D_1}_t)$,
where 
$$
u(z):=\begin{cases}
\rho (z, a^*_1) & \hbox{if  } z \in D_0\cap B_\rho (a^*_1, 3C_2) ,  \cr
- \ell (a^*_1, z) & \hbox{if  } z \in L, \cr
 -\ell (a^*_1, a^*_2) -\rho (z , a^*_2)    & \hbox{if  } z  \in  D_0\cap B_\rho (a^*_2, 3C_2).
 \end{cases}
 $$ 
By a similar argument as that for Section \ref{S:4}, we can show that 
 $Y$  is a subprocess of a skew Brownian motion on $\IR$ with skewness at points  0  and $\ell (a^*_1, a^*_2)$ and   bounded drift 
  killed upon leaving $I=(- \ell (a^*_1,a^*_2)-2C_2, 2C_2)$. The desired lower bound estimate 
  for $p(t, x, y)$ can be
  derived from the Dirichlet heat kernel estimate for the one-dimensional diffusion $Y$. 

\item{(ii)} Both  $x, y \in D_2:=D_0\setminus (B_\rho(a^*_1, C_2/2)\cup B_\rho(a^*_2, C_2/2))$. 
In this case, the desired lower bound estimate for $p(t, x, y)$ follows from the Dirichlet heat kernel estimate 
for two-dimensional Brownian motion in $C^{1,1}$ domain $D_2$. 

\item{(iii)} $x\in L \cup B_\rho(a^*_1, C_2/2)\cup B_\rho(a^*_2, C_2/2)$ and $y\in D_0\setminus (B_\rho(a^*_1, 2C_2)\cup B_\rho(a^*_2, 2C_2))$. Without loss of generality, we assume $x$ is closer to $a^*_1$ than to $a_2^*$. Let 
$$ D_3:=  \{z\in D_0 : C_2/2\le \rho(z, a^*_1)\le  C_2\}.
$$
Note that $\rho(x,y)\ge  (\rho (x,z)+\rho(y, z))/5$ for $z\in D_3$. 
 By Markov property,
$$
p(t,x,y)\ge \int_{D_3} p(t/2, x, z)p(t/2, z,y)m_p(dz).
$$
 The desired lower bound for $p(t,x,y)$   follows from  the results obtained in (i) and (ii). 
\end{description}
\end{proof}

\vskip 0.3truein

\noindent {\bf Zhen-Qing Chen}

\smallskip \noindent
Department of Mathematics, University of Washington, Seattle,
WA 98195, USA

\noindent
E-mail: \texttt{zqchen@uw.edu}

\bigskip

\noindent {\bf Shuwen Lou}

\smallskip \noindent
Department of Mathematics, Statistics, and Computer Science,

\noindent
University of Illinois at Chicago,
Chicago, IL 60607, USA

\noindent
E-mail:  \texttt{slou@uic.edu}

 \end{document}